\newtheorem{theorem}{Theorem}[section]
\newtheorem{corollary}[theorem]{Corollary}
\newtheorem{lemma}[theorem]{Lemma}
\newtheorem{proposition}[theorem]{Proposition}
\newtheorem{assumption}[theorem]{Assumption}
\theoremstyle{definition}
\newtheorem{definition}[theorem]{Definition}
\newtheorem{remark}[theorem]{Remark}
\DeclareMathOperator{\supp}{supp}
\DeclareMathOperator{\spann}{span}
\DeclareMathOperator{\law}{Law}
\DeclareMathOperator{\Var}{Var}
\numberwithin{equation}{section}
\newcommand{\eps}{\varepsilon}
\newcommand{\R}{\mathbb{R}}
\newcommand{\I}{\mathbb{I}}
\newcommand{\N}{\mathbb{N}}
\newcommand{\p}{\mathbb{P}}
\newcommand{\Cf}{\mathrm{C}}
\newcommand{\Df}{\mathrm{D}}
\newcommand{\B}{\mathcal{B}}
\newcommand{\cdl}{c\`{a}dl\`{a}g }
\newcommand{\F}{\mathcal{F}}
\newcommand{\E}[1]{\ensuremath{\mathbb E \left[ #1 \right]}}
\newcommand{\M}{\mathcal{M}}
\newcommand{\Ito}{It\^o }
\newcommand{\Itos}{It\^o's }
\newcommand{\e}{\ensuremath{\mathbb E}}
\newcommand{\W}{\mathcal{W}}
\newcommand{\cG}{\mathcal{G}}
\newcommand{\m}{\vartheta}
\newcommand{\cM}{\mathcal{M}}
\newcommand{\cH}{\mathcal{H}}
\newcommand{\cN}{\mathcal{N}}
\newcommand{\cP}{\mathcal{P}}
\newcommand{\cS}{\mathcal{S}}
\newcommand{\cV}{\mathcal{V}}
\newcommand{\cL}{\mathcal{L}}
\newcommand{\mui}{\mu_0}
\newcommand{\mcD}{\mathcal D}
\newcommand{\mcM}{\mathcal{M}}
\title[SPDEs and SGD]{Conservative SPDEs as fluctuating mean field limits of stochastic gradient descent}
\author{Benjamin Gess$^{\dagger\ddagger}$} 
\author{Rishabh S. Gvalani$^{\ddagger}$}
\author{Vitalii Konarovskyi$^{\dagger\S}$}
\address[$\dagger$]{Fakult\"{a}t f\"{u}r Mathematik, Bielefeld Universit\"{a}t, 33615 Bielefeld, Germany}
\address[$\ddagger$]{Max Planck Institute for Mathematics in the Sciences, 04103 Leipzig, Germany}
\address[$\S$]{Institute of Mathematics of NAS of Ukraine, 01024 Kiev, Ukraine}
\email{benjamin.gess@math.uni-bielefeld.de}
\email{rishabh.gvalani@mis.mpg.de}
\email{vitalii.konarovskyi@math.uni-bielefeld.de}
\date{\today}
  \subjclass{Primary 
  60H15, 
  60F05, 
  68T07; 
Secondary  
60G46, 
60G57 
}
  \keywords{Stochastic gradient descent, machine learning, overparametrization, Dean--Kawasaki equation, SDE with interaction, fluctuation mean field limit, law of large numbers, central limit theorem}
\begin{document}

\maketitle

\begin{abstract}
  The convergence of stochastic interacting particle systems in the mean-field limit to solutions of conservative stochastic partial differential equations is established, with optimal rate of convergence. As a second main result, a quantitative central limit theorem for such SPDEs is derived, again, with optimal rate of convergence.
  
  The results apply, in particular, to the convergence in the mean-field scaling of stochastic gradient descent dynamics in overparametrized, shallow neural networks to solutions of SPDEs. It is shown that the inclusion of fluctuations in the limiting SPDE improves the rate of convergence, and retains information about the fluctuations of stochastic gradient descent in the continuum limit. 
\end{abstract}

\tableofcontents
\section{Introduction}
\label{sec:introduction}

The analysis of machine learning algorithms is confronted with algorithms in high dimension, with a large number of degrees of freedom (parameters), huge data-sets, and high computing capacities. This motivates the analysis of scaling limits, corresponding to the asymptotic regimes in which these parameters become large, leading to a variety of relative scaling regimes. A particularly relevant one is the so-called overparametrised regime, which corresponds to the case in which the number of parameters $M$ is much larger than the (large) number of training samples (data) $N$. Indeed, a large class of real-world algorithms fall into this class, see, for example \cite{Bi.Ca.Ce.Na2018}. 

The success of such algorithms comes as a statistical surprise. Classical belief and estimates in statistics would suggest that vast overparametrization leads to overfitting \cite{Zh.Be.Ha.Re.Vi2021}, contradicting empirical evidence in machine learning. A systematic explanation of this observation constitutes a key challenge in the scientific understanding of machine learning. For recent progress concerning the related ``double-descent'' phenomenon of the error observed in machine learning we refer to \cite{Belkin:2020,Hastie:2022,Mei:2022,Ro.Va2018a}. A central standing conjecture is that the (stochastic) learning algorithm employed in empirical risk minimization introduces an ``implicit bias'' towards minimizers that generalize well, thereby avoiding those that would lead to overfitting. 

In order to prove or disprove this conjecture, universal models for machine learning are needed as the basis for the analysis of the stochastic dynamics of (stochastic) learning algorithms and their implicit bias. Motivated by this, several scaling limits of stochastic gradient descent dynamics have been analysed in the literature. In particular, the overparametrised regime, with its ``mean-field'' \cite{Ch.Ba2018,Si.Sp2020,Ja.Mo.Mo2020} and ``lazy training'' \cite{Ch.Oy.Ba2020,Ja.Ga.Ho2018,Zh.Ma.Gr2019} scalings has achieved significant attention in recent years. In these works, the joint scaling limit of small learning rate $\alpha\to0$ and overparametrization $M\to\infty$ is considered. Roughly speaking, it is shown, for example in \cite{Me.Mo.Ng2018}, that the empirical distribution $\nu^{M,\alpha}$ of the network parameters following stochastic gradient descent converges to the solution $\mu^{0}$ of a (deterministic) gradient flow in the sense that
\begin{equation}
d(\nu^{M,\alpha},\mu^{0})=O(M^{-\frac{1}{2}})+O(\alpha^{\frac{1}{2}})\quad\text{for }\alpha\to0,\,M\to\infty.\label{eq:montanari}
\end{equation}
This corresponds to a law of large numbers result, since it proves the concentration of the random measures $\nu^{M,\alpha}$ onto a deterministic path. The mean behavior of stochastic gradient descent can then be analysed by considering the limiting dynamics $\mu^{0}$. However, after passing to $\mu^{0}$ all of the information about the inherent fluctuations of stochastic gradient descent is lost. Since there is substantial empirical evidence that stochasticity is decisive for the implicit bias of stochastic gradient descent \cite{Wu.Ma.E2018,Al.Do.Ti2020,Ke.Mu.No.Sm.Ta2017}, universal limiting models incorporating these fluctuations are needed. In this work, we rigorously identify a class of nonlinear conservative SPDEs which serve as such a fluctuating continuum model. 

All previous known results rely on considering the \textit{joint} scaling limit $M\to\infty$, $\alpha\to 0$. In contrast, in practice, the sizes of networks $M$ are typically large, while the learning rate $\alpha$ is moderately small (e.g. \cite{Wu.Zo.Br.Gu2021}). This corresponds to the scaling limit $M\to\infty$ with $\alpha>0$ small but fixed. The identification of a scaling limit in this regime is demanding, since it informally corresponds to the solutions of a nonlinear SPDE with challenging well-posedness properties. The development of a corresponding well-posedness framework and the rigorous treatment of the scaling regime $M\to\infty$, $\alpha>0$ are two of the main contributions of the present work.

We now give a more precise account of the setup and results of the present work. Supervised learning starts from a given training set of data\footnote{For simplicity we assume that the ground-truth is given by a function $f$.} $\mcD\subseteq\R^{n_{0}}\times\R^{k_0}$ with inputs $\Theta=\{\theta:(\theta,f(\theta))\in\mcD\}$ and labels $\{f(\theta):(\theta,f(\theta))\in\mcD\}$. One then chooses a space of hypotheses. Here, we consider a fully-connected feed-forward network with one hidden layer 
\begin{equation}
  f^{M}(x,\theta)
  =\frac{1}{M}\sum_{i=1}^{M}c_{i}\phi(U_i \theta+b_{i})
  =\frac{1}{M}\sum_{i=1}^{M}\Phi(x_i,\theta)\label{eq:ML-shallow}
\end{equation}
with weights/parameters $x_i:=(c_i,U_i,b_i)\in\R^{k_0}\times\R^{d_0\times n_0}\times\R^{d_0}=:\R^d $, $\Phi(x_i,\theta)=c_i\phi (U_i\theta +b_i )$,  and $\phi$ a nonlinear activation function. In practice, $\phi$ is often chosen as the rectified linear unit (ReLU). In fact, the results of this paper apply to more general choices of $\Phi$, and we only restrict for simplicity to the specific choice in this introduction. Thereby, we obtain a parametrization of the space of hypotheses  $\mcM=\{f^M(x,\theta):x\in \R ^{Md}\}.$ The aim of risk minimization then is to select a suitable model $f^M(x,\cdot )\in \mcM$ minimizing the risk
\begin{align}
  L(x) & =\int_\mcD|f(\theta)-f^{M}(x,\theta)|^{2}d\m(\theta), \label{eq:intro_Lemp}
\end{align}
where we concentrate on square loss $l(x,y)= \frac{1}{ 2 }|x-y|^{2}$, and $\m$ is some measure on the data set $\mcD$. For example, $\mcD$ finite with $\m$ the uniform distribution corresponds to empirical risk minimization. This leads to the optimization problem 
$\mathrm{Err}:=^{!}\min_{x}L(x) 
$. 
In machine learning, this optimization is approximated by variants of the  stochastic gradient descent algorithm \cite{[RM51]}, corresponding to a random choice of the direction of descent. More precisely, the optimization dynamics are specified via
\begin{equation}\label{equ_SGD}
  x(n+1)=x(n)- \frac{\alpha}{ P }\sum_{p=1}^{P}\nabla_{x}l(f(x(n),\theta_p),f(\theta_{p})),
\end{equation}
where $\theta_{p}$ are i.i.d.~samples  drawn from $\m$, $\alpha$ is the learning rate, $P$ is the mini-batch size, and $x(0)$ is initialized i.i.d from a measure $\mu_0$. Stochastic gradient descent corresponds to $P=1$ and gradient descent to $P=N$. Note that, since $\e_\m\nabla_{x}l(f(\cdot),f^M(x,\cdot))=\nabla_{x}L(x)$, this is an unbiased estimator of gradient descent. The convergence of these (stochastic) optimization algorithms depends crucially on the properties of the empirical risk $L$. In machine learning, this risk landscape is typically non-convex, non-smooth, and degenerate, making the rigorous analysis of the convergence of stochastic gradient descent challenging. 

In a series of works, see, e.g. \cite{E.Ma.Wu2020,Li.Ta.E2019}, it has been shown that for small learning rate $\alpha>0$, the time discrete dynamics \eqref{equ_SGD} can be approximated up to first order by the following  SDE 
\begin{equation} 
dX_{t}^{M,\alpha}=U(X_{t}^{M,\alpha},\mu_{t}^{M,\alpha})dt+ \sigma^{ \frac{1}{ 2 }}\Sigma^{\frac{1}{2}}(X_t^{M,\alpha}) dB(t), \label{eq:intro_SME}
\end{equation}
where $\mu^{M,\alpha}_t:=\frac{1}{M}\sum_{i}\delta_{(X_{t}^{M,\alpha})_{i}}$ is the empirical measure of the above system, $B$ is a Brownian motion in $\R^{Md}$, $U(x,\mu)=(V(x_i,\mu))_{i\in[M]}$, $\Sigma(x,\mu)=(\tilde A(x_i,x_j,\mu))_{i,j\in[M]}$ for $x=(x_i)_{i\in[M]}\in \R^{Md}$, and
\begin{equation}\label{equ_definition_of_coefficients_intro}
\begin{split}
V(x_i,\mu)&=\nabla F(x_i)-\langle \nabla_{x_i}K(x_i,\cdot),\mu\rangle,\\
G(x_i,\mu,\theta)&=\left( f(\theta)-\int \Phi(y,\theta)\mu(d y) \right)\nabla_{x_i}\Phi(x_i,\theta)\\
&-\e_{\m}\left[ \left( f(\theta )-\int \Phi(y,\theta )\mu(d y) \right)\nabla_{x_i}\Phi(x_i,\theta ) \right],\\
\tilde{A}(x_i,x_j,\mu)&=\e_{\m} \left[G(x_i,\mu,\theta)\otimes G(x_j,\mu,\theta)\right]
\end{split}
\end{equation}
and $\sigma=\frac{ \alpha }{ P }$ is the fluctuation intensity. Notably, compared to plain gradient descent, \eqref{equ_definition_of_coefficients_intro} retains information on the fluctuations in  \eqref{equ_SGD}, and offers a higher order of approximation. 

In the case of shallow networks \eqref{eq:ML-shallow} and square loss (\ref{eq:intro_Lemp}) we observe, following \cite{Ch.Ro.Br.Va2020,Ro.Va2018a,Rotskoff:2022}, that we can represent the square loss as follows
\begin{align}
L(x) & =C_{f}-\frac{1}{M}\sum_{i=1}^{M}F(x_{i})+\frac{1}{2M^{2}}\sum_{i,j=1}^{M}K(x_{i},x_{j})\label{eq:ML-square_loss}
\end{align}
where $C_{f}=\e_{\m}|f(\theta)|^{2}$, $F(x_i) = \mathbb{E}_\m\left[f(\theta)\Phi(x_i,\theta)\right]$, $K(x_i,x_j) =\mathbb{E}_\m\left[ \Phi(x_i,\theta)\Phi(x_j, \theta)\right]$, $i,j\in[M]$. This representation of the loss reveals intricate relations to statistical physics by interpreting the parameters as particles interacting via the interaction potential given by the risk (see ~\cite{Rotskoff:2022}):  The empirical distribution $\mu^{M,\alpha}$ of the parameter dynamics \eqref{eq:intro_SME} can be identified as a solution to the martingale problem
\begin{equation}\label{eq:discrete_SPDE}
    d\mu_{t}^{M,\alpha}=-\nabla\cdot(V(\cdot ,\mu_{t}^{M,\alpha})\mu_{t}^{M,\alpha})dt+\frac{\sigma}{2}D^{2}:(A(\cdot,\mu_{t}^{M,\alpha})\mu_{t}^{M,\alpha})dt+\sigma^{\frac{1}{2}}\nabla\cdot(d\mcM_{t}^{M,\alpha})
\end{equation}
with initial datum $\mu_0^M= \frac{1}{M}\sum_{i=1}^M\delta_{(X_0^{M,\alpha})_i}$ and where $\mcM^{M,\alpha}$ is a continuous martingale satisfying 
$$
[\langle\psi,\mcM_{\cdot }^{M,\alpha}\rangle]_{t}=\int_{0}^{t}\int\int\psi(x)\otimes\psi(y):\tilde{A}(x,y,\mu_{s}^{M,\alpha})\mu_{s}^{M,\alpha}(dx)\mu_{s}^{M,\alpha}(dy)ds
$$
and $A(x,\mu)=\tilde{A}(x,x,\mu)$. Based on this, in several works \cite{Ch.Ro.Br.Va2020,Ro.Va2018a,Rotskoff:2022} it has been informally suggested that in the overparametrised limit ($M\to\infty$) the dynamics will converge to solutions of the martingale problem
\begin{equation}
  d\mu_{t}^{\alpha}=-\nabla\cdot(V(\cdot ,\mu_{t}^{\alpha})\mu_{t}^{\alpha})dt+\frac{\sigma}{2}D^{2}:(A(\cdot ,\mu_{t}^{\alpha})\mu_{t}^{\alpha})dt+\sigma^{\frac{1}{2}}\nabla\cdot(d\mcM_{t}^{\alpha}),\label{eq:intro-martingale}
\end{equation}
where $\mcM$ is a continuous martingale satisfying
\begin{equation}
  [\langle\psi,\mcM_{\cdot }^{\alpha}\rangle]_{t}=\int_{0}^{t}\int\int\psi(x)\otimes\psi(y):\tilde{A}(x,y,\mu_{s}^{\alpha})\mu_{s}^{\alpha}(dx)\mu_{s}^{\alpha}(dy)ds.\label{eq:intro_quadr_variation}
\end{equation}

The proof of this conjecture is one of the main results of this work. Precisely, we prove that in the $M\to\infty$ scaling limit, the empirical measure $\mu^{M,\alpha}$ converges to a solution $\mu^{\alpha}$ of the conservative SPDE~\eqref{eq:intro-SPDE}, with the optimal rate of convergence
\[
d(\mu^{M,\alpha},\mu^{\alpha})=O(M^{-1/2})\quad\text{for }M\to\infty \,.
\]
The rigorous proof relies on the development of a well-posedness framework for (\ref{eq:intro-martingale}), that is, to an infinite dimensional martingale problem with nonlocal coefficients and degenerate ellipticity. We approach this by analyzing instead the probabilistically strong well-posedness of SPDEs that have (\ref{eq:intro-martingale}) as their martingale problem. However, the naive guess for such an SPDE, corresponding to informally taking the square-root of the coefficients in the quadratic variation (\ref{eq:intro_quadr_variation}), leads to an SPDE with irregular diffusion coefficients, for which the validity of strong uniqueness appears unclear. Instead, as one of the first main ideas of this work, we introduce an alternative ``coupling'' SPDE associated to (\ref{eq:intro-martingale}), by 
\begin{equation}
d\mu_{t}^{\alpha}=-\nabla\cdot(V(\cdot,\mu_{t}^{\alpha})\mu_{t}^{\alpha})dt+\frac{\sigma}{2}D^{2}:(A(\cdot,\mu_{t}^{\alpha})\mu_{t}^{\alpha})dt+\sigma^{\frac{1}{2}}\nabla\cdot\int_{\Theta}G(\cdot,\mu_t^\alpha,\theta)\mu_{t}^{\alpha}\,W(d\theta,dt),\label{eq:intro-SPDE}
\end{equation}
which we will call the {\it Stochastic Mean-Field Equation}, where $W$ is a cylindrical Wiener process in $L_{2}(\Theta,\m)$. We then prove the (probabilistically strong) well-posedness of this SPDE. Since (\ref{eq:intro-SPDE}) is an SPDE with nonlocal coefficients, and degenerate coercivity, this is a challenging task. We next state a paraphrased version of our main results on the well-posedness of~\eqref{eq:intro-SPDE}. We refer the reader to Theorems~\ref{the_uniqueness_for_atomic_initial_conditions},~\ref{the_uniqueness_for_initial_condition_with_l2_density}, and~\ref{the_well_posedness_of_dk_equation_in_general_case} for the precise results under varying assumptions on the coefficients and the initial data.

\begin{theorem}[See Theorems~\ref{the_uniqueness_for_atomic_initial_conditions},~\ref{the_uniqueness_for_initial_condition_with_l2_density}, and~\ref{the_well_posedness_of_dk_equation_in_general_case}] Given sufficiently nice initial data $\mu_0 \in \mathcal{P}_2(\mathbb{R}^d)$ and sufficiently nice coefficients $V,A,G$, the stochastic mean-field equation~\eqref{eq:intro-SPDE} has a unique superposition solution in the sense of Definition~\ref{def_superposition_principle}.
\end{theorem}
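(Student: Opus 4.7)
The plan is to reduce the well-posedness of the SPDE~(\ref{eq:intro-SPDE}) to the well-posedness of the conditional McKean--Vlasov SDE
\begin{equation*}
dX_t = V(X_t, \mu_t^\alpha)\,dt + \sigma^{1/2} \int_\Theta G(X_t, \mu_t^\alpha, \theta)\, W(d\theta, dt), \qquad \mu_t^\alpha = \law(X_t \mid \mathcal{F}_t^W),
\end{equation*}
with $X_0\sim\mu_0$ independent of $W$, where $\mathcal{F}_t^W$ is the natural filtration of the cylindrical Wiener process. A direct application of Ito's formula to a test function $\psi(X_t)$, followed by conditioning on $\mathcal{F}_t^W$ and exchanging the conditional expectation with the $W$-stochastic integral, shows that the conditional law $\mu_t^\alpha$ solves the stochastic mean-field equation. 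A superposition solution in the sense of Definition~\ref{def_superposition_principle} is, by design, precisely such a conditional law, so the theorem reduces to the well-posedness of this SDE.

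For \emph{existence}, I would split by the regularity of $\mu_0$, mirroring the three referenced theorems. For atomic $\mu_0=\frac{1}{M}\sum_{i=1}^M\delta_{x_i^0}$, the SDE collapses (by exchangeability) to the finite-dimensional interacting particle system~(\ref{eq:intro_SME}), whose strong well-posedness follows from classical Ito theory under Lipschitz hypotheses on $V,G$. For a general $\mu_0\in\mcP_2(\R^d)$, I would approximate it in the quadratic Wasserstein distance by atomic measures $\mu_0^M$, establish tightness of the associated empirical measures $\mu^{M,\alpha}$ in $C([0,T];\mcP_2(\R^d))$ via moment and modulus-of-continuity estimates, and pass to the limit in the finite-dimensional martingale problem~(\ref{eq:discrete_SPDE}). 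The quadratic-variation structure~(\ref{eq:intro_quadr_variation}) is the key to identifying the limiting martingale as a stochastic integral against $W$. For initial data with an $L^2$ density, an energy estimate on $\mu_t^\alpha$ is needed in addition to propagate the density in time.

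For \emph{uniqueness}, I would couple two superposition solutions $\mu^1,\mu^2$ on a common probability space carrying $W$: extract their Lagrangian representations $\mu_t^i=\law(X_t^i\mid\mathcal{F}_t^W)$ and drive $X^1$ and $X^2$ by the same $W$ and the same initial sample $X_0$. An Ito estimate on $|X_t^1-X_t^2|^2$ combined with Lipschitz dependence of $V,G$ on the measure argument (in Wasserstein distance) yields
\begin{equation*}
\e|X_t^1-X_t^2|^2 \le C\int_0^t \e\bigl[|X_s^1-X_s^2|^2 + W_2(\mu_s^1,\mu_s^2)^2\bigr]\,ds.
\end{equation*}
Since $(X_s^1,X_s^2)$ furnishes a conditional coupling of $(\mu_s^1,\mu_s^2)$ given $\mathcal{F}_s^W$, Jensen gives $W_2(\mu_s^1,\mu_s^2)^2 \le \e[|X_s^1-X_s^2|^2\mid \mathcal{F}_s^W]$, and taking a further expectation together with Gronwall's lemma forces $X^1\equiv X^2$ and therefore $\mu^1\equiv\mu^2$.

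The principal obstacle, as emphasized in the discussion preceding~(\ref{eq:intro-SPDE}), is that any naive formulation of the noise via a square root of the diffusion matrix $A$ produces a diffusion coefficient that is irregular in $\mu$, for which strong uniqueness appears out of reach. The specific structure of the $\Theta$-indexed cylindrical noise, combined with the smooth dependence of $G$ on $\mu$ through integrals of the data kernel $\Phi$, is exactly what makes the coupling argument close. A secondary but serious difficulty is the justification of the Lagrangian representation in the non-atomic case: for $\mu_0$ with an $L^2$ density, the candidate process $X$ must be constructed either via a Trevisan-style superposition principle for SPDEs or via a careful approximation by particle systems, and the required regularity must be propagated by energy estimates. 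Moment bounds and cutoff/localization arguments are additionally needed to handle the potentially unbounded character of $A(x,\mu)$ before the Gronwall estimate can be applied.
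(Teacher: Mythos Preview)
Your reduction to the Lagrangian problem is correct and matches the paper's Section~\ref{sub:sde_with_interaction}: your conditional McKean--Vlasov SDE is equivalent to the paper's SDE with interaction~\eqref{equ_equation_with_interaction}, and your Gronwall coupling is essentially the uniqueness half of Theorem~\ref{the_well_posedness_of_sde_with_interaction}. For existence with general $\mu_0$, the paper is more direct than your tightness route --- it runs a Picard iteration on the full flow $X(u,t)$ of the SDE with interaction, valid for any $\mu_0\in\cP_2(\R^d)$, so no approximation by atomic measures is needed.

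The real gap is that your uniqueness argument presupposes that both $\mu^1$ and $\mu^2$ already admit a Lagrangian representation; it therefore proves only uniqueness \emph{within the class of superposition solutions}, which is the easy direction (Corollary~\ref{cor_superposition_principle}). The referenced Theorems~\ref{the_uniqueness_for_atomic_initial_conditions}, \ref{the_uniqueness_for_initial_condition_with_l2_density}, \ref{the_well_posedness_of_dk_equation_in_general_case} assert uniqueness among \emph{all} solutions in the sense of Definition~\ref{def_definition_of_solution_to_dk_equation}, and their substance is precisely the superposition principle you flag as a ``secondary difficulty''. The paper's proofs are neither Trevisan-type nor by particle approximation. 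For atomic $\mu_0\in\cN_n$, the key idea is an It\^o formula (Proposition~\ref{pro_ito_formula}) for the polynomial functional $F_{n+1}(\mu)=\int\prod_{i<j}|z^i-z^j|^2\,\mu^{\otimes(n+1)}(dz)$: a careful cancellation of second-order terms, exploiting Lipschitz continuity of $G$, yields $\e F_{n+1}(\mu_t)\le C\int_0^t\e F_{n+1}(\mu_s)\,ds$, so any solution stays atomic and its atoms can then be identified pathwise with SDE trajectories. For $L_2$-density initial data, the paper freezes $\mu_t$ in the coefficients to obtain a \emph{linear} conservative SPDE and proves uniqueness for the linear problem by Kurtz--Xiong heat-kernel energy estimates. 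For general $\mu_0\in\cP_2(\R^d)$ (under stronger smoothness of the coefficients), the paper composes any solution with the inverse of the stochastic flow generated by the noise term alone, transforming the SPDE into a random continuity equation with no martingale part, whose uniqueness follows by deterministic duality.
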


The proof of this result relies on establishing a superposition principle for the stochastic mean-field equation~\eqref{eq:intro-SPDE}, i.e. the proof that each solution to \eqref{eq:intro-SPDE} is given as a superposition of solutions to the SDE with interaction
\begin{equation} 
  \label{equ_eq:intro-sde_with_interaction}
  dX(u,t)=V(X(u,t),\mu_{t})\,dt+\int_{\Theta}G(X(u,t),\mu_{t},\theta)W(d\theta,dt),
\end{equation}
where $\mu_{t}=\mu_{0}\circ X(\cdot,t)^{-1}$. While the superposition principle for deterministic PDE has been well-established in a series of ground-breaking works~\cite{Ambrosio:2004,Ambrosio_Grippa:2008,DiPerna:1989,F08,Trevisan:2016}, even in infinite dimensions \cite{Re2020}, the few existing results for the case of SPDEs \cite{Flandoli:2009,CG19} rely on restrictive assumptions on the regularity of the coefficients or initial data, which are not satisfied in the present case. Therefore, a new argument for the case of measure-valued, discrete initial data corresponding to \eqref{eq:discrete_SPDE} and Lipschitz continuous coefficients is developed in the present work, as well as a new proof for measure-valued initial data, relaxing the regularity assumptions on the coefficients from \cite{CG19}.

We then prove the uniqueness of solutions to this Lagrangian system, which by the superposition principle implies the uniqueness of solutions to \eqref{eq:intro-SPDE}. Based on the superposition principle, we next establish the convergence of the empirical measures $\mu^{M,\alpha}$ to the solution $\mu^{\alpha}$ of the stochastic mean-field equation~\eqref{eq:intro-SPDE} with optimal rate $M^{- \frac{1}{ 2 }}$.

\begin{theorem}[See Theorem~\ref{the_continuous_dependents_of_solutions_to_sde_with_interaction}] 
  \label{the_rate_of_convergence_of_empirical_measure}
  Let $\mu^{M,\alpha}$ be the superposition empirical measure associated to the SDE~\eqref{eq:intro_SME} started from independent samples of $\mu_0$. Then, for fixed $\alpha>0$ and as $M\to\infty$, we have the estimate
\begin{equation}
\mathbb{E}\sup_{t \in [0,T]}\mathcal{W}_2^2(\mu_t^{M,\alpha},\mu_t^{\alpha}) \lesssim  \mathbb{E} \mathcal{W}_2^2 (\mu_0^{M},\mu_0)\lesssim  M^{-1} \, ,
\label{eq:lln1}
\end{equation}
where $\mu^\alpha$ is a superposition solution of the stochastic mean-field equation~\eqref{eq:intro-SPDE} with $\sigma =\frac{\alpha}{P}$ and initial datum $\mu_0$, $\W_p$ denotes the $p$-Wasserstein distance, and the implicit constants are independent of $\alpha$.
\end{theorem}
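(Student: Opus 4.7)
The plan is to realise both $\mu^{M,\alpha}$ and $\mu^\alpha$ on a common probability space as superposition measures associated to flows of the SDE with interaction~\eqref{equ_eq:intro-sde_with_interaction}, driven by the \emph{same} cylindrical Wiener process $W$. By the superposition principle established earlier in the paper, there exist jointly measurable flows $X^\alpha(u,t)$ and $X^{M,\alpha}(u,t)$ satisfying~\eqref{equ_eq:intro-sde_with_interaction} with reference measures $\mu_0$ and $\mu_0^M$ respectively, so that $\mu_t^\alpha=\mu_0\circ X^\alpha(\cdot,t)^{-1}$ and $\mu_t^{M,\alpha}=\mu_0^M\circ X^{M,\alpha}(\cdot,t)^{-1}$.

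Next, I would introduce a synchronous coupling via optimal transport at time zero. Let $\pi_0\in\Pi(\mu_0,\mu_0^M)$ be a (random, $\omega$-measurably selected) optimal coupling realising $\W_2^2(\mu_0,\mu_0^M)$. Since $(X^\alpha(\cdot,t),X^{M,\alpha}(\cdot,t))_\#\pi_0$ is a coupling of $\mu_t^\alpha$ and $\mu_t^{M,\alpha}$, one obtains
\begin{equation*}
\W_2^2(\mu_t^\alpha,\mu_t^{M,\alpha})\leq D(t):=\int |X^\alpha(u,t)-X^{M,\alpha}(v,t)|^2\,\pi_0(du,dv),\qquad D(0)=\W_2^2(\mu_0,\mu_0^M).
\end{equation*}

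The main estimate then comes from applying \Ito's formula to $|X^\alpha(u,t)-X^{M,\alpha}(v,t)|^2$, integrating against $\pi_0$, and using the $\W_2$-Lipschitz continuity of $V$ and $G$ together with the key bound $\W_2^2(\mu_t^\alpha,\mu_t^{M,\alpha})\le D(t)$ to control both drift and quadratic-variation contributions by $C\,D(t)$. A stochastic Fubini reduces the martingale part to a real \Ito integral whose quadratic variation, by Cauchy--Schwarz in $\pi_0\otimes\pi_0$, is dominated by $\int_0^tD(s)^2\,ds$. A BDG estimate, together with absorbing the prefactor $\tfrac12\mathbb{E}\sup_tD(t)$ on the left-hand side and closing with Gr\"{o}nwall's inequality, then yields
\begin{equation*}
\mathbb{E}\sup_{t\in[0,T]}\W_2^2(\mu_t^\alpha,\mu_t^{M,\alpha})\leq\mathbb{E}\sup_{t\in[0,T]}D(t)\lesssim \mathbb{E} D(0)=\mathbb{E}\W_2^2(\mu_0,\mu_0^M),
\end{equation*}
which is the first inequality in~\eqref{eq:lln1}. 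The second bound $\mathbb{E}\W_2^2(\mu_0^M,\mu_0)\lesssim M^{-1}$ is then the classical empirical-measure rate for i.i.d.\ samples, under the moment and dimensional assumptions imposed on $\mu_0$.

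The hard part will be making this coupling rigorous: the plan $\pi_0$ is random through $\mu_0^M$, and the argument requires an $\omega$-jointly measurable selection of flows $u\mapsto X^\alpha(u,t,\omega)$ and $v\mapsto X^{M,\alpha}(v,t,\omega)$, together with a measurable plan $\pi_0(\omega)$, so that the pushforward really is a coupling of the random measures $\mu_t^\alpha$ and $\mu_t^{M,\alpha}$. This is precisely what is furnished by the superposition principle developed earlier. A secondary subtlety is that only $\W_2$-Lipschitz continuity of $G$ is available (not a pointwise modulus in $(x,\mu)$), so the BDG bound must close against $D(t)$ itself via the Cauchy--Schwarz step above, rather than a naive particle-by-particle estimate.
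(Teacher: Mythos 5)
Your proposal is correct and follows the same strategy as the paper's proof of Theorem~\ref{the_continuous_dependents_of_solutions_to_sde_with_interaction}: a synchronous coupling of the two interaction flows through the common cylindrical Wiener process, a coupling of the initial measures pushed forward by the flows to dominate $\W_2^2(\mu_t^{\alpha},\mu_t^{M,\alpha})$, and a Lipschitz--BDG--Gr\"onwall closure. The one organisational difference is worth noting because it dissolves exactly the two points you single out as ``the hard part''. The paper first proves, for each \emph{fixed} pair $(u,v)$, the bound $\e \sup_{s\in[0,t]}|X_1(u,s)-X_2(v,s)|^2\le C|u-v|^2+C\int_0^t\e\W_2^2(\bar\mu^1_s,\bar\mu^2_s)\,ds$ by applying BDG at the level of the single particle, and only \emph{then} integrates against an \emph{arbitrary} deterministic coupling $\chi$ of the initial measures, taking the infimum over $\chi$ at the very end. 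This (i) removes any need for an $\omega$-measurable selection of an optimal plan $\pi_0(\omega)$ --- the randomness of $\mu_0^M$ is handled simply by conditioning on the initial samples, which are independent of $W$, and using that the constant in~\eqref{equ_estimate_for_bar_mu1_bar_mu2} is uniform in the initial data; and (ii) avoids your ``absorb $\tfrac12\e\sup_t D(t)$ on the left-hand side'' step, which as written requires an a priori finiteness or localisation argument (available via Corollary~\ref{cor_moment_preserving_property}, but an extra step nonetheless). One caveat on the last assertion: $\e\W_2^2(\mu_0^M,\mu_0)\lesssim M^{-1}$ is \emph{not} the generic Fournier--Guillin rate for empirical measures in $\W_2$, which degrades to $M^{-2/d}$ for $d>4$; the $M^{-1}$ rate needs either low dimension or passage to a weaker (e.g.\ negative Sobolev) metric, so the ``dimensional assumptions'' you invoke should be made explicit.
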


We next analyze the law of large numbers behavior of solutions $\mu^{\alpha}$  to ~\eqref{eq:intro-SPDE} in the limit of small learning rate $\alpha \to 0$, proving an optimal rate of convergence to the deterministic transport equation
\begin{equation}
  d\mu^0_t=-\nabla \cdot \left( V(\cdot,\mu^0_t)\mu^0_t \right)dt \, .
  \label{eq:introtransport}
 \end{equation}
 
 \begin{theorem}[See Theorem~\ref{the_lln}]\label{the_lln_introduction}
If $\mu^\alpha$  is a superposition solution of~\eqref{eq:intro-SPDE} with initial datum $\mu_0$, then in the limit $\alpha \to 0$, we have the estimate
\begin{equation}
\mathbb{E}\sup_{t \in [0,T]}\mathcal{W}_2^2(\mu_t^{\alpha},\mu_t^{0}) \lesssim \alpha,
\label{eq:lln2}
\end{equation}
where $\mu^0$ is the solution of the transport equation~\eqref{eq:introtransport} with initial datum $\mu_0$ and the implicit constant is independent of $\alpha>0$.
\end{theorem}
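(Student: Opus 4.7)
The plan is to exploit the superposition principle established earlier in the paper to realise both $\mu^\alpha$ and $\mu^0$ as pushforwards of flows driven by the same initial distribution $\mu_0$, and then control the Wasserstein distance through the resulting synchronous coupling. Concretely, write $\mu_t^\alpha = \mu_0 \circ X^\alpha(\cdot,t)^{-1}$, where $X^\alpha$ solves the SDE with interaction
\begin{equation*}
dX^\alpha(u,t) = V(X^\alpha(u,t),\mu_t^\alpha)\,dt + \sigma^{1/2}\int_\Theta G(X^\alpha(u,t),\mu_t^\alpha,\theta)\,W(d\theta,dt), \qquad X^\alpha(u,0)=u,
\end{equation*}
and analogously $\mu_t^0 = \mu_0 \circ X^0(\cdot,t)^{-1}$, with $X^0$ the characteristic flow $\partial_t X^0(u,t) = V(X^0(u,t),\mu_t^0)$ starting at $X^0(u,0)=u$. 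For each $t$, $(X^\alpha(\cdot,t),X^0(\cdot,t))_\#\mu_0$ is an admissible transport plan, giving the pointwise bound $\W_2^2(\mu_t^\alpha,\mu_t^0) \leq \int |X^\alpha(u,t)-X^0(u,t)|^2\,\mu_0(du)$.

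Next I apply \Ito's formula to $|X^\alpha(u,t)-X^0(u,t)|^2$. The drift difference $2(X^\alpha-X^0)\cdot[V(X^\alpha,\mu^\alpha)-V(X^0,\mu^0)]$ is, by Lipschitz continuity of $V$ in its spatial and $\W_2$-arguments, bounded by $C\bigl(|X^\alpha-X^0|^2 + \W_2^2(\mu_t^\alpha,\mu_t^0)\bigr)$, while the \Ito correction equals $\sigma\,\mathbb{E}_\m|G(X^\alpha,\mu^\alpha,\theta)|^2 \leq C\sigma$ by the standing boundedness hypothesis on $G$. Setting $e(u,t):=\sup_{s\leq t}|X^\alpha(u,s)-X^0(u,s)|^2$ and applying the Burkholder--Davis--Gundy inequality to the stochastic integral, I obtain, uniformly in $u$,
\begin{equation*}
\E{e(u,t)} \leq C\int_0^t \Bigl( \E{e(u,s)} + \E{\W_2^2(\mu_s^\alpha,\mu_s^0)} \Bigr)ds + C\sigma t.
\end{equation*}
Integrating against $\mu_0(du)$, using Fubini, and bounding the Wasserstein term by the coupling inequality yields a closed Gr\"onwall inequality for $h(t):=\int\E{e(u,t)}\,\mu_0(du)$, giving $h(T)\leq C\sigma T e^{CT}$. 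Since $\sigma=\alpha/P$ and, by monotonicity of $e(u,\cdot)$ and Fubini, $\mathbb{E}\sup_{t\in[0,T]}\W_2^2(\mu_t^\alpha,\mu_t^0)\leq h(T)$, the estimate \eqref{eq:lln2} follows, with a constant independent of $\alpha$.

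The principal technical point I anticipate is closing the Gr\"onwall loop at the optimal rate: to control $\W_2^2(\mu_s^\alpha,\mu_s^0)$ inside the drift estimate one must use the $L^2(\mu_0)$-norm of the \emph{running supremum} of $|X^\alpha-X^0|$ rather than the pointwise norm, which forces the supremum to be brought inside the $u$-integral \emph{before} BDG is applied and not after. Once this is arranged, the linear (rather than square-root) appearance of $\sigma$ in the \Ito correction transfers directly into the linear rate $\alpha$, and the argument otherwise relies only on the Lipschitz and boundedness hypotheses on $V$ and $G$ already in force.
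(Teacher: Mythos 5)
Your proposal is correct and follows essentially the same route as the paper's proof of Theorem~\ref{the_lln}: realise both measures as pushforwards of the synchronously coupled flows $X^\alpha$ and $X^0$ via the superposition principle, derive a per-particle estimate for $\e\sup_{s\le t}|X^\alpha(u,s)-X^0(u,s)|^2$ using Lipschitz continuity and Burkholder--Davis--Gundy, integrate against the initial coupling, and close with two applications of Gr\"onwall's lemma; your remark about taking the running supremum before integrating in $u$ is exactly how the paper organises the two Gr\"onwall steps. The only substantive deviation is that you bound the \Ito{} correction by $C\sigma$ using a boundedness hypothesis on $G$, whereas Theorem~\ref{the_lln} assumes only linear growth (Assumption~\ref{ass_lipshitz_continuity}) and controls $\eps\,\e\int_0^t\||G(s,X_\eps(u,s),\mu_s^\eps,\cdot)|\|_\m^2\,ds$ via the moment-preservation estimate of Theorem~\ref{the_well_posedness_of_sde_with_interaction}; this is why the constant in \eqref{equ_estimate_from_lln} carries the factor $1+\langle\phi_2,\mu_0^\eps\rangle$, which for fixed $\mu_0\in\cP_2(\R^d)$ is harmless and yields the same rate $O(\alpha)$.
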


Combining Theorems~\ref{the_rate_of_convergence_of_empirical_measure} and~\ref{the_lln_introduction}, we conclude
\begin{equation}
\mathbb{E}\sup_{t \in [0,T]}\mathcal{W}_2^2(\mu_t^{M,\alpha},\mu_t^{0}) \lesssim \alpha + M^{-1}, \, 
\label{eq:lln3}
\end{equation}
which implies that the limits $M \to \infty$ and $\alpha\to 0$ of $\mu^{M,\alpha}$ commute. In addition, using the results of the present work the intermediate limits can be characterized. Indeed, since $\mu^{M,\alpha}$ is itself shown to be a superposition solution to the stochastic mean-field equation~\eqref{eq:intro-SPDE},~\eqref{eq:lln2} implies that taking the limit $\alpha \to 0$ for $M>0$ fixed yields a solution $\mu^{M}$ to the transport equation~\eqref{eq:introtransport} with initial datum $\mu_0^M$. Subsequently, considering the limit $M \to \infty$ and applying~\eqref{eq:lln3} implies that $\mu^{M}$ converges to a solution to the transport equation~\eqref{eq:introtransport} with initial datum $\mu_0$. Taking the limits in the opposite order follows in an analogous manner, see Figure~\ref{fig:commute}. 

\begin{figure}
    \centering
   \begin{tikzpicture}
   \draw[->] (0,0) node[left]{$\mu^{M,\alpha}$} -- node[above] {$M \to \infty$} (2.5,0) node[right] {SMFE $\eqref{eq:intro-SPDE}_{\mu_0}$};
   \draw[->] (3.3,-.2) -- node[right] {$\alpha \to 0$} (3.3,-2) node[below] {\hspace{5mm} TE $\eqref{eq:introtransport}_{\mu_0}$};   
    \draw[->] (-.5,-0.2) -- node[left] {$\alpha \to 0$}(-0.5,-2) node[below] {\hspace{5mm}TE $\eqref{eq:introtransport}_{\mu_0^M}$}; 
     \draw[->] (0.7,-2.2) -- node[below]{$M \to \infty$} (2.5,-2.2); 
   \end{tikzpicture}
    \caption{The limits of small learning rate and large parameter size for the empirical measure $\mu^{M,\alpha}$. Here, $\eqref{eq:introtransport}_{\nu}$ (resp. $\eqref{eq:intro-SPDE}_{\nu}$) denotes a solution of~\eqref{eq:introtransport} (resp.~\eqref{eq:intro-SPDE}) with initial datum $\nu$.}
    \label{fig:commute}
\end{figure}

Having established the law of large numbers behavior of $\mu^{M,\alpha}$, we next turn to its asymptotic fluctuations and prove a quantified central limit theorem for the stochastic mean-field equation~\eqref{eq:intro-SPDE}, again providing optimal bounds on the rate of convergence. As discussed in~\cite{Rotskoff:2022}, there are two sources of fluctuations, one due to the sampling from the initial measure $\mu_0$ and one due to the dynamical fluctuations in SGD. We define the corresponding fluctuation field  $$\eta^{M,\alpha}:=\min\{\alpha^{-1/2},M^{1/2}\}(\mu^{M,\alpha}-\mu^0).$$
As is done in~\cite[Section~4.4]{Rotskoff:2022}, \ we will\ focus on \ the\ case \ where \ the dynamical \ fluctuations\ dominate, \ i.e. $\alpha$ decreases slowly than $M^{-1}$. \  Precisely, assuming that $M=M(\alpha)$ with $\lim_{\alpha \to 0} \alpha^{-1} M^{-1}(\alpha)<\infty$, we prove that the fluctuation field $\eta^{\alpha}=\alpha^{-1/2}(\mu^{M(\alpha),\alpha}-\mu^0)$ for $\alpha \to 0$ converges to a solution of the linear SPDE
  \begin{align}
    \label{eq:intro-linSPDE2}
    d\eta_t&= -\nabla \cdot \left( V(\cdot ,\mu_t^{0})\eta_t+ \langle \tilde{V}(x ,\cdot ) , \eta_t \rangle_0 \mu_t^{0}(dx) \right)dt\\
    &-P^{-\frac12} \int_{ \Theta }   \nabla \cdot \left( G(\cdot ,\mu_t^{0},\theta)\mu_t^{0} \right)W(d \theta,dt),
\end{align}
and prove an optimal rate of convergence. This generalizes the result obtained in~\cite[Section~4.4]{Rotskoff:2022}, and proves that~\eqref{eq:intro-SPDE} correctly reproduces the central limit fluctuations of the stochastic gradient descent. The proof of this optimal rate of convergence relies on a careful estimation of the error terms, including a new stopping time argument. 

\begin{theorem}[See Theorem~\ref{the_clt}]\label{the_clt_intro}
Given sufficiently nice initial data $\mu_0 \in \mathcal{P}_2(\mathbb{R}^d)$ and sufficiently nice coefficients $V,A,G$, consider the fluctuation field $\eta^\alpha$ as defined earlier. Then, $\eta^{\alpha}$ converges to a weak solution $\eta$ of~\eqref{eq:intro-linSPDE2} with initial datum $\eta_0=a\tilde\eta_0$,
where $\tilde\eta_0$ is a centred Gaussian random variable with covariance
\[
\mathbb{E} \langle \varphi,\tilde\eta_0 \rangle \langle \psi,\tilde\eta_0\rangle = \int \left(\varphi(x)- \langle\varphi,\mu_0\rangle\right) \left(\psi(x)- \langle\psi, \mu_0\rangle\right) \, d \mu_0(x) \, ,
\]
for any smooth $\varphi,\psi$ and $a=\lim_{\alpha \to 0} \alpha^{-\frac 12} M^{-\frac 12}(\alpha)$.
Furthermore,  $\eta^{\alpha}$ satisfies
\begin{equation}
    \W_2^2(\law(\eta^{\alpha}),\law(\eta) )\lesssim \alpha \, ,
\end{equation}
where the above $\W_2$ distance is defined with respect to an appropriate negative Sobolev norm.
\end{theorem}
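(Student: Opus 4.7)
My approach is to derive an SPDE satisfied by $\eta^\alpha$, linearise it about $\mu^0$, and then quantify the convergence to the linear SPDE~\eqref{eq:intro-linSPDE2} by coupling both to the same cylindrical Wiener process $W$ and running a Gronwall estimate in a sufficiently weak negative Sobolev space $H^{-s}$.

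First, since $\mu^{M(\alpha),\alpha}$ is itself a superposition solution of~\eqref{eq:intro-SPDE}, subtracting~\eqref{eq:introtransport} and dividing by $\alpha^{1/2}$ yields an SPDE for $\eta^\alpha$. A first-order expansion of $\mu\mapsto V(\cdot,\mu)\mu$ around $\mu^0$, with linear functional derivative $\tilde V$, rewrites the drift as
\[
-\nabla\cdot\bigl(V(\cdot,\mu^0)\eta^\alpha + \langle \tilde V(x,\cdot),\eta^\alpha\rangle\mu^0(dx)\bigr) + \alpha^{-1/2} R^\alpha_1,
\]
where $R^\alpha_1$ is quadratic in $\mu^{M,\alpha}-\mu^0$. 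The \Ito correction $\tfrac{\sigma}{2}D^2:(A\mu^{M,\alpha})$, once rescaled by $\alpha^{-1/2}$, contributes a term of size $\alpha^{1/2}$, and the martingale part becomes $-P^{-1/2}\nabla\cdot\int G(\cdot,\mu^{M,\alpha},\theta)\mu^{M,\alpha} W(d\theta,dt)$, which differs from its $\mu^0$-counterpart in~\eqref{eq:intro-linSPDE2} by an error of order $\alpha^{1/2}$ in $L^2(\omega)$. The key a priori input here is the combination of Theorems~\ref{the_rate_of_convergence_of_empirical_measure} and~\ref{the_lln_introduction} under the assumption $\alpha^{-1}M^{-1}(\alpha)$ bounded, which gives $\E{\W_2^2(\mu^{M,\alpha},\mu^0)}\lesssim \alpha$.

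Second, I would couple by driving~\eqref{eq:intro-linSPDE2} by the same $W$ and choosing $\eta_0 = a\tilde\eta_0$ built directly from the i.i.d.\ samples used to initialise $\mu_0^{M(\alpha)}$; a quantitative finite-dimensional CLT in $H^{-s}$ for $s$ large enough gives $\E{\|\eta^\alpha_0-\eta_0\|_{H^{-s}}^2}\lesssim \alpha$. Setting $\zeta^\alpha=\eta^\alpha-\eta$, applying \Itos formula to $\|\zeta^\alpha_t\|_{H^{-s}}^2$ and using the local Lipschitz bounds on the linearised coefficients together with the remainder estimates above yields
\[
\E{\sup_{r\leq t}\|\zeta^\alpha_r\|_{H^{-s}}^2} \leq C\int_0^t \E{\sup_{r\leq s}\|\zeta^\alpha_r\|_{H^{-s}}^2}\,ds + C\alpha.
\]
Gronwall then gives $\W_2^2(\law(\eta^\alpha),\law(\eta)) \leq \E{\sup_{t\leq T}\|\zeta^\alpha_t\|_{H^{-s}}^2}\lesssim \alpha$, delivering both the convergence in law and the optimal rate.

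The main obstacle is that $R^\alpha_1$, being quadratic in $\mu^{M,\alpha}-\mu^0$, only admits a pathwise bound controlled by powers of $\|\eta^\alpha\|_{H^{-s}}$, which is not a priori bounded uniformly in $\omega$. I would therefore introduce a stopping time $\tau_R=\inf\{t: \|\eta^\alpha_t\|_{H^{-s}}>R\}$, run the Gronwall argument up to $\tau_R\wedge T$, and close it by proving moment bounds on the fluctuation field (from the SPDE structure and the rate of Theorem~\ref{the_rate_of_convergence_of_empirical_measure}) strong enough to make $\mathbb{P}\{\tau_R<T\}\to 0$ as $R\to\infty$ uniformly in $\alpha$. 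Balancing the choice of $R=R(\alpha)$, the negative Sobolev index $s$, and the assumed regularity of $V,A,G$ so as to close the Gronwall loop at the sharp rate $\alpha$ (rather than $\alpha\log(1/\alpha)$ or $\alpha^{1-\epsilon}$) is the heart of the argument and the motivation for the new stopping-time technique advertised in the introduction.
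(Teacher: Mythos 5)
Your overall architecture coincides with the paper's proof of Theorem~\ref{the_clt}: write the (exact, since $V$ is assumed affine in $\mu$ by Assumption~\ref{ass_compact_support}, so no genuine Taylor remainder is needed) equation for $\eta^{\alpha}$, couple it to~\eqref{eq:intro-linSPDE2} through the same cylindrical Wiener process and an initial-data CLT, apply \Itos formula to $\|\eta^{\alpha}_t-\eta^0_t\|_{-J}^2$, control the difference of the noise coefficients via the superposition/Lagrangian coupling and the LLN rate $\mathbb{E}\,\W_2^2(\mu^{M,\alpha}_t,\mu^0_t)\lesssim\alpha$, and close with Gronwall plus a stopping time for the bilinear term. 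So far, so good.

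The genuine gap is in where you place the stopping time and how you propose to close it. The term that obstructs Gronwall is the bilinear one, $\sqrt{\alpha}\,\hat U(\eta^0_s,\eta^{\alpha}_s)$ (in the paper's notation), which after Young's inequality produces either $\sqrt{\alpha}\,\|\eta^0_s\|_{-J}\|\zeta^{\alpha}_s\|_{-J}^2$ or, with your decomposition, a term of order $\alpha\,\|\eta^{\alpha}_s\|_{-J+1}^4$. Your stopping time $\tau_R=\inf\{t:\|\eta^{\alpha}_t\|_{-J}>R\}$ forces you to choose $R$ so that simultaneously (i) $\alpha R^4=O(\alpha)$ inside the Gronwall loop, i.e.\ $R=O(1)$, and (ii) the contribution of the bad event, $\mathbb{E}\big[\sup_t\|\zeta^{\alpha}_t\|_{-J}^2\,\I_{\{\tau_R<T\}}\big]\lesssim \alpha^{-1}\,\p{\tau_R<T}\lesssim \alpha^{-1}R^{-2}$, is $O(\alpha)$, i.e.\ $R\gtrsim\alpha^{-1}$. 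These are incompatible, and the only escape is a bound $\mathbb{E}\sup_t\|\eta^{\alpha}_t\|_{-J}^4\lesssim 1$ uniformly in $\alpha$, which is a nontrivial fourth-moment estimate on the fluctuation field that you do not supply and that the paper does not prove either (Lemma~\ref{lem_boundedness_of_the_norm_of_eta_eps} only gives second moments). The paper's stopping time is instead placed on the \emph{limit} process, $\tau^{\alpha}=\inf\{t:\|\eta^0_t\|_{-J}\geq\alpha^{-1/2}\}$: this makes $\sqrt{\alpha}\,\|\eta^0_{s\wedge\tau^{\alpha}}\|_{-J}\leq 1$ inside the Gronwall loop, the fourth moment $\mathbb{E}\sup_t\|\eta^0_t\|_{-J+1}^4$ is finite by Gaussian hypercontractivity since $\eta^0$ is Gaussian, the tail probability $\p{\sup_t\|\eta^0_t\|_{-J}\geq\alpha^{-1/2}}\lesssim\alpha^2$ follows from Chebyshev with that fourth moment, and on the bad event one uses the trivial deterministic bound $\|\eta^{\alpha}_t\|_{-J}\leq 2\alpha^{-1/2}$ (difference of two probability measures). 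This combination delivers the sharp rate without any higher moment bounds on $\eta^{\alpha}$; as written, your version of the stopping-time step would not close.
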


\begin{remark} For the case that the fluctuations arising from the sampling from the initial measure dominate, that is, if $M=M(\alpha)$ with $\lim_{\alpha\to 0}\alpha^{-1}M^{-1}(\alpha)=+\infty$, then the same arguments as in the proof of Theorem \ref{the_clt_intro} imply that $\eta^M=M^{1/2}(\mu^{M(\alpha),\alpha}-\mu^0)$ converges to a solution $\eta$ to the linear PDE~\eqref{eq:intro-linSPDE2} with $G=0$ started from the centered Gaussian random variable $\tilde\eta_0$ defined in Theorem~\ref{the_clt_intro} and yield the optimal rate of convergence 
\begin{equation}
    \W_2^2(\law(\eta^{M}),\law(\eta)) \lesssim M^{-1}.
\end{equation}
\end{remark}

We next show that the central limit Theorem \ref{the_clt_intro} in particular implies that the stochastic mean-field equation offers a higher order approximation of the SGD dynamics than the deterministic mean field equation. We define the empirical distribution of SGD by
\begin{equation} 
  \label{equ_empirical_process_for_sgd_intro}
  \nu^{M,\frac{1}{M}}_t=\frac{1}{M}\sum_{i=1}^M\delta_{x_i(\lfloor Mt\rfloor)},\quad t\geq 0,
\end{equation}
where $x(k)=(x_i(k))_{i\in[M]}$, $k\in\N_0$, is defined by~\eqref{equ_SGD} with $\alpha= \frac{1}{M}$ and $P=1$. The central limit theorem obtained in~\cite{Si.Sp2020} gives 
\[
\nu^{M,\frac{1}{M}}=\mu^0+M^{-\frac{1}{2}}\eta+o(M^{- \frac12}),
\]
with $\eta$ as in Theorem~\ref{the_clt_intro}. Moreover, for $\mu^{\frac{1}{M}}$  we have  by Theorem~\ref{the_clt_intro}
\[
  \mu^{\frac{1}{M}}=\mu^0+M^{- \frac{1}{ 2 }}\eta+O(M^{- 1}).
\]
This indicates that the solutions $\mu^{\frac{1}{M}}$ to~\eqref{eq:intro-SPDE} provide a higher order approximation to the SGD dynamics $\nu^{M,\frac{1}{M}}$, in the sense that 
\[
  \nu^{M,\frac{1}{M}}-\mu^{\frac{1}{M}}=o(M^{- \frac12}),
\]
which supersedes the order of approximation by the non-fluctuating limit $d(\nu^{M,\frac{1}{M}},\mu^{0})\approx M^{-1/2}$.

\begin{theorem}(See Theorem~\ref{the_approximation_of_sgd}) 
  \label{the_approximation_intro}
  Let $\mu^{\frac{1}{M}}$ be a superposition solution to the stochastic mean-field equation~\eqref{eq:intro-SPDE} with $\alpha= \frac{1}{ M }$. Let also $\nu^{M,\frac 1M}$, be the empirical process associated to the SGD~\eqref{equ_SGD}, which is defined by~\eqref{equ_empirical_process_for_sgd_intro}. Then,   for every $p \in [1,2)$,
  \[
    \W_p(\law(\mu^{\frac 1M}),\law(\nu^{M,\frac 1M}))=o(M^{- \frac{1}{ 2 }}).
  \]
\end{theorem}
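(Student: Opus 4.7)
The plan is to bound the law-distance between $\mu^{\frac{1}{M}}$ and $\nu^{M,\frac{1}{M}}$ by inserting a common Gaussian surrogate and closing the argument by the triangle inequality. Let $\eta$ denote the limiting Gaussian field from Theorem~\ref{the_clt_intro}, applied in the balanced regime $\alpha=\frac 1M$ so that $a=1$, and set $\zeta^M := \mu^0 + M^{-\frac{1}{2}}\eta$. By the triangle inequality on $\W_p$ of laws,
\begin{equation*}
\W_p(\law(\mu^{\frac{1}{M}}), \law(\nu^{M,\frac{1}{M}})) \leq \W_p(\law(\mu^{\frac{1}{M}}), \law(\zeta^M)) + \W_p(\law(\zeta^M), \law(\nu^{M,\frac{1}{M}})).
\end{equation*}

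For the first summand I would invoke Theorem~\ref{the_clt_intro} directly: it yields $\W_2^2(\law(M^{\frac{1}{2}}(\mu^{\frac{1}{M}} - \mu^0)), \law(\eta)) \lesssim M^{-1}$, and the linear rescaling by $M^{-\frac{1}{2}}$ transforms this into $\W_p(\law(\mu^{\frac{1}{M}}), \law(\zeta^M)) \leq \W_2(\law(\mu^{\frac{1}{M}}), \law(\zeta^M)) \lesssim M^{-1} = o(M^{-\frac{1}{2}})$ for every $p \leq 2$. For the second summand I would rely on the Sirignano--Spiliopoulos CLT~\cite{Si.Sp2020}, which asserts that $M^{\frac{1}{2}}(\nu^{M,\frac{1}{M}} - \mu^0)$ converges in distribution, in the appropriate negative Sobolev space, to the same Gaussian field $\eta$. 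To promote this weak convergence to $\W_p$ convergence for $p\in[1,2)$, it suffices to establish a uniform-in-$M$ second moment bound
\begin{equation*}
\sup_M \mathbb{E}\bigl\| M^{\frac{1}{2}}(\nu^{M,\frac{1}{M}} - \mu^0)\bigr\|_{H^{-s}}^{2} < \infty,
\end{equation*}
which either follows from the tightness estimates underlying the SS CLT or can be derived directly from the discrete SGD recursion via martingale techniques parallel to those in the proof of Theorem~\ref{the_clt_intro}. Weak convergence together with uniform integrability of $\|\cdot\|_{H^{-s}}^p$ (automatic for $p<2$ from a uniform second-moment bound) then gives $\W_p(\law(M^{\frac{1}{2}}(\nu^{M,\frac{1}{M}} - \mu^0)), \law(\eta)) \to 0$, hence $\W_p(\law(\nu^{M,\frac{1}{M}}), \law(\zeta^M)) = o(M^{-\frac{1}{2}})$, and the triangle inequality closes the argument.

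The main obstacle is this upgrading step for the Sirignano--Spiliopoulos limit, since \cite{Si.Sp2020} is used as an external black box providing only convergence in distribution; the second moment estimate must be proved (or extracted) in the same negative Sobolev scale in which Theorem~\ref{the_clt_intro} is formulated. A second, more conceptual point of care is the identification of the Gaussian fields appearing in~\cite{Si.Sp2020} and in Theorem~\ref{the_clt_intro} as the \emph{same} limit~$\eta$, since this identification is precisely what produces the cancellation of the leading $M^{-\frac{1}{2}}$ term. Finally, the restriction $p<2$ in the theorem statement is intrinsic to this strategy: second-moment uniformity yields uniform integrability of $\|\cdot\|_{H^{-s}}^p$ only for $p$ strictly below~$2$, while the endpoint $p=2$ would demand strictly higher order moment control on the discrete SGD fluctuations.
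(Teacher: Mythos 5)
Your strategy is essentially the paper's: the paper also reduces the claim to the identity $\sqrt{M}\,\tilde\W_p(\law(\mu^{M}),\law(\nu^{M}))=\tilde\W_p(\law(\eta^M),\law(\xi^M))$ for the two fluctuation fields $\eta^M=\sqrt M(\mu^M-\mu^0)$, $\xi^M=\sqrt M(\nu^M-\mu^0)$, inserts the common Gaussian limit via the triangle inequality, controls the $\mu^M$-side by the quantified CLT (Theorem~\ref{the_clt}), upgrades the Sirignano--Spiliopoulos weak convergence of $\xi^M$ to $\tilde\W_p$-convergence for $p<2$ using exactly the uniform second-moment bound you postulate (it is \cite[Lemma~4.8]{Si.Sp2020b}) together with Skorohod representation and de la Vall\'ee-Poussin, and identifies the two Gaussian limits through uniqueness of the limiting linear SPDE (\cite[Theorem~6.2]{Si.Sp2020b}). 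Two points you gloss over: first, your claimed rate $\W_p(\law(\mu^{1/M}),\law(\zeta^M))\lesssim M^{-1}$ is too strong, because the precise CLT bound~\eqref{equ__estimate_of_eta_eps_minus_eta_0} carries the initial-data terms $\W_2^2(\mu_0^{M},\mu_0)$ and $\|\eta_0^{M}-\varrho\|_{-J}^2$, which decay only like the Fournier--Guillin rate and the coupling error of the initial fluctuations, respectively -- not like $M^{-1}$; this costs you nothing here since $o(M^{-1/2})$ suffices, but the paper must handle it by choosing $\mu_0^M=\nu_0^M$, conditioning on the initial $\sigma$-field, and invoking \cite{Fournier:2015}. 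Second, $\mu^{1/M}$ is started from the random atomic measure $\nu_0^{M,1/M}$ rather than from $\mu_0$, so the random initial datum of $\eta^M$ must be coupled to the Gaussian initial datum of $\eta$; this is precisely the step your sketch leaves implicit and which requires the conditional-expectation argument in the paper's proof.
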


\subsection{Overview of the literature}
To first order, in the small learning rate limit $\alpha \to0$, stochastic gradient descent converges to deterministic gradient descent. As argued above, this law of large numbers scaling limit does not incorporate information on the fluctuations of stochastic gradient descent. However,  considering higher order approximations, stochastic gradient descent can be shown to converge to solutions to so-called stochastic modified equations
\begin{equation}\label{eq:ML-SDE}
dX_{t}=-\nabla\big(L(X_{t})+\frac 14 \alpha|\nabla L(X_{t})|^{2}\big) dt+(\alpha\Sigma(X_{t}))^{\frac{1}{2}}dW_{t}
\end{equation}
with $\Sigma$ given in terms of the variance of the stochastic sampling of the empirical loss, see \cite{E.Ma.Wu2020,Li.Ta.E2017}. For the validity of this limit for moderately large learning rates see \cite{Li.Ma.Ar2021}. A discussion of \eqref{eq:ML-SDE} with jump noise can be found in \cite{Ng.Si.Me.Ga2019}.

The effect of the randomness inherent to stochastic gradient descent on the implicit bias and on implicit regularization has been analyzed in  \cite{Wu.Ma.E2018,Al.Do.Ti2020,Ke.Mu.No.Sm.Ta2017}. 

Overparameterised limits of shallow networks in the mean-field training regime have received considerable attention in recent years. In \cite{Ro.Va2018,Ch.Ba2018,Si.Sp2020,Ja.Mo.Mo2020,Me.Mo.Ng2018} the convergence of gradient descent to a Wasserstein gradient flow has been shown and analysed. Notably, this limit is different from the ``lazy training'' regime which can be treated in terms of a linearisation around initialization, see \cite{Ja.Ga.Ho2018,du2018gradient}. An instructive comparison of the scaling regimes is given in \cite{Ch.Oy.Ba2020} and of their performance in \cite{Gh.Me.Mi.Mo2020,chizat2020implicit,refinetti2021classifying}.

Linear SPDE have been rigorously identified in the context of central limit fluctuations in stochastic gradient descent in \cite{Si.Sp2020b,Ro.Va2018}. A fluctuating, nonlinear mean-field limit, incorporating the fluctuations of stochastic gradient descent was, informally, suggested in \cite{Ch.Ro.Br.Va2020,Ro.Va2018a,Rotskoff:2022}, taking the form of the conservative SPDE (\ref{equ_mean_field_equation_with_corr_noise}) below. The rigorous derivation of this conservative SPDE as well as a proof of a quantified central limit theorem remained open problems in the literature. These are solved in the present work. 

For an overview of the literature on conservative SPDEs we refer to \cite[Section 1.1]{FG21}. We here concentrate on nonlocal conservative SPDEs. In \cite{DV95} nonlocal, nonlinear stochastic Fokker-Planck equations have been considered, proving the uniqueness of solutions by several methods, e.g. by duality arguments, coupling arguments, and the Krylov--Rozovskii variational framework. Under less restrictive assumptions on the coefficients and solutions, this has been extended in  \cite[25, p. 115]{Kurtz:1999}. The case of measure-valued solutions has been treated in \cite{CG19}. Additionally, motivated by applications to fluid dynamics, signed measure-valued solutions to nonlocal, nonlinear stochastic Fokker-Planck equations have been considered in the literature, see, for example, e.g. \cite{RV14,K10,KS12,AX06} and the references therein. 

The convergence of interacting particle systems to solutions of nonlocal, nonlinear Fokker--Planck equations, and the closely related phenomenon of propagation of chaos has been considered in \cite{CG19}, \cite{Kotelenez:1995}  and \cite[Theorem 2.3]{KX01} and the references therein. SDEs with interaction have been analyzed in ~\cite{Dorogovtsev:2007:en,Dorogovtsev:2007:en,Dorogovtsev:2010,Dorogovtsev:2020,Pilipenko:2006,Belozerova:2020,Wang_Feng:2021}, and their relation to  SPDEs and to McKean--Vlasov SDEs with common noise have been considered in \cite{Kotelenez:1995,Dorogovtsev:1997} and \cite{Dorogovtsev:2002,Wang_Feng:2021,Kurtz:1999,Carmona_Delarue:2016}, respectively.

The SPDE considered in this work bears some similarity with the so-called Dean--Kawasaki equation introduced in \cite{Dea1996,Kaw1994}, and which corresponds to \eqref{eq:discrete_SPDE} for independent particles. In contrast to the Dean--Kawasaki equation, the noise caused by SGD is spatially correlated, which allows the development of a full mathematical treatment introduced in the present work. The more singular case of the Dean--Kawasaki equation has attracted considerable interest in the literature, yielding the construction of (renormalized) solutions \cite{vo.St2009,AndvRe2010,KonvRe2019,Konarovskyi:AP:2017}, negative results on the existence of non-trivial solutions \cite{KonLehVon2019,KonLehvRe2020}, and regularized models \cite{CorShaZim2019,CorShaZim2020}.

Central limit theorems for conservative, local SPDEs have been analyzed in \cite{DFG20}. The case of linear transport noise has been analyzed in \cite{Ga.Lu2022}. For central limit theorems for parabolic SPDEs with multiplicative, semilinear noise we refer to \cite{Hu.Nu.Vi.Zh2020,Ch.Kh.Nu.Pu2022,Hu.Li.Wa2020} and the references therein. Higher order approximations of interacting particle systems by conservative, local SPDEs have been shown in \cite{DFG20}, and for non-interacting particle systems up to arbitrary order in \cite{Co.Fi2021}. The authors are not aware of any previous results on central limit theorems for nonlocal conservative SPDEs.

\subsection{Outline of the paper}
In Section~\ref{sec:mean_field_equation_with_correlated_noise}, well-posedness results for the SDE with interaction~\eqref{equ_eq:intro-sde_with_interaction} is shown assuming Lipschitz continuity of its coefficients, and the existence of a superposition solution to the stochastic mean-field equation~\eqref{eq:intro-SPDE} is established. This allows us to connect the uniqueness with the superposition principle in Corollary~\ref{cor_superposition_principle}. The well-posedness of~\eqref{equ_eq:intro-sde_with_interaction} and the continuous dependence of its solutions with respect to the initial particle distribution is obtained in Section~\ref{sub:sde_with_interaction}. Section~\ref{sub:uniqueness_and_superposition_principle} is devoted to the proof of the uniqueness for the stochastic mean-field equation~\eqref{eq:intro-SPDE}. The limit Theorems~\ref{the_lln} and~\ref{the_clt} are proved in Section~\ref{sec:lln_and_clt_for_the_mean_field_equation}. In Section~\ref{sec:mean_field_limit_and_stochastic_gradient_descent}, the higher order approximation of the SGD dynamics by solutions to the stochastic mean-field equation is obtained.

\subsection{Basic notation}
\label{sub:basic_notation}

Let $d \in \N$ be fixed. For $m \in \N_0:=\N \cup \{0\}$ the space of $m$-times continuously differentiable  functions from an open set $\Gamma \subset \R^d $ to $\R $ is denoted by $\Cf^m(\Gamma )$. The subspace of $\Cf^m(\Gamma )$ of all bounded together with their derivatives (resp. compactly supported) functions is denoted by $\Cf_b^m(\Gamma )$ (resp. $\Cf_c^m(\Gamma)$). We write $\Cf(\Gamma)$, $\Cf_b(\Gamma)$ and $\Cf_c(\Gamma)$ for $\Cf^0(\Gamma)$, $\Cf^0_b(\Gamma)$ and $\Cf^0_c(\Gamma)$, respectively.  Let $\varphi, f_i \in \Cf^m(\Gamma) $, $i \in [n]:=\{1,\dots,n\}$. We set $\partial_i \varphi= \frac{\partial }{\partial x_i}\varphi$ and $\partial_{i,j}^2 \varphi=\frac{\partial^2}{\partial x_ix_j}\varphi$. For $f=(f_i)_{i \in [n]}$, we write $\nabla f$ for the matrix with rows $\left( \partial_j f_i \right)_{j \in [d]}$, $i \in [n]$, and $\nabla \cdot f= \sum_{ i=1 }^{ d } \partial_i f_i$ if $n=d$. We also set $D^2 \varphi=(\partial_{i,j} \varphi)_{i,j \in [d]}$. The supremum norm in $\Cf^{m}_b(\Gamma)$ will be denoted by $\|\cdot \|_{\Cf^m_b}$, that is, 
\[
  \|f\|_{\Cf^m_b}= \sum_{ |\alpha|\leq m }  \sup\limits_{ x \in \Gamma  }\left|D^{\alpha}f(x)\right|,
\]
where $D^{\alpha}=\frac{\partial ^{|\alpha|}}{\partial x_1^{\alpha_1}\dots\partial x_d^{\alpha_d}}$ and $|\alpha|=\alpha_1+\dots+\alpha_d$ for $\alpha=(\alpha_1,\dots,\alpha_d) \in \N_0^d$.

For vectors $a,b \in \R^d $ and matrices $A,B \in \R^{d \times d}$ we will use the notation $a \cdot b=\sum_{ i=1 }^{ d } a_ib_i$, $|a|=\sqrt{ a \cdot a }$, $a \otimes b=(a_ib_j)_{i,j \in [d]}$, $A:B=\sum_{ i,j=1 }^{ d } a_{i,j}b_{i,j}$, $A \cdot b=\left( \sum_{ j=1 }^{ d } a_{i,j}b_j \right)_{i \in [d]}$ and $|A|=\sqrt{ A:A }$. In particular, $(\nabla f)\cdot g=\left( \sum_{ j=1 }^{ d } (\partial_j f_i)g_j \right)_{i \in [n]}$, where $f=(f_i)_{i \in [n]}$ and $g=(g_i)_{i \in [d]}$.

For $p\geq 0$ we also introduce the subset $\Cf^2_p(\R^d)$ of all functions $f$ from $\Cf^2(\R^d )$ such that $|f(x)|+(1+|x|) |\nabla f(x)|+ (1+|x|^2) |D^2 f(x)|\leq C(1+|x|^p)$, $x \in \R^d $, for some $C>0$.

For every $m \in \N_0$ and $\delta \in (0,1)$ denote by $\Cf^{m,\delta}(\R^d )$ the subset of all functions from $\Cf^m(\R^d )$ whose $m$-th derivatives are locally $\delta$-H\"older continuous, that is, a function $f \in \Cf^m(\R^d)$ belongs to $\Cf^{m+\delta}(\R^d )$ if for every $R>0$ there exists a constant $C$ such that 
\[
  |D^{\alpha}f(x)-D^{\alpha}f(y)|\leq C|x-y|^{\delta} 
\]
for all $x,y \in \R^d $ with $|x-y|\leq R$ and $\alpha \in \N_0^d$ with $|\alpha|=m$. We equip the space $\Cf^{m,\delta}(\R^d )$ with the Fr\'echet topology generated by the following seminorms
\[
  \|f\|_{m+\delta,K}= \sup\limits_{ x \in K  }\frac{ |f(x)| }{ 1+|x| }+ \sum_{ 1\leq |\alpha|\leq m }   \sup\limits_{ x \in K  }|D^{\alpha}f(x)|+ \sum_{ |\alpha|=m } \sup\limits_{ \substack{x,y \in K \\ x\not= y} }\frac{ |D^{\alpha}f(x)-D^{\alpha}f(y)| }{ |x-y|^{\delta} }
\]
for all compact sets $K \subset \R^d $. Set $\Cf^{m,\delta}_{lb}=\big\{ f \in \Cf^{m,\delta}:\  \|f\|_{m+\delta}:=\|f\|_{m+\delta,\R^d }<\infty\big\}$.

We also denote by $\tilde{\Cf}^{m,\delta}$ the set of all functions $f \in \Cf^m(\R^d \times \R^d )$ whose mixed $m$-th derivatives are locally $\delta$-H\"older continuous, that is, for every  $R>0$ there exists a constant $C>0$ such that
\[
|D^{\alpha}_xD^{\alpha}_yf(x,y)-D^{\alpha}_{x'}D^{\alpha}_yf(x',y)-D^{\alpha}_xD^{\alpha}_{y'}f(x,y')+D^{\alpha}_{x'}D^{\alpha}_{y'}f(x',y')|\leq C|x-x'|^{\delta}|y-y'|^{\delta}
\]
for all $x,y \in \R^d $ with $|x-y|<R$ and $\alpha \in \N_0^d$ with $|\alpha|=m$.  Similarly to $\Cf^{m,\delta}(\R^d )$, the space $\tilde\Cf^{m,\delta}(\R^d )$ will be equipped with the Fr\'echet topology generated by the seminorms 
\begin{align}
  \|f\|_{m+\delta,K}^{\sim}&= \sup\limits_{ x,y \in K  }\frac{ |f(x,y)| }{ (1+|x|)(1+|y|) }+ \sum_{ 1\leq |\alpha|\leq m }   \sup\limits_{ x,y \in K  }|D^{\alpha}_xD^{\alpha}_yf(x,y)|\\
  &+ \sum_{ |\alpha|=m } \sup\limits_{ \substack{x,x',y,y' \in K \\ x\not= x',y\not= y'} }\frac{ |D^{\alpha}_xD^{\alpha}_yf(x,y)-D^{\alpha}_{x'}D^{\alpha}_yf(x',y)-D^{\alpha}_xD^{\alpha}_{y'}f(x,y')+D^{\alpha}_{x'}D^{\alpha}_{y'}f(x',y')| }{ |x-x'|^{\delta}|y-y'|^{\delta} }.
\end{align}
for all compact sets $K \subset \R^d $.  Set $\tilde\Cf^{m,\delta}_{lb}(\R^d )=\left\{ f \in \tilde{\Cf}^{m,\delta}(\R^d ):\ \|f\|_{m+\delta}^{\sim}:=\|f\|_{m+\delta,\R^d }^{\sim}< \infty \right\}$.

Let $L_2(\R^d )$ be the Hilbert space of all 2-integrable functions on $\R^d $ with respect to the Lebesgue measure with the usual $L_2$-norm $\|\cdot \|_{L_2}$ and inner product $\langle \cdot ,\cdot  \rangle_{L_2}$. 

For $J \in \N_0$ and an open domain $\Gamma \subset \R^d $ we denote the complete extension of the space $\Cf_c^\infty(\Gamma )$  with respect to the norm defined by
\[
  \|\varphi\|_{J,\Gamma}^2=\sum_{ |\alpha|\leq J }   \int_{\Gamma}   |D^\alpha \varphi(x)|^2 dx 
\]
by $H^{J}(\Gamma)$. It is well-known that $H^J(\Gamma)$ is a separable Hilbert space with the inner product
\[
  \langle \varphi_1 , \varphi_2 \rangle_{J,\Gamma}=\sum_{ |\alpha|\leq J }   \int_{ \Gamma }  D^{\alpha}\varphi_1(x)D^{\alpha}\varphi_2(x)dx.
\]
The dual space to $H^J(\Gamma)$ equipped with the norm
\[
  \|f\|_{-J,\Gamma}= \sup\limits_{ \varphi \in \Cf_c^{\infty}(\R^d ) } \frac{ \langle \varphi , f \rangle_{0,\Gamma} }{ \|\varphi\|_J }
\]
will be denoted by $H^{-J}(\Gamma)$. It is also a separable Hilbert space with the inner product denoted by $\langle \cdot  , \cdot  \rangle_{-J,\Gamma}$. According to the Riesz representation theorem, there exists the isometry between $H^J(\Gamma)$ and $H^{-J}(\Gamma)$ denoted by $L_{J,\Gamma}$.  We will often drop $\Gamma$ from the notation of the inner product and the norm on a Sobolev space, if it does not lead to the confusion. For more details about the Sobolev spaces $H^{J}(\Gamma)$ and $H^{-J}(\Gamma)$ we refer the reader to, e.g.,~\cite{Adams:1975}. 

For $I=[0,T]$ or $I=[0,\infty)$ the space of all \cdl functions from $I$ to a metric space $E$ equipped with the Skorohod topology will be denoted by $\Df(I,E)$. The set $\Cf(I,E)$ of all continuous functions from $I$ to $E$ is a closed subset of $\Df(I,E)$ and the induced topology on $\Cf(I,E)$ is equivalent to the topology of uniform convergence on compacts.

The space of all probability measures (resp. signed measures with finite total variations) on $(\R^d ,\B(\R^d ))$ equipped with the topology of weak convergence will be denoted by $\cP(\R^d )$ (resp. by $\cM(\R^d )$). Let $\cP_p(\R^d )$ denote the subset of $\cP(\R^d)$ of all probability measures with finite $p$-moment for $p\geq 1$, that is, 
\[
  \cP_p(\R^d )=\left\{ \mu \in \cP(\R^d ):\ \langle \phi_p , \mu \rangle<\infty \right\},
\]
where $\phi_p(x)=|x|^p$, $x \in \R^d $, and $\langle \varphi  , \mu  \rangle$ (and also $\langle \varphi(x) , \mu(dx) \rangle$) is the integration of $\varphi$ with respect to $\mu$. It is well-know that $\cP_p(\R^d )$ is a Polish space with the Wasserstein distance given for each $\mu,\nu \in \cP_p(\R^d )$ by 
\[
  \W_p(\mu,\nu)=\inf\left( \int_{ \R^d  }   \int_{ \R^d  }   |x-y|^p\chi(dx,dy)\right)^{\frac{1}{p}},  
\]
where the infimum is taken over all probability measures on $\R^d \times \R^d $ with marginals $\mu$ and $\nu$.

We will fix a measure space $(\Theta,\cG,\m)$ such that $\m$ is a finite measure and the space $L_2(\Theta,\m):=L_2(\Theta,\cG,\m)$, which consists of all 2-integrable with respect to $\m$ functions (more precisely, equivalence classes) from $\Theta$ to $\R $, is separable. The usual inner product and norm on $L_2(\Theta,\m)$ are denoted by $\langle \cdot  , \cdot \rangle_{\m}$ and $\|\cdot \|_{\m}$, respectively. Let $W_t$, $t\geq 0$, be a cylindrical Wiener process on $L_2(\Theta,\m)$ defined on a complete probability space $(\Omega,\F,\p)$ and $(\F_t)_{t\geq 0}$ be the right-continuous complete extension of the filtration generated by $W$, which exists according to \cite[Lemma~7.8]{Kallenberg:2002}. We recall that for an $(\F_t)$-progressively measurable $L_2(\Theta,\m)$-valued process $g_t$, $t\geq 0$, satisfying 
\[
  \int_{ 0 }^{ t } \int_{ \Theta }   g_t^2(\theta)\m(d \theta)<\infty \quad \mbox{a.s.}  
\]
for every $t>0$, the integral\footnote{For the definition of the stochastic integral with respect to a cylindrical Wiener process see, e.g., \cite[Section~2.2.4]{Gawarecki:2011}. The equality~\eqref{equ_quadratic_variation_of_the_integral} holds due to Theorem~2.4~\cite{Gawarecki:2011} and the fact that $G_tG^{*}_t=\|g\|^2_{\m}$ for $G_t=\langle g_t , \cdot  \rangle_{\m}$.} defined by 
\[
   \int_{ 0 }^{ t } g_s(\theta)W(d \theta,ds):= \int_{ 0 }^{ t } G_s dW_s, \quad t\geq 0,  
\]
is a continuous local $(\F_t)$-martingale with quadratic variation
\begin{equation} 
  \label{equ_quadratic_variation_of_the_integral}
  \left[ \int_{ 0 }^{ \cdot  } g_s(\theta)W(d \theta,ds)  \right]_t=\int_{ 0 }^{ t } \int_{ \Theta }   g_s^2(\theta)\m(d \theta)ds, \quad t\geq 0,  
\end{equation}
where $G_t=\langle g_t , \cdot  \rangle_{\m}$, $t\geq 0$, is an $(\F_t)$-progressively measurable process on the space of Hilbert--Schmidt operators on $L_2(\Theta,\m)$. Denote the space of all Hilbert--Schmidt operators from $L_2(\Theta,\m)$ to a Hilbert space $H$ by $\cL_2(L_2(\Theta,\m);H)$ and $\|\cdot \|_{\mathrm{HS},H}$ be the Hilbert--Schmidt norm on that space. In particular, $\|G_t\|_{\mathrm{HS},\R }=\|g_t\|_{\m}$.

\section{Well-posedness and superposition principle}
\label{sec:mean_field_equation_with_correlated_noise}

In this section, we establish the well-posedness and the superposition principle for the stochastic mean-field equation 
\begin{equation} 
  \label{equ_mean_field_equation_with_corr_noise}
  \begin{split} 
    d\mu_t=  \frac{1}{ 2 }D^2: \left(A(t,\cdot ,\mu_t)\mu_t\right)dt&- \nabla \cdot \left( V(t,\cdot ,\mu_t)\mu_t \right)dt\\
    &- \int_{ \Theta }   \nabla \cdot \left( G(t,\cdot ,\mu_t,\theta)\mu_t \right)W(d \theta,dt), 
  \end{split}
\end{equation}
where $W$ is a cylindrical Wiener process in $L_2(\Theta,\m)$ defined on a complete probability space $(\Omega,\F,\p)$ and the functions $V:[0,\infty)\times \R^d \times \cP_2(\R^d )\times \Omega \to \R^d $ and $G:[0,\infty) \times \R^d \times \cP_2(\R^d )\times \Omega \to (L_2(\Theta,\m))^d$, $A:[0,\infty) \times \R^d \times \cP_2(\R^d ) \times \Omega \to \R ^{d \times d}$ satisfy the following Assumption~\ref{ass_basic_assumption}. 
  \begin{assumption} 
  \label{ass_basic_assumption}
  The functions $V$ and $G$ are $\B([0,\infty))\otimes \B(\R^d )\otimes \B(\cP_2(\R^d ))\otimes\F$-measurable and bounded (in $(t,x,\mu)$) on every compact subset of $[0,\infty)\times \R^d \times  \cP_2(\R^d )$ a.s., the restrictions of $V$ and $G$ to the time interval $[0,t]$ are $\B([0,t])\otimes \B(\R^d )\otimes\B(\cP_2(\R^d ))\otimes \F_t$-measurable, and 
\[
  A(t,x,\mu)= \big( \langle G_i(t,x,\mu,\cdot ) , G_j(t,x,\mu,\cdot ) \rangle_{\m} \big)_{i,j \in [d]}
\]
for all $t\geq 0$, $x \in \R^d$, $\mu \in \cP_2(\R^d )$.
  \end{assumption}

\begin{definition} 
  \label{def_definition_of_solution_to_dk_equation}
  Let $\mui \in \cP_2(\R^d )$. A continuous $(\F_t)$-adapted process $\mu_t$, $t\geq 0$, in $\cP_2(\R^d )$ is a {\it (strong) solution} to the stochastic mean-field equation~\eqref{equ_mean_field_equation_with_corr_noise} started from $\mui$ if for every $\varphi \in \Cf_c^2(\R^d )$ a.s. the equality 
  \begin{equation} 
  \label{equ_integral_equality_in_the_definition_of_solution_to_dke}
    \begin{split}
      \langle \varphi , \mu_t \rangle&= \langle \varphi , \mui \rangle+ \frac{1}{ 2 }\int_{ 0 }^{ t }  \left\langle D^2 \varphi:A(s,\cdot ,\mu_s) , \mu_s \right\rangle ds\\
      &+ \int_{ 0 }^{ t } \left\langle \nabla \varphi \cdot V(s,\cdot ,\mu_s) ,\mu_s  \right\rangle ds + \int_{ 0 }^{ t } \int_{ \Theta}   \left\langle \nabla \varphi \cdot G(s,\cdot ,\mu_s, \theta) , \mu_s \right\rangle W(d \theta,ds)  
    \end{split}
  \end{equation}
  holds for every $t\geq 0$.
\end{definition}
All integrals in the definition above are well-defined due to the a.s. boundedness of the functions inside the integral $\langle \cdot  , \mu_s \rangle$.

\begin{remark} 
  \label{rem_extension_of_definition_to_bounded_functions_phi}
  If for every $T>0$ and compact set $K$ in $\cP_2(\R^d )$ the coefficients $V$ and $G$ are a.s. bounded on $[0,T]\times \R^d \times K$, that is, there exists a (random) constant $C>0$ such that 
  \[
    |V(t,x,\mu)|+ \left\| |G(t,x,\mu,\cdot )| \right\|_{m}\leq C, \quad t \in [0,T],\ x \in \R^d ,\ \mu \in K,
  \]
  and $\mu_t$, $t\geq 0$, is a solution to~\eqref{equ_mean_field_equation_with_corr_noise}, then the integral equality~\eqref{equ_integral_equality_in_the_definition_of_solution_to_dke} in Definition~\ref{def_definition_of_solution_to_dk_equation} holds for every $\varphi \in \Cf_b^2(\R^d )$. This follows from the dominated convergence theorem and the compactness of $\{ \mu_t,\ t \in [0,T] \}$ in $\cP_2(\R^d )$ due to the continuity of $\mu_t$, $t\geq 0$.
\end{remark}
\begin{remark} 
  \label{rem_about_quadratic_variation_of_solution_to_dk_equation}
  Let $\mu_t$, $t\geq 0$, be a solution to the equation~\eqref{equ_mean_field_equation_with_corr_noise}. Then for every $\varphi \in \Cf_c^2(\R^d )$ the process $\langle \varphi , \mu_t \rangle$, $t\geq 0$, is a continuous local $(\F_t)$-semimartingale with quadratic variation 
  \begin{align}
    \left[ \langle \varphi , \mu_{\cdot } \rangle \right]_t&= \int_{ 0 }^{ t } \int_{ \Theta }   \left\langle \nabla \varphi \cdot G(s,\cdot ,\mu_s,\theta) , \mu_s \right\rangle \left\langle \nabla \varphi \cdot G(s,\cdot ,\mu_s,\theta) , \mu_s \right\rangle \m(d \theta)ds\\
    &= \int_{ 0 }^{ t } \int_{ \R^d  }   \int_{ \R^d  }   \left(\nabla \varphi(x) \otimes \nabla \varphi(y)\right): \tilde{A}(s,x,y,\mu_s)\mu_s(dx)\mu_s(dy)ds,  
  \end{align}
  where $\tilde{A}(s,x,y,\mu)=\left( \langle G_i(s,x,\mu,\cdot ) , G_j(s,y,\mu, \cdot ) \rangle_{\m} \right)_{i,j \in [d]}$. The expression for the quadratic variation of $\langle \varphi , \mu_t \rangle$, $t\geq 0$, directly follows from~\eqref{equ_quadratic_variation_of_the_integral}.
\end{remark}

Together with the stochastic mean-field equation~\eqref{equ_mean_field_equation_with_corr_noise}, we will consider the following associated SDE with interaction 
\begin{equation} 
  \label{equ_equation_with_interaction}
  \begin{split} 
    dX(u,t)&= V(t,X(u,t),\bar{\mu}_t)dt+\int_{ \Theta }   G(t,X(u,t),\bar{\mu}_t,\theta) W(d \theta,dt),\\ 
    X(u,0)&= u, \quad \bar{\mu}_t= \mui \circ X^{-1}(\cdot ,t), \quad u \in \R^d, \ \ t\geq 0. 
  \end{split}
\end{equation}
This type of equation was introduced and studied by Dorogovtsev in \cite[Section~2]{Dorogovtsev:2007:en}. We next give the definition of a solution to~\eqref{equ_equation_with_interaction}, following \cite[Definition~2.1.1]{Dorogovtsev:2007:en}.

\begin{definition} 
  \label{def_definition_of_solutions_to_sde_with_interaction}
  A family of continuous processes $\{X(u,t),\ t\geq 0\}$, $u \in \R^d $, is called a {\it (strong) solution} to the SDE with interaction~\eqref{equ_equation_with_interaction} if the restriction of $X$ to the time interval $[0,t]$ is $\B([0,t])\otimes\B(\R^d ) \otimes \F_t$-measurable, $\bar{\mu}_t= \mui \circ X^{-1}(\cdot ,t) \in \cP_2(\R^d )$ a.s. for all $t\geq 0$  and for every $u \in \R^d $ a.s.
  \[
    X(u,t)=u+\int_{ 0 }^{ t } V(s,X(u,s),\bar{\mu}_s)ds+\int_{ 0 }^{ t } \int_{ \Theta }   G(s,X(u,s),\bar{\mu}_s,\theta) W(d \theta,ds)
  \]
  for all $t\geq 0$. 
\end{definition}

We remark that $\bar{\mu}_t$, $t\geq 0$, is an $(\F_t)$-progressively measurable process in $\cP_2(\R^d )$. Moreover, due to Fubini's theorem it does not depend on the version of $X(u ,\cdot )$, $u \in \R^d $, that is, if $Y(u,t)$, satisfies the same measurablity conditions from Definition~\ref{def_definition_of_solutions_to_sde_with_interaction} as $X$ and for every $u \in \R^d $ a.s. $Y(u,\cdot )=X(u, \cdot )$, then $\bar{\mu}_t=\mui\circ Y^{-1}(\cdot ,t)$, $t\geq 0$, a.s.

The key tool in the investigation of solutions to the stochastic mean-field equation~\eqref{equ_mean_field_equation_with_corr_noise} is the fact that it satisfies the (strong) superposition principle.
\begin{definition} 
  \label{def_superposition_principle}
  A continuous process $\mu_t$, $t\geq 0$, in $\cP_2(\R^d )$ started from $\mui \in \cP_2(\R^d )$ is a {\it strong superpositon solution}\footnote{The notion of the strong superposiltion solution considered in this work is close to one introduced by Flandoli (see \cite[Definition~5]{Flandoli:2009}). Since we will only work with equation~\eqref{equ_equation_with_interaction} which has a unique solution, we avoid more general definitions, like in~\cite{Ambrosio:2004,Flandoli:2009,Trevisan:2016}, which needs the introduction of distributions on the path space.} to the stochastic mean-field equation~\eqref{equ_mean_field_equation_with_corr_noise} or {\it satisfies the superposition principle} if there exists a solution $X(u,t)$, $t\geq 0$, $u \in \R^d $, to the SDE with interaction~\eqref{equ_equation_with_interaction} such that $\mu_t=\mui \circ X^{-1}(\cdot ,t)$, $t\geq 0$, a.s. 
\end{definition}

In order to build a strong superposition solution to the equation~\eqref{equ_mean_field_equation_with_corr_noise}, we will need the Lipschitz continuity assumption on the coefficients, which will guarantee the well-posedness of the SDE~\eqref{equ_equation_with_interaction}.

\begin{assumption} 
  \label{ass_lipshitz_continuity}
  The coefficients $V$ and $G$ are Lipschitz continuous with respect to $x$ and $\mu$, that is, for every $T>0$ there exists $L>0$ such that a.s. for every $t \in [0,T]$, $x,y \in \R^d $ and $\mu,\nu \in \cP_2(\R^d )$
  \begin{align}
    |V(t,x,\mu)-V(t,y,\nu)|&+ \left\| |G(t,x,\mu,\cdot )-G(t,y,\nu,\cdot )|\right\|_{\m}  \leq L\left( |x-y|+\W_2(\mu,\nu) \right).
  \end{align}
  and 
  \[
    |V(t,0,\delta_0)|+ \left\||G(t,0,\delta_0,\cdot )|\right\|_{\m}\leq L,
  \]
  where $\delta_0$ denotes the $\delta$-measure at $0$ on $\R^d $.
\end{assumption}

\begin{remark} 
  \label{rem_linear_growth_of_coefficients_under_lipschitz_conditions}
  Assumption 2 implies linear growth of $V$ and $G$, that is, a.s. 
  \[
    |V(t,x,\mu)|+\left\| |G(t,x,\mu,\cdot )|\right\|_{\m}\leq L(1+|x|+\W_2(\mu,\delta_0))
  \]
  for all $x \in \R^d $ and $\mu \in \cP_2(\R^d )$.
\end{remark}

In the next section, using a standard approach, we will prove the well-posedness of the SDE with interaction~\eqref{equ_equation_with_interaction} stated in the following theorem.

\begin{theorem}[Well-posedness of the SDE with interaction] 
  \label{the_well_posedness_of_sde_with_interaction}
  Under Assumptions~\ref{ass_basic_assumption} and~\ref{ass_lipshitz_continuity}, the SDE with interaction~\eqref{equ_equation_with_interaction} has a unique strong solution $X(u,t)$, $t\geq 0$, $u \in \R^d $, for every $\mui \in \cP_2(\R^d )$. Moreover, for every $T>0$ and $p\geq 2$ there exists a constant $C>0$ (only depending of $L,p,d$ and $T$) such that 
  \[
    \e \sup\limits_{ t \in [0,T] }|X(u,t)|^{p}\leq C(1+ \langle \phi_p , \mui \rangle+|u|^p)
  \]
  for all $u \in \R^d $, where $\phi_p(x)=|x|^p$, $x \in \R^d $.
\end{theorem}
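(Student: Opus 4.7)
The plan is to use a Picard iteration on the family $\{X(u,\cdot)\}_{u \in \mathbb{R}^d}$ of processes. Because the coefficients depend on the unknown empirical distribution $\bar{\mu}_t = \mu_0 \circ X(\cdot,t)^{-1}$, one cannot apply a standard SDE fixed-point argument for each $u$ separately; the iteration must be performed on the whole family simultaneously and controlled in a $\mu_0$-weighted $L_2$-norm.

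First I would set $X^0(u,t):=u$ and define inductively
\[
  X^{n+1}(u,t) = u + \int_0^t V(s,X^n(u,s),\bar\mu^n_s)\,ds + \int_0^t \int_\Theta G(s,X^n(u,s),\bar\mu^n_s,\theta)\,W(d\theta,ds),
\]
with $\bar\mu^n_s := \mu_0 \circ X^n(\cdot,s)^{-1}$. Joint measurability of $X^n$ in $(u,t,\omega)$ propagates through the iteration, so $\bar\mu^n$ is well-defined and $(\mathcal{F}_t)$-progressively measurable. Setting
\[
  b_n(t) := \int_{\mathbb{R}^d} \mathbb{E}\sup_{s \leq t}|X^{n+1}(u,s)-X^n(u,s)|^2\,\mu_0(du),
\]
an application of the Burkholder--Davis--Gundy inequality combined with Assumption~\ref{ass_lipshitz_continuity} and the identity-coupling bound $\mathcal{W}_2^2(\bar\mu^n_s, \bar\mu^{n-1}_s) \leq \int |X^n(u,s)-X^{n-1}(u,s)|^2\,\mu_0(du)$ yields $b_n(t) \leq C\int_0^t b_{n-1}(s)\,ds$. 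Iteration gives $b_n(t) \leq (CT)^n/n! \cdot b_0(T)$, so $\{X^n\}$ is Cauchy in the weighted norm and its limit $X$ satisfies the SDE. Uniqueness is obtained by the same Grönwall argument applied to the difference of two solutions sharing the same initial distribution.

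For the moment estimate, I would invoke the linear growth of Remark~\ref{rem_linear_growth_of_coefficients_under_lipschitz_conditions} together with BDG. Setting $f(t,u) := \mathbb{E}\sup_{s \leq t}|X(u,s)|^p$ and using, for $p \geq 2$, the bound $\mathcal{W}_2^p(\bar\mu_s,\delta_0) \leq \mathcal{W}_p^p(\bar\mu_s,\delta_0) = \int |X(v,s)|^p\,\mu_0(dv)$, one obtains
\[
  f(t,u) \leq C\Bigl(1 + |u|^p + \int_0^t f(s,u)\,ds + \int_0^t \!\int f(s,v)\,\mu_0(dv)\,ds\Bigr).
\]
Integrating in $u$ against $\mu_0$ and applying Grönwall yields $\int f(t,v)\,\mu_0(dv) \lesssim 1 + \langle \phi_p, \mu_0\rangle$; substituting this back into the displayed inequality and a second Grönwall in $t$ give the claimed bound $f(T,u) \leq C(1 + |u|^p + \langle \phi_p, \mu_0\rangle)$.

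The main obstacle is not the analytic estimates, which are standard for McKean--Vlasov-type SDEs, but the careful handling of the joint measurability of $X^n(u,t,\omega)$ required to make $\bar\mu^n_s$ a bona fide $\mathcal{P}_2$-valued $(\mathcal{F}_t)$-progressive process (so that the outer integration against $\mu_0$ is legitimate and the iteration closes). Either one builds a jointly measurable version at each Picard step --- using the Kolmogorov criterion applied to $p$-moment increment estimates so that $u \mapsto X^n(u,\cdot)$ admits a continuous modification --- or one works directly on the product space $\Omega \times \mathbb{R}^d$ with progressively measurable functionals. Once this is in place, the contraction argument and the moment bound are routine.
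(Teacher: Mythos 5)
Your proposal follows essentially the same route as the paper: a Picard iteration made rigorous by Kolmogorov-continuity in $u$ (to get joint measurability of $(u,t,\omega)\mapsto X^n(u,t)$ and hence a bona fide $\cP_2$-valued process $\bar\mu^n$), a contraction estimate in the $\mu_0$-weighted $L_2$-norm with factorial decay, and a two-stage Gronwall argument for the $p$-th moment bound. The only structural difference is that you lag both the process and the measure in the iteration, whereas the paper lags only the measure and solves a standard (Lipschitz) SDE at each step; this changes nothing essential. One small point to tighten: in the moment estimate, $f(t,u)=\e\sup_{s\le t}|X(u,s)|^p$ is not a priori finite, so the Gronwall step needs a localization — the paper introduces stopping times $\tau_u^n\wedge\sigma^m$ and passes to the limit by Fatou — or alternatively you should run the estimate on the Picard iterates (where finiteness holds by induction) and pass to the limit.
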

\begin{corollary}[Moment preservation property] 
  \label{cor_moment_preserving_property}
  Under Assumptions~\ref{ass_basic_assumption} and~\ref{ass_lipshitz_continuity}, the measure-valued process $\bar\mu_t=\mui\circ X^{-1}(\cdot ,t)$, $t\geq 0$, is moment preserving, that is, for every $T>0$ and $p\geq 2$
  \begin{equation} 
  \label{equ_finiteness_of_m_moments}
  \e {\sup\limits_{ t \in [0,T] }\langle \phi_p , \bar\mu_t \rangle}\leq  C\left( 1+\langle \phi_p , \mu_0 \rangle \right).
  \end{equation}
\end{corollary}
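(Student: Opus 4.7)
The plan is to reduce the corollary directly to the moment bound established in Theorem~\ref{the_well_posedness_of_sde_with_interaction} via Fubini's theorem, exploiting the fact that $\bar\mu_t$ is the pushforward $\mu_0 \circ X^{-1}(\cdot,t)$.

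First, I would rewrite the quantity of interest in Lagrangian form. By definition of the pushforward,
\[
\langle \phi_p, \bar\mu_t \rangle = \int_{\R^d} |X(u,t)|^p \, \mu_0(du),
\]
so that, for each $\omega$,
\[
\sup_{t \in [0,T]} \langle \phi_p, \bar\mu_t \rangle = \sup_{t \in [0,T]} \int_{\R^d} |X(u,t)|^p \, \mu_0(du) \leq \int_{\R^d} \sup_{t \in [0,T]} |X(u,t)|^p \, \mu_0(du),
\]
by monotonicity of the integral. The sup on the right-hand side is jointly measurable in $(u,\omega)$: by the measurability hypothesis in Definition~\ref{def_definition_of_solutions_to_sde_with_interaction}, $(t,u,\omega) \mapsto X(u,t)$ is jointly measurable, and by continuity in $t$, the supremum over $[0,T]$ may be taken over a countable dense subset, preserving measurability.

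Next I would take expectations and apply the Tonelli theorem (the integrand is nonnegative), which gives
\[
\e \sup_{t \in [0,T]} \langle \phi_p, \bar\mu_t \rangle \leq \int_{\R^d} \e \sup_{t \in [0,T]} |X(u,t)|^p \, \mu_0(du).
\]
Inserting the pointwise-in-$u$ moment bound from Theorem~\ref{the_well_posedness_of_sde_with_interaction}, namely
$\e \sup_{t \in [0,T]}|X(u,t)|^p \leq C(1 + \langle \phi_p,\mu_0\rangle + |u|^p)$, yields
\[
\e \sup_{t \in [0,T]} \langle \phi_p, \bar\mu_t \rangle \leq C \int_{\R^d}\bigl( 1 + \langle \phi_p, \mu_0\rangle + |u|^p \bigr)\mu_0(du) = C\bigl(1 + 2\langle \phi_p, \mu_0\rangle\bigr),
\]
which, after absorbing the factor $2$ into $C$, is exactly~\eqref{equ_finiteness_of_m_moments}.

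There is essentially no obstacle here beyond the bookkeeping of measurability and the application of Fubini/Tonelli; the corollary is a direct consequence of the uniform-in-$u$ moment estimate in Theorem~\ref{the_well_posedness_of_sde_with_interaction} together with the representation $\bar\mu_t = \mu_0 \circ X^{-1}(\cdot,t)$.
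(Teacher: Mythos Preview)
Your proof is correct and follows exactly the approach indicated in the paper, which simply states that the corollary follows by integrating the inequality from Theorem~\ref{the_well_posedness_of_sde_with_interaction} with respect to $\mu_0$. You have spelled out the details (pushforward representation, pulling the supremum inside, Tonelli, and absorbing constants) that the paper leaves implicit.
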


The corollary directly follows from the inequality in Theorem~\ref{the_well_posedness_of_sde_with_interaction} by its integration with respect to $\mui$.

Next, using \Itos formula, one can prove the existence of a superposition solution to~\eqref{equ_mean_field_equation_with_corr_noise}.
\begin{theorem}[Existence of solutions] 
  \label{the_existence_of_solutions_to_dke}
  Let $V,G,A$ satisfy Assumptions~\ref{ass_basic_assumption} and~\ref{ass_lipshitz_continuity}. Then for every $\mui \in \cP_2(\R^d )$ there exists a solution $\mu_t$, $t\geq 0$, to the stochastic mean-field equation~\eqref{equ_mean_field_equation_with_corr_noise} started from $\mui$ that satisfies the superposition principle and is moment preserving.
\end{theorem}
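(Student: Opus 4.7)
The plan is to exhibit the solution explicitly as the pushforward of $\mui$ under the Lagrangian flow provided by Theorem~\ref{the_well_posedness_of_sde_with_interaction}, and to verify the weak identity~\eqref{equ_integral_equality_in_the_definition_of_solution_to_dke} by applying \Itos formula pathwise to the flow and then averaging over the initial position via Fubini and stochastic Fubini theorems.

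\emph{Step 1 (Construction).} Under Assumptions~\ref{ass_basic_assumption} and~\ref{ass_lipshitz_continuity}, Theorem~\ref{the_well_posedness_of_sde_with_interaction} yields a unique strong solution $\{X(u,t)\}$ of~\eqref{equ_equation_with_interaction}. I would set $\mu_t := \mui \circ X(\cdot,t)^{-1} = \bar\mu_t$. By construction $\mu_t$ satisfies the superposition principle (Definition~\ref{def_superposition_principle}), and Corollary~\ref{cor_moment_preserving_property} gives the moment-preservation bound~\eqref{equ_finiteness_of_m_moments}. Adaptedness of $\mu_t$ is immediate from the measurability clause in Definition~\ref{def_definition_of_solutions_to_sde_with_interaction}, while continuity in $\cP_2(\R^d)$ follows from the pointwise estimate
$$\W_2^2(\mu_t,\mu_s) \leq \int_{\R^d} |X(u,t)-X(u,s)|^2\, \mui(du),$$
which vanishes as $s \to t$ by pathwise continuity of $X(u,\cdot )$ and the dominated convergence theorem, using the moment bound of Theorem~\ref{the_well_posedness_of_sde_with_interaction} to supply an integrable majorant.

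\emph{Step 2 (\Itos formula).} Fix $\varphi \in \Cf_c^2(\R^d)$. For each $u$, $X(u,\cdot )$ is an \Ito process whose stochastic integral driver has quadratic variation given, thanks to Assumption~\ref{ass_basic_assumption}, in terms of $A(s,X(u,s),\mu_s)$. Applying \Itos formula gives
\begin{align*}
\varphi(X(u,t)) &= \varphi(u) + \int_0^t \nabla \varphi(X(u,s)) \cdot V(s,X(u,s),\mu_s)\, ds\\
&\quad + \tfrac{1}{2}\int_0^t D^2 \varphi(X(u,s)) : A(s,X(u,s),\mu_s)\, ds\\
&\quad + \int_0^t \int_\Theta \nabla\varphi(X(u,s)) \cdot G(s,X(u,s),\mu_s,\theta)\, W(d\theta,ds).
\end{align*}
Integrating with respect to $\mui(du)$ and rewriting $\int_{\R^d} h(X(u,s))\,\mui(du) = \langle h,\mu_s\rangle$ yields~\eqref{equ_integral_equality_in_the_definition_of_solution_to_dke}, provided the order of integration can be interchanged. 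The two deterministic-time integrals are handled by the classical Fubini theorem, using the compact support of $\nabla\varphi,\,D^2\varphi$, the linear growth of $V$ and $A$ from Remark~\ref{rem_linear_growth_of_coefficients_under_lipschitz_conditions}, and the moment bound~\eqref{equ_finiteness_of_m_moments}.

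\emph{Main obstacle (stochastic Fubini).} The delicate point is the exchange in the stochastic term: the integrand $\nabla\varphi(X(u,s)) \cdot G(s,X(u,s),\mu_s,\cdot )$ is $L_2(\Theta,\m)$-valued and couples $u$ and $\omega$ through the flow $X$. One must check joint progressive measurability in $(s,\omega,u)$ and the Hilbert--Schmidt integrability
$$\int_{\R^d} \e \int_0^T \bigl\| \nabla\varphi(X(u,s)) \cdot G(s,X(u,s),\mu_s,\cdot )\bigr\|_\m^2\, ds\, \mui(du) < \infty.$$
By compact support of $\nabla\varphi$ and the linear growth of $G$ this reduces to $\int \e \sup_{s \leq T} |X(u,s)|^2\,\mui(du) < \infty$, which is supplied by Theorem~\ref{the_well_posedness_of_sde_with_interaction} and~\eqref{equ_finiteness_of_m_moments}. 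Separability of $L_2(\Theta,\m)$, fixed in Section~\ref{sec:mean_field_equation_with_correlated_noise}, then allows the application of a standard stochastic Fubini theorem for cylindrical Wiener processes, completing the identification of $\mu_t$ as a strong superposition solution.
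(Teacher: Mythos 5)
Your proposal is correct and follows essentially the same route as the paper: the paper's proof of Theorem~\ref{the_existence_of_solutions_to_dke} is exactly to take the flow from Theorem~\ref{the_well_posedness_of_sde_with_interaction}, define $\mu_t=\mui\circ X^{-1}(\cdot,t)$, and verify~\eqref{equ_integral_equality_in_the_definition_of_solution_to_dke} via \Itos formula followed by integration against $\mui$ and (stochastic) Fubini — this is the content of Lemma~\ref{lem_from_sde_with_interaction_to_dk_equation}, with moment preservation supplied by Corollary~\ref{cor_moment_preserving_property}. The only cosmetic difference is that the paper bounds the integrands using the a.s.\ compactness of $\{\bar\mu_s:\ s\in[0,t]\}$ together with the local boundedness in Assumption~\ref{ass_basic_assumption}, whereas you use linear growth plus the moment bounds; both are valid under the stated hypotheses.
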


The proof of this theorem directly follows from Lemma~\ref{lem_from_sde_with_interaction_to_dk_equation} below.  We note that the uniqueness of the stochastic mean-field equation is closely related to the superposition principle. Indeed, the well-posedness of the SDE with interaction (see Theorem~\ref{the_well_posedness_of_sde_with_interaction}) and Theorem~\ref{the_existence_of_solutions_to_dke} immediately imply the following corollary.

\begin{corollary} 
  \label{cor_superposition_principle}
  Let $V,G,A$ satisfy Assumptions~\ref{ass_basic_assumption},~\ref{ass_lipshitz_continuity}. The stochastic mean-field equation~\eqref{equ_mean_field_equation_with_corr_noise} has a unique solution if and only if every its solution in the sense of Definition~\ref{def_definition_of_solution_to_dk_equation} is a strong superposition solution in the sense of Definition~\ref{def_superposition_principle}.
\end{corollary}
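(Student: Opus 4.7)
The statement is an immediate consequence of the two preceding results (Theorem~\ref{the_well_posedness_of_sde_with_interaction} on well-posedness of the SDE with interaction and Theorem~\ref{the_existence_of_solutions_to_dke} on existence of superposition solutions to the stochastic mean-field equation), so the plan is simply to verify both implications separately and transparently.

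For the forward implication, I would assume that~\eqref{equ_mean_field_equation_with_corr_noise} has a unique solution. Theorem~\ref{the_existence_of_solutions_to_dke} supplies a solution $\mu^\ast_t=\mu_0\circ X^{-1}(\cdot,t)$ which, by construction, is a strong superposition solution in the sense of Definition~\ref{def_superposition_principle}. If $\mu_t$ is any solution in the sense of Definition~\ref{def_definition_of_solution_to_dk_equation}, uniqueness forces $\mu_t=\mu_t^\ast$ a.s.\ for all $t\geq 0$, hence $\mu$ itself admits the representation required by Definition~\ref{def_superposition_principle}.

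For the reverse implication, assume every solution is a strong superposition solution, and let $\mu^1$, $\mu^2$ be two solutions to~\eqref{equ_mean_field_equation_with_corr_noise} on the same filtered probability space $(\Omega,\F,(\F_t),\p)$ with the same cylindrical Wiener process $W$. By hypothesis, there exist solutions $X^1$, $X^2$ to the SDE with interaction~\eqref{equ_equation_with_interaction} such that $\mu^i_t=\mu_0\circ (X^i)^{-1}(\cdot,t)$, $t\geq 0$, a.s., for $i=1,2$. Since Theorem~\ref{the_well_posedness_of_sde_with_interaction} asserts that~\eqref{equ_equation_with_interaction} has a unique strong solution, one has $X^1(u,\cdot)=X^2(u,\cdot)$ a.s.\ for every $u\in\R^d$. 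Pushing forward $\mu_0$ and using the fact (noted after Definition~\ref{def_definition_of_solutions_to_sde_with_interaction}) that the induced measure-valued process $\bar\mu_t$ does not depend on the chosen version, we conclude $\mu^1_t=\mu^2_t$ a.s.\ for all $t\geq 0$, i.e.\ uniqueness of solutions to~\eqref{equ_mean_field_equation_with_corr_noise}.

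No step is difficult here; the only mild care needed is to observe that both superposition representations use the \emph{same} driving noise $W$ attached to each solution of the SPDE, so that Theorem~\ref{the_well_posedness_of_sde_with_interaction} (pathwise uniqueness) can be applied directly. The genuinely substantive contents, namely existence of a superposition solution and pathwise uniqueness of the Lagrangian SDE, are already packaged in Theorems~\ref{the_existence_of_solutions_to_dke} and~\ref{the_well_posedness_of_sde_with_interaction}.
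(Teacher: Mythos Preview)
Your proposal is correct and follows exactly the approach the paper indicates: the corollary is stated as an immediate consequence of Theorems~\ref{the_well_posedness_of_sde_with_interaction} and~\ref{the_existence_of_solutions_to_dke}, and you have spelled out both implications precisely as intended. Your remark about the same driving noise $W$ is a useful clarification of why pathwise uniqueness from Theorem~\ref{the_well_posedness_of_sde_with_interaction} applies directly.
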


In Section~\ref{sub:uniqueness_and_superposition_principle}, we will prove the uniqueness to the stochastic mean-field equation making further assumptions on the initial condition and its coefficients.

\subsection{SDE with interaction}
\label{sub:sde_with_interaction}

We start this section with the proof that any solution to the SDE with interaction~\eqref{equ_equation_with_interaction} provides a solution to the equation~\eqref{equ_mean_field_equation_with_corr_noise} in the sense of Definition~\ref{def_definition_of_solution_to_dk_equation}. 

\begin{lemma} 
  \label{lem_from_sde_with_interaction_to_dk_equation}
  Let $V,G,A$ satisfy Assumption~\ref{ass_basic_assumption}, $X(u,t)$, $t\geq 0$, $u \in \R^d $, be a solution to the SDE with interaction~\eqref{equ_equation_with_interaction} with $\mui \in \cP_2(\R^d )$ and $\bar{\mu}_t=\mui\circ X^{-1}(\cdot ,t)$, $t\geq 0$, be continuous in $\cP_2(\R^d )$. Then $\bar\mu_t$, $t\geq 0$, is a solution to the stochastic mean-field equation~\eqref{equ_mean_field_equation_with_corr_noise} started from $\mui$.
\end{lemma}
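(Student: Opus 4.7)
The plan is to obtain the SPDE formulation by applying It\^o's formula to $\varphi(X(u,t))$ pathwise in $u$, and then averaging the resulting identity against $\mu_0(du)$ using Fubini and stochastic Fubini. Fix $\varphi\in\Cf_c^2(\R^d)$. Since $X(u,\cdot)$ is, for each fixed $u$, a continuous It\^o semimartingale driven by a cylindrical Wiener process with coefficients $V(s,X(u,s),\bar\mu_s)$ and $G(s,X(u,s),\bar\mu_s,\cdot)$, the standard It\^o formula (for Hilbert-space valued stochastic integrals, cf.\ the formula for $[\int_0^\cdot g_s(\theta)W(d\theta,ds)]_t$ in~\eqref{equ_quadratic_variation_of_the_integral}) yields
\begin{align}
\varphi(X(u,t)) &= \varphi(u) + \int_0^t \nabla\varphi(X(u,s))\cdot V(s,X(u,s),\bar\mu_s)\,ds \\
&\quad + \frac{1}{2}\int_0^t D^2\varphi(X(u,s)):A(s,X(u,s),\bar\mu_s)\,ds \\
&\quad + \int_0^t\!\int_\Theta \nabla\varphi(X(u,s))\cdot G(s,X(u,s),\bar\mu_s,\theta)\,W(d\theta,ds),
\end{align}
using that the quadratic variation of the stochastic integral term in~\eqref{equ_equation_with_interaction} has density $A(s,X(u,s),\bar\mu_s)\,ds$ by Assumption~\ref{ass_basic_assumption}.

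Next I would integrate this identity against $\mu_0(du)$ and rewrite each piece via the definition $\bar\mu_s=\mu_0\circ X^{-1}(\cdot,s)$. By definition, $\int\varphi(X(u,t))\mu_0(du)=\langle\varphi,\bar\mu_t\rangle$ and analogously $\int\varphi(u)\mu_0(du)=\langle\varphi,\mu_0\rangle$. For the two Lebesgue integrals, a standard Fubini argument (justified because $\varphi\in\Cf_c^2$ so that $\nabla\varphi\cdot V(s,\cdot,\bar\mu_s)$ and $D^2\varphi:A(s,\cdot,\bar\mu_s)$ are supported in a compact set on which $V$ and $A$ are a.s.\ bounded by Assumption~\ref{ass_basic_assumption}, and thus the integrands are bounded by a deterministic constant on $\mathrm{supp}\,\varphi$ for every fixed $\omega$) identifies them as $\int_0^t\langle\nabla\varphi\cdot V(s,\cdot,\bar\mu_s),\bar\mu_s\rangle\,ds$ and $\frac{1}{2}\int_0^t\langle D^2\varphi:A(s,\cdot,\bar\mu_s),\bar\mu_s\rangle\,ds$, respectively, which already matches two of the three terms in~\eqref{equ_integral_equality_in_the_definition_of_solution_to_dke}.

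The delicate step is to commute the $\mu_0(du)$-integral with the stochastic integral, that is, to establish
\begin{align}
\int_{\R^d}\!\int_0^t\!\int_\Theta &\nabla\varphi(X(u,s))\cdot G(s,X(u,s),\bar\mu_s,\theta)\,W(d\theta,ds)\,\mu_0(du) \\
&= \int_0^t\!\int_\Theta \langle\nabla\varphi(\cdot)\cdot G(s,\cdot,\bar\mu_s,\theta),\bar\mu_s\rangle\,W(d\theta,ds).
\end{align}
I would invoke a stochastic Fubini theorem for cylindrical Wiener integrals (for example, in the form of Veraar's stochastic Fubini, or \cite[Theorem~2.8]{Gawarecki:2011}). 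The integrability hypothesis required is $\int_{\R^d}\bigl(\int_0^t\|\nabla\varphi(X(u,s))\cdot G(s,X(u,s),\bar\mu_s,\cdot)\|_{\m}^2\,ds\bigr)^{1/2}\mu_0(du)<\infty$ a.s., which holds because the integrand vanishes unless $X(u,s)\in\mathrm{supp}\,\varphi$, on which set $\|G(s,\cdot,\bar\mu_s,\cdot)\|_\m$ is bounded on $[0,T]$ by a (random) constant thanks to Assumption~\ref{ass_basic_assumption} together with the continuity of $\bar\mu_\cdot$ in $\cP_2(\R^d)$ (so that $\{\bar\mu_s:s\in[0,T]\}$ is a.s.\ compact). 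Combining the four identifications gives exactly~\eqref{equ_integral_equality_in_the_definition_of_solution_to_dke}.

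The main obstacle, and the step where care is needed, is the stochastic Fubini application: one must check that the joint integrand is jointly measurable, progressively measurable in $(s,\omega)$, and satisfies the norm bound above, so as to justify the exchange of order of integration against $\mu_0$ and against $W$. The hypothesis that $\bar\mu_\cdot$ is continuous in $\cP_2(\R^d)$ is what supplies the uniform (in $s$) compactness in $\cP_2(\R^d)$ used to bound $\|G(s,x,\bar\mu_s,\cdot)\|_\m$ uniformly on $[0,T]\times\mathrm{supp}\,\varphi$; without it one would have to localize in $s$ by stopping times. The $(\F_t)$-progressive measurability of all the integrands follows from the analogous measurability of $X$ stipulated in Definition~\ref{def_definition_of_solutions_to_sde_with_interaction} and of the coefficients in Assumption~\ref{ass_basic_assumption}.
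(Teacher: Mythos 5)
Your proposal is correct and follows essentially the same route as the paper: apply It\^o's formula to $\varphi(X(u,t))$ pathwise in $u$, identify the quadratic variation of the noise term with $A$ via Assumption~\ref{ass_basic_assumption}, and then integrate against $\mui$ using (stochastic) Fubini, with the boundedness of the integrands supplied by the compact support of $\varphi$ together with the a.s.\ compactness of $\{\bar\mu_s:\,s\in[0,t]\}$ in $\cP_2(\R^d)$. If anything, you are more explicit than the paper about the hypotheses needed for the stochastic Fubini step, which the paper passes over with a single invocation of Fubini's theorem.
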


\begin{proof} 
  We note that for every $u \in \R^d $ the process $X(u,t)$, $t\geq 0$, is a continuous semimartingale in $\R^d $ with quadratic variation
  \[
    [X(u,\cdot )]_t=\int_{ 0 }^{ t } A(s,X(u,s),\bar\mu_s) ds, \quad t\geq 0.
  \]
  Indeed, according to~\eqref{equ_quadratic_variation_of_the_integral} and the polarisation equality, one gets 
  \begin{align}
    [X_i(u,\cdot ),X_j(u, \cdot )]_t&=  \int_{ 0 }^{ t } \langle G_i(s,X(u,s),\bar\mu_s, \cdot ) , G_j(s,X(u,s),\bar\mu_s, \cdot ) \rangle_{\m} ds\\
    &=  \int_{ 0 }^{ t } A_{i,j}(s,X(u,s),\bar\mu_s)ds, \quad t\geq 0,  
  \end{align}
  for all $i,j \in [d]$.
  
  Next, taking $\varphi \in \Cf_c^2(\R^d )$ and applying \Itos formula to $\varphi(X(u,t))$ for every $u \in \R^d $, we get a.s.
  \begin{align}
    \varphi(X(u,t))&= \varphi(u)+ \int_{ 0 }^{ t } \nabla \varphi(X(u,s)) \cdot V(s,X(u,s),\bar\mu_s) ds\\
    &+ \frac{1}{ 2 }\int_{ 0 }^{ t } D^2 \varphi(X(u,s)): A(s,X(u,s),\bar\mu_s) ds\\
    &+ \int_{ 0 }^{ t } \nabla \varphi(X(u,s)) \cdot d N(u,s), \quad t\geq 0.
  \end{align}
  where $N(u,t)=\int_{ 0 }^{ t } \int_{ \Theta }   G(s,X(u,s),\bar\mu_s,\theta)W(d \theta,ds)  $, $t\geq 0$. Using the definition of the stochastic integral with respect to a cylindrical Wiener process, it is easily to see that
  \begin{align}
    \varphi(X(u,t))&= \varphi(u)+ \int_{ 0 }^{ t } \nabla \varphi(X(u,s)) \cdot V(s,X(u,s),\bar\mu_s) ds\\
    &+ \frac{1}{ 2 }\int_{ 0 }^{ t } D^2 \varphi(X(u,s)): A(s,X(u,s),\bar\mu_s) ds\\
    &+ \int_{ 0 }^{ t } \int_{ \Theta } \nabla \varphi(X(u,s)) \cdot G(s,X(u,s),\bar\mu_s,\theta) W(d \theta,ds), \quad t\geq 0.
  \end{align}
  Since $\varphi$ has a compact support and the set $\{ \bar\mu_s,\ s \in [0,t] \}$ is compact a.s. as the image of the compact set $[0,t]$ under the continuous map $s\mapsto\bar\mu_s$, the functions $\nabla \varphi(x) \cdot V(s,x,\bar\mu_s)$, $D^2 \varphi(x):A(s,x,\bar\mu_s)$ and $\|\nabla \varphi(x) \cdot G(s,x,\bar\mu_s,\cdot )\|_{\m}^2$, $s \in [0,t]$, $x \in \R^d $ are bounded a.s., by Assumption~\ref{ass_basic_assumption}. Hence, we may integrate the above expression with respect to $\mui$ and use Fubini's theorem and the equality $\int_{ \R^d  }   \psi(X(u,t))\mui(du)= \langle \psi , \bar{\mu}_t \rangle $ for $\psi \in \Cf_b(\R^d )$ to get that $\bar{\mu}_t$, $t\geq 0$, satisfies~\eqref{equ_integral_equality_in_the_definition_of_solution_to_dke}. This ends the proof of the lemma.
\end{proof}

We next prove Theorem~\ref{the_well_posedness_of_sde_with_interaction}. Since its proof is similar to the proofs of \cite[Theorem~2.2.1]{Dorogovtsev:2007:en}, we will only provide a sketch. 

\begin{proof}[Proof of Theorem~\ref{the_well_posedness_of_sde_with_interaction}] 
  Let $\bar\mu^0_t=\mui$, $t\geq 0$. We define inductively for every $n\geq 1$ the family of continuous processes $\{X_n(u,t),\ t\geq 0\}$, $u \in \R^d $, as solutions to the usual SDEs 
  \begin{equation}\label{equ_equation_for_X_n}
    \begin{split} 
      dX_n(u,t)&= V\left(s,X_n(u,t),\bar\mu_s^{n-1}\right)dt+\int_{ \Theta }   G\left(t,X_n(u,t),\bar\mu_s^{n-1},\theta\right)W(d \theta,dt),\\
      X_n(u,0)&= u
    \end{split}
  \end{equation}
  and
  \begin{equation} 
    \label{equ_equality_for_mu_n}
    \bar\mu_n= \mui\circ X_n^{-1}(\cdot ,t), \quad   t\geq 0.
  \end{equation}
  Using Assumptions~\ref{ass_basic_assumption} and~\ref{ass_lipshitz_continuity}, it is easy to see that SDE~\eqref{equ_equation_for_X_n} has a unique solution for every $u \in \R^d $ and for every $p\geq 2$, $n\geq 1$ and $T>0$ there exists a constant $C>0$, independent of $u$, such that 
  \begin{equation} 
  \label{equ_bound_of_moments_for_x_n}
  \e \sup\limits_{ t \in [0,T] }|X_n(u,t)|^p\leq C(1+|u|^p)
  \end{equation} for all $n\geq 1$ and $u \in \R^d $.
  
  We will first show that $X_n$ satisfies the measurability assumptions from Definition~\ref{def_definition_of_solutions_to_sde_with_interaction}. For this we will prove that $X_n$ has a continuous version in $u$. Taking $u,v \in \R^d $, $T>0$, $p \geq 2$, and using H\"older's inequality and the Burkholder--Davis--Gundy inequality, we estimate  for each $t \in [0,T]$
  \begin{align}
    &\e{\sup\limits_{ s \in [0,t] }\left|X_n(u,s)-X_n(v,s)\right|^{p}}\leq C_1|u-v|^p\\
  &\qquad+ C_1 \e{\int_{ 0 }^{ t } \left| V\left( s,X_n(u,s),\bar\mu_s^{n-1} \right)-V\left( s,X_n(v,s),\bar\mu_s^{n-1} \right) \right|^{p}ds}\\
  &\qquad+C_1 \e{ \int_{ 0 }^{ t }  \left\| |G(s,X_n(u,s),\bar\mu_s^{n-1},\cdot)-G(s,X_n(v,s),\bar\mu_s^{n-1},\cdot)|\right\|^{p}_{\m}ds    },
  \end{align}
  where $C_1$ is a constant that depends only on $p,T$ and $d$. By Assumption~\ref{ass_lipshitz_continuity} and Gronwall's lemma, we get 
  \begin{equation} 
  \label{equ_estimate_for_xut_and_xvt}
    \e{\sup\limits_{ t \in [0,T] }\left|X_n(u,t)-X_n(v,t)\right|^{p}}\leq C|u-v|^{p},
  \end{equation}
  where $C$ also depends only on $p,T,L$ and $d$.  Therefore, we can conclude from \cite[Theorem~3.23]{Kallenberg:2002} that $X_n(u,\cdot )$, $u \in \R^d $, has a continuous version as a $\Cf([0,\infty))$-valued process, which will be also denoted by $X_n$. Note that the choice of different version of $X_n(u,\cdot )$, $u \in \R^d $, does not change the fact that $X_n(u,\cdot )$ solves equation~\eqref{equ_equation_for_X_n} for every $u \in \R^d $. Thus, the desired measurability of $X_n$ follows from the continuity of $(u,t)\mapsto X_n(u,t)$ a.s. We also note that  $\bar\mu^n$, defined by~\eqref{equ_equality_for_mu_n}, is an $(\F_t)$-progressively measurable continuous process in $\cP_2(\R^d )$, where the continuity follows from bound~\eqref{equ_bound_of_moments_for_x_n} and de la Vall\'ee-Poussin \cite[Theorem~1.8]{Liptser:2001}. Note that $\bar\mu_{\cdot }$ does not depend on the choice of a version of $X_n(u,\cdot )$, $u \in \R^d $.

  Next, using Assumption~\ref{ass_lipshitz_continuity} again, we can estimate for every $T>0$, $t \in [0,T]$, $u \in \R^d $ and $n\geq 1$ 
  \begin{align}
    \e{ \sup\limits_{ s \in [0,t] }\left| X_{n+1}(u,s)-X_n(u,s) \right|^{2} }&\leq C \int_{ 0 }^{ t } \e{ \sup\limits_{ r \in [0,s] }\left| X_{n+1}(u,r)-X_n(u,r) \right|^2 }d s\\
    &+ C \int_{ 0 }^{ t } \e{\sup\limits_{ r \in [0,s] }\W_2^2\left( \bar\mu^{n}_r,\bar\mu^{n-1}_r \right)}ds,
  \end{align}
  where $C$ is independent of $u$, $t$ and $n$. By Gronwall's lemma, we have 
  \begin{equation} 
  \label{equ_estimate_of_x_n_via_mu_n}
  \e{ \sup\limits_{ s \in [0,t] }\left| X_{n+1}(u,s)-X_n(u,s) \right|^2 }\leq Ce^{CT} \int_{ 0 }^{ t } \e{\sup\limits_{ r \in [0,s] }\W_2^2\left( \bar\mu^{n}_r,\bar\mu^{n-1}_r \right)}ds.
  \end{equation}
  Using the definition of the Wasserstein distance, we further estimate for $t \in [0,T]$ and $n\geq 2$
  \begin{align}
    \e{\sup\limits_{ s \in [0,t] }\W_2^2\left( \bar\mu^{n}_s,\bar\mu^{n-1}_s \right)}&\leq \e{ \sup\limits_{ s \in [0,t] } \int_{ \R^d  }   |X_{n}(u,s)-X_{n-1}(u,s)|^2\mui(du)  }\\
    &\leq \int_{ \R^d  }   \e{ \sup\limits_{ s \in [0,t] }\left| X_{n}(u,s)-X_{n-1}(u,s) \right|^2 }\mui(du)\\
    &\leq Ce^{CT}\int_{ 0 }^{ t } \e{ \sup\limits_{ r \in [0,s]}\W_2^2\left( \bar\mu_r^{n-1},\bar\mu_r^{n-2} \right) }ds,  
  \end{align}
  where we have used the equality $\mui(\R^d )=1$ and~\eqref{equ_estimate_of_x_n_via_mu_n} in the last step. Iterating the above inequality $n-2$ times, we get 
  \begin{align}
    \e \sup\limits_{ t \in [0,T] }&\W_2^2\left( \bar\mu^n_t,\bar\mu^{n-1}_t \right)\leq C^2e^{2CT}\int_{ 0 }^{ t }\int_{ 0 }^{ s_1 } \e{ \sup\limits_{ r \in [0,s_2]}\W_2^2\left( \bar\mu_r^{n-2},\bar\mu_r^{n-3} \right) }ds_2ds_1\leq \dots\\
    &\leq C^{n-1}e^{(n-1)CT}\int_{ 0 }^{ t }\int_{ 0 }^{ s_1 }\dots \int_{ 0 }^{ s_{n-2} }   \e{ \sup\limits_{ r \in [0,s_{n-1}]}\W_2^2\left( \bar\mu_r^{1},\bar\mu_r^{0} \right) }ds_{n-1}\dots ds_2ds_1\\
    &\leq \frac{C^{n-1}e^{(n-1)CT}T^{n-1} }{(n-1)!}\,\e \sup\limits_{ t \in [0,T] }\W_2^2\left( \bar\mu_t^1,\bar\mu_t^0 \right).
  \end{align}
  The finiteness of the expectation on the right hand side of the above inequality follows from~\eqref{equ_bound_of_moments_for_x_n}.  By~\eqref{equ_estimate_of_x_n_via_mu_n},
  \[
    \e{ \sup\limits_{ t \in [0,T] }\left| X_{n+1}(u,t)-X_n(u,t) \right|^2 }\leq\frac{C^{n}e^{nCT}T^{n} }{(n-1)!}\,\e \sup\limits_{ t \in [0,T] }\W_2^2\left( \bar\mu_t^1,\bar\mu_t^0 \right).
  \]
  Next, using the Borel--Cantelli lemma, it is easily seen that there exist continuous processes $\bar\mu_t$, $t\geq 0$, in $\cP_2(\R^d )$ and $X(u,t)$, $t\geq 0$, in $\R^d $, $u \in \R^d $, such that for every $T>0$ and $u \in \R^d $
  \begin{align}
    & \sup\limits_{ t \in [0,T] }\left| X_n(u,t)-X(u,t) \right|\to 0\quad \mbox{and} \quad  \quad \sup\limits_{ t \in [0,T] }\W_2\left( \bar\mu^{n}_t,\bar\mu_t \right)  \to 0\quad \mbox{a.s.}
  \end{align}
  as $n \to \infty$. Moreover, for every $u \in \R^d $ a.s.
  \[
    X(u,t)=u+\int_{ 0 }^{ t } V(s,X(u,s),\bar{\mu}_s)ds+\int_{ 0 }^{ t } \int_{ \Theta }   G(s,X(u,s),\bar{\mu}_s,\theta) W(d \theta,ds)
  \]
  for all $t\geq 0$, by Assumption~\ref{ass_lipshitz_continuity}.  Since the constant in inequality~\eqref{equ_estimate_for_xut_and_xvt} does not depend on $u,v$ and $n$, the inequality remains true for $X_n$ replaced by $X$, by Fatou's lemma. Therefore, the $\Cf([0,\infty))$-valued random field $X(u,\cdot )$, $u \in \R^d $, has a continuous version, which is also denoted by $X$. This implies that $X$ satisfies the measurability assumptions of Definition~\ref{def_definition_of_solutions_to_sde_with_interaction}. We also remark that for every $\varphi \in \Cf_b(\R^d )$ and $t\geq 0$ one has a.s.
  \[
    \langle \varphi , \bar\mu_t \rangle=\lim_{ n\to\infty }\langle \varphi ,\bar\mu^n_t  \rangle= \lim_{ n\to\infty }\int_{ \R^d  }   \varphi(X_n(u,t))\mui(du)=\int_{ \R^d  }   \varphi(X(u,t))\mui(du),
  \]
  by the dominated convergence theorem. This completes the proof of the well-posedness of SDE~\eqref{equ_equation_with_interaction}.

  We next show the finiteness of moments of the solution $X$. For each $u \in \R^d $ and $n,m \in \N$ we define the stopping times
  \[
    \tau_u^n= \inf\left\{ t\geq 0:\ |X(u,t)|\geq n \right\} \quad \mbox{and}\quad \sigma^n=\inf\left\{ t\geq 0:\ \W_2(\bar{\mu}_t,\delta_0)\geq n \right\}.
  \]
  Let also $\tau^{n,m}_u=\tau_u^n \wedge \sigma^m$.  Then, using H\"older's inequality, the Burkholder--Davis--Gundy inequality (see, e.g., Theorem~3.28~\cite{Karatzas:1991}) and Remark~\ref{rem_linear_growth_of_coefficients_under_lipschitz_conditions}, we estimate for $p\geq 2$ and every $n,m \in \N$, $t \in [0,T]$
  \begin{align}
    &\e \sup\limits_{ s \in [0,t] }|X(u,s\wedge \tau^{n,m}_u)|^p \\\leq& C|u|^p+C\e \sup\limits_{ s \in [0,t] }\left|\int_{ 0 }^{ s\wedge\tau^{n,m}_u} V(r,X(u,r),\bar{\mu}_r)dr \right|^p\\
    &+C\e \sup\limits_{ s \in [0,t] }\left|\int_{ 0 }^{ s \wedge \tau^{n,m}_u} G(r,X(u,s),\bar\mu_r,\theta)W(d \theta,dr) \right|^p\\
    \leq & C|u|^p+C\e \int_{ 0 }^{ t \wedge \tau_u^{n,m}} \left|V(r,X(u,r),\bar\mu_r)\right|^pds\\
    &+ C\e \left( \int_{ 0 }^{ t \wedge \tau_u^{n,m}} \left\|\left|G(s,X(u,s),\bar\mu_s,\cdot )\right|\right\|_{\m}^2ds  \right)^{ \frac{p}{ 2 }}\\
    \leq & C|u|^p+C \e \int_{ 0 }^{ t \wedge \tau_u^{n,m}} \left( 1+|X(u,s)|^p+\W_2^p(\bar\mu_s,\delta_0) \right)ds \\
    \leq & C(1+|u|^p)+C\e \int_{ 0 }^{ t } |X(u,s \wedge \tau_u^{n,m})|^pds\\
    &+ C\e \int_{ 0 }^{ t } \W_2^p(\bar\mu_{s \wedge \tau_u^{n,m}},\delta_0)ds \\
    \leq & C(1+|u|^p)+C \e \int_{ 0 }^{ t } \sup\limits_{ r \in [0,s] }|X(u,r \wedge \tau_u^{n,m})|^pds\\
    &+ C \int_{ 0 }^{ t } \e\W_2^p(\bar\mu_{s \wedge \tau_u^{n,m}},\delta_0)ds,
  \end{align}
  where the constant $C$ depends only on $L,p,d$ and $T$.  By Gronwall's lemma, we get 
  \begin{align}
    \e \sup\limits_{ s \in [0,t] }\left|X(u,s \wedge \tau^{n,m}_u)\right|^p\leq Ce^{CT}\left( 1+|u|^p+ \int_{ 0 }^{ t }\e \W_2^p(\bar\mu_{s \wedge \tau^{n,m}_u},\delta_0)ds  \right).
  \end{align}
  Making $n\to\infty$ and using Fatou's lemma, we obtain 
  \begin{equation} 
  \label{equ_inequality_for_expectation_of_x_p}
    \e \sup\limits_{ s \in [0,t] }\left|X(u,s \wedge \sigma^m)\right|^p\leq Ce^{CT}\left( 1+|u|^p+ \int_{ 0 }^{ t }\e \W_2^p(\bar\mu_{s \wedge \sigma^m},\delta_0) ds \right)
  \end{equation}
  for all $t \in [0,T]$, $u \in \R^d $ and $m\geq 1$. In order to bound the integral in the inequality above, we first estimate
  \begin{align}
    &\e \sup\limits_{ s \in [0,t] } \W_2^p(\bar\mu_{s \wedge \sigma^m},\delta_0)\\&=\e\left(\sup\limits_{ s \in [0,t] }\int_{ \R^d }   |X(u,s \wedge \sigma^m)|^2 \mui(du)\right)^{ \frac{p}{2 }}  \\
    &\leq \e \int_{ \R^d  }   \sup\limits_{ s \in[0,t] }|X(u,s\wedge \sigma^m)|^p \mui(du) \\
    & \leq C e^{CT}\int_{ \R^d  }   \left( 1+|u|^p+ \int_{ 0 }^{ t } \e\W_2^p(\bar\mu_{s \wedge \sigma^m},\delta_0)ds \right)\mui(du)\\
    &= C_1\left( 1+\langle \phi_p , \mui \rangle \right)+ C_1 \int_{ 0 }^{ t } \e\W_2^p(\bar\mu_{s \wedge \sigma^m},\delta_0)ds \\
    &= C_1\left( 1+\langle \phi_p , \mui \rangle \right)+ C_1 \int_{ 0 }^{ t } \e \sup\limits_{ r \in[0,s] }\W_2^p(\bar\mu_{r \wedge \sigma^m},\delta_0)ds,
  \end{align}
  where $C_1=C e^{CT}$.  Hence, Gronwall's lemma yields  
  \[
    \e \sup\limits_{ s \in[0,t] }\W_2^p(\bar\mu_{s \wedge \sigma^m},\delta_0)\leq C_1e^{C_1T}(1+\langle \phi_p , \mui \rangle) 
  \]
  for all $t \in [0,T]$ and $m \geq 1$. Combining the inequality above with~\eqref{equ_inequality_for_expectation_of_x_p}, we obtain 
  \[
    \e \sup\limits_{ s \in [0,t] }|X(u,s \wedge \sigma^m)|^p\leq C\left( 1+\langle \phi_p , \mui \rangle+|u|^p \right),
  \]
  where the constant $C$ depends only on $L,p,d$ and $T$. Now, making $m\to\infty$ and using Fatou's lemma, we complete the proof of the theorem. 
\end{proof}

We will further prove the continuous dependence of $\bar\mu_{\cdot }$ on the initial condition. This is known for SDEs with interaction with non-random coefficients driven by a Brownian sheet (see \cite[Exercise~5.3.1]{Dorogovtsev:2007:en}).

\begin{theorem} 
  \label{the_continuous_dependents_of_solutions_to_sde_with_interaction}
  Let the coefficients of SDE~\eqref{equ_equation_with_interaction} satisfy Assumptions~\ref{ass_basic_assumption},~\ref{ass_lipshitz_continuity} and let $X_i$, $i \in [2]$, be solutions to~\eqref{equ_equation_with_interaction} with $\mui=\mui^i \in \cP_2(\R^d )$. Then, for every $T>0$, there exists a constant $C>0$ such that 
  \begin{equation} 
  \label{equ_estimate_for_x1u_minus_x2v}
    \e \sup\limits_{ t \in [0,T] }|X_1(u,t)-X_2(v,t)|^2\leq C \left(|u-v|^2+\W_2^2(\mui^1,\mui^2)\right), \quad u,v \in \R^d ,
  \end{equation}
  and
  \begin{equation} 
  \label{equ_estimate_for_bar_mu1_bar_mu2}
  \e \sup\limits_{ t \in [0,T] }\W^2_2\left(\bar\mu^1_t,\bar\mu^2_t\right)\leq C\W_2^2\left(\mui^1,\mui^2\right),
  \end{equation}
  where $\bar\mu^i_t=\mui^i\circ X_i(\cdot ,t)$.
\end{theorem}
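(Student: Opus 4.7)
The plan is to obtain both inequalities by a two-stage Gronwall argument, using an optimal coupling of $\mui^1$ and $\mui^2$ to reduce the Wasserstein distance of the flows to a pointwise quantity.

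First, for fixed $u,v\in\R^d$, subtract the integral equations for $X_1(u,\cdot)$ and $X_2(v,\cdot)$, take the square of the supremum, and apply the Burkholder--Davis--Gundy inequality to the noise term exactly as in the proof of Theorem~\ref{the_well_posedness_of_sde_with_interaction}. Using Assumption~\ref{ass_lipshitz_continuity} on both $V$ and $G$ (and the identity $\|\,|G|\,\|_{\m}^2$ in the BDG step), this yields, for any $t\in[0,T]$,
\begin{equation}
\label{eq:plan_pointwise}
\e\sup_{s\le t}|X_1(u,s)-X_2(v,s)|^2
\le C|u-v|^2
+ C\int_0^t \e\sup_{r\le s}|X_1(u,r)-X_2(v,r)|^2\,ds
+ C\int_0^t \e\sup_{r\le s}\W_2^2(\bar\mu^1_r,\bar\mu^2_r)\,ds,
\end{equation}
with $C$ depending only on $L,d,T$.

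Second, choose an optimal coupling $\pi\in\cP_2(\R^d\times\R^d)$ of $(\mui^1,\mui^2)$, so that $\int|u-v|^2\,\pi(du,dv)=\W_2^2(\mui^1,\mui^2)$. Since the joint law of $(X_1(\cdot,s),X_2(\cdot,s))$ under $\pi$ is a coupling of $\bar\mu^1_s$ and $\bar\mu^2_s$, we have $\W_2^2(\bar\mu^1_s,\bar\mu^2_s)\le\int|X_1(u,s)-X_2(v,s)|^2\,\pi(du,dv)$ for every $s$. Set
\[
\Phi(t):=\int_{\R^d\times\R^d}\e\sup_{s\le t}|X_1(u,s)-X_2(v,s)|^2\,\pi(du,dv).
\]
The measurability of $X_i$ in $(u,\omega)$ from Definition~\ref{def_definition_of_solutions_to_sde_with_interaction} (and the continuous modification guaranteed by Theorem~\ref{the_well_posedness_of_sde_with_interaction}) legitimates the use of Fubini and of $\sup_s\!\int\le\int\sup_s$, giving
\[
\e\sup_{s\le t}\W_2^2(\bar\mu^1_s,\bar\mu^2_s)\le \Phi(t).
\]

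Third, integrate~\eqref{eq:plan_pointwise} against $\pi$ to obtain
\[
\Phi(t)\le C\W_2^2(\mui^1,\mui^2)+2C\int_0^t\Phi(s)\,ds,
\]
and Gronwall's lemma yields $\Phi(t)\le C'\W_2^2(\mui^1,\mui^2)$ on $[0,T]$. This immediately gives~\eqref{equ_estimate_for_bar_mu1_bar_mu2}. Substituting the resulting bound for $\e\sup_{r\le s}\W_2^2(\bar\mu^1_r,\bar\mu^2_r)$ back into~\eqref{eq:plan_pointwise} and applying Gronwall's lemma a second time delivers~\eqref{equ_estimate_for_x1u_minus_x2v}.

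The only real technical point is the justification of Fubini and the exchange of supremum with $\pi$-integration in the second stage; both rest on the joint $(u,\omega)$-measurability built into Definition~\ref{def_definition_of_solutions_to_sde_with_interaction} and on the continuous-in-$u$ modification of $X_i$ produced in the proof of Theorem~\ref{the_well_posedness_of_sde_with_interaction}. Everything else is a straightforward BDG--Lipschitz--Gronwall loop parallel to the well-posedness argument.
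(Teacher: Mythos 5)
Your proposal is correct and follows essentially the same route as the paper: a BDG--Lipschitz estimate on $X_1(u,\cdot)-X_2(v,\cdot)$, a coupling of the initial measures pushed forward by the flows to control $\W_2^2(\bar\mu^1_s,\bar\mu^2_s)$, and Gronwall applied twice. The only cosmetic differences are that you fix an optimal coupling up front and run Gronwall on the $\pi$-integrated quantity $\Phi$, whereas the paper first applies Gronwall pointwise in $(u,v)$, then integrates against an arbitrary coupling and takes the infimum at the end; both orderings work, with the a priori finiteness of $\Phi$ (needed for Gronwall) supplied by the moment bound of Theorem~\ref{the_well_posedness_of_sde_with_interaction}.
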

\begin{proof} 
  Using H\"older's inequality, the Burkholder--Davis--Gundy inequality and Assumption~\ref{ass_lipshitz_continuity}, we estimate for each $t \in [0,T]$ and $u,v \in \R^d $
  \begin{align}
    \e \sup\limits_{ s \in [0,t] }& |X_1(u,s)-X_2(v,s)|^2\leq 3|u-v|^2\\
    &+ 3 t \e \int_{ 0 }^{ t } \left| V(s,X_1(u,s),\bar\mu_s^1)-V(s,X_2(v,s),\bar\mu_s^2) \right|^2 ds\\
  &+3\e \sup\limits_{ s \in [0,t] }\left| \int_{ 0 }^{ s } \int_{ \Theta }   \left(G(r,X_1(u,r),\bar\mu_r^1,\theta)-G(r,X_2(v,r),\bar\mu_r^2,\theta)\right)W(d \theta,dr)   \right|^2\\
  &\leq 3|u-v|^2+C \int_{ 0 }^{ t } \left(\e|X_1(u,s)-X_2(v,s)|^2+\e\W_2^2\left( \bar\mu_s^1,\bar\mu_s^2 \right)\right)ds\\
  &\leq 3|u-v|^2+C \int_{ 0 }^{ t } \e \sup\limits_{ r \in [0,s] }|X_1(u,r)-X_2(v,r)|^2ds +C \int_{ 0 }^{ t } \e\W_2^2\left( \bar\mu_s^1,\bar\mu_s^2 \right)ds,
  \end{align}
  where $C$ is independent of $u,v$ and $t$.  By Gronwall's lemma, we get for each $t \in [0,T]$
  \begin{equation} 
  \label{equ_estimate_for_diff_of_x1_and_x2_}
  \e \sup\limits_{ s \in [0,t] }|X_1(u,s)-X_2(v,s)|^2\leq C|u-v|^2+C \int_{ 0 }^{ t } \e\W_2^2\left( \bar\mu_s^1,\bar\mu_s^2 \right)ds. 
  \end{equation}

  We next take an arbitrary probability measure $\chi$ on $\R^d \times \R^d $ with marginals $\mui^i$, $i \in [2]$. Then the probability measure $\chi_t$ defined by $\chi_s(B)=\chi\left( \{(u,v):\ (X_1(u,s),X_2(v,s)) \in B\}\right) $, $B \in \B(\R^d \times \R^d )$, that is the pushforward of $\chi$ under the map $(u,v)\mapsto (X_1(u,s),X_2(v,s))$, has the marginals $\bar\mu_s^i$, $i \in [2]$, for each $t \in [0,T]$. Thus, by the estimate above, we obtain 
  \begin{align}
    \e \sup\limits_{ s \in [0,t] }\W^2_2(\bar\mu_s^1,\bar\mu_s^2)&\leq \e \sup\limits_{ s \in [0,t] }\int_{ \R^d  }   \int_{ \R^d  }   |x-y|^2\chi_s(dx,dy)\\
    &\leq  \int_{ \R^d  }   \int_{ \R^d  }   \e \sup\limits_{ s \in [0,t] }|X_1(u,s)-X_2(v,s)|^2\chi(du,dv)\\
    &\leq C\int_{ \R^d  }   \int_{ \R^d  }   |u-v|^2\chi(du,dv)+ C\int_{ 0 }^{ t } \e \W_2^2(\bar\mu_s^1,\bar\mu_s^2) ds.
  \end{align}
  Taking infimum over all $\chi$ with marginals $\mui^i$, $i \in [2]$, we get the inequality 
  \begin{align}
    \e \sup\limits_{ s \in [0,t] }\W^2_2(\bar\mu_s^1,\bar\mu_s^2)&\leq  C\W_2^2(\mui^1,\mui^2)+ C\int_{ 0 }^{ t } \e \W_2^2(\bar\mu_s^1,\bar\mu_s^2) ds\\
    &\leq C\W_2^2(\mui^1,\mui^2)+ C\int_{ 0 }^{ t } \e \sup\limits_{ r \in [0,s] } \W_2^2(\bar\mu_r^1,\bar\mu_r^2) ds,
  \end{align}
  where $C$ depends only on $T$, $L$ and $d$.  Next, Gronwall's lemma implies
  \[
    \e \sup\limits_{ t \in [0,T] }\W_2^2(\bar\mu_s^1,\bar\mu_s^2)\leq Ce^{CT}\W_2^2(\mui^1,\mui^2).
  \]
  This completes the proof of~\eqref{equ_estimate_for_bar_mu1_bar_mu2}.

  The estimate~\eqref{equ_estimate_for_x1u_minus_x2v} now directly follows from~\eqref{equ_estimate_for_bar_mu1_bar_mu2} and~\eqref{equ_estimate_for_diff_of_x1_and_x2_}. This concludes the proof of the proposition.
\end{proof}

We will also need to use the fact that two paths of particles described by an SDE with interaction never meet. 

\begin{lemma} 
  \label{lem_two_particles_newer_meet}
  Let $V$ and $G$ satisfy Assumptions~\ref{ass_basic_assumption},~\ref{ass_lipshitz_continuity} and let $X$ be a solution to~\eqref{equ_equation_with_interaction}. Then for every $u,v \in \R^d $, $u \not= v$, a.s. $X(u,t)\not= X(v,t)$ for all $t\geq 0$.
\end{lemma}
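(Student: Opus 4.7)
The plan is to apply \Itos formula to $\log|X(u,t)-X(v,t)|^2$ up to the first time the difference becomes small, and show via Chebyshev's inequality that this hitting time diverges with full probability as the threshold vanishes. Set $Z(t):=X(u,t)-X(v,t)$. Subtracting the SDEs~\eqref{equ_equation_with_interaction} for $X(u,\cdot)$ and $X(v,\cdot)$ gives
\begin{equation}
dZ(t)=\bigl[V(t,X(u,t),\bar{\mu}_t)-V(t,X(v,t),\bar{\mu}_t)\bigr]dt + dM(t),
\end{equation}
with martingale part $M(t):=\int_0^t\int_\Theta\bigl[G(s,X(u,s),\bar{\mu}_s,\theta)-G(s,X(v,s),\bar{\mu}_s,\theta)\bigr]W(d\theta,ds)$. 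By Assumption~\ref{ass_lipshitz_continuity}, the drift is bounded in norm by $L|Z(t)|$, and by Remark~\ref{rem_about_quadratic_variation_of_solution_to_dk_equation}, the covariation matrix $a(t)$ of $M$ has trace $\tr a(t)=\||G(t,X(u,t),\bar{\mu}_t,\cdot)-G(t,X(v,t),\bar{\mu}_t,\cdot)|\|_{\m}^2\leq L^2|Z(t)|^2$.

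For $\eps\in(0,|u-v|)$ put $\tau_{\eps}:=\inf\{t\geq 0 : |Z(t)|\leq \eps\}$, which is strictly positive a.s.\ by path continuity of $Z$. On $[0,\tau_{\eps}]$ the function $f(z):=\log|z|^2$ is smooth, with $\nabla f(z)=2z/|z|^2$ and $D^2 f(z)=2I/|z|^2-4z\otimes z/|z|^4$. Applying \Itos formula and using $Z^{T}a Z\leq |Z|^2\tr a\leq L^2|Z|^4$ together with the drift bound, every power of $|Z|$ cancels and one obtains
\begin{equation}
\log|Z(t\wedge\tau_{\eps})|^2 = \log|u-v|^2 + \int_0^{t\wedge\tau_{\eps}}b(s)\,ds + N(t\wedge\tau_{\eps}),
\end{equation}
where $b$ is pointwise bounded by some constant $C_1=C_1(L)$ and $N(t):=\int_0^{t}(2Z(s)/|Z(s)|^2)\cdot dM(s)$ is a continuous martingale with $d[N](s)\leq C_2\,ds$, $C_2=C_2(L)$. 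In particular, for every $T>0$ one has $\e N(t\wedge\tau_{\eps})^2\leq C_2 T$ on $[0,T]$.

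Fix $T>0$. On $\{\tau_{\eps}\leq T\}$ the left-hand side of the identity above equals $2\log\eps$, so that for all sufficiently small $\eps$,
\begin{equation}
|N(\tau_{\eps}\wedge T)|\;\geq\; 2|\log\eps|-\bigl|\log|u-v|^2\bigr|-C_1 T=:R_{\eps}>0.
\end{equation}
Chebyshev's inequality then yields $\p\{\tau_{\eps}\leq T\}\leq C_2 T/R_{\eps}^2\to 0$ as $\eps\downarrow 0$, and since $\{\tau_0\leq T\}\subset\bigcap_{\eps>0}\{\tau_{\eps}\leq T\}$ for $\tau_0:=\lim_{\eps\downarrow 0}\tau_{\eps}$, one concludes $\p\{\tau_0\leq T\}=0$ for every $T>0$, which proves $X(u,t)\neq X(v,t)$ for all $t\geq 0$ a.s. The main technical point is the exact cancellation of the $|Z|^{-2}$ singularities in the \Itos expansion of $\log|Z|^2$ against the Lipschitz scaling of $V$ and $G$, producing bounded drift and bounded diffusion rate for $\log|Z|^2$; the rest is a standard Chebyshev-type localisation.
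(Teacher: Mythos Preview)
Your argument is correct and complete. The reference to Remark~\ref{rem_about_quadratic_variation_of_solution_to_dk_equation} is slightly misplaced---that remark concerns the quadratic variation of $\langle\varphi,\mu_t\rangle$ for the SPDE, not of the difference process---but the formula you use for $a(t)$ follows directly from~\eqref{equ_quadratic_variation_of_the_integral} and the polarisation identity, so the mathematics is fine.

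Your route differs from the paper's. The paper applies \Itos formula to $1/|Z(t)|$, obtains an integral inequality of Gronwall type for $\e\,1/|Z(t\wedge\sigma_n)|$ with $\sigma_n:=\inf\{t:|Z(t)|\le 1/n\}$, and then passes $n\to\infty$ via Fatou to conclude $\e\,1/|Z(t\wedge\sigma)|<\infty$, forcing $\sigma=\infty$. You instead apply \Itos formula to $\log|Z(t)|^2$, which yields a semimartingale with uniformly bounded drift and bounded quadratic variation on $[0,\tau_\eps]$, and then use Chebyshev's inequality on the martingale part to show $\p\{\tau_\eps\le T\}\to 0$. Both arguments exploit the same cancellation---Lipschitz scaling of $V,G$ removes the $|Z|^{-2}$ singularity in the generator---but package it differently: the paper's version gives an explicit exponential bound on $\e\,|Z(t)|^{-1}$ via Gronwall, while your version avoids Gronwall and Fatou altogether at the cost of an additional (standard) Chebyshev step. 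Either approach is a recognised technique for non-collision of Lipschitz diffusions.
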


\begin{proof} 
  We set $X(u,v,t)=X(u,t)-X(v,t)$, $t\geq 0$, and introduce the following $(\F_t)$-stopping times 
  \[
    \sigma_n=\inf\left\{ t\geq 0:\ |X(u,v,t)|\leq  \frac{1}{ n } \right\}, \quad n\geq 1.
  \]
  Then by \Itos formula and Assumption~\ref{ass_lipshitz_continuity}, we can estimate for every $T>0$ and $t \in [0,T]$
  \[
    \e \frac{1}{ |X(u,v,t\wedge \sigma_n)| }\leq \frac{1}{ |u-v| }+C \int_{ 0 }^{ t } \e \frac{1}{ |X(u,v,s\wedge \sigma_n)| }ds ,
  \]
  for some constant $C$ independent of $t$ and $n$. By Gronwall's lemma, $\e \frac{1}{ |X(u,v,t\wedge \sigma_n)| }< \frac{e^{CT}}{ |u-v| }$. Passing to the limit as $n\to\infty$ and using Fatou's lemma, one gets $\e \frac{1}{ |X(u,v,t\wedge \sigma)| }< \frac{e^{CT}}{ |u-v| }$, $t \in [0,T]$, where $\sigma$ is defined similarly to $\sigma_n$ with $ \frac{1}{ n }$ replaced by $0$. Therefore, $X(u,v,t\wedge \sigma)>0$ a.s. for all $t\geq 0$. This implies $t< \sigma$ a.s. for all $t\geq 0$. Hence, $\sigma= +\infty$ a.s. which completes the proof of the lemma.
\end{proof}

\subsection{Uniqueness and superposition principle}
\label{sub:uniqueness_and_superposition_principle}

The main goal of this section is to prove the uniqueness of solutions to the stochastic mean-field equation. We will consider separately a few types of initial particle distributions: atomic, with $L_2$-density, and with a finite second moment. Depending on the type of initial conditions, we will need additional assumptions on the coefficients of the equation, which will appear in corresponding sections.

\subsubsection{Atomic initial conditions}
\label{ssub:atomic_initial_conditions}

We remind the reader that $\phi_m(x)=|x|^m$, $x \in \R^d $, and set 
\begin{equation} 
  \label{equ_definition_of_hn}
  \cN_n=\left\{ \mu \in \cP_2(\R^d ):\ \mu=\sum_{ l=1 }^{ n } \alpha_l \delta_{x^l}\ \mbox{for some}\ \alpha_l\geq 0\ \mbox{and}\ x^l \in \R^d  \right\}, \quad n\geq 1.
\end{equation}
It is easily seen that $\cN_n$ is a closed subspace of $\cP_2(\R^d )$ for each $n\geq 1$.
\begin{theorem} 
  \label{the_uniqueness_for_atomic_initial_conditions}
  Let $V,G,A$ satisfy Assumptions~\ref{ass_basic_assumption},~\ref{ass_lipshitz_continuity}, and let $\mui \in \cN_n$ for some $n\geq 1$. Then there exists a unique solution $\mu_t$, $t\geq 0$, to~\eqref{equ_mean_field_equation_with_corr_noise} started from $\mui$  and satisfying
  \begin{equation} 
  \label{equ_boundedness_of_pth_moment}
    \sup\limits_{ t \in [0,T] }\left\langle \phi_p , \mu_t \right\rangle< \infty \quad \mbox{a.s.}
  \end{equation}
  for $p=n^2+n$ and all $T>0$. Moreover, $\mu_t$, $t\geq 0$, is a superposition solution to~\eqref{equ_mean_field_equation_with_corr_noise}.
\end{theorem}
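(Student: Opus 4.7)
Existence of a superposition solution satisfying~\eqref{equ_boundedness_of_pth_moment} is immediate from Theorem~\ref{the_existence_of_solutions_to_dke} and Corollary~\ref{cor_moment_preserving_property}: for the atomic initial condition $\mui=\sum_{l=1}^n\alpha_l\delta_{x^l}$, the unique strong solution $X$ of~\eqref{equ_equation_with_interaction} yields $\bar\mu_t=\sum_{l=1}^n\alpha_l\delta_{Y_l(t)}$ with $Y_l(t):=X(x^l,t)$, and the identity $\langle\phi_p,\bar\mu_t\rangle=\sum_l\alpha_l|Y_l(t)|^p$ combined with Theorem~\ref{the_well_posedness_of_sde_with_interaction} gives local boundedness of moments of every order. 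The content of the theorem is therefore uniqueness within the stated moment class.

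By Corollary~\ref{cor_superposition_principle}, uniqueness reduces to showing that every solution $\mu_t$ of~\eqref{equ_mean_field_equation_with_corr_noise} satisfying~\eqref{equ_boundedness_of_pth_moment} is a strong superposition solution. Because $\mui$ is supported on $n$ points, this is equivalent to the representation $\mu_t=\sum_{l=1}^n\alpha_l\delta_{Y_l(t)}$ for adapted continuous $Y_l$ with $Y_l(0)=x^l$. Once this representation is available, substituting it into the weak formulation~\eqref{equ_integral_equality_in_the_definition_of_solution_to_dke} with suitable separating test functions forces $(Y_l)_{l\in[n]}$ to be a strong solution of the finite-dimensional SDE
\[
dY_l(t)=V\bigl(t,Y_l(t),\textstyle\sum_j\alpha_j\delta_{Y_j(t)}\bigr)dt+\int_\Theta G\bigl(t,Y_l(t),\textstyle\sum_j\alpha_j\delta_{Y_j(t)},\theta\bigr)W(d\theta,dt),
\]
which is pathwise unique under Assumption~\ref{ass_lipshitz_continuity}, identifying $Y_l$ with $X(x^l,\cdot)$ and hence $\mu=\bar\mu$.

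The hard step, and the core of the proof, is to establish this atomic representation. My approach is a direct Ito-type comparison of $\mu_t$ with the candidate $\bar\mu_t$: first, localize by stopping times $\tau_R=\inf\{t:\langle\phi_p,\mu_t\rangle+\langle\phi_p,\bar\mu_t\rangle>R\}$, which tend a.s.\ to infinity by the assumed moment bound together with Corollary~\ref{cor_moment_preserving_property}; then, after mollification needed to take polynomial-type test functions in~\eqref{equ_integral_equality_in_the_definition_of_solution_to_dke}, apply Ito's formula to a smooth functional of the pair $(\mu_t,\bar\mu_t)$ that dominates a Wasserstein-type distance. The correlated noise structure in~\eqref{equ_mean_field_equation_with_corr_noise} couples the martingale parts so that, using Assumption~\ref{ass_lipshitz_continuity}, one obtains a Gronwall-type inequality in expectation and deduces $\mu=\bar\mu$ up to $\tau_R$, hence everywhere as $R\to\infty$. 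The exponent $p=n^2+n$ is the bookkeeping cost for controlling the $n(n+1)$-type pairwise atomic cross terms against the polynomial growth of $V,G$ in the Ito expansion, and closing this estimate under the assumed moment bound is where I expect the main technical difficulty.
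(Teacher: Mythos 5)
Your reduction of the problem is correct as far as it goes: existence and the moment bound do follow from Theorem~\ref{the_existence_of_solutions_to_dke} and Corollary~\ref{cor_moment_preserving_property}, and by Corollary~\ref{cor_superposition_principle} uniqueness is equivalent to showing that every solution with the stated moment bound is a superposition solution, which in turn would follow from the atomic representation $\mu_t=\sum_l\alpha_l(t)\delta_{Y_l(t)}$ plus identification of the $Y_l$ with $X(x^l,\cdot)$. But the method you propose for the ``hard step'' does not work, and it is not what the paper does.

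The core gap is the mechanism for proving that $\mu_t$ stays in $\cN_n$. You propose applying It\^o's formula to ``a smooth functional of the pair $(\mu_t,\bar\mu_t)$ that dominates a Wasserstein-type distance'' and closing a Gronwall estimate. No such functional is available: $\W_2$ is not a smooth cylindrical functional of the measures, and the noise coefficient $\mu\mapsto G(\cdot,\mu,\theta)\mu$ is degenerate and nonlocal, so a direct $L^2$- or Wasserstein-type energy estimate between two arbitrary measure-valued solutions does not close — this is exactly why the paper needs three separate uniqueness arguments for three classes of initial data and routes everything through the superposition principle. Note also that if your comparison argument worked it would prove $\mu=\bar\mu$ directly for arbitrary $\mu_0\in\cP_2$, making the atomic structure irrelevant; the fact that it cannot is the whole difficulty. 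The paper's actual idea (Lemma~\ref{lem_mu_stays_in_hn}) is to introduce the polynomial functional $F_{n+1}(\mu)=\int_{\R^{d(n+1)}}\prod_{i\neq j}|z^i-z^j|\prod_i\mu(dz^i)$, whose zero set is exactly $\cN_n$, to compute its functional derivatives, and to show via symmetrization and Assumption~\ref{ass_lipshitz_continuity} that the It\^o expansion satisfies $|\text{drift}+\text{correction}|\le C\,F_{n+1}(\mu_s)$; since $F_{n+1}(\mu_0)=0$, Gronwall gives $F_{n+1}(\mu_t)\equiv 0$, i.e.\ $\mu_t\in\cN_n$. The exponent $p=n^2+n$ is the degree of this polynomial in the moments (so that Proposition~\ref{pro_ito_formula} applies), not a cost of controlling cross terms in a comparison argument. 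Your proposal contains no substitute for this vanishing-functional idea.

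A secondary gap: even granting the atomic representation, your identification of the $Y_l$ with $X(x^l,\cdot)$ via ``separating test functions'' is only local in time. One needs test functions that are constant (respectively linear) near each atom and vanish near the others, which requires the atoms to stay separated; the paper iterates this over stopping times and invokes Lemma~\ref{lem_two_particles_newer_meet} (two trajectories of the SDE with interaction never meet) to show the iteration reaches $t=\infty$. Without a non-collision argument your local identification cannot be propagated.
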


We define the following function $F_n:\cP_{n(n-1)}(\R^d ) \to [0,\infty)$ by
\begin{equation} 
  \label{equ_function_f}
  F_n(\mu)=\int_{ \R^{dn} }   \prod_{ i,j:i \not= j }   \left|z^i-z^j\right| \prod_{ i=1 }^{ n } \mu(dz^i) =\int_{ \R^{dn} }   \prod_{ i,j:i < j }   \left|z^i-z^j\right|^2 \prod_{ i=1 }^{ n } \mu(dz^i).
\end{equation}
Then
\begin{equation} 
  \label{equ_connection_f_and_ch}
  F^{-1}_n(\{ 0 \})=\cN_{n-1}
\end{equation}
for every $n\geq 2$.

\begin{remark} 
  \label{rem_f_is_a_polynomial}
  The function $F_n$ can be written as a polynomial of the maps 
  \[
    \mu\mapsto\int_{ \R^d  }   z_1^{k_1}\dots z_d^{k_d}\mu(dz)
  \]
  with $k_1+\dots+k_d\leq n(n-1)$, $k_i \in \N \cup \{ 0 \}$, $i \in [d]$.
\end{remark}

To prove Theorem~\ref{the_uniqueness_for_atomic_initial_conditions}, we will apply \Itos formula to the semimartingale $F(\mu_t)$, $t\geq 0$. Hence, we will need an analog of \Itos formula for functions of the form 
\begin{equation} 
  \label{equ_function_g}
  G(\mu)=f(\langle \varphi_1 , \mu \rangle,\dots,\langle \varphi_m , \mu \rangle),\quad \mu \in \cP_2(\R^d ),
\end{equation}
for some smooth functions $f$ and $\varphi_l$, $l \in m$, involving functional derivatives~\cite[Section~2]{Dawson:1993}
\[
  \frac{ \delta G(\mu) }{ \delta\mu }(x)=\frac{\partial }{\partial \eps}G(\mu+\eps \delta_x)|_{\eps=0}
\]
and 
\[
  \frac{ \delta^2 G(\mu) }{ \delta \mu^2 }(x,y)=\frac{\partial^2}{\partial \eps_1 \partial \eps_2}G(\mu+\eps_1\delta_x+\eps_2\delta_y)|_{\eps_1=\eps_2=0}
\]
in order to simplify computations.

We will often work with the linear stochastic mean-field equation obtained by freezing a solution $\mu_t$, $t\geq 0$, in coefficients. In particular, we will write $v(t,x)$, $a(t,x)$ and $g(t,x,\theta)$ for $V(t,x,\mu_t)$, $A(t,x,\mu_t)$ and $G(t,x,\mu_t,\theta)$.

\begin{proposition} 
  \label{pro_ito_formula}
  Let $V,A,G$ satisfy Assumptions~\ref{ass_basic_assumption},~\ref{ass_lipshitz_continuity}, $p\geq 2$ and $\nu_t$, $t\geq 0$, be a continuous $(\F_t)$-adapted process in $\cP_2(\R^d )$ satisfying for every $\varphi \in \Cf_c^2(\R^d )$
  \begin{equation} 
  \label{equ_definition_of_process_nu}
    \begin{split}
      \langle \varphi , \nu_t \rangle&= \langle \varphi , \nu_0 \rangle+ \int_{ 0 }^{ t } \langle \nabla \varphi \cdot  v(s,\cdot ) , \nu_s \rangle ds+ \frac{1}{ 2 }\int_{ 0 }^{ t } \langle D^2 \varphi: a(s,\cdot ) , \nu_s \rangle ds\\
      &+ \int_{ 0 }^{ t } \int_{ \Theta }\langle \nabla \varphi \cdot g(s,\cdot,\theta) , \nu_s \rangle W(d \theta,ds).
    \end{split}
  \end{equation}
  If there exist an $(\F_t)$-stopping time $\sigma$ such that $\sup\limits_{ t \in [0,\sigma] }\langle \phi_p , \nu_t \rangle< \infty$ a.s., then 
  \begin{equation} 
  \label{equ_ito_formula_for_measure_valued_processes}
    \begin{split}
      G(\nu_{t\wedge \sigma})&=G(\nu_0)+ \int_{ 0 }^{ t\wedge \sigma }  \left\langle \nabla \frac{ \delta G(\nu_s) }{\delta\nu_s  }\cdot v(s,\cdot ),\nu_s  \right\rangle ds\\
      &+ \frac{1}{ 2 }\int_{ 0 }^{ t\wedge \sigma }  \left\langle D^2 \frac{ \delta G(\nu_s) }{ \delta\nu_s }:a(s,\cdot ) , \nu_s \right\rangle ds\\
      &+ \frac{1}{ 2 }\int_{ 0 }^{ t\wedge\sigma }  \left\langle \nabla\otimes\nabla \frac{ \delta^2G(\nu_s) }{ \delta\nu_s^2 } : \tilde{a}(s,\cdot ) , \nu_s\otimes\nu_s \right\rangle ds\\
      &+ \int_{ 0 }^{ t\wedge \sigma } \int_{ \Theta }   \left\langle \nabla \frac{ \delta G(\nu_s) }{ \delta\nu_s } \cdot g(s,\cdot ,\theta) , \nu_s \right\rangle W(d \theta,ds),
    \end{split}
  \end{equation}
  for each function $G$ defined by~\eqref{equ_function_g} with $\varphi_l \in \Cf_p^2(\R^d )$, $l \in [m]$, and $f \in \Cf^2(\R^m )$, where $\tilde{a}_{i,j}(t,x,y)=\langle g_i(t,x,\cdot ) , g_j(t,y,\cdot )  \rangle_{\m}$, $i,j \in [d]$.
\end{proposition}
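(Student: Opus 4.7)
The plan is to reduce to the standard finite-dimensional It\^o formula applied to the vector semimartingale $Y_t=(\langle\varphi_1,\nu_t\rangle,\dots,\langle\varphi_m,\nu_t\rangle)$, after first localizing to compactly supported test functions, and then to re-express the resulting expression in terms of the functional derivatives of $G$.

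\textbf{Step 1 (compactly supported test functions).} Assume first $\varphi_l\in\Cf_c^2(\R^d)$ for every $l\in[m]$. Then~\eqref{equ_definition_of_process_nu} applies to each $\varphi_l$, and the process $Y^l_t:=\langle\varphi_l,\nu_t\rangle$ is a continuous $(\F_t)$-semimartingale with
\[
Y^l_t=Y^l_0+\int_0^t b^l_s\,ds+M^l_t,\qquad b^l_s=\langle\nabla\varphi_l\cdot v(s,\cdot),\nu_s\rangle+\tfrac12\langle D^2\varphi_l:a(s,\cdot),\nu_s\rangle,
\]
where $M^l_t=\int_0^t\int_\Theta\langle\nabla\varphi_l\cdot g(s,\cdot,\theta),\nu_s\rangle W(d\theta,ds)$. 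By~\eqref{equ_quadratic_variation_of_the_integral} and polarization,
\[
[M^l,M^k]_t=\int_0^t\!\!\int_\Theta\langle\nabla\varphi_l\cdot g(s,\cdot,\theta),\nu_s\rangle\langle\nabla\varphi_k\cdot g(s,\cdot,\theta),\nu_s\rangle\mathfrak{m}(d\theta)\,ds,
\]
which by Fubini equals $\int_0^t\langle\nabla\varphi_l\otimes\nabla\varphi_k:\tilde a(s,\cdot,\cdot),\nu_s\otimes\nu_s\rangle ds$. Applying the classical It\^o formula to $f(Y_t)$ and using
\[
\frac{\delta G(\mu)}{\delta\mu}(x)=\sum_{l=1}^m\partial_lf(\langle\varphi,\mu\rangle)\varphi_l(x),\qquad\frac{\delta^2 G(\mu)}{\delta\mu^2}(x,y)=\sum_{l,k=1}^m\partial_{lk}^2f(\langle\varphi,\mu\rangle)\varphi_l(x)\varphi_k(y),
\]
we obtain~\eqref{equ_ito_formula_for_measure_valued_processes} in this case (the stopping time $\sigma$ is not needed yet, since all integrands are bounded).

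\textbf{Step 2 (approximation for $\Cf_p^2$ test functions).} For general $\varphi_l\in\Cf_p^2(\R^d)$, choose cut-offs $\chi_n\in\Cf_c^\infty(\R^d)$ with $\chi_n\equiv 1$ on $\{|x|\leq n\}$, $0\leq\chi_n\leq 1$, and $|\nabla\chi_n|+|D^2\chi_n|\leq C$ uniformly in $n$, and set $\varphi_l^n=\chi_n\varphi_l\in\Cf_c^2(\R^d)$. Then $\varphi_l^n,\nabla\varphi_l^n,D^2\varphi_l^n$ converge pointwise to $\varphi_l,\nabla\varphi_l,D^2\varphi_l$ and satisfy the same polynomial bound $|\varphi_l^n(x)|+(1+|x|)|\nabla\varphi_l^n(x)|+(1+|x|^2)|D^2\varphi_l^n(x)|\leq C(1+|x|^p)$ uniformly in $n$. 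Writing $G^n(\mu)=f(\langle\varphi_1^n,\mu\rangle,\dots,\langle\varphi_m^n,\mu\rangle)$, Step~1 gives~\eqref{equ_ito_formula_for_measure_valued_processes} with $G$ replaced by $G^n$ and evaluated at $t\wedge\sigma$.

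\textbf{Step 3 (passage to the limit).} This is the main technical step. Using Remark~\ref{rem_linear_growth_of_coefficients_under_lipschitz_conditions}, one has $|v(s,x)|+\|g(s,x,\cdot)\|_\mathfrak{m}\lesssim 1+|x|+\mathcal{W}_2(\nu_s,\delta_0)$, so by the growth bound on $\varphi_l^n$ every integrand in the $G^n$-version of~\eqref{equ_ito_formula_for_measure_valued_processes} is dominated pointwise in $(s,x)\in[0,\sigma]\times\R^d$ by an expression of the form $C(1+|x|^p)(1+\mathcal{W}_2(\nu_s,\delta_0))^2$, and hence, after integration against $\nu_s$ (or $\nu_s\otimes\nu_s$), by $C(1+\langle\phi_p,\nu_s\rangle)(1+\mathcal{W}_2(\nu_s,\delta_0))^2$. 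The hypothesis $\sup_{s\leq\sigma}\langle\phi_p,\nu_s\rangle<\infty$ a.s.\ (which in particular controls $\mathcal{W}_2(\nu_s,\delta_0)^2=\langle\phi_2,\nu_s\rangle\leq 1+\langle\phi_p,\nu_s\rangle$) then provides a dominating function; dominated convergence handles the Lebesgue integrals and the quadratic-variation integral, while for the stochastic integral the dominated convergence theorem for It\^o integrals (or equivalently convergence in probability of the quadratic variations of the differences to zero) yields convergence in probability, uniformly on $[0,t\wedge\sigma]$. Passing to the limit $n\to\infty$ on both sides identifies the right-hand side with the expression involving $\delta G/\delta\nu$ and $\delta^2 G/\delta\nu^2$, which completes the proof.

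\textbf{Main obstacle.} The routine part is Step~1; the delicate point is Step~3, where one must argue that the stochastic integral passes to the limit without any a~priori $L^p$ control on $\sup_{s\leq\sigma}\langle\phi_p,\nu_s\rangle$. The assumption provides only almost sure finiteness, so standard BDG-based arguments do not apply directly. The cleanest workaround is a further localization by the stopping times $\sigma_k=\sigma\wedge\inf\{t:\langle\phi_p,\nu_t\rangle\geq k\}$, on which all integrands are uniformly bounded; the formula holds on $[0,\sigma_k]$ by bounded convergence, and one then lets $k\to\infty$ using $\sigma_k\uparrow\sigma$ a.s.
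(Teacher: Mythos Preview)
Your overall strategy matches the paper's: approximate the unbounded test functions by compactly supported ones, localize via the stopping times $\sigma_k=\sigma\wedge\inf\{t:\langle\phi_p,\nu_t\rangle\geq k\}$, and reduce to the finite-dimensional It\^o formula. The only structural difference is order: the paper first extends~\eqref{equ_definition_of_process_nu} from $\Cf_c^2$ to $\Cf_p^2$ test functions (at time $t\wedge\sigma$) and then applies It\^o once, whereas you apply It\^o for each truncation $G^n$ and pass to the limit afterwards. Both routes need the same estimates.

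There is, however, a concrete gap in Step~2. The condition $|\nabla\chi_n|+|D^2\chi_n|\leq C$ uniformly in $n$ does \emph{not} imply the claimed uniform $\Cf_p^2$ bound on $\varphi_l^n=\chi_n\varphi_l$. The cross term $(1+|x|)\,|\varphi_l(x)|\,|\nabla\chi_n(x)|$ is dominated by $C(1+|x|^p)$ only if $(1+|x|)|\nabla\chi_n(x)|\leq C$ uniformly, and similarly one needs $(1+|x|^2)|D^2\chi_n(x)|\leq C$. A cut-off of the form $\chi_n(x)=\chi_0(|x|-n)$ with a fixed one-dimensional profile satisfies your stated hypothesis but violates this: on its support $|x|\sim n$ while $|\nabla\chi_n|\sim 1$, so $(1+|x|)|\nabla\chi_n|\sim n$. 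Without the stronger decay, the domination in Step~3 fails (you would need $\langle\phi_{p+1},\nu_s\rangle$ or $\langle\phi_{p+2},\nu_s\rangle$, which is not assumed). The paper resolves this by using logarithmic cut-offs $\kappa_n(x)=g(n-\ln|x|)$ for a fixed $g\in\Cf_b^2(\R)$ with $g=0$ on $(-\infty,0]$ and $g=1$ on $[1,\infty)$; a direct computation gives $|\kappa_n|+(1+|x|)|\nabla\kappa_n|+(1+|x|^2)|D^2\kappa_n|\leq C$ uniformly in $n$. Scaled cut-offs $\chi_n(x)=\chi(x/n)$ would also do. With this correction, your Step~3 goes through.

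One further subtlety in the final limit $k\to\infty$: since $\nu_t$ is continuous only in $\cP_2(\R^d)$, for $p>2$ the map $t\mapsto\langle\varphi_l,\nu_t\rangle$ need not be continuous, so merely $\sigma_k\uparrow\sigma$ is not enough to pass to the limit on the left-hand side. The paper notes that in fact $\sigma_k=\sigma$ for all sufficiently large $k$ a.s.\ (directly from $\sup_{t\leq\sigma}\langle\phi_p,\nu_t\rangle<\infty$), which sidesteps the issue.
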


\begin{proof} 
  We will first show that equality~\eqref{equ_definition_of_process_nu} holds for every $\varphi \in \Cf_p^2(\R^d )$ and $t$ replaced by $t \wedge\sigma$. We fix $g \in C_b^2(\R )$ such that $g(x)=0$, $x\leq 0$, and $g(x)=1$, $x\geq 1$. Set for $n\geq 1$
  \[
    \kappa_n(x):=
    \begin{cases}
      g\left( n-\ln |x| \right), & \mbox{ if } x \not= 0, \\
      1, & \mbox{ if } x=0,
    \end{cases} \quad 
    x \in \R^d .
  \]
  It is easy to see that for each $x \in \R^d $ $\kappa_n(x)\to 1$ as $n\to\infty$ and $\kappa_n$, $n\geq 1$, belongs uniformly to $\Cf_0^2(\R^d )$, that is, there exists a constant $C>0$ such that 
  \[
    |\kappa_n(x)|+(1+|x|) |\nabla \kappa_n(x)|+ (1+|x|^2) |D^2 \kappa_n(x)|\leq C
  \]
  for all $x \in \R^d $ and $n\geq 1$. Let $\varphi \in \Cf^2_p(\R^d )$ and $\varphi_n=\varphi \kappa_n$, $n\geq 1$. Note that $\varphi_n \in \Cf^2_c(\R^d )$ for all $n\geq 1$. Therefore, for every $t\geq 0$ 
  \begin{align}
    \langle \varphi_n , \nu_t \rangle&= \langle \varphi_n , \nu_0 \rangle+ \int_{ 0 }^{ t } \langle \nabla \varphi_n \cdot  v(s,\cdot ) , \nu_s \rangle ds+ \frac{1}{ 2 } \int_{ 0 }^{ t } \langle D^2 \varphi_n: a(s,\cdot ) , \nu_s \rangle ds\\
    &+ \int_{ 0 }^{ t } \int_{ \Theta }\langle \nabla \varphi_n \cdot g(s,\cdot,\theta) , \nu_s \rangle W(d \theta,ds).
  \end{align}

  We will next pass to the limit as $n\to\infty$. A simple computation gives that $\varphi_n(x) \to \varphi(x)$,  $\nabla \varphi_n(x) \to \nabla \varphi(x)$ and $D^2 \varphi_n(x) \to D^2 \varphi(x)$ as $n \to \infty$ for all $x \in \R^d $.  Define for $k\geq 1$
  \[
    \sigma_k=\inf\left\{ t\geq 0:\ \left\langle \phi_p , \nu_t \right\rangle> k,\ \W_2(\nu_t,\delta_0)>k \right\}\wedge \sigma.
  \]
  Since the set $\{ \mu \in \cP_2(\R^d ):\ \left\langle \phi_p , \mu \right\rangle\leq k \}$ is closed in $\cP_2(\R^d )$ and $\nu_t$, $t\geq 0$, is a continuous process, $\sigma_k$ is an $(\F_t)$-stopping time, by \cite[Proposition~2.1.5~(a)]{Ethier:1986}. Let $k\geq 1$ and $t\geq 0$ be fixed.  Since $\sup\limits_{ s \in [0,\sigma_k] }\langle \phi_p , \nu_s \rangle\leq k$ and $\varphi_n(x)\leq C(1+|x|^p)$, $x \in \R^d $, $n\geq 1$, for some $C>0$, the dominated convergence theorem implies that 
  \[
    \langle \varphi_n , \nu_{t\wedge \sigma_k} \rangle \to \langle \varphi , \nu_{t\wedge \sigma_k} \rangle \quad \mbox{a.s.}
  \]
  as $n\to\infty$.  By Remark~\ref{rem_linear_growth_of_coefficients_under_lipschitz_conditions}, for every $x \in \R^d $ and $s \in [0,t\wedge \sigma_k]$ one has
  \begin{equation} 
  \label{equ_estimate_of_nabla_phi_v}
    \begin{split}
      \left| \nabla \varphi_n \cdot v(s,x)  \right| &\leq \left(|\nabla \varphi(x)| |\kappa_n(x)| +|\varphi(x)| |\nabla\kappa_n(x)|\right)|V(s,x,\mu_s)|\\
      &\leq L\left(|\nabla \varphi(x)| |\kappa_n(x)|+|\varphi(x)| |\nabla \kappa_n(x)|\right)\left( 1+|x|+\W_2(\mu_s,\delta_0) \right)\\
      &\leq L\left(|\nabla \varphi(x)| |\kappa_n(x)|+|\varphi(x)| |\nabla \kappa_n(x)|\right)\left( 1+|x|+k \right)\\
      &\leq C_1(1+|x|^p),
    \end{split}
  \end{equation}
  where $C_1$ is a constant which is independent of $s$ and $x$. Since $\int_{ 0 }^{ t\wedge \sigma_k } C_1(1+\langle \phi_p , \nu_s \rangle)ds\leq  C_1(1+k)t <\infty$, the dominated convergence theorem implies 
  \[
    \int_{ 0 }^{ t\wedge \sigma_k } \left\langle \nabla \varphi_n \cdot v(s,\cdot ) , \nu_s \right\rangle ds \to \int_{ 0 }^{ t\wedge \sigma_k } \left\langle \nabla \varphi \cdot v(s,\cdot ) , \nu_s \right\rangle ds  \quad \mbox{a.s.}
  \]
  as $n\to\infty$.  Similarly, by Remark~\ref{rem_linear_growth_of_coefficients_under_lipschitz_conditions}, for every $s \in [0,t\wedge \sigma_k]$
  \begin{align}
    & |D^2 \varphi_n:a(s,x)|\\ \leq & |\kappa_n(x)D^2 \varphi(x):a(s,x)|+\left|\left(\nabla \varphi(x)\otimes \nabla \kappa_n(x)\right):a(s,x)\right|\\
    &+\left| \left( \nabla \kappa_n(x)\otimes \nabla \varphi(x) \right):a(s,x) \right|+\left| \varphi(x)D^2 \kappa_n(x) :a(s,x) \right|\\
     \leq & \left( |\kappa_n(x)| |D^2 \varphi(x)|+2 |\nabla \varphi(x)| |\nabla \kappa_n(x)|+|\varphi(x)| |D^2 \kappa_n(x)| \right) |a(s,x)| \\
     \leq &  \big( |\kappa_n(x)| |D^2 \varphi(x)|+2 |\nabla \varphi(x)| |\nabla \kappa_n(x)|\\
    &+C|\varphi(x)| |D^2 \kappa_n(x)| \big) \left( 1+|x|+\W_2(\mu_s,\delta_0) \right)^2\leq C_1 (1+|x|^p),
  \end{align}
  where $C_1$ is also a constant independent of $x$ and $s$. Therefore, using the dominated convergence theorem again, we get 
  \[
    \int_{ 0 }^{ t\wedge \sigma_k } \left\langle D^2 \varphi_n:a(x,\cdot ) ,\nu_s  \right\rangle ds \to \int_{ 0 }^{ t\wedge \sigma_k} \left\langle D^2 \varphi:a(x,\cdot ) , \nu_s  \right\rangle  ds \quad \mbox{a.s.} 
  \]
  as $n\to\infty$.  It remains only to show the convergence of the stochastic integrals.  We consider
  \begin{align}
    &\e \left( \int_{ 0 }^{ t\wedge \sigma_k  } \left\langle \left( \nabla \varphi -\nabla \varphi_n \right)\cdot g(s,\cdot ,\theta) ,\nu_s  \right\rangle W(d \theta,ds) \right)^2\\
    &\qquad\qquad=\e \int_{ 0 }^{ t \wedge \sigma_k } \int_{ \Theta }   \left\langle \left( \nabla \varphi-\nabla \varphi_n \right) \cdot g(s, \cdot ,\theta) , \nu_s \right\rangle^2 \m(d \theta)ds\\
    &\qquad\qquad\leq \e \int_{ 0 }^{ t \wedge \sigma_k } \int_{ \Theta }   \left\langle \left |\nabla \varphi-\nabla \varphi_n \right|^2 |g(s, \cdot ,\theta)|^2 , \nu_s \right\rangle \m(d \theta)ds\\
    &\qquad\qquad\leq \e \int_{ 0 }^{ t \wedge \sigma_k }  \left\langle \left |\nabla \varphi-\nabla \varphi_n \right|^2 \left\| |g(s, \cdot ,\theta)|\right\|^2_{\m} , \nu_s \right\rangle ds\to 0
  \end{align}
  as $n\to\infty$. In the last step, we have used the dominated convergence theorem, since $|\nabla \varphi-\nabla \varphi_n|^2 \left\| |g(s,x)|\right\|^2_{\m} \leq C(1+|x|^p)$ on $[0,t\wedge\sigma_k]$ with a (non-random) constant $C$ that does not depend on $s$ and $x$, where the estimate can be obtained in a similar manner to~\eqref{equ_estimate_of_nabla_phi_v}. Consequently, 
  \[
    \int_{ 0 }^{ t \wedge \sigma_k } \int_{ \Theta }   \left\langle \nabla \varphi_n \cdot g(s,\cdot ,\theta) ,\nu_s  \right\rangle W(d \theta,ds) \to   
    \int_{ 0 }^{ t \wedge \sigma_k } \int_{ \Theta }   \left\langle \nabla \varphi \cdot g(s,\cdot ,\theta) ,\nu_s  \right\rangle W(d \theta,ds)
  \]
  in $L_2(\Omega)$ as $n\to\infty$.
  
  Summarizing obtained convergence results, we can conclude that for each $t\geq 0$ and $k\geq 0$ we have a.s.
  \begin{align}
    \langle \varphi , \nu_{t\wedge \sigma_k} \rangle&= \langle \varphi , \nu_0 \rangle+ \int_{ 0 }^{ t\wedge\sigma_k } \langle \nabla \varphi \cdot  v(s,\cdot ) , \nu_s \rangle ds+ \frac{1}{ 2 } \int_{ 0 }^{ t\wedge \sigma_k } \langle D^2 \varphi: a(s,\cdot ) , \nu_s \rangle ds\\
    &+ \int_{ 0 }^{ t\wedge \sigma_k } \int_{ \Theta }\langle \nabla \varphi \cdot g(s,\cdot,\theta) , \nu_s \rangle W(d \theta,ds).
  \end{align}
  Now we pass to the limit as $k\to\infty$. Remark that the map $t \mapsto \langle \varphi , \nu_t \rangle$ is not continuous a.s. in the Euclidean topology on $[0,\infty)$ because $\varphi$ is not bounded in general. However, $\sigma_k \to  \sigma$ in the discrete topology as $k\to\infty$, i.e. a.s. there exists $\tilde{k}$ such that $\sigma_k=\sigma$ for all $k\geq \tilde{k}$. This allows to pass to the limit as $k\to\infty$. This gives~\eqref{equ_definition_of_process_nu} with $t$ replaced by $t\wedge \sigma$. We can conclude from this equality that the process $\langle \varphi , \nu_{t\wedge \sigma} \rangle$ has a continuous version and is an $(\F_t)$-semimartingale.

  We next set $\langle \varphi , \nu_t \rangle=\left(\langle \varphi_1 , \nu_t \rangle,\dots,\langle \varphi_m , \nu_t \rangle\right)$. Applying \Itos formula, one obtains
  \begin{align}
    G(\nu_{t\wedge \sigma})&= f\left( \langle \varphi_1 , \nu_{t\wedge \sigma} \rangle,\dots, \langle \varphi_m , \nu_{t\wedge \sigma} \rangle \right)=f\left( \langle \varphi ,\nu_0  \rangle \right)\\
    &= \sum_{ l=1 }^{ m }\int_{ 0 }^{ t\wedge \sigma } \partial_l f(\langle \varphi , \nu_s \rangle)d \langle \varphi_l , \nu_s \rangle\\
    &+ \frac{1}{ 2 }\sum_{ l,k=1 }^{ m } \int_{ 0 }^{ t\wedge \sigma } \partial_l\partial_kf(\langle \varphi , \nu_s \rangle)d \left[ \langle \varphi_l , \nu_{\cdot } \rangle, \langle \varphi_k , \nu_{\cdot } \rangle \right]_s\\
    &= f\left( \langle \varphi ,\nu_0  \rangle \right)+ \sum_{ l=1 }^{ m }\int_{ 0 }^{ t\wedge \sigma } \partial_lf(\langle \varphi , \nu_s \rangle)\langle \nabla \varphi_l \cdot v(s,\cdot ) , \nu_s \rangle ds\\
    &+ \frac{1}{ 2 }\sum_{ l=1 }^{ m } \int_{ 0 }^{ t\wedge \sigma } \partial_lf(\langle \varphi , \nu_s \rangle)\langle D^2 \varphi_l:a(s,\cdot ) , \nu_s \rangle ds \\
    &+ \sum_{ l=1 }^{ m }\int_{ 0 }^{ t\wedge \sigma}\int_{ \Theta}  \partial_lf(\langle \varphi , \nu_s \rangle)  \left\langle \nabla \varphi_l \cdot g(s,\cdot ,\theta) , \nu_s \right\rangle W(d \theta,ds)\\ 
    &+ \frac{1}{ 2 } \sum_{ l,k=1 }^{ m }\int_{ 0 }^{ t\wedge \sigma}\int_{ \Theta }   \partial_l\partial_k f(\langle \varphi , \nu_s \rangle)  \langle \nabla \varphi_l \cdot g_l(s,\cdot ,\theta) ,\nu_s  \rangle \langle \nabla \varphi_k \cdot g_k(s,\cdot ,\theta) ,\nu_s  \rangle \m(d \theta)ds. 
  \end{align}
  Using the equalities 
  \[
    \frac{ \delta G(\mu) }{ \delta\mu }(x)=\sum_{ l=1 }^{ m } \partial_l f(\langle \varphi , \mu \rangle)\varphi_l(x)
  \]
  and 
  \[
    \frac{ \delta^2 G(\mu) }{ \delta\mu^2 }(x,y)=\sum_{ l,k=1 }^{ m } \partial_l\partial_k f(\langle \varphi , \mu \rangle)\varphi_l(x)\varphi_k(y),
  \]
  we get~\eqref{equ_ito_formula_for_measure_valued_processes} that completes the proof of the proposition.
\end{proof}

\begin{remark} 
One can extend the obtained \Itos formula to any, e.g., bounded twice continuously differentiable function $G$ on $\cP_2(\R^d )$, using an approximation analog to Bernstein polynomials similarly as it was done in the proof of \cite[Theorem~2]{KonLehvRe2020}. We do not consider this extension here since the obtained \Ito formula is needed only for the proof of Theorem~\ref{the_uniqueness_for_atomic_initial_conditions}, where it will be applied to the function $F_n$, that is defined by~\eqref{equ_function_f} and satisfies the assumptions of Proposition~\ref{pro_ito_formula} .
\end{remark}

\begin{corollary} 
  \label{cor_testing_of_solutions_to_dk_equation_by_bounded_functions}
  Let $\mu_t$, $t\geq 0$, be a solution to the stochastic mean-field equation~\eqref{equ_mean_field_equation_with_corr_noise} whose coefficients satisfy Assumptions~\ref{ass_basic_assumption} and~\ref{ass_lipshitz_continuity}. Then equality~\eqref{equ_integral_equality_in_the_definition_of_solution_to_dke} holds for every $\varphi \in \Cf^2_b(\R^d )$.
\end{corollary}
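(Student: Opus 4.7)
The plan is to extend the integral identity from $\Cf_c^2(\R^d)$ to $\Cf_b^2(\R^d)$ by truncation, following essentially the scheme used in the proof of Proposition~\ref{pro_ito_formula}. I would take smooth cutoffs $\kappa_n \in \Cf_c^2(\R^d)$ with $\kappa_n \to 1$ pointwise, $|\kappa_n| \le 1$, and
$(1+|x|)|\nabla \kappa_n(x)| + (1+|x|^2)|D^2 \kappa_n(x)| \le C$ uniformly in $n$ and $x$ — for instance, the functions $\kappa_n(x) = g(n - \ln|x|)$ already constructed in that proof. Setting $\varphi_n := \varphi \kappa_n \in \Cf_c^2(\R^d)$, the identity~\eqref{equ_integral_equality_in_the_definition_of_solution_to_dke} is available from Definition~\ref{def_definition_of_solution_to_dk_equation} for each $\varphi_n$, and the goal is to pass to the limit as $n \to \infty$.

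To control the moments along the way I would exploit that $t \mapsto \mu_t$ is continuous in $\cP_2(\R^d)$, so that $t \mapsto \langle \phi_2, \mu_t \rangle$ is continuous and a.s.\ locally bounded. Setting
\begin{equation*}
\sigma_k := \inf\{t \ge 0 : \langle \phi_2, \mu_t \rangle + \W_2(\mu_t, \delta_0) > k\},
\end{equation*}
we obtain $(\F_t)$-stopping times with $\sigma_k \uparrow \infty$ a.s. On $[0, t \wedge \sigma_k]$ the bound $\W_2(\mu_s, \delta_0) \le k$ together with the uniform cutoff estimates yields
\begin{align*}
|\varphi_n(x)| &\le \|\varphi\|_\infty, \\
|\nabla \varphi_n(x)| &\le \|\nabla \varphi\|_\infty + C\|\varphi\|_\infty/(1+|x|), \\
|D^2 \varphi_n(x)| &\le \|D^2 \varphi\|_\infty + 2C\|\nabla\varphi\|_\infty/(1+|x|) + C\|\varphi\|_\infty/(1+|x|^2),
\end{align*}
which, combined with the linear growth of $V$ and $G$ from Remark~\ref{rem_linear_growth_of_coefficients_under_lipschitz_conditions}, dominates each of $|\nabla \varphi_n \cdot V|$, $|D^2 \varphi_n : A|$ and $\|\nabla \varphi_n \cdot G(s,\cdot,\mu_s,\cdot)\|_\m^2$ by $C_k(1+|x|^2)$ uniformly in $n$. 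This quantity is $\mu_s$-integrable on $[0, t \wedge \sigma_k]$ by definition of $\sigma_k$.

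The dominated convergence theorem then yields pathwise convergence of the drift and Hessian integrals, and the $L^2$-isometry~\eqref{equ_quadratic_variation_of_the_integral} together with the same domination yields $L^2(\Omega)$-convergence of the stochastic integrals evaluated at $t \wedge \sigma_k$. This delivers~\eqref{equ_integral_equality_in_the_definition_of_solution_to_dke} with $t$ replaced by $t \wedge \sigma_k$ for every $\varphi \in \Cf_b^2(\R^d)$. Passing $k \to \infty$ is then routine: on a set of full probability, for each fixed $t$ we have $\sigma_k > t$ from some $k$ onward, so the stopped identity coincides a.s.\ with the unstopped one. The only delicate point is the uniform-in-$n$ domination of the stochastic integrand; this is where the factor $1/(1+|x|)$ supplied by the cutoff is essential, as it precisely balances the linear growth of $G$ to produce an integrand of order $|x|^2$, tameable by the second-moment control furnished by $\sigma_k$.
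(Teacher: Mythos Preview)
Your proposal is correct and follows essentially the same cutoff-and-stopping-time argument that underlies the paper's approach; indeed, the paper's proof simply invokes Proposition~\ref{pro_ito_formula} with $f(x)=x$ and a single test function $\varphi\in\Cf_b^2(\R^d)\subset\Cf_2^2(\R^d)$, using that $\sup_{s\in[0,t]}\langle\phi_2,\mu_s\rangle<\infty$ a.s.\ by continuity in $\cP_2(\R^d)$. Your write-up reproduces the relevant portion of that proposition's proof directly rather than citing it, so you could shorten things by appealing to Proposition~\ref{pro_ito_formula} outright.
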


\begin{proof} 
  We note that $\mu_t$, $t\geq 0$, is a continuous process in $\cP_2(\R^d )$. Hence, $\sup\limits_{ s \in [0,t] }\langle \phi_2 , \mu_s \rangle< \infty$ a.s. for all $t\geq 0$. Consequently, the corollary directly follows from Proposition~\ref{pro_ito_formula} with $f(x)=x$, $x \in \R $, and the inclusion $\Cf_b^2(\R^d ) \subset \Cf_2^2(\R^d )$.
\end{proof}

The following lemma gives the key property of solutions to the stochastic mean-field equation started from atomic initial condition that allows to prove their uniqueness.

\begin{lemma} 
  \label{lem_mu_stays_in_hn}
  Let $V,A,G$ satisfy Assumptions~\ref{ass_basic_assumption},~\ref{ass_lipshitz_continuity}, $n\geq 2$, and $\nu_t$, $t\geq 0$, be defined in Proposition~\ref{pro_ito_formula} with $\mu_0 \in \cN_{n-1}$. Assume that there exists an $(\F_t)$-stopping time $\sigma$ such that $\sup\limits_{ t \in [0,\sigma] }\langle \phi_p,\nu_t \rangle<\infty$ a.s. for $p=n^2-n$. Then a.s. $\nu_{t\wedge \sigma} \in \cN_{n-1}$, $t\geq 0$. 
\end{lemma}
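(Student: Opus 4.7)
My plan is to apply the It\^o formula (Proposition~\ref{pro_ito_formula}) to the nonnegative functional $F_n$ along $\nu_{t\wedge\sigma}$ and to show that the resulting drift is pointwise controlled by $F_n(\nu_s)$; Gronwall combined with $F_n(\nu_0)=0$ will then force $F_n(\nu_{t\wedge\sigma})\equiv 0$, and the identity $F^{-1}_n(\{0\})=\cN_{n-1}$ from~\eqref{equ_connection_f_and_ch} concludes. By Remark~\ref{rem_f_is_a_polynomial}, $F_n$ has the form required in Proposition~\ref{pro_ito_formula} with test functions being monomials of total degree at most $n(n-1)=p$, hence in $\Cf^2_p(\R^d)$, and the hypothesis $\sup_{t\leq\sigma}\langle \phi_p,\nu_t\rangle<\infty$ makes the It\^o formula applicable on $[0,\sigma]$.

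The next step is to rewrite the drift structurally. Writing $P(z^1,\ldots,z^n)=\prod_{i<j}|z^i-z^j|^2$, a direct computation exploiting the symmetry of $P$ gives $\frac{\delta F_n(\mu)}{\delta\mu}(x) = n\int P(x,z^2,\ldots,z^n)\prod_{k=2}^n\mu(dz^k)$, and analogously for $\frac{\delta^2 F_n}{\delta\mu^2}$. Symmetrizing the distinguished indices against $\nu^{\otimes n}$, the three drift terms of Proposition~\ref{pro_ito_formula} collapse to
\[
\int_0^{t\wedge\sigma}\!\int_{\R^{dn}}(\mathcal{L}P)(z)\,\nu_s^{\otimes n}(dz)\,ds,
\]
where $\mathcal{L}=\sum_{i=1}^n v(s,z^i)\cdot\nabla_{z^i}+\tfrac12\sum_{i,j=1}^n\tilde a(s,z^i,z^j):\nabla_{z^i}\otimes\nabla_{z^j}$ is exactly the generator (with frozen coefficients) of $n$ coupled copies of the linear SDE $dZ^i_s=v(s,Z^i_s)\,ds+\int_\Theta g(s,Z^i_s,\theta)\,W(d\theta,ds)$ all driven by the same cylindrical Wiener process $W$.

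The heart of the argument will be the pointwise estimate $\mathcal{L}P\leq CP$ on $\R^{dn}$, with $C=C(L,n,d)$. Setting $q_{ij}=|z^i-z^j|^2$ and viewing $P=\prod_{i<j}q_{ij}$ as a multilinear polynomial in the $q_{ij}$'s, the Leibniz rule gives
\[
\mathcal{L}P=\sum_{i<j}\frac{\partial P}{\partial q_{ij}}\,\mathcal{L}q_{ij}+\tfrac12\sum_{(i,j)\neq(k,l)}\frac{\partial^2 P}{\partial q_{ij}\partial q_{kl}}\,\Gamma(q_{ij},q_{kl}),
\]
where $\Gamma(q_{ij},q_{kl})=4\bigl\langle (z^i-z^j)\cdot(g(s,z^i,\cdot)-g(s,z^j,\cdot)),\,(z^k-z^l)\cdot(g(s,z^k,\cdot)-g(s,z^l,\cdot))\bigr\rangle_\m$ is the mixed carr\'e-du-champ. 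Assumption~\ref{ass_lipshitz_continuity} immediately yields $\mathcal{L}q_{ij}=2(z^i-z^j)\cdot(v(s,z^i)-v(s,z^j))+\|g(s,z^i,\cdot)-g(s,z^j,\cdot)\|_\m^2\leq (2L+L^2)q_{ij}$, while Cauchy--Schwarz applied to $\Gamma$ followed by the Lipschitz bound on $g$ gives $|\Gamma(q_{ij},q_{kl})|\leq 4L^2 q_{ij}q_{kl}$. Since $\partial P/\partial q_{ij}=\prod_{(a,b)\neq (i,j)}q_{ab}$ and analogously for the second derivative, the two contributions assemble into $\mathcal{L}P\leq CP$. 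I expect this pointwise bound---in particular ensuring that the factors of $q$ produced by the covariation cancel exactly with the ones appearing in the second derivatives of $P$---to be the main technical point.

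To conclude, one localizes the continuous local martingale part of It\^o by $\tau_k=\inf\{t\leq\sigma:\langle\phi_p,\nu_t\rangle\geq k\}\wedge k$, which is a valid localization since all integrands are polynomial in $\nu_s$ and $\tau_k\uparrow\sigma$ by hypothesis. Taking expectations kills the martingale and yields
\[
\e F_n(\nu_{t\wedge\tau_k})\leq C\int_0^t\e F_n(\nu_{s\wedge\tau_k})\,ds,
\]
since $F_n(\nu_0)=0$ by $\nu_0\in\cN_{n-1}$ and~\eqref{equ_connection_f_and_ch}. Gronwall combined with $F_n\geq 0$ forces $F_n(\nu_{t\wedge\tau_k})=0$ a.s.; letting $k\to\infty$ and invoking the a.s. continuity of $s\mapsto\nu_s$ in $\cP_p(\R^d)$ (together with continuity of $F_n$ on moment-bounded subsets) promotes this to $F_n(\nu_{t\wedge\sigma})=0$ for all $t\geq 0$ simultaneously. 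By~\eqref{equ_connection_f_and_ch} this is exactly $\nu_{t\wedge\sigma}\in\cN_{n-1}$ a.s., completing the proof.
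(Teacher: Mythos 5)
Your proof is correct, and while it shares the paper's overall skeleton --- apply the It\^o formula of Proposition~\ref{pro_ito_formula} to $F_n$, bound the drift by $C\,F_n(\nu_s)$, localize, take expectations, and conclude via Gronwall together with $F_n^{-1}(\{0\})=\cN_{n-1}$ --- the way you obtain the central drift estimate is genuinely different and arguably cleaner. The paper computes $\delta F_n/\delta\mu$ and $\delta^2F_n/\delta\mu^2$ explicitly, splits the drift into the pieces $K_1,K_2,K_3$ (with further sub-splittings $J_{1,1},J_{1,2},J_{1,3},J_2$), and controls each by $F_n(\nu_s)$ only after a sequence of index-relabelling symmetrizations. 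You instead observe that, by the symmetry of $P=\prod_{i<j}q_{ij}$, the three drift terms assemble into $\int(\mathcal{L}P)\,d\nu_s^{\otimes n}$ for the generator $\mathcal{L}$ of the $n$-particle system with common noise (whose block diffusion matrix is $(\tilde a(s,z^i,z^j))_{i,j}$), and you then prove the pointwise inequality $\mathcal{L}P\le CP$ by the Leibniz rule in the variables $q_{ij}=|z^i-z^j|^2$: the bounds $\mathcal{L}q_{ij}\le(2L+L^2)q_{ij}$ and $|\Gamma(q_{ij},q_{kl})|\le 4L^2q_{ij}q_{kl}$ are immediate from Assumption~\ref{ass_lipshitz_continuity} and Cauchy--Schwarz in $L_2(\Theta,\m)$, and, since $P$ is multilinear and each $q_{ij}\ge 0$, one has $(\partial P/\partial q_{ij})\,q_{ij}=P$, $(\partial^2P/\partial q_{ij}\partial q_{kl})\,q_{ij}q_{kl}=P$ and vanishing diagonal second derivatives, so the claim follows at once. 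This packages the paper's index gymnastics into a single carr\'e-du-champ computation, and the one-sided bound $\mathcal{L}P\le CP$ is exactly what Gronwall needs (the paper proves a two-sided bound it does not use). The remaining steps --- $\tau_k$ is a stopping time because $\{\mu:\langle\phi_p,\mu\rangle\le k\}$ is closed and $\nu$ is continuous, the stopped stochastic integral is a true martingale since the integrand is bounded up to $\tau_k$, and the closedness of $\cN_{n-1}$ upgrades the conclusion from fixed $t$ to all $t$ --- coincide with the paper's.
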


\begin{proof} 
  To prove the lemma, it is enough to show that a.s. $F_n(\nu_{t\wedge\sigma})=0$, $t\geq 0$, by observation~\eqref{equ_connection_f_and_ch}. For this we will apply \Itos formula to the function $F:=F_n$, which satisfies assumptions of Proposition~\ref{pro_ito_formula}, according to Remark~\ref{rem_f_is_a_polynomial}. We first compute the derivatives of $F$ which appear in~\eqref{equ_ito_formula_for_measure_valued_processes}.  For $x \in \R^d $ we have
  \begin{align}
    \frac{ \delta F(\mu) }{ \delta\mu }(x)&= \frac{ \partial }{ \partial\eps }\int_{ \R ^{dn} }    \prod_{ i,j:i \not= j }   |z^i-z^j| \prod_{ i=1 }^{ n } \left(\mu(dz^i)+\eps\delta_{x}\right)\Big|_{\eps=0}\\
    &= \sum_{ i_1=1 }^{ n } \int_{ \R ^{dn} }   \prod_{ i,j:i \not= j }   |z^i-z^j|\delta_{x}(dz^{i_1})\prod_{ i \not= i_1 }   \mu(dz^i). 
  \end{align}
  Writing $\{i,j\}\not= \{i_1,i_2\}$ for the set $\left\{ (i,j) \in [n]^2:\ i \not= j,\ (i,j) \not\in \{(i_1,i_2),(i_2,i_1)\}\right\}$, we get for $k,l \in [d]$
  \begin{align}
    \frac{\partial }{\partial x_k} \frac{ \delta F(\mu) }{ \delta\mu }(x)&=2 \sum_{ i_1,i_2:i_1 \not= i_2 }   \int_{ \R ^{dn} }  (z_k^{i_1}-z_k^{i_2}) \prod_{ \{ i,j \}\not= \{ i_1,i_2 \} }   |z^i-z^j| \delta_x(z^{i_1})\prod_{ i:i \not= i_1 }   \mu(dz^i)
  \end{align}
  and 
  \begin{align}
    \frac{\partial ^2}{\partial x_k\partial x_l}\frac{ \delta F(\mu) }{\delta\mu  }(x)&=4 \sum_{\substack{i_1,i_2,i_3:\\ i_1\not= i_2\not= i_3} }   \int_{ \R ^{dn} }   (z_k^{i_1}-z_k^{i_2})(z_l^{i_1}-z_l^{i_3})\\
    &\hspace{25mm}\prod_{ \substack{\{ i,j \}\not= \{ i_1,i_2 \}\\ \{ i,j \}\not= \{ i_1,i_3 \}} }|z^i-z^j| \delta_x(z^{i_1})\prod_{ i:i \not= i_1 }   \mu(dz^i)\\
    &+ 2 \delta_{kl}\sum_{ i_1,i_2:i_1 \not= i_2 }   \int_{ \R ^{dn} } \prod_{ \{ i,j \}\not= \{ i_1,i_2 \} }   |z^i-z^j|\delta_x(z^{i_1})\prod_{ i:i \not= i_1 }   \mu(dz^i)\\
    &= :I^{k,l}(x,\mu)+\delta_{kl}\tilde{I}^{k}(x,\mu),    
  \end{align}
  where $\delta_{kl}=\I_{\left\{ k \right\}}(l)$ denotes the Kronecker delta.  Similarly, for $x,y \in \R^d $ and $k,l \in [d]$
  \begin{align}
    \frac{ \delta^2F }{ \delta\mu^2 }(x,y)&=\sum_{ i_1,i_2:i_1 \not= i_2 }   \int_{ \R ^{dn} }   \prod_{ i \not= j }   |z^i-z^j|\delta_x(dz^{i_1})\delta_y(dz^{i_2})\prod_{ i \not= i_1,i_2 }   \mu(dz^i),  
  \end{align}
  \begin{align}
    \frac{\partial }{\partial x_k}\frac{ \delta^2F }{ \delta\mu^2 }(x,y)&= 2 \sum_{ i_1,i_2:i_1 \not= i_2 }   \sum_{ i_3:i_3 \not= i_1 } \int_{ \R ^{dn} }   (z^{i_1}_k-z^{i_3}_k) \\
    &\hspace{25mm}\prod_{ \{ i,j \}\not= \{ i_1,i_3 \} }   |z^i-z^j| \delta_x(dz^{i_1})\delta_y(dz^{i_2})\prod_{ i:i \not= i_1,i_2 }   \mu(dz^i)
  \end{align}
  and 
  \begin{align}
    \frac{\partial^2 }{\partial x_k\partial y_l}\frac{ \delta^2F }{ \delta\mu^2 }(x,y)&= 4 \sum_{ i_1,i_2:i_1 \not= i_2 }   \sum_{ i_3:i_3 \not= i_1 } \sum_{ i_4:i_4 \not= i_1,i_2 }  \int_{ \R ^{dn} }   (z^{i_1}_k-z^{i_3}_k)(z^{i_2}_l-z^{i_4}_l) \\
    &\hspace{25mm}\prod_{ \substack{\{ i,j \}\not= \{ i_1,i_3 \}\\ \{ i,j \}\not= \{ i_2,i_4 \}} }   |z^i-z^j| \delta_x(dz^{i_1})\delta_y(dz^{i_2})\prod_{ i:i \not= i_1,i_2 }   \mu(dz^i)\\
    &+ 4 \sum_{ i_1,i_2:i_1 \not= i_2 }   \sum_{ i_3:i_3 \not= i_1,i_2 } \int_{ \R ^{dn} }   (z^{i_1}_k-z^{i_3}_k)(z^{i_2}_l-z^{i_1}_l) \\
    &\hspace{25mm}\prod_{ \substack{\{ i,j \}\not= \{ i_1,i_3 \}\\ \{ i,j \}\not= \{ i_2,i_1 \}} }   |z^i-z^j| \delta_x(dz^{i_1})\delta_y(dz^{i_2})\prod_{ i:i \not= i_1,i_2 }   \mu(dz^i)\\
    &- 2 \delta_{kl} \sum_{ i_1,i_2:i_1 \not= i_2 }   \int_{ \R ^{dn} }   \prod_{ \{ i,j \}\not= \{ i_1,i_2 \} }   |z^i-z^j| \delta_x(dz^{i_1})\delta_y(dz^{i_2})\prod_{ i:i \not= i_1,i_2 }   \mu(dz^i)\\
    &=:J^{k,l}_1(x,y,\mu)+J^{k,l}_2(x,y,\mu)+\delta_{kl}\tilde{J}^{k}(x,y,\mu).
  \end{align}
  We next estimate the terms which appear after applying \Itos formula from Proposition~\ref{pro_ito_formula} to the semimartingale $F(\nu_t)$, $t\geq 0$. For the first term, we get 
  \begin{align}
    \left\langle \nabla \frac{ \delta F(\nu_s) }{\delta\nu_s  }\cdot v(s,\cdot ) , \nu_s \right\rangle&= 2 \sum_{ i_1,i_2: i_1\not= i_2 }\int_{ \R ^{dn} }   (z^{i_1}-z^{i_2})\cdot v(s,z^{i_1})\prod_{ \{ i,j \}\not= \{ i_1,i_2 \} }   |z^i-z^j|\prod_{ i=1 }^{ n } \nu_s(dz^i).
  \end{align}
  Interchanging $i_1$ and $i_2$, the expression above can be rewritten as
  \begin{align}
    &\sum_{ i_1,i_2: i_1\not= i_2 }\int_{ \R ^{dn} }   (z^{i_1}-z^{i_2})\cdot v(s,z^{i_1})\prod_{ \{ i,j \}\not= \{ i_1,i_2 \} }   |z^i-z^j|\prod_{ i=1 }^{ n } \nu_s(dz^i)\\
    &\quad+\sum_{ i_1,i_2: i_1\not= i_2 }\int_{ \R ^{dn} }   (z^{i_2}-z^{i_1})\cdot v(s,z^{i_2})\prod_{ \{ i,j \}\not= \{ i_2,i_1 \} }   |z^i-z^j|\prod_{ i=1 }^{ n } \nu_s(dz^i)\\
    &\quad=\sum_{ i_1,i_2: i_1\not= i_2 }\int_{ \R ^{dn} }   (z^{i_1}-z^{i_2})\cdot \left(v(s,z^{i_1})-v(s,z^{i_2})\right)\prod_{ \{ i,j \}\not= \{ i_1,i_2 \} }   |z^i-z^j|\prod_{ i=1 }^{ n } \nu_s(dz^i).
  \end{align}
  Therefore, using the Lipschitz continuity of $v(s,\cdot )$ from Assumption~\ref{ass_lipshitz_continuity}, we can estimate
  \begin{align}
    \left| \left\langle \nabla \frac{ \delta F(\nu_s) }{\delta\nu_s  }\cdot v_s , \nu_s \right\rangle \right|&\leq L \sum_{ i_1,i_2: i_1\not= i_2 }\int_{ \R ^{dn} }   |z^{i_1}-z^{i_2}|^2\prod_{ \{ i,j \}\not= \{ i_1,i_2 \} }   |z^i-z^j|\prod_{ i=1 }^{ n } \nu_s(dz^i)\\
    &=  L \sum_{ i_1,i_2: i_1\not= i_2 }\int_{ \R ^{dn} }   \prod_{ i,j:i\not= j }   |z^i-z^j|\prod_{ i=1 }^{ n } \nu_s(dz^i)=Ln(n-1)F(\nu_s).
  \end{align}
  In order to estimate 
  \[
    K:=\left\langle D^2 \frac{ \delta F(\nu_s) }{ \delta\nu_s }:a(s,\cdot ) , \nu_s \right\rangle+ \left\langle \nabla\otimes\nabla \frac{ \delta^2F(\nu_s) }{\delta\nu_s^2  }: \tilde{a}(s,\cdot ) , \nu_s\otimes\nu_s \right\rangle,
  \]
  we first split $J_1^{k,l}$ into three terms $J_{1,1}^{k,l},J_{1,2}^{k,l},J_{1,3}^{k,l}$ defined by 
  \begin{align}
    J_{1,1}^{k,l}(x,y,\mu)&= 4\sum_{ \substack{i_1,i_2,i_3,i_4:\\ i_1 \not= i_2 \not= i_3 \not= i_4} }\int_{ \R ^{dn} }   (z^{i_1}_k-z^{i_3}_k)(z^{i_2}_l-z^{i_4}_l) \\
    &\hspace{25mm}\prod_{ \substack{\{ i,j \}\not= \{ i_1,i_3 \}\\ \{ i,j \}\not= \{ i_2,i_4 \}} }   |z^i-z^j| \delta_x(dz^{i_1})\delta_y(dz^{i_2})\prod_{ i:i \not= i_1,i_2 }   \mu(dz^i),\\
    J_{1,2}^{k,l}(x,y,\mu)&= 4 \sum_{ \substack{ i_1,i_2,i_4:\\ i_1\not= i_2\not= i_4} } \int_{ \R ^{dn} }   (z^{i_1}_k-z^{i_2}_k)(z^{i_2}_l-z^{i_4}_l) \\
    &\hspace{25mm}\prod_{ \substack{\{ i,j \}\not= \{ i_1,i_2 \}\\ \{ i,j \}\not= \{ i_2,i_4 \}} }   |z^i-z^j| \delta_x(dz^{i_1})\delta_y(dz^{i_2})\prod_{ i:i \not= i_1,i_2 }   \mu(dz^i)
  \end{align}
  and 
  \begin{align}
    J_{1,3}^{k,l}(x,y,\mu)&= 4\sum_{ \substack{i_1,i_2,i_4:\\ i_1\not= i_2\not= i_4} } \int_{ \R ^{dn} }   (z^{i_1}_k-z^{i_4}_k)(z^{i_2}_l-z^{i_4}_l) \\
    &\hspace{25mm}\prod_{ \substack{\{ i,j \}\not= \{ i_1,i_4 \}\\ \{ i,j \}\not= \{ i_2,i_4 \}} }   |z^i-z^j| \delta_x(dz^{i_1})\delta_y(dz^{i_2})\prod_{ i:i \not= i_1,i_2 }   \mu(dz^i),
  \end{align}
  which are obtained by summing over $\{i_3:i_3\not= i_2,i_4\}$, $\{i_3:i_3=i_2,i_3\not= i_4\}$ and $\{ i_3:i_3=i_4,i_3\not= i_2 \}$ in the second sum, respectively. We remark that $J_{1,1}^{k,l}$ only appears for $n\geq 4$ and $J_{1,2}^{k,l}$, $J_{1,3}^{k,l}$ appear for $n\geq 3$. We rewrite $K$ as the sum $K_1+K_2+K_3$, where
  \begin{align}
    K_1&= \left\langle J_{1,1}(\nu_s): \tilde{a}(s,\cdot ) , \nu_s\otimes\nu_s \right\rangle,\\
    K_2&= \left\langle I(\nu_s):a(s,\cdot ) , \nu_s \right\rangle+\left\langle J_{1,2}(\nu_s): \tilde{a}(s,\cdot ) , \nu_s\otimes\nu_s \right\rangle\\
    &+\left\langle J_{1,3}(\nu_s): \tilde{a}(s,\cdot ) , \nu_s\otimes\nu_s \right\rangle+\left\langle J_{2}(\nu_s): \tilde{a}(s,\cdot ) , \nu_s\otimes\nu_s \right\rangle
  \end{align}
  and 
  \[
    K_3= \left\langle \tilde{I}(\nu_s)\cdot b(s,\cdot ) , \nu_s \right\rangle+\left\langle \tilde{J}(\nu_s): \tilde{b}(s,\cdot ) , \nu_s\otimes\nu_s \right\rangle.
  \]
  In the above equality the functions $b$ and $\tilde{b}$ are defined by $b_k(s,\cdot )=a_{k,k}(s,\cdot )$ and $\tilde{b}_k(s, \cdot )=\tilde{a}_{k,k}(s,\cdot )$, $k \in [d]$. We next compute 
  \begin{align}
    K_1&= 4\sum_{ \substack{i_1,i_2,i_3,i_4:\\ i_1 \not= i_2 \not= i_3 \not= i_4} } \int_{ \R ^{dn} }   \left[(z^{i_1}-z^{i_3})\otimes(z^{i_2}-z^{i_4})\right]: \tilde{a}(s,z^{i_1},z^{i_2}) \prod_{ \substack{\{ i,j \}\not= \{ i_1,i_3 \}\\ \{ i,j \}\not= \{ i_2,i_4 \}} }   |z^i-z^j| \prod_{ i=1 }^n   \nu_s(dz^i)\\
    &= \sum_{ \substack{i_1,i_2,i_3,i_4:\\ i_1 \not= i_2 \not= i_3 \not= i_4} } \int_{ \R ^{dn} }   \left[(z^{i_1}-z^{i_3})\otimes(z^{i_2}-z^{i_4})\right]\\
    &\hspace{17mm}:\left\langle g(s,z^{i_1},\cdot )-g(s,z^{i_3},\cdot ), g(s,z^{i_2},\cdot )-g(s,z^{i_4}, \cdot ) \right\rangle_{\m} \prod_{ \substack{\{ i,j \}\not= \{ i_1,i_3 \}\\ \{ i,j \}\not= \{ i_2,i_4 \}} }   |z^i-z^j| \prod_{ i=1 }^n   \nu_s(dz^i)\\
    &= \sum_{ \substack{i_1,i_2,i_3,i_4:\\ i_1 \not= i_2 \not= i_3 \not= i_4} } \int_{ \R ^{dn} }   \big\langle (z^{i_1}-z^{i_3})\cdot (g(s,z^{i_1},\cdot )-g(s,z^{i_3},\cdot )),\\
    & \hspace{17mm}(z^{i_2}-z^{i_4})\cdot(g(s,z^{i_2},\cdot )-g(s,z^{i_4}, \cdot )) \big\rangle_{\m} \prod_{ \substack{\{ i,j \}\not= \{ i_1,i_3 \}\\ \{ i,j \}\not= \{ i_2,i_4 \}} }   |z^i-z^j| \prod_{ i=1 }^n   \nu_s(dz^i).
  \end{align}
  Therefore, using the Cauchy-Schwarz inequality and then the Lipschitz continuity of $g(s,\cdot )$, we estimate 
  \begin{align}
    |K_1|&\leq \sum_{ \substack{i_1,i_2,i_3,i_4:\\ i_1 \not= i_2 \not= i_3 \not= i_4} } \int_{ \R ^{dn} } |z^{i_1}-z^{i_3}|\|g(s,z^{i_1},\cdot )-g(s,z^{i_3},\cdot )\|_{\m} |z^{i_2}-z^{i_4}|\|g(s,z^{i_2},\cdot )-g(s,z^{i_4}, \cdot )\|_{\m} \\
    &\hspace{25mm}\prod_{ \substack{\{ i,j \}\not= \{ i_1,i_3 \}\\ \{ i,j \}\not= \{ i_2,i_4 \}} }   |z^i-z^j| \prod_{ i=1 }^n   \nu_s(dz^i)\\
    &\leq L^2\sum_{ \substack{i_1,i_2,i_3,i_4:\\ i_1 \not= i_2 \not= i_3 \not= i_4} } \int_{ \R ^{dn} } |z^{i_1}-z^{i_3}|^2|z^{i_2}-z^{i_4}|^2 \prod_{ \substack{\{ i,j \}\not= \{ i_1,i_3 \}\\ \{ i,j \}\not= \{ i_2,i_4 \}} }   |z^i-z^j| \prod_{ i=1 }^n   \nu_s(dz^i)\\
    &= L^2\sum_{ \substack{i_1,i_2,i_3,i_4:\\ i_1 \not= i_2 \not= i_3 \not= i_4} } \int_{ \R ^{dn} } \prod_{ i,j:i \not= j }   |z^i-z^j| \prod_{ i=1 }^n   \nu_s(dz^i)=L^2 \frac{ n! }{ (n-4)! }F(\nu_s).
  \end{align}

  We rewrite $K_2$ in similar way as $K_1$:
  \begin{align}
    K_2&=  4 \sum_{\substack{i_1,i_2,i_3:\\ i_1\not= i_2\not= i_3} }   \int_{ \R ^{dn} }   \left[(z^{i_1}-z^{i_2})\otimes(z^{i_1}-z^{i_3})\right]: a(s,z^{i_1})\prod_{ \substack{\{ i,j \}\not= \{ i_1,i_2 \}\\ \{ i,j \}\not= \{ i_1,i_3 \}} }|z^i-z^j| \prod_{ i=1 }^n   \nu_s(dz^i)\\
    &+ 4 \sum_{ \substack{i_1,i_2,i_4:\\i_1\not= i_2\not= i_4} } \int_{ \R ^{dn} }   \left[(z^{i_1}-z^{i_2})\otimes(z^{i_2}-z^{i_4})\right]: \tilde{a}(s,z^{i_1},z^{i_2}) \prod_{ \substack{\{ i,j \}\not= \{ i_1,i_2 \}\\ \{ i,j \}\not= \{ i_2,i_4 \}} }   |z^i-z^j| \prod_{ i=1 }^n   \nu_s(dz^i)\\
    &+ 4\sum_{ \substack{i_1,i_2,i_4:\\ i_1\not= i_2\not= i_4} } \int_{ \R ^{dn} }   \left[(z^{i_1}-z^{i_4})\otimes(z^{i_2}-z^{i_4})\right]: \tilde{a}(s,z^{i_1},z^{i_2}) \prod_{ \substack{\{ i,j \}\not= \{ i_1,i_4 \}\\ \{ i,j \}\not= \{ i_2,i_4 \}} }   |z^i-z^j| \prod_{ i=1 }^n   \nu_s(dz^i)\\
    &+ 4 \sum_{ \substack{i_1,i_2,i_3:\\ i_1 \not= i_2\not= i_3} } \int_{ \R ^{dn} }   \left[(z^{i_1}-z^{i_3})\otimes(z^{i_2}-z^{i_1})\right]: \tilde{a}(s,z^{i_1},z^{i_2}) \prod_{ \substack{\{ i,j \}\not= \{ i_1,i_3 \}\\ \{ i,j \}\not= \{ i_2,i_1 \}} }   |z^i-z^j| \prod_{ i=1 }^n   \nu_s(dz^i).
  \end{align}
  Interchanging the indexes of summations in second, third and fourth terms in the following way $[i_1 \mapsto i_2,i_2 \mapsto i_1, i_4 \mapsto i_3]$, 
  $[i_1 \mapsto i_2,i_2 \mapsto i_3,i_4 \mapsto i_1]$ and $[i_2 \mapsto i_3,i_3 \mapsto i_2]$, respectively, and using the equalities $a(s,x)=\langle g(s,x, \cdot ) ,g(s,x, \cdot )  \rangle_{\m}$, $\tilde{a}(s,x,y)=\langle g(s,x, \cdot ) , g(s,y, \cdot ) \rangle$, we get
  \begin{align}
    K_2&=  4 \sum_{\substack{i_1,i_2,i_3:\\ i_1\not= i_2\not= i_3} }   \int_{ \R ^{dn} }   \left[(z^{i_1}-z^{i_2})\otimes(z^{i_3}-z^{i_1})\right]: \\
    &\hspace{10mm}\big[ -\langle g(s,z^{i_1}, \cdot ) ,g(s,z^{i_1},\cdot )  \rangle_{\m}+ \langle g(s,z^{i_2}, \cdot ) ,g(s,z^{i_1}, \cdot )  \rangle_{\m} \\
    &\hspace{20mm} -\langle g(s,z^{i_2}, \cdot ) ,g(s,z^{i_3},\cdot )\rangle_{\m}+ \langle g(s,z^{i_1}, \cdot ) , g(s,z^{i_3}, \cdot ) \rangle_{\m}\big] \prod_{ \substack{\{ i,j \}\not= \{ i_1,i_2 \}\\ \{ i,j \}\not= \{ i_1,i_3 \}} }|z^i-z^j| \prod_{ i=1 }^n   \nu_s(dz^i)\\
    &=  4 \sum_{\substack{i_1,i_2,i_3:\\ i_1\not= i_2\not= i_3} }   \int_{ \R ^{dn} }   \left[(z^{i_1}-z^{i_2})\otimes(z^{i_3}-z^{i_1})\right]\\
    & \hspace{17mm}:  \langle g(s,z^{i_1}, \cdot )-g(s,z^{i_2},\cdot ) ,g(s,z^{i_3},\cdot )-g(s,z^{i_1},\cdot )  \rangle_{\m}\prod_{ \substack{\{ i,j \}\not= \{ i_1,i_2 \}\\ \{ i,j \}\not= \{ i_1,i_3 \}} }|z^i-z^j| \prod_{ i=1 }^n   \nu_s(dz^i).
  \end{align}
  Similarly as before, we get
  \[
    K_2\leq 4L^2 \frac{ n! }{ (n-3)! }F(\nu_s).
  \]
  We now compute 
  \begin{align}
    K_3&= 2\sum_{ i_1,i_2:i_1 \not= i_2 }   \int_{ \R ^{dn} } \left[\sum_{ k=1 }^{ d } a_{k,k}(s,z^{i_1})\right] \prod_{ \{ i,j \}\not= \{ i_1,i_2 \} }   |z^i-z^j|\prod_{ i=1 }^n   \nu_s(dz^i)\\
    &-2\sum_{ i_1,i_2:i_1 \not= i_2 }   \int_{ \R ^{dn} }  \left[ \sum_{ k=1 }^{ d } \tilde{a}_{k,k}(s,z^{i_1},z^{i_2}) \right] \prod_{ \{ i,j \}\not= \{ i_1,i_2 \} }   |z^i-z^j| \prod_{ i=1 }^n   \nu_s(dz^i)\\
    &= \sum_{ i_1,i_2:i_1 \not= i_2 }   \int_{ \R ^{dn} } \Bigg[\sum_{ k=1 }^{ d }\big( \langle g_k(s,z^{i_1},\cdot ) ,g_k(s,z^{i_1},\cdot )  \rangle_{\m}+\langle g_k(s,z^{i_2},\cdot ) ,g_k(s,z^{i_2},\cdot )  \rangle_{\m}\\
    &\hspace{30mm}-2\langle g_k(s,z^{i_1},\cdot ) ,g_k(s,z^{i_2},\cdot )  \rangle_{\m}\big)\Bigg] \prod_{ \{ i,j \}\not= \{ i_1,i_2 \} }   |z^i-z^j|\prod_{ i=1 }^n   \nu_s(dz^i)\\
    &= \sum_{ i_1,i_2:i_1 \not= i_2 }   \int_{ \R ^{dn} } \sum_{ k=1 }^{ d }  \|g_k(s,z^{i_1},\cdot )-g_k(s,z^{i_2},\cdot )\|_{\m}^2 \prod_{ \{ i,j \}\not= \{ i_1,i_2 \} }   |z^i-z^j|\prod_{ i=1 }^n   \nu_s(dz^i).
  \end{align}
  Hence, using the Lipschitz continuity of $g(s,\cdot )$ again, we get 
  \begin{align}
    |K_3|&\leq L^2\sum_{ i_1,i_2:i_1 \not= i_2 }   \int_{ \R ^{dn} }  |z^{i_1}-z^{i_2}|^2 \prod_{ \{ i,j \}\not= \{ i_1,i_2 \} }   |z^i-z^j|\prod_{ i=1 }^n   \nu_s(dz^i)=L^2 n(n-1) F(\nu_s).
  \end{align}
  Combining obtained estimates for $K_i$, $i \in [3]$, we can see that $|K|\leq C_nL^2 F(\nu_s)$. 
  
  We next define for $p=n^2-n$ and every $k\geq 1$ the $(\F_t)$-stopping time $\sigma_k$ as follows 
  \[
    \sigma_k=\inf\left\{ t\geq 0:\ \langle \phi_p , \nu_t \rangle \geq k \right\}\wedge \sigma.
  \]
  Then there exists a (non-random) constant $C>0$ such that $F(\mu_{t})\leq C$ for all $t \in [0,\sigma_k]$, by Remark~\ref{rem_f_is_a_polynomial}.   Applying \Itos formula to $F(\mu_t\wedge \sigma_k)$, $t\geq 0$, taking the expectations and using estimates obtained above, we have for every $k\geq 1$ 
  \begin{align}
    \e{ F(\mu_{t\wedge \sigma_k}) }&\leq Ln(n-1) \e\int_{ 0 }^{ t\wedge \sigma_k } F(\mu_s) ds+ L^2C_n \e\int_{ 0 }^{ t\wedge \sigma_k } F(\mu_s) ds\\
    &\leq Ln(n-1) \int_{ 0 }^{ t} \e F(\mu_{s\wedge \sigma_k}) ds+ L^2C_n \int_{ 0 }^{ t} \e F(\mu_{s\wedge \sigma_k}) ds , \quad t\geq 0.  
  \end{align}
  By Gronwall's lemma, $\E{ F(\mu_{t\wedge \sigma_k}) }=0$ for all $t\geq 0$. This implies that for every $t\geq 0$ a.s. $\mu_{t\wedge \sigma_k} \in \cN_{n-1}$. Since $\cN_{n-1}$ is closed and $\mu_t$, $t\geq 0$, is continuous, we obtain that a.s. $\mu_{t\wedge \sigma_k} \in \cN_{n-1}$ for all $t\geq 0$ and $k\geq 1$. Making $k\to\infty$, we can conclude that with probability 1 $\mu_{t\wedge \sigma} \in \cN_{n-1}$, $t\geq 0$, that ends the proof of the proposition.
\end{proof}

We next prove the main statement of this section. 

\begin{proof}[Proof of Theorem~\ref{the_uniqueness_for_atomic_initial_conditions}] 
  Let $\mu_t$, $t\geq 0$, be a solution to the stochastic mean-field equation~\eqref{equ_mean_field_equation_with_corr_noise} started from $\mui \in \cN_n$ and satisfying~\eqref{equ_boundedness_of_pth_moment}. We will show that $\mu_t$ is a superposition solution. Let $\mui=\sum_{ i=1 }^{ n } \alpha_i\delta_{x^i}$ for some $\alpha_i\geq 0$ and $x^i \in \R^d $, $i \in [n]$. Without loss of generality, we may assume that $\alpha_i$, $i \in [n]$, are strictly positive and $x^i$, $i \in [n]$, are distinct.  Applying Lemma~\ref{lem_mu_stays_in_hn}, we can conclude that a.s. $\mu_t \in \cN_n$ for all $t\geq 0$. By the continuity of $\mu_{\cdot }$, there exist $(\F_t)$-adapted continuous processes $\alpha_i(t)$, $Y^i(t)$, $t\geq 0$, $i \in [n]$, such that $a_i(0)=\alpha_i$, $Y_i(0)=x_i$, $i \in [n]$, and 
  \[
    \mu_t=\sum_{ i=1 }^{ n } \alpha_i(t)\delta_{Y^i(t)}, \quad t\geq 0.
  \]
  For every $i \in [n]$ we define the stopping time 
  \[
    \tau_0:=\inf\left\{ t\geq 0:\   |Y^i(t)-x^i|\geq   \eps\ \ \mbox{or}\ \ |Y^j(t)-x^i|\leq 2 \eps\ \ \mbox{for some}\ \ j \not= i \right\},
  \]
  where $\eps= \frac{1}{ 3 }\min\limits_{ i \not= j }|x^i-x^j|$.  For each $i \in [n]$ we next consider functions $\varphi, \psi_i \in \Cf^2_c(\R^d )$ such that  $\varphi_i(x)=1$, $\psi_i(x)=x^i$ for $|x-x^i|<\eps$ and $\varphi_i(x)=\psi_i(x)=0$ for $|x-x^i|\geq 2 \eps$. Then, using the definition of a solution to the stochastic mean-field equation, we obtain
  \begin{align}
    \alpha_i(t\wedge\tau_0)=\langle \varphi_i , \mu_{t\wedge\tau_0} \rangle=\langle \varphi_i , \mu_0 \rangle=\alpha_i, \quad t\geq 0,
  \end{align}
  since $\nabla \varphi_i(x)=0$ and $D^2 \varphi_i(x)=0$ for $|x-x^i|<\eps$ and for $|x-x^i|>2 \eps$. Similarly, using additionally the previous observation, for every $t\geq 0$
  \begin{align}
    \alpha_i Y_i(t\wedge\tau_0)&=  \alpha_i(t\wedge\tau_0) Y_i(t\wedge\tau_0)= \langle \psi_i , \mu_{t\wedge\tau_0} \rangle  \\
    &= \alpha_iY_i(0)+ \int_{ 0 }^{ t\wedge \tau_0 } \alpha_i(s)v(s,Y_i(s))ds+ \int_{ 0 }^{ t\wedge\tau_0 } \int_{ \Theta }   \alpha_i(s)g(s,Y_i(s),\theta)W(d \theta,ds)\\
    &= \alpha_iY_i(0)+ \int_{ 0 }^{ t\wedge \tau_0 } \alpha_iv(s,Y_i(s))ds+ \int_{ 0 }^{ t\wedge\tau_0 } \int_{ \Theta }   \alpha_ig(s,Y_i(s),\theta)W(d \theta,ds)
  \end{align}
  holds. Hence, $Y_i$ is a solution to the usual SDE
  \begin{equation} 
  \label{equ_equation_for_y}
    dY_i(t)=v(t,Y_i(t))dt+ \int_{ \Theta }   g(t,Y_i(t),\theta)W(d \theta,dt), \quad Y_i(0)=x^i 
  \end{equation}
  on the interval $[0,\tau_0]$. Note that~\eqref{equ_equation_for_y} has at most one solution due to the Lipschitz continuity of its coefficients. Let $X(u,t)$, $t\geq 0$, $u \in \R^d $, be a solution to SDE with interaction~\eqref{equ_equation_with_interaction}. Since $X(x^i,t)$, $t\geq 0$, also solves equation~\eqref{equ_equation_for_y}, one has $Y_i(t)=X(x^i,t)$, $t \in [0,\tau_0]$, for each $i \in [n]$. Next, considering the process $\mu_t^1:=\mu_{t+\tau_0}$ conditioning to the $\sigma$-algebra $\F_{\tau_0}$, we get that, $\mu_t^1$, $t\geq 0$, satisfies the same stochastic mean-field equation with the initial condition $\mu_{\tau_0}$. Applying our argument above again to $\mu_t^{1}$, $t\geq 0$, we get that $\alpha_i(t+\tau_0)=\alpha_i$ and $Y_{i}(t+\tau_0)=X(x^i,t+\tau_0)$ for $t \in [0,\tau_1]$ and $i \in [n]$, where
  \begin{align}
    \tau_1:=\inf\big\{ t\geq 0:\ &   |Y_i(t+\tau_0)-Y_i(\tau_0)|\geq   \eps\\
    &\qquad\mbox{or}\ \ |Y_j(t+\tau_0)-Y_i(\tau_0)|\leq 2 \eps\ \ \mbox{for some}\ \ j \not= i \big\},
  \end{align}
  and $\eps= \frac{1}{ 3 }\min\limits_{ i \not= j }|Y_i(\tau_0)-Y_j(\tau_0)|$. Hence, $\alpha_i(t)=\alpha_i$ and $Y_i(t)=X(x^i,t)$, $t \in [0,\tau_0+\tau_1]$, $i \in [n]$. Repining our argument infinitely many times and using the uniform continuity of $Y_i$, $i \in [n]$, a.s. on any compact time interval, we get that $\alpha_i(t)=\alpha_i$ and $Y_i(t)=X(x^i,t)$, $t \in [0,\tau]$, for 
  \[
    \tau= \inf\left\{ t\geq 0:\ Y_i(t)=Y_j(t)\ \ \mbox{for some}\ \ j \not= i \right\}.
  \]
  Since $X(x^i,\cdot )$ and $X(x^j,\cdot )$ never meet for distinct $x^i$, $x^j$, by Lemma~\ref{lem_two_particles_newer_meet}, we can conclude that $\tau=+\infty$. Consequently, $\mu_t=\mui \circ X^{-1}(\cdot ,t)$, $t\geq 0$. This ends the proof of the uniqueness of the superposition principle for the stochastic mean-field equation.
\end{proof}

\subsubsection{Initial condition with \texorpdfstring{$L_2$}{L2}-density}
\label{ssub:initial_condition_with_l2_density}

In this section, we adapt the method from~\cite{Kurtz:1999} in order to prove the uniqueness for the stochastic mean-field equation~\eqref{equ_mean_field_equation_with_corr_noise} if the initial condition has an $L_2$-density with respect to the Lebesgue measure and the coefficients are bounded a.s. We will need the following assumption.

\begin{assumption} 
  \label{ass_boundedness_assumption}
  For every $T>0$ there exists a (non-random) constant $L>0$ such that a.s. for every $x \in \R^d $, $t \in [0,T]$, and $\mu \in \cP_2(\R^d )$
  \[
    |V(t,x,\mu)|+ \left\| |G(t,x,\mu,\cdot )| \right\|_{\m}\leq L
  \]
  holds.
\end{assumption}

Using the notation $\frac{ d\mu }{ dx }$ for the density of a measure $\mu \in \cP(\R^d )$ with respect to the Lebesgue measure, we state the main result of this section.

\begin{theorem} 
  \label{the_uniqueness_for_initial_condition_with_l2_density}
  Let the coefficients of stochastic mean-field equation~\eqref{equ_mean_field_equation_with_corr_noise} satisfy Assumptions~\ref{ass_basic_assumption}, \ref{ass_lipshitz_continuity} and~\ref{ass_boundedness_assumption}, and $\mui \in \cP_2(\R^d )$ be absolutely continuous with respect to Lebesgue measure with $\frac{ d\mui }{ dx } \in L_2(\R^d )$. Then~\eqref{equ_mean_field_equation_with_corr_noise} has a unique solution $\mu_t$, $t\geq 0$, started from $\mui$. Moreover, $\mu_t$, $t\geq 0$, is a superposition solution which is absolutely continuous with respect to the Lebesgue measure with 
  \begin{equation} 
  \label{equ_integrability_of_l2_density}
  \e \Big\|\frac{ d\mu_t }{ dx }\Big\|_{L_2}^2 \leq e^{Ct} \left\|\frac{ d\mui }{ dx }\right\|_{L_2}^2
  \end{equation}
   for all $t\geq 0$ and some constant $C>0$.
\end{theorem}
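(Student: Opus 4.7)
My plan is to argue the two halves of the theorem separately, using Corollary~\ref{cor_superposition_principle} to convert uniqueness into a superposition-principle statement, and using a Kurtz--Xiong style energy argument on a linearized SPDE to obtain both the $L_2$ density bound and $L_2$-level uniqueness.

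For existence together with the bound~\eqref{equ_integrability_of_l2_density}, I would start from the superposition solution $\bar\mu_t = \mu_0\circ X^{-1}(\cdot,t)$ produced by Theorem~\ref{the_existence_of_solutions_to_dke}. To show $\bar\mu_t$ has an $L_2$ density, I would fix a standard mollifier $\eta_\eps$, test the equation~\eqref{equ_integral_equality_in_the_definition_of_solution_to_dke} against $\eta_\eps(x-\cdot)$, and study the time evolution of $\rho_t^\eps(x):=\langle \eta_\eps(x-\cdot),\bar\mu_t\rangle$. The process $\rho^\eps$ satisfies a mollified SPDE driven by $W$, so \Itos formula in $L_2(\R^d)$ yields
\begin{align}
d\|\rho_t^\eps\|_{L_2}^2 &= 2\langle \rho_t^\eps,d\rho_t^\eps\rangle_{L_2}+ d[\rho^\eps]_t,
\end{align}
and the key point is that after integrating by parts the drift contribution from $\tfrac12 D^2:(A\rho^\eps)$ equals $-\int a_{ij}\partial_i\rho^\eps\partial_j\rho^\eps\,dx$ plus lower-order terms, while the quadratic-variation term from the noise is exactly $+\int\!\int (g_i\partial_i\rho^\eps)(g_j\partial_j\rho^\eps)\mathfrak m(d\theta)\,dx = \int a_{ij}\partial_i\rho^\eps\partial_j\rho^\eps\,dx$ (using $A=GG^\ast$). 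The quadratic form in the first derivatives cancels, leaving only terms controllable by $\|\rho^\eps\|_{L_2}^2$ under the Lipschitz/bounded assumptions (the commutator between mollification and the Lipschitz coefficients produces a harmless $O(1)\|\rho^\eps\|_{L_2}^2$). Gronwall then gives $\mathbb E\|\rho_t^\eps\|_{L_2}^2\le e^{Ct}\|\rho_0^\eps\|_{L_2}^2$, and $\eps\downarrow 0$ yields~\eqref{equ_integrability_of_l2_density}.

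For uniqueness, by Corollary~\ref{cor_superposition_principle} it suffices to show that any solution $\mu_t$ to~\eqref{equ_mean_field_equation_with_corr_noise} in the sense of Definition~\ref{def_definition_of_solution_to_dk_equation} is a strong superposition solution. I would freeze $\mu_t$ in the coefficients, setting $v(t,x):=V(t,x,\mu_t)$, $g(t,x,\theta):=G(t,x,\mu_t,\theta)$, $a(t,x):=A(t,x,\mu_t)$. These are $(\F_t)$-progressively measurable, a.s.\ bounded (by Assumption~\ref{ass_boundedness_assumption}), and Lipschitz in $x$ (by Assumption~\ref{ass_lipshitz_continuity}). The linear SDE
\begin{align}
dY(x,t)=v(t,Y(x,t))\,dt+\int_\Theta g(t,Y(x,t),\theta)\,W(d\theta,dt),\qquad Y(x,0)=x,
\end{align}
therefore has a unique strong solution for every $x\in\R^d$ admitting a jointly measurable, a.s.\ continuous version $Y(\cdot,\cdot)$. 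Setting $\tilde\mu_t:=\mu_0\circ Y(\cdot,t)^{-1}$, the same Itô-formula computation as in Lemma~\ref{lem_from_sde_with_interaction_to_dk_equation} shows that $\tilde\mu_t$ solves the \emph{linear} SPDE with frozen coefficients $v,a,g$. By construction, $\mu_t$ itself also solves this linear SPDE with the same initial datum $\mu_0$. If I can show pathwise uniqueness for the linear SPDE within the class of processes with $L_2$ densities, then $\mu_t=\tilde\mu_t=\mu_0\circ Y(\cdot,t)^{-1}$, which is exactly the superposition property.

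It remains to prove the $L_2$-uniqueness of the linear equation and to ensure $\mu_t$ actually has an $L_2$ density. The density claim for $\mu_t$ follows from the first step applied to the linearised equation (the bound depends only on the Lipschitz constants, not on the specific solution). For uniqueness, let $\rho^{(1)},\rho^{(2)}$ be two $L_2$-density solutions; the difference $r:=\rho^{(1)}-\rho^{(2)}$ satisfies the same linear SPDE started from $0$. Mollifying and applying \Itos formula to $\|r^\eps_t\|_{L_2}^2$ gives the exact cancellation described above, so
\begin{align}
\mathbb E\|r_t^\eps\|_{L_2}^2\le C\int_0^t\mathbb E\|r_s^\eps\|_{L_2}^2\,ds+\text{commutator errors vanishing as }\eps\downarrow 0,
\end{align}
and Gronwall combined with passage to the limit forces $r\equiv 0$. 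The main obstacle will be the rigorous handling of the mollification commutators in the presence of only Lipschitz (not $C^1$) coefficients $v,g,a$ and the $\m$-valued nature of $g$: a DiPerna--Lions type commutator lemma for the transport and second-order operators, applied term by term and averaged in $\theta\in\Theta$, should supply the needed $o(1)$ error in $L^1([0,T];L_2(\R^d))$, but this is the step that requires the most care and is where Assumption~\ref{ass_boundedness_assumption} is essential.
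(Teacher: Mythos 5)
Your proposal follows essentially the same route as the paper: freeze the solution in the coefficients, prove $L_2$-uniqueness of the resulting linear SPDE by a Kurtz--Xiong style mollification/It\^o/Gronwall argument exploiting the cancellation between the second-order drift and the noise quadratic variation (with the commutator controlled via Lipschitz continuity of $g$), identify the solution with the pushforward under the frozen-coefficient flow, and conclude uniqueness from Corollary~\ref{cor_superposition_principle}. The only cosmetic difference is that the paper bounds the error term by $\e\|P_\eps(|\nu_s|)\|_{L_2}^2$ and invokes \cite[Corollary~3.1, Theorem~3.3]{Kurtz:1999} to close the argument for a priori signed-measure-valued solutions, which is precisely the careful commutator handling you flag at the end.
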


\begin{proof}
  The proof of the theorem is similar to the proof of the uniqueness result for the nonlinear SPDE in~\cite[Section~3]{Kurtz:1999}, which differs from the equation~\eqref{equ_mean_field_equation_with_corr_noise} by non-random and homogeneous (independent of time) coefficients, on one side, and a more general structure on the other side.  Therefore, we just describe the main steps omitting details. Let $\mu_t$, $t\geq 0$, be a solution to~\eqref{equ_mean_field_equation_with_corr_noise} started from $\mui$. As in the previous section, we freeze $\mu_t$ in the coefficients. As before, we set $v(t,x)=V(t,x,\mu_t)$, $a(t,x)=A(t,x,\mu_t)$ and $g(t,x,\theta)=G(t,x,\mu_t,\theta)$. We will consider $\mu_t$, $t\geq 0$, as a solution to the following linear SPDE
\begin{equation} 
  \label{equ_linear_dk_equation}
  \begin{split} 
    d\mu_t&=  \frac{1}{ 2 }D^2: (a(t,\cdot )\mu_t)dt-\nabla \cdot (v(t,\cdot )\mu_t)dt-\int_{ \Theta }   \nabla \cdot (g(t,\cdot ,\theta)\mu_t)W(d \theta,dt). \\
  \end{split}
\end{equation}
This means that equality~\eqref{equ_definition_of_process_nu} holds for the process $\mu_t$, $t\geq 0$, where test functions $\varphi$ can be taken in $\Cf_b^2(\R^d )$, by Corollary~\ref{cor_testing_of_solutions_to_dk_equation_by_bounded_functions} and Assumption~\ref{ass_lipshitz_continuity}.  

Our goal is to show that~\eqref{equ_linear_dk_equation} has a unique solution. But now we will assume that a solution $\nu_t$, $t\geq 0$, can take values in the space $\cM(\R^d )$ of all signed measures on $\R^d $ with finite total variations and $\nu_0 \in \cM(\R^d )$. Thus, let $\nu_t$, $t\geq 0$, be a continuous $(\F_t)$-adapted process which satisfies~\eqref{equ_definition_of_process_nu} for all $\varphi \in \Cf_b(\R^d )$ and $\nu_0$ is absolutely continuous with respect to the Lebesgue measure with $\frac{ d\nu_0 }{ dx }$ from $L_2(\R^d )$.

For any $\rho \in \cM(\R^d )$ and $\eps>0$, we define 
\[
  P_{\eps}\rho(x):=\int_{ \R^d  }   p_{\eps}(x-y)\rho(dy), 
\]
where $p_{\eps}$ is the heat kernel defined by $p_{\eps}(x)= \frac{1}{ (2 \pi \eps)^{-d/2}}e^{- \frac{ |x|^2 }{ 2 \eps }} $. For every $\varphi \in \Cf_b(\R^d)$, we will also write $P_{\eps}\varphi(x)$ for $\int_{ \R^d  }   p_{\eps}(x-y)\varphi(y)dy $ and $P_{\eps}(\varphi\rho)(x)$ for $\int_{ \R^d  }   p_{\eps}(x-y)\varphi(y)\rho(dy) $.  Setting $\nu^{\eps}_t:=P_{\eps}\nu_t$, $t\geq 0$, and following the computations from~\cite[Section~3]{Kurtz:1999}, based on the application of \Itos formula, we get 
\begin{align}
  \e \|\nu^{\eps}_t\|_{L_2}^2 &=\|\nu_0^{\eps}\|_{L_2}^2-\sum_{ i=1 }^{ d }\e \int_{ 0 }^{ t } 2 \left\langle \nu^{\eps}_s,\partial_{x_i}P_{\eps}(v_i(s,\cdot )\nu_s) \right\rangle_{L_2}ds  \\
  &+\sum_{ i,j=1 }^{ d } \e \int_{ 0 }^{ t }  \left\langle \nu^{\eps}_s,\partial_{x_i}\partial_{x_j}P_{\eps}(a_{i,j}(s, \cdot )\nu_s) \right\rangle_{L_2}ds \\
  &+\e \int_{ 0 }^{ t }\int_{ \Theta }  \Big\|\sum_{ i=1 }^{ d } \partial_{x_i}P_{\eps}(g_i(s,\cdot ,\theta)\nu_s)\Big\|_{L_2}^2\m(d \theta)ds, \quad t\geq 0.
\end{align}

Using Assumptions~\ref{ass_lipshitz_continuity},~\ref{ass_boundedness_assumption} and \cite[Lemma~3.2]{Kurtz:1999}, one can immediately conclude that for every $T>0$ there exists a constant $C$ such that
\begin{equation} 
  \label{equ_estimate_of_the_drift_term_in_the_kurtz_approach}
  \e \int_{ 0 }^{ t } 2 \left\langle \nu^{\eps}_s,\partial_{x_i}P_{\eps}(v_i(s,\cdot )\nu_s) \right\rangle_{L_2}ds\leq C \int_{ 0 }^{ t }  \e \left\|P_{\eps}(|\nu_s|)\right\|_{L_2}^2ds, \quad t \in [0,T],
\end{equation}
where $|\nu_s|$ is the total variation of $\nu_s$.  Using the integration by parts in the $L_2$-norm, the sum of the last two integrals in the expansion of $\e \|\nu^{\eps}_t\|_{L_2}^2$ can be rewritten as 
\begin{align}
  \frac{1}{ 2 }\sum_{ i,j=1 }^{ d }& \int_{ \R^d  }   \int_{ \R^d  }   \left( \frac{ (x_i-y_i)(x_j-y_j) }{ 4\eps^2 }- \frac{1}{ 2\eps }\I_{\left\{ i = j \right\}} \right)p_{2 \eps}(x-y)\\
  &\cdot\langle g_i(s,y,\theta)-g_i(s,x,\theta) , g_j(s,y,\theta)-g_j(s,x,\theta) \rangle_{\m}\nu_s(dx)\nu_s(dy).  
\end{align}
This leads to the same estimate~\eqref{equ_estimate_of_the_drift_term_in_the_kurtz_approach}, see the proof of \cite[Lemma~3.3]{Kurtz:1999} for more details. Hence, for every $T>0$, there exists a constant $C>0$ such that
\begin{equation} 
  \label{equ_linear_spde_g_inequality}
  \e \|\nu^\eps_t\|_{L_2}^2 \leq \|\nu^{\eps}_0\|_{L_2}^2+C \int_{ 0 }^{ t } \e \left\|P_{\eps}(|\nu_s|)\right\|_{L_2}^2 ds.
\end{equation}
Following proofs of Corollary~3.1 and Theorem~3.3 from~\cite{Kurtz:1999}, we can conclude that every solution to the linear SPDE~\eqref{equ_linear_dk_equation} with $L_2$-density is unique. Moreover, if its solution $\nu_t$, $t\geq 0$, is a measure-valued process, then $\nu_t$ is absolutely continuous with respect to the Lebesgue measure with density satisfying~\eqref{equ_integrability_of_l2_density} for every $t\geq 0$. This immediately implies that the process $\mu_t$, $t\geq 0$, is a unique solution to~\eqref{equ_linear_dk_equation} started from $\mui$ and is absolutely continuous with respect to the Lebesgue measure that satisfies~\eqref{equ_integrability_of_l2_density}.

In order to show that $\mu_t$, $t\geq 0$, is superposition solution, consider a solution $Y(u,t)$, $t\geq 0$, $u \in \R^d $, to the equation 
\begin{equation} 
  \label{equ_flow_sde_freeze}
  \begin{split} 
    dY(u,t)&= v(t,Y(u,t))dt+ \int_{ \Theta }  g(t,Y(u,t),\theta )W(d \theta ,dt), \\
    Y(u,0)&= u, \quad \bar\nu_t=\mui \circ Y(\cdot ,t)^{-1},
  \end{split}
\end{equation}
which exists and is unique due to Theorem~\ref{the_well_posedness_of_sde_with_interaction}, since its coefficients $v$, $g$ satisfy Assumptions~\ref{ass_basic_assumption},~\ref{ass_lipshitz_continuity}. By Lemma~\ref{lem_from_sde_with_interaction_to_dk_equation}, $\bar\nu_t$, $t\geq 0$, is a solution to the~\eqref{equ_linear_dk_equation} started from $\mui$. The uniqueness result, stated above, yields that $\bar\nu_t=\mu_t$, $t\geq 0$. On the other hand, the unique solution $X$ to~\eqref{equ_equation_with_interaction} solves also~\eqref{equ_flow_sde_freeze}. Hence, $Y$ must coincide with $X$. This completes the proof of the superposition principle.

Then, the uniqueness for the stochastic mean-field equation~\eqref{equ_mean_field_equation_with_corr_noise} directly follows from Corollary~\ref{cor_superposition_principle}.
\end{proof}

\subsubsection{Initial condition with finite second moment}
\label{ssub:arbitrary_initial_conditions}

We will now obtain the well-posedness and superposition principle for the stochastic mean-field equation for general initial conditions. This case includes both types of initial conditions considered before, however,  stronger assumptions on the regularity of the coefficients will be needed. Our main idea is to transform a solution to the stochastic mean-field equation by a smooth (in space) stochastic flow to get a solution to a continuity equation with random coefficients for which the superposition principle can be easily obtain, e.g., by a duality method. This will imply that the original equation has only superposition solutions which will yield the uniqueness result by Corollary~\ref{cor_superposition_principle}.  For the construction of the transformation flow, we will mainly use results from~\cite{Kunita:1990}. 

We first introduce the following assumption on the coefficients of the equation and formulate the main result of this section.

\begin{assumption} 
  \label{ass_smoothness_of_coefficients}
  There exists $\delta \in (0,1)$ such that $V_i(t,\cdot  ,\mu) \in \Cf_{lb}^{1,\delta}(\R^d )$, $\tilde{A}_{i,j}(t,\cdot ,\mu)\in \tilde{\Cf}^{3,\delta}_{lb}(\R^d )$ a.s. for all $t\geq 0$, $\mu \in \cP_2(\R^d )$, $i,j \in [d]$, and for every $T>0$ and a compact set $K \subset \cP_2(\R^d )$ a.s. 
  \[
    \sup\limits_{ t \in [0,T],\mu \in K } \left( \|V_i(t,\cdot ,\mu)\|_{1+\delta}+\|\tilde A_{i,j}(t,\cdot ,\mu)\|_{3+\delta}^{\sim} \right)< \infty, \quad i,j \in [d],
  \]
  where $\tilde{A}$ was defined in Remark~\ref{rem_about_quadratic_variation_of_solution_to_dk_equation}.
\end{assumption}

\begin{remark} 
  \label{rem_integral_version_of_assumption_4}
  Let $\mu_t$, $t\geq 0$, be an arbitrary continuous process in $\cP_2(\R^d )$. Then Assumption~\ref{ass_smoothness_of_coefficients} implies that for each $i,j \in [d]$ the processes $\int_{ 0 }^{ t }  V_i(s,\cdot ,\mu_s) ds$, $t\geq 0$, and $\int_{ 0 }^{ t } \tilde A_{i,j}(s,\cdot ,\mu_s)ds$, $t\geq 0$, are a.s. continuous in $\Cf^{1,\delta}(\R^d )$ and $\tilde\Cf^{3,\delta}(\R^d )$, respectively. Indeed, due to the continuity of the process $\mu_t$, $t\geq 0$,  the set $K^{\mu}_T:=\{ \mu_t:\ t \in [0,T] \}$ is compact in $\cP_2(\R^d )$ for every $T>0$. Then the direct computation shows that for every $T>0$, $t,t' \in [0,T]$ and compact $K \subset \R^d $
  \[
    \left\|\int_{ 0 }^{ t } V_i(s, \cdot ,\mu_s)ds- \int_{ 0 }^{ t' } V_i(s,\cdot ,\mu_s)ds  \right\|_{1+\delta,K}\leq \sup\limits_{ \substack{s \in [0,T],\\ \mu \in K_T^{\mu}} } \left\|V_i(s,\cdot ,\mu)\right\|_{1+\delta}|t-t'|
  \]
  and
  \[
    \left\|\int_{ 0 }^{ t } \tilde A_{i,j}(s, \cdot ,\mu_s)ds- \int_{ 0 }^{ t' } \tilde A_{i,j}(s,\cdot ,\mu_s)ds  \right\|_{3+\delta,K}^{\sim}\leq \sup\limits_{ \substack{s \in [0,T],\\ \mu \in K_T^{\mu}} } \left\|\tilde A_{i,j}(s,\cdot ,\mu)\right\|_{3+\delta}|t-t'|
  \]
  for all $i,j \in [d]$, that guarantees the continuity.
\end{remark}

\begin{theorem} 
  \label{the_well_posedness_of_dk_equation_in_general_case}
  Let the coefficients of the stochastic mean-field equation~\eqref{equ_mean_field_equation_with_corr_noise} satisfy Assumptions~\ref{ass_basic_assumption},~\ref{ass_lipshitz_continuity} and~\ref{ass_smoothness_of_coefficients}. Then for every $\mui \in \cP_2(\R^d )$ the equation~\eqref{equ_mean_field_equation_with_corr_noise} has a unique solution started from $\mui$. Moreover, it is a superposition solution.
\end{theorem}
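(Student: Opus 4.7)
Existence of a superposition solution is already provided by Theorem~\ref{the_existence_of_solutions_to_dke}, so by Corollary~\ref{cor_superposition_principle} it is enough to show that an arbitrary solution $\mu_t$ of~\eqref{equ_mean_field_equation_with_corr_noise} starting from $\mu_0$ is automatically a superposition solution. Following the strategy announced by the authors, we freeze the coefficients along $\mu_t$ by setting $v(t,x):=V(t,x,\mu_t)$ and $g(t,x,\theta):=G(t,x,\mu_t,\theta)$, and regard $\mu_t$ as a solution of the resulting linear SPDE with random coefficients. Under Assumption~\ref{ass_smoothness_of_coefficients}, combined with Remark~\ref{rem_integral_version_of_assumption_4}, these coefficients have enough spatial regularity ($C^{1,\delta}$ in the drift, $\tilde{C}^{3,\delta}$ in the covariance) to invoke Kunita's theory~\cite{Kunita:1990}, so that the random SDE
\[
d\phi_t(x)=v(t,\phi_t(x))\,dt+\int_\Theta g(t,\phi_t(x),\theta)\,W(d\theta,dt),\qquad \phi_0(x)=x,
\]
generates a stochastic flow $\phi_t$ of $C^1$-diffeomorphisms of $\R^d$ with a jointly continuous inverse $\phi_t^{-1}$.

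\textbf{Identification of $\mu_t$.} The plan is to prove that $\mu_t=\mu_0\circ\phi_t^{-1}$ a.s.\ for every $t\ge 0$, whence $X(u,t):=\phi_t(u)$ solves the SDE with interaction~\eqref{equ_equation_with_interaction} --- the frozen coefficients then automatically match the nonlinear ones, since $\mu_0\circ X^{-1}(\cdot,t)=\mu_t$ --- and $\mu_t$ is thereby exhibited as a superposition solution. Writing $\eta_t$ for the pushforward of $\mu_t$ by $\phi_t^{-1}$, so that $\langle\psi,\eta_t\rangle=\langle\psi\circ\phi_t^{-1},\mu_t\rangle$ for $\psi\in \Cf_c^2(\R^d)$, this reduces to showing $\eta_t\equiv\mu_0$. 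For fixed $\psi$, set $u_t:=\psi\circ\phi_t^{-1}$; by the backward Ito--Kunita formula for inverse flows, $u_t$ is a space-time random field semimartingale with drift of the form $-\nabla u_t\cdot v-\tfrac{1}{2}a:D^2 u_t$ (plus the Ito corrections produced by inverting $\phi_t$) and diffusion $-\int_\Theta\nabla u_t\cdot g(t,\cdot,\theta)\,W(d\theta,dt)$. Pairing $u_t$ against $\mu_t$ via the Ito product rule, and using Remark~\ref{rem_about_quadratic_variation_of_solution_to_dk_equation} to identify the joint covariation as a Hilbert--Schmidt pairing in $L_2(\Theta,\m)$, the drift contributions from $u_t$ exactly cancel the drift contributions $\langle\nabla u_t\cdot v,\mu_t\rangle+\tfrac{1}{2}\langle D^2u_t:a,\mu_t\rangle$ transferred from $d\mu_t$, while the two stochastic integrals cancel one another because they share the kernel $g$. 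Hence $d\langle u_t,\mu_t\rangle=0$, so $\langle\psi,\eta_t\rangle=\langle\psi,\mu_0\rangle$ for all admissible $\psi$, giving $\mu_t=\mu_0\circ\phi_t^{-1}$; uniqueness then follows from Corollary~\ref{cor_superposition_principle}.

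\textbf{Main obstacle.} The decisive step is the rigorous execution of the Ito--Kunita calculus for $\langle\psi\circ\phi_t^{-1},\mu_t\rangle$: both factors are semimartingales driven by the same cylindrical Wiener process on $L_2(\Theta,\m)$, so the joint covariation must be identified precisely as a Hilbert--Schmidt pairing, and the relatively strong regularity of $\tilde{A}$ built into Assumption~\ref{ass_smoothness_of_coefficients} is exactly what is needed to ensure that all Ito-correction terms arising from $\phi_t^{-1}$ are well defined and combine to zero. A secondary technical point is that $u_t=\psi\circ\phi_t^{-1}$ is not compactly supported in a fixed set --- $\phi_t^{-1}$ has random spatial growth --- so one must localize via a stopping-time argument, combining the moment estimates from Corollary~\ref{cor_moment_preserving_property} with the extension of the test-function class provided by Corollary~\ref{cor_testing_of_solutions_to_dk_equation_by_bounded_functions}.
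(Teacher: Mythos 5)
Your overall architecture (freeze the coefficients along $\mu_t$, transform by a Kunita flow, conclude by a duality/cancellation argument) matches the paper's, but the specific flow you choose creates a genuine gap. You build the \emph{full} flow $\phi_t$ driven by both the drift $v$ and the noise $g$, and then want to pair $u_t=\psi\circ\phi_t^{-1}$ against $\mu_t$ via the It\^o product rule. For that pairing to make sense you must use $u_t$ as a test function in the weak formulation of the SPDE for $\mu_t$, which is second order in It\^o form and therefore requires $u_t\in\Cf^2$; likewise the It\^o form of the backward equation for $\psi\circ\phi_t^{-1}$ contains $D^2u_t$. But Assumption~\ref{ass_smoothness_of_coefficients} only gives $V_i(t,\cdot,\mu)\in\Cf^{1,\delta}_{lb}(\R^d)$, so by Kunita's theory $\phi_t$ is only a flow of $\Cf^{1,\delta'}$-diffeomorphisms (exactly the regularity the paper obtains for its flow $Y$), and $u_t$ is merely $\Cf^{1,\delta'}$: the second-order terms whose cancellation your argument rests on are not even defined. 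The strong regularity of $\tilde A$ that you invoke does not help here, because the obstruction comes from the drift, not from the noise.

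The paper's proof is built precisely around this asymmetry in Assumption~\ref{ass_smoothness_of_coefficients}: it transforms $\mu_t$ only by the inverse $\psi(t,\cdot)$ of the \emph{noise-only} flow $\varphi(t,x)=x+\int_0^tM(\varphi(s,x),\circ ds)$, which is a flow of $\Cf^{3}$-diffeomorphisms because $\tilde A_{i,j}(t,\cdot,\mu)\in\tilde\Cf^{3,\delta}_{lb}$. Then $\varphi\circ\psi(t,\cdot)$ is an admissible $\Cf^2$ test function, the stochastic integrals cancel, and the transformed measure $\rho_t=\mu_t\circ\psi^{-1}(t,\cdot)$ solves a purely \emph{first-order} continuity equation with random drift $b$ (Lemma~\ref{lem_equation_for_transformed_measure_mu}). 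The duality argument for that first-order equation (Lemma~\ref{lem_uniqueness_of_solutions_to_equation_for_rho}) only needs the dual function $\gamma(t,\cdot)=\varphi(Z^{-1}(\cdot,t))$ to be $\Cf^1$, which the $\Cf^{1,\delta}$ drift does supply. To make your one-step version work you would have to strengthen the hypothesis on $V$ to $\Cf^{2,\delta}$ regularity, which is not what the theorem assumes; under the stated assumptions the two-step decomposition (remove the noise first, then dualize the remaining transport equation) is not a stylistic choice but the point of the proof.
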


In order to prove the theorem we will state a few auxiliary statements. Let $\mu_t$, $t\geq 0$, be a solution to the stochastic mean-field equation~\eqref{equ_mean_field_equation_with_corr_noise} whose coefficients satisfy Assumptions~\ref{ass_basic_assumption},~\ref{ass_lipshitz_continuity} and~\ref{ass_smoothness_of_coefficients}. As before, we will freeze $\mu_t$ in the coefficients, setting $v(t,x)=V(t,x,\mu_t)$, $a(t,x)=A(t,x,\mu_t)$, $\tilde{a}(t,x,y)=\tilde{A}(t,x,y,\mu_t)$ and $g(t,x,\theta)=G(t,x,\mu_t,\theta)$. We consider the following field of local martingales 
  \begin{align}
    M(x,t)&=  \int_{ 0 }^{ t } \int_{ \Theta }     g(s,x,\theta) W(d \theta,ds), \quad x \in \R^d,\ \ t\geq 0.
  \end{align}
  Note that its local quadratic variation 
  \[
    \left[ M_i(x,\cdot ) ,M_j(y, \cdot )  \right]_t =\int_{ 0 }^{ t } \tilde{a}_{i,j}(s,x,y)ds, \quad t\geq 0, \ \ x,y \in \R^d , \ \ i,j \in [d],
  \]
  is $\tilde{\Cf}^{3,\delta}(\R^d )$-valued continuous process, by Remark~\ref{rem_integral_version_of_assumption_4}. Consequently, there exists a version of $M$ which is a $\Cf^{3,\delta'}$-valued continuous process for every $\delta' \in (0,\delta)$, by \cite[Theorem~3.1.2]{Kunita:1990}. Moreover, for each $\alpha$ with $|\alpha|\leq 3$, $D^{\alpha}M(x,t)$, $t\geq 0$, $x \in \R^d $, is a family of continuous local martingales with quadratic variation
  \[
    \left[ D^{\alpha}M_i(x,\cdot ) ,D^{\beta}M_j(y, \cdot )  \right]_t =\int_{ 0 }^{ t } D^{\alpha}_xD^{\beta}_y\tilde{a}_{i,j}(s,x,y)ds, \quad t\geq 0,
  \]
  for any  $x,y \in \R^d$, $i,j \in [d]$ and $\alpha$, $\beta$ with $|\alpha|\leq 3$, $|\beta|\leq 3$, by \cite[Theorem~3.1.3]{Kunita:1990}. 

  We consider the following family of linear stochastic transport equations written in integral form
  \begin{equation} 
  \label{equ_stochastic_transport_equation}
  \psi_k(t,x)=x_k- \int_{ 0 }^{ t } \nabla \psi_k(s,x) \cdot  M(x,\circ ds), \quad t\geq 0,\ \ x \in \R^d , \ \ k \in [d], 
  \end{equation}
  where the Stratonovich integral was defined in \cite[Section~3.2]{Kunita:1990}. Using the connection between the \Ito and Stratonovich integrals (see \cite[Theorem~3.2.5]{Kunita:1990}), one gets 
  \begin{align}
    \int_{ 0 }^{ t } \nabla \psi_k(s,x) \cdot M (x,\circ ds)&=  \int_{ 0 }^{ t } \nabla \psi_k(s,x) \cdot M(x, ds)+ \frac{1}{ 2 } \sum_{ i=1 }^{ d } \left[ \int_{ 0 }^{ \cdot  } M_i(x,ds),\partial_i \psi_k(\cdot ,x)  \right]_t\\
    &= \int_{ 0 }^{ t } \nabla \psi_k(s,x)\cdot g(s,x, \theta) W(d \theta,ds)\\
    &- \frac{1}{ 2 }\sum_{ i=1 }^{ d } \left[ M_i(x, \cdot ),\partial_i \int_{ 0 }^{ \cdot  } \nabla \psi_k(s,x) \cdot M(x,ds)  \right]_t.
  \end{align}
  In order to compute the quadratic variation on the right hand side of the above expression, we will use \cite[Theorem~3.1.3]{Kunita:1990}. Thus, for $x,y \in \R^d $
  \begin{align}
    &\left[ M_i(y,\cdot ), \partial_i \int_{ 0 }^{ \cdot  } \nabla \psi_k(s,x)\cdot M(x,ds)  \right]_t= \frac{\partial }{\partial x_i}\left[ M_i(y,\cdot ),\int_{ 0 }^{ \cdot  } \nabla \psi_k(s,x) \cdot M(x,ds)  \right]\\
    &\qquad\qquad= \frac{\partial }{\partial x_i} \sum_{ j=1 }^{ d } \int_{ 0 }^{ t } \partial_j \psi_k(s,x)\int_{ \Theta }  g_j(s,x,\theta)g_j(s,y,\theta)\m(d \theta)ds\\
    &\qquad\qquad=\sum_{ j=1 }^{ d } \int_{ 0 }^{ t } \frac{\partial }{\partial x_i}\left( \partial_j \psi_k(s,x) \tilde{a}_{i,j}(s,x,y) \right)ds.
  \end{align}
  Therefore, equality~\eqref{equ_stochastic_transport_equation} can be rewritten in \Itos form as follows 
  \begin{equation} 
  \label{equ_stochastic_transport_equation_in_ito_form}
  \begin{split} 
    \psi_k(t,x)&= x_k- \int_{ 0 }^{ t } \int_{ \Theta }   \nabla \psi_k(s,x) \cdot g(s,x,\theta) W(d \theta,ds)\\
    &+ \frac{1}{ 2 } \int_{ 0 }^{ t }\int_{ \R^d  }   \nabla_x \cdot \left( \tilde{a}(s,x,y) \cdot \nabla \psi_k(s,x) \right) \delta_{x}(dy)ds
  \end{split}
  \end{equation}
  for any $t\geq 0$, $x \in \R^d $ and $k \in [d]$.

  \begin{proposition} 
  \label{pro_existence_of_solutions_to_the_stochastic_transport_equation}
  Under the assumptions of Theorem~\ref{the_well_posedness_of_dk_equation_in_general_case}, for each $k \in [d]$ there exist $\delta' \in (0,\delta)$ and an $(\F_t)$-adapted continuous $\Cf^{3, \delta'}$-valued process $\psi_k(t,\cdot )$, $t\geq 0$, that satisfies~\eqref{equ_stochastic_transport_equation} (and, therefore,~\eqref{equ_stochastic_transport_equation_in_ito_form}). Moreover, a.s. for every $t\geq 0$ the map $\psi(t,\cdot )=(\psi_1(t,\cdot ),\dots,\psi_d(t, \cdot )):\R^d \to \R^d  $ is invertible and $\varphi(t,\cdot ):=\psi^{-1}(t,\cdot )$ is an $(\F_t)$-adapted continuous $\Cf^{3,\delta'}(\R^d )$-valued stochastic process that satisfies the equation
  \begin{align}
    \varphi(t,x)&= x+ \int_{ 0 }^{ t } M(\varphi(s,x),\circ ds)
  \end{align}
  for every $t\geq 0$ and $x \in \R^d $.
  \end{proposition}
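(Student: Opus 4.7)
My plan is to build $\psi$ as the inverse of the stochastic flow of characteristics generated by the martingale field $M$, and then verify the transport equation by composition. This strategy sidesteps solving the stochastic transport equation directly and instead leverages Kunita's theory of stochastic flows of diffeomorphisms (\cite{Kunita:1990}).

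\emph{Step 1: Construct the forward flow.} Consider the Stratonovich SDE
\begin{equation*}
 \varphi(t,x)=x+\int_{0}^{t} M(\varphi(s,x),\circ ds),\quad \varphi(0,x)=x.
\end{equation*}
The local characteristic of $M$ is $(0,\int_{0}^{t}\tilde{a}(s,x,y)\,ds)$. By Remark~\ref{rem_integral_version_of_assumption_4} and the compactness of $\{\mu_{s}:s\in[0,T]\}$ in $\cP_{2}(\R^{d})$, Assumption~\ref{ass_smoothness_of_coefficients} ensures that a.s., for every $T>0$,
\begin{equation*}
 \sup_{t\in[0,T]} \Big\|\int_{0}^{t}\tilde{a}(s,\cdot,\cdot)\,ds\Big\|_{3+\delta}^{\sim}<\infty.
\end{equation*}
This is precisely the regularity required by Kunita's flow theorem (e.g.\ \cite[Theorem~4.6.5]{Kunita:1990}), which yields, for every $\delta'\in(0,\delta)$, the existence of a unique $(\F_{t})$-adapted solution $\varphi$ which is a.s.\ a continuous flow of $C^{3,\delta'}$-diffeomorphisms of $\R^{d}$.

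\emph{Step 2: Invert and read off regularity.} Define $\psi(t,\cdot):=\varphi(t,\cdot)^{-1}$. By \cite[Theorem~4.2.1]{Kunita:1990} (the inversion theorem for stochastic flows), $\psi$ is an $(\F_{t})$-adapted continuous process with values in $C^{3,\delta'}(\R^{d})$. Thus each $\psi_{k}$ inherits the required regularity, and $\varphi(t,\cdot)=\psi(t,\cdot)^{-1}$ is the claimed inverse diffeomorphism satisfying the Stratonovich SDE written in the statement.

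\emph{Step 3: Verify the transport equation.} Since $\psi_{k}(t,\varphi(t,x))\equiv x_{k}$, applying the generalized It\^o--Wentzell formula \cite[Theorem~3.3.1]{Kunita:1990} to this composition and using that $\varphi$ satisfies $d\varphi(t,x)=M(\varphi(t,x),\circ dt)$ gives, after cancellation of the joint-variation terms via the Stratonovich chain rule,
\begin{equation*}
 0=d\psi_{k}(t,y)\big|_{y=\varphi(t,x)}+\nabla\psi_{k}(t,\varphi(t,x))\cdot M(\varphi(t,x),\circ dt).
\end{equation*}
Because $\varphi(t,\cdot)$ is onto $\R^{d}$, substituting $y$ for $\varphi(t,x)$ yields $d\psi_{k}(t,y)=-\nabla\psi_{k}(t,y)\cdot M(y,\circ dt)$, which is exactly \eqref{equ_stochastic_transport_equation}. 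The It\^o form \eqref{equ_stochastic_transport_equation_in_ito_form} then follows from the Stratonovich--It\^o conversion already carried out in the excerpt.

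\emph{Main obstacle.} The delicate point is the regularity input for Kunita's flow theorem: one must show that the spatial $\tilde C^{3,\delta}_{lb}$ bound on $\tilde A(t,\cdot,\cdot,\mu)$ survives after freezing $\mu=\mu_{t}$ and integrating in time, with the bound holding a.s.\ uniformly on every compact time interval. This is precisely where the uniform-on-compacts formulation in Assumption~\ref{ass_smoothness_of_coefficients}, combined with the compactness of the trajectory $\{\mu_{t}:t\in[0,T]\}\subset\cP_{2}(\R^{d})$ coming from continuity of $\mu$, is essential; the regularity of $V$ plays no role here since the Stratonovich field carries only the martingale part $M$.
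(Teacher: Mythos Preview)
Your proposal is correct and follows essentially the same route as the paper: both construct the forward characteristic flow $\varphi$ via Kunita's theory of stochastic flows of $C^{3,\delta'}$-diffeomorphisms, identify $\psi$ with its inverse, and then use the method of characteristics to verify that $\psi$ solves the Stratonovich transport equation. The only difference is organizational: the paper cites \cite[Theorems~6.1.2 and~6.1.8]{Kunita:1990} directly for the existence of $\psi$ and its identification with $\varphi^{-1}$, whereas you sketch the underlying argument by hand via the It\^o--Wentzell formula applied to $\psi_k(t,\varphi(t,x))\equiv x_k$.
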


  We note that \cite[Theorem~3.2.5]{Kunita:1990} yields that the inverse flow $\varphi(t,\cdot )$, $t\geq 0$, to $\psi(t,\cdot )$, $t\geq 0$, solves the equation 
  \begin{equation} 
  \label{equ_ito_form_for_the_equation_for_phi}
    \begin{split} 
      \varphi(t,x)&= x+ \int_{ 0 }^{ t } \int_{ \Theta }   g(s,\varphi(s,x),\theta)W(d \theta,ds)\\
      &+ \frac{1}{ 2 } \int_{ 0 }^{ t }  (\nabla_x \cdot  \tilde{a})(s,\varphi(s,x),\varphi(s,x))ds , \quad t\geq 0,\ \ x \in \R^d ,
    \end{split}
  \end{equation}
  in \Ito form, where $\nabla_x \cdot \tilde{a}(s,x ,y )=\left( \sum_{ j=1 }^{ d }\frac{\partial }{\partial x_j} \tilde{a}_{i,j}(s,x,y) \right)_{i \in [d]}$. Indeed,
    \[
      \int_{ 0 }^{ t } M_k(\varphi(s,x), \circ ds) = \int_{ 0 }^{ t } M_k(\varphi(s,x),ds)+ \frac{1}{ 2 }\sum_{ i=1 }^{ d }  \left[ \partial_i \int_{ 0 }^{ \cdot  }  M_k(\varphi(s,x),ds),\varphi_i(\cdot ,x) \right]_t.
    \]
    By \cite[Theorem~3.1.3]{Kunita:1990}, we can compute for $x,y \in \R^d $
    \begin{align}
      \left[ \frac{\partial }{\partial x_i}\int_{ 0 }^{ \cdot }  M_k(\varphi(s,x),ds), \varphi_i(\cdot ,y) \right]_t&= \frac{\partial }{\partial x_i} \int_{ 0 }^{ t } \int_{ \Theta }   g_k(s,x,\theta)g_i(s,y, \theta)\m(d \theta)ds\\
      &= \int_{ 0 }^{ t } \frac{\partial }{\partial x_i}\tilde{a}_{k,i}(s,x,y)ds.
    \end{align}
    This implies the equivalence between the equations in \Ito and Stratonovich form.

    \begin{proof}[Proof of Proposition~\ref{pro_existence_of_solutions_to_the_stochastic_transport_equation}] 
    The existence of $\psi_k$, $k \in [d]$, follows from \cite[Theorem~6.1.8]{Kunita:1990}. We observe that
    \begin{align}
      \varphi(t,x)=x+\int_{ 0 }^{ t } M(\varphi(s,x),\circ ds) , \quad t\geq 0,\ \ x \in \R^d ,
    \end{align}
    is the stochastic characteristic equation for the SPDE~\eqref{equ_stochastic_transport_equation}. By~\eqref{equ_ito_form_for_the_equation_for_phi} and \cite[Theorems~3.4.6, 4.7.3]{Kunita:1990}, it has a unique continuous $\Cf^{3,\delta'}(\R^d )$-valued solution which is also a stochastic flow of $\Cf^3$-diffeomorphisms. Using \cite[Theorem~6.1.2]{Kunita:1990}, we can conclude that $\psi_k(t,\cdot )$ is the $k$-th coordinate of $\varphi^{-1}(t,\cdot )$ for any $t\geq 0$. This immediately implies the equality $\psi^{-1}(t,\cdot )=\varphi(t,\cdot )$ which ends the proof of the proposition.
  \end{proof}

  We will next consider for every $t\geq 0$ the following probability measure $\rho_t=\mu_t\circ \psi^{-1}(t,\cdot )$ on $\R^d $. It is easy to see that $\rho_t$, $t\geq 0$, is a continuous process in $\cP(\R^d )$. Let us show that this process (locally) satisfies a continuity equation with random coefficients. 

  \begin{lemma} 
  \label{lem_equation_for_transformed_measure_mu}
  Let the coefficients of~\eqref{equ_mean_field_equation_with_corr_noise} satisfy Assumptions~\ref{ass_basic_assumption},~\ref{ass_lipshitz_continuity} and~\ref{ass_smoothness_of_coefficients}. Then the measure-valued process $\rho_t$, $t\geq 0$, defined above, is a solution to the equation 
  \begin{equation} 
  \label{equ_equation_for_rho}
    d\rho_t=-\nabla(b(t,\cdot )\rho_t)dt, \quad \rho_0=\mui,
  \end{equation}
  that is, for every $\varphi \in \Cf_c^2(\R^d )$ a.s. the equality
  \begin{equation} 
  \label{equ_integral_equality_for_rho}
    \langle \rho_t , \varphi \rangle=\langle \mui , \varphi \rangle+ \int_{ 0 }^{ t } \left\langle \nabla \varphi \cdot b(s,\cdot ) , \rho_s  \right\rangle ds, \quad t\geq 0, 
  \end{equation}
  holds, where $b(t,x)=\tilde{b}(t,\psi^{-1}(t,x))$ and
  \[
    \tilde{b}_k(t,x)=\nabla \psi_k(t,x) \cdot v(t,x)- \frac{1}{ 2 } \int_{ \R^d  }   \nabla_x \cdot \left( \tilde{a}(t,x,y) \cdot \nabla \psi_k(t,x) \right) \delta_{x}(dy)
  \]
  for all $t\geq 0$, $x \in \R^d $, $k \in [d]$.
  \end{lemma}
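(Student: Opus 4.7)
The plan is to derive~\eqref{equ_integral_equality_for_rho} by applying a generalized It\^o product formula to the pairing $\langle\varphi\circ\psi(t,\cdot),\mu_t\rangle$ for $\varphi\in\Cf_c^2(\R^d)$, exploiting the fact that the stochastic transport equation~\eqref{equ_stochastic_transport_equation} for $\psi$ was designed precisely to cancel the cylindrical Wiener noise carried by the stochastic mean-field equation. Fix $\varphi\in\Cf_c^2(\R^d)$ and set $\Phi_t(x):=\varphi(\psi(t,x))$; by Proposition~\ref{pro_existence_of_solutions_to_the_stochastic_transport_equation}, $\psi(t,\cdot)$ is a $\Cf^{3,\delta'}$-valued semimartingale and a $\Cf^3$-diffeomorphism, so $\Phi_t(\cdot)$ is a compactly supported $\Cf^{2,\delta'}$-valued semimartingale. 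A direct application of the classical It\^o formula to $\varphi\circ\psi(t,x)$ for fixed $x$, using~\eqref{equ_stochastic_transport_equation_in_ito_form} together with the quadratic covariation $d[\psi_k(\cdot,x),\psi_l(\cdot,x)]_t=(\nabla\psi_k)^{T}a(t,x)\nabla\psi_l\,dt$, shows that the noise part of $d\Phi_t(x)$ collapses into $-\int_\Theta\nabla_x\Phi_t(x)\cdot g(t,x,\theta)\,W(d\theta,dt)$ via the chain-rule identity $\nabla_x\Phi_t(x)=\sum_k\partial_k\varphi(\psi(t,x))\nabla_x\psi_k(t,x)$.

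Next, I will compute $d\langle\Phi_t,\mu_t\rangle$ via a generalized It\^o--Wentzell product rule of the form $d\langle\Phi_t,\mu_t\rangle=\langle d\Phi_t,\mu_t\rangle+\langle\Phi_t,d\mu_t\rangle+d[\Phi,\mu]^{\mathrm{cross}}_t$, which can be justified either by mollifying $\mu_t$ with the heat kernel (reducing to the smooth-density setting as in the proof of Theorem~\ref{the_uniqueness_for_initial_condition_with_l2_density}) or by approximating $\Phi_t$ as a finite sum $\sum_i f_i(t)\chi_i(x)$ of products of scalar semimartingales and smooth test functions, applying the classical product rule termwise, and passing to the limit using the uniform regularity afforded by Assumption~\ref{ass_smoothness_of_coefficients} together with the $\Cf^{2,\delta'}$ regularity and compact support of $\Phi_t$. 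The key cancellation then becomes manifest: the stochastic-integral piece arising from $\langle\Phi_t,d\mu_t\rangle$, namely $\int_\Theta\langle\nabla\Phi_t\cdot g(t,\cdot,\theta),\mu_t\rangle\,W(d\theta,dt)$, is exactly the negative of the stochastic-integral piece from $\langle d\Phi_t,\mu_t\rangle$ by the first step, so $\langle\Phi_t,\mu_t\rangle$ has no martingale part.

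It then remains to identify the finite-variation part. The drift from $\langle\Phi_t,d\mu_t\rangle$ contributes $\langle\nabla\Phi_t\cdot v+\tfrac{1}{2}D^2\Phi_t:a,\mu_t\rangle$; the drift from $\langle d\Phi_t,\mu_t\rangle$ contributes the terms coming from the It\^o drift in~\eqref{equ_stochastic_transport_equation_in_ito_form} together with the quadratic variation of $\psi$; and $d[\Phi,\mu]^{\mathrm{cross}}_t$ supplies the covariation between the noise density $-\nabla\Phi_t\cdot g$ of $d\Phi_t(x)$ and the divergence-form noise $-\nabla\cdot(g\mu_t)$ of $d\mu_t$. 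Using the chain-rule identities $\nabla\Phi_t=\sum_k\partial_k\varphi(\psi)\nabla\psi_k$ and $D^2\Phi_t:a=\sum_{k,l}\partial_{k,l}^2\varphi(\psi)(\nabla\psi_k)^{T}a\nabla\psi_l+\sum_k\partial_k\varphi(\psi)\,(a:D^2\psi_k)$, all second-order terms in $\varphi$ cancel pairwise and the surviving first-order expression reassembles precisely into $\sum_k\partial_k\varphi(\psi(t,x))\,\tilde b_k(t,x)\,\mu_t(dx)$, which after change of variables equals $\nabla\varphi(y)\cdot b(t,y)\,\rho_t(dy)$, yielding~\eqref{equ_integral_equality_for_rho}. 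The main obstacle will be the rigorous execution of the It\^o--Wentzell step and the precise identification of the cross-variation term, both of which require handling a distributional product between a smooth semimartingale test field and a divergence-form measure-valued SPDE noise; the mollification route appears most tractable, since it reduces everything to pointwise It\^o calculus on bounded smooth functions, with uniform estimates in the mollification parameter coming from the Lipschitz and H\"older bounds of Assumptions~\ref{ass_lipshitz_continuity} and~\ref{ass_smoothness_of_coefficients}.
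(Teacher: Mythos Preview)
Your proposal is correct and follows essentially the same route as the paper: the paper also applies It\^o's formula to $\varphi\circ\psi(t,x)$, then mollifies $\mu_t$ (with a compactly supported kernel rather than the heat kernel) to obtain a pointwise product $\varphi\circ\psi(t,x)\,\mu_t^\eps(x)$, applies the classical product rule, integrates in $x$, and passes $\eps\to 0$ via integration by parts to exhibit precisely the cancellations you describe. Your high-level account of the cross-variation and the second-order cancellations matches the paper's detailed computation.
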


  \begin{proof} 
    Let $\varphi \in \Cf_c^{2}(\R^d )$. We note that $\langle \varphi , \rho_t \rangle=\langle \varphi \circ \psi(t,\cdot ) , \mu_t \rangle$ for every $t\geq 0$, by the definition of the measure $\rho_t$. Hence, we first apply \Itos formula to $\varphi \circ \psi(t,x)$, $t\geq 0$, for each $x \in \R^d $. One gets for $t\geq 0$
  \begin{align}
    &\varphi\circ \psi(t,x)=  \varphi \left( \psi(t,x) \right)= \varphi(x)\\
    &\qquad- \sum_{ k=1 }^{ d } \int_{ 0 }^{ t } \int_{ \Theta }   \left( \partial_k \varphi \circ \psi\right)(s,x)  \left[ \nabla \psi_k(s,x) \cdot g(s,x,\theta) \right]W(d \theta,ds)  \\
    &\qquad+ \frac{1}{ 2 }\sum_{ k=1 }^{ d } \int_{ 0 }^{ t }  \left( \partial_k \varphi \circ \psi\right)(s,x)  \int_{ \R^d  }   \nabla_x \cdot \left( \tilde{a}(t,x,y) \cdot \nabla \psi_k(t,x) \right) \delta_{x}(dy)ds\\
    &\qquad+ \frac{1}{ 2 } \sum_{ k,l=1 }^{ d } \int_{ 0 }^{ t }\left( \partial^{2}_{k,l} \varphi \circ \psi\right)(s,x)\left[ \left( \nabla \psi_k(s,x)\otimes \nabla \psi_l(s,x)\right): a(s,x)\right]ds.
  \end{align}
  
  Next, we consider a non-negative function $\kappa \in \Cf_c^2(\R^d )$ such that $\int_{ \R^d  }   \kappa(x)dx=1 $, and set $\delta^{\eps}_x(y)= \frac{1}{ \eps^d }\kappa\left( \frac{1}{ \eps }(x-y)\right)$, $y \in \R^d $, for each $x \in \R^d $ and $\eps>0$. Integrating $\delta^{\eps}_x$ by $\mu_t$ and denoting $\mu_t^{\eps}(x)=\langle \delta^{\eps}_x ,\mu_t  \rangle$, we get for every $x \in \R^d $ a.s. 
  \begin{align}
    \mu_t^{\eps}(x)&= \mu_0^{\eps}(x)+ \frac{1}{ 2 }\int_{ 0 }^{ t } \left\langle D^2\delta^{\eps}_x:a(s,\cdot ) , \mu_s \right\rangle ds\\
    &+\int_{ 0 }^{ t } \left\langle \nabla\delta_x^{\eps} \cdot v(s, \cdot ) , \mu_s \right\rangle ds + \int_{ 0 }^{ t } \int_{ \Theta }   \left\langle \nabla \delta_x^{\eps} \cdot g(s, \cdot ,\theta) , \mu_s \right\rangle W(d \theta,ds)  
  \end{align}
  for all $t\geq 0$. Using now the expressions for $\varphi\circ \psi(t,x)$ and $\mu_t^{\eps}(x)$ and \Itos formula, we obtain for every $x \in \R^d $ a.s. 
  \begin{align}
    \varphi\circ \psi&(t,x)\mu^{\eps}_t(x)=  \varphi(x)\mu^{\eps}_0(x)\\
    &- \sum_{ k=1 }^{ d } \int_{ 0 }^{ t } \int_{ \Theta } \mu_s^{\eps}(x)  \left( \partial_k \varphi \circ \psi\right)(s,x)\left[ \nabla \psi_k(s,x) \cdot g(s,x,\theta) \right]W(d \theta,ds)  \\
    &+ \frac{1}{ 2 }\sum_{ k=1 }^{ d } \int_{ 0 }^{ t } \mu_s^{\eps}(x) \left( \partial_k \varphi \circ \psi\right)(s,x)  \int_{ \R^d  }   \nabla_x \cdot \left( \tilde{a}(s,x,y) \cdot \nabla \psi_k(s,x) \right) \delta_{x}(dy)ds\\
    &+ \frac{1}{ 2 } \sum_{ k,l=1 }^{ d } \int_{ 0 }^{ t }\mu_s^{\eps}(x)\left( \partial^{2}_{k,l} \varphi \circ \psi\right)(s,x)\left[ \left( \nabla \psi_k(s,x)\otimes \nabla \psi_l(s,x)\right): a(s,x)\right]ds\\
    &+\frac{1}{ 2 }\int_{ 0 }^{ t } \varphi\circ \psi(s,x)\left\langle D^2\delta^{\eps}_x:a(s,\cdot ) , \mu_s \right\rangle ds +\int_{ 0 }^{ t } \varphi\circ \psi(s,x)\left\langle \nabla\delta_x^{\eps} \cdot v(s, \cdot ) , \mu_s \right\rangle ds\\ 
    &+ \int_{ 0 }^{ t } \int_{ \Theta } \varphi \circ \psi(s,x) \left\langle \nabla \delta_x^{\eps} \cdot g(s, \cdot ,\theta) , \mu_s \right\rangle W(d \theta,ds)\\
    &- \sum_{ k=1 }^{ d } \int_{ 0 }^{ t } \int_{ \Theta }   \left\langle \nabla\delta_x^{\eps} \cdot g(s,\cdot, \theta) , \mu_s \right\rangle   \left( \partial_k \varphi \circ \psi\right)(s,x)\left[ \nabla \psi_k(s,x) \cdot g(s,x,\theta) \right]\m(d \theta) ds
  \end{align}
  for all $t\geq 0$. Note that for every $t\geq 0$ a.s. there exists a compact $K \subset \R^d $ such that $\supp \varphi \circ \psi(s, \cdot ) \in K$ for any $s \in [0,t]$. Indeed, one can take $K=\{ \psi^{-1}(s,x):\ s \in [0,t],\ x \in \supp \varphi \}$, which is a compact set as the image of $[0,t]\times \supp \varphi$ under the continuous map $(s,x)\mapsto\psi^{-1}(s,x)$ (for the continuity of $\psi^{-1}$ see Proposition~\ref{pro_existence_of_solutions_to_the_stochastic_transport_equation}). Therefore, we may integrate the above expression with respect to the Lebesgue measure $dx$ over $\R^d $. Then taking $\eps \to 0$, we get
  \[
    \int_{ \R^d  }   \varphi\circ \psi(t,x)\mu_t^{\eps}(x)dx \to \left\langle \varphi\circ \psi(t,\cdot ) ,\mu_t  \right\rangle \quad \mbox{a.s.}
  \]
  and 
  \[
    \int_{ \R^d  }   \varphi(x)\mu_0^{\eps}(x)dx \to \left\langle \varphi , \mui \right\rangle \quad \mbox{a.s.}
  \]
  Next using the equalities $\nabla_y\delta_x^{\eps}(y)=-\nabla_x\delta_x^{\eps}(y)$, $D^2_y\delta_x^{\eps}(y)=D^2_x\delta_x^{\eps}(y)$, the integration by parts formula and Fubini's theorem, we can conclude that the sum of all terms consisting of the integrals $\int_{ 0 }^{ t } (\dots)ds $ in the expression for $\int_{ \R^d  }   \varphi\circ \psi(t,x)\mu_t^{\eps}(x)dx$ converges a.s. to 
  \begin{align}
    I_1:&= \frac{1}{ 2 }\sum_{ k=1 }^{ d } \int_{ 0 }^{ t } \left\langle \left( \partial_k \varphi \circ \psi\right)(s,x )  \int_{ \R^d  }   \nabla_x \cdot \left( \tilde{a}(s,x ,y) \cdot \nabla \psi_k(s,x ) \right) \delta_{x }(dy),\mu_s(dx)\right \rangle ds\\
    &+ \frac{1}{ 2 } \sum_{ k,l=1 }^{ d } \int_{ 0 }^{ t }\left\langle \left( \partial^{2}_{k,l} \varphi \circ \psi\right)(s,x)\left[ \left( \nabla \psi_k(s,x)\otimes \nabla \psi_l(s,x)\right): a(s,x)\right],\mu_s(dx)\right\rangle ds\\
    &+\frac{1}{ 2 }\int_{ 0 }^{ t }\left\langle  D^2(\varphi\circ \psi)(s,x):a(s,\cdot ) , \mu_s \right\rangle ds +\int_{ 0 }^{ t } \left\langle \nabla(\varphi\circ \psi)(s,x)\cdot v(s, \cdot ) , \mu_s \right\rangle ds\\ 
    &- \sum_{ k,i,j=1 }^{ d } \int_{ 0 }^{ t } \int_{ \R^d  }      \left\langle \frac{\partial }{\partial x_i}\left[ \left( \partial_k \varphi \circ \psi\right)(s,x) \partial_j \psi_k(s,x)\tilde{a}_{i,j}(s,x,y) \right],\mu_s(dx)\right\rangle \delta_x(dy)ds.
  \end{align}
  Taking into account that the second and the third integrals are the same and that the last term can be rewritten as 
  \begin{align}
    &-\sum_{ k,l=1 }^{ d } \int_{ 0 }^{ t }\left\langle  \left( \partial^2_{k,l} \varphi \circ \psi\right)(s,x) \partial_j \left[\left(\nabla\psi_k(s,x)\otimes\nabla \psi_l(s,x)\right):\tilde{a}_{i,j}(s,x,x) \right],\mu_s(dx)\right\rangle ds\\
    &-\sum_{ k=1 }^{ d } \int_{ 0 }^{ t } \left\langle \left( \partial_k \varphi \circ \psi\right)(s,x) \int_{ \R^d  }   \nabla_x \cdot \left( \tilde{a}(s,x,y) \cdot \nabla \psi_k(s,x) \right)\delta_x(dy),\mu_s(dx)\right\rangle ds,
  \end{align}
  we obtain
  \begin{align}
    I_1&= -\frac{1}{ 2 }\sum_{ k=1 }^{ d } \int_{ 0 }^{ t } \left\langle \left( \partial_k \varphi \circ \psi\right)(s,x )  \int_{ \R^d  }   \nabla_x \cdot \left( \tilde{a}(s,x ,y) \cdot \nabla \psi_k(s,x ) \right) \delta_{x }(dy),\mu_s(dx)\right \rangle ds\\
    &+\sum_{ k=1 }^{ d }\int_{ 0 }^{ t }  \left\langle (\partial_k\varphi\circ \psi)(s,\cdot ) \nabla\psi_k(s,\cdot )\cdot v(s, \cdot ) , \mu_s \right\rangle ds\\
    &= \int_{ 0 }^{ t } \left\langle \left(\nabla \varphi\circ \psi\right)(s,\cdot ) \cdot b(s, \psi(s,\cdot )) , \mu_s \right\rangle ds.
  \end{align}
  We now show that the difference of stochastic integrals converges to zero. Set 
  \begin{align}
    F_{\eps}(s,x,\theta)&=  \varphi\circ \psi(s,x)\left\langle \nabla \delta_x^{\eps} \cdot g(s, \cdot ,\theta) , \mu_s \right\rangle\\
    &-\sum_{ k=1 }^{ d } \mu_s^{\eps}(x)  \left( \partial_k \varphi \circ \psi\right)(s,x)\left[ \nabla \psi_k(s,x) \cdot g(s,x,\theta) \right]\\
    &= \int_{ \R^d  }   \varphi\circ \psi(s,x) \nabla_y\delta_x^{\eps}(y)\cdot g(s,y, \theta)\mu_s(dy)\\
    &-\int_{ \R^d  }   \delta_x^{\eps}(y)\nabla (\varphi\circ \psi)(s,x) \cdot g(s,x,\theta)\mu_s(dy)  \\
    &= \int_{ \R^d  }    \left(\varphi\circ \psi(s,x) \nabla_y\delta_x^{\eps}(y)\cdot g(s,y, \theta)-\delta_x^{\eps}(y)\nabla (\varphi\circ \psi)(s,x) \cdot g(s,x,\theta)\right)\mu_s(dy)  
  \end{align}
  for all $s\geq 0$, $x \in \R^d $ and $\theta \in \Theta$. Using stochastic Fubini's theorem, one can see that 
  \begin{equation} 
  \label{equ_stochastic_integral_of_f}
    \int_{ \R^d  }   \left(\int_{ 0 }^{ t } \int_{ \Theta }   F_{\eps}(s,x,\theta)W(d \theta,ds)\right)dx= \int_{ 0 }^{ t } \int_{ \Theta }  \left( \int_{ \R^d  }   F_{\eps}(s,x, \theta) dx \right) W(d \theta,ds).      
  \end{equation}
  Moreover, 
  \begin{align}
    &\int_{ 0 }^{ t } \int_{ \Theta }   \left( \int_{ \R^d  }   F_{\eps}(s,x,\theta)dx  \right)^{2}\m(d \theta)ds\\
    &\qquad= \int_{ 0 }^{ t } \int_{ \R ^{4d} } \varphi\circ \psi(s,x) \varphi\circ \psi(s,\tilde x)\nabla_y\delta_x^{\eps}(y)\otimes\nabla_{\tilde{y}}\delta_{\tilde{x}}^{\eps}(\tilde y): \tilde{a}(s,y, \tilde{y})\mu_s(dy)\mu_s(d \tilde{y})dxd \tilde{x}ds\\    
    &\qquad+\int_{ 0 }^{ t } \int_{ \R^{4d}  }   \delta_x^{\eps}(y)\delta_{\tilde{x}}^{\eps}(\tilde y)\nabla (\varphi\circ \psi)(s,x)\otimes\nabla (\varphi\circ \psi)(s,\tilde x): \tilde{a}(s,x,\tilde{x})\mu_s(dy)\mu_s(d\tilde y)dx d \tilde{x} ds\\
    &\qquad-2 \int_{ 0 }^{ t } \int_{ \R ^{4d} }   \varphi\circ \psi(s,x)\delta_{\tilde{x}}^{\eps}(\tilde{y})\nabla_y \delta_x^{\eps}(y)\otimes \nabla\left( \varphi\circ \psi(s, \tilde{x}): \tilde{a}(s,y,\tilde{x}) \right)\mu_s(dy)\mu_s(d \tilde{y})dxd \tilde{x}ds  
  \end{align}
  for every $t\geq 0$. After the integration by parts, similarly as before, we obtain 
  \[
    \int_{ 0 }^{ t } \int_{ \Theta }   \left( \int_{ \R^d  }   F_{\eps}(s,x,\theta)dx  \right)^{2}\m(d \theta)ds \to 0 \quad \mbox{a.s.}
  \]
  as $\eps \to 0$. This simply implies the convergence of the right hand side of~\eqref{equ_stochastic_integral_of_f} to zero in probability as $\eps \to 0$, by, e.g., \cite[Theorem~II.7.2']{Ikeda:1989}. Summarizing our computations, we get that for every $t\geq 0$ a.s. 
  \[
    \left\langle \varphi\circ \psi(t, \cdot ) , \mu_t \right\rangle=\left\langle \varphi , \mui \right\rangle+ \int_{ 0 }^{ t } \left\langle \left( \nabla \varphi\circ \psi \right)(s, \cdot )\cdot b(s,\psi(s,\cdot)) , \mu_s \right\rangle ds.
  \]
  This directly implies that the process $\rho_t$, $t\geq 0$, solves the equation~\eqref{equ_equation_for_rho}. 
  \end{proof}

  Note that we cannot control the growth of the coefficient $b$ in the PDE~\eqref{equ_equation_for_rho} in the spatial variable. Therefore, the known superposition principle, e.g. from~\cite{Ambrosio:2008,Trevisan:2016,DiPerna:1989}, cannot be applied. However, we can construct a superposition solution to~\eqref{equ_equation_for_rho} and a solution to its dual equation precisely, and then use the duality principle to prove that only the superposition solution solves~\eqref{equ_equation_for_rho}. 

  Let $Y(u,t)$, $u \in \R^d $, $t\geq 0$, be a unique solution to~\eqref{equ_equation_with_interaction} with the frozen coefficients $v$ and $g$. In particular, for every $u \in \R^d $ a.s.
  \begin{equation} 
  \label{equ_equation_for_y_with_frozen_coefficients}
    Y(u,t)=u+\int_{ 0 }^{ t } v(s,Y(u,s))ds+ \int_{ 0 }^{ t } \int_{ \Theta }   g(s,Y(u,s),\theta)W(d \theta,ds), \quad t\geq 0.   
  \end{equation}
  By Assumptions~\ref{ass_lipshitz_continuity},~\ref{ass_smoothness_of_coefficients} and \cite[Theorem~4.6.5]{Kunita:1990}, $Y$ has a modification (also denoted by $Y$) which is a continuous process in the space of $\Cf^{1,\delta'}(\R^d )$-diffeomorphisms for every $\delta' \in (0,\delta)$, i.e., a.s. $Y(\cdot,t )$, $t\geq 0$, is a continuous $\Cf^{1,\delta'}(\R^d )$-valued process and a.s. for every $t\geq 0$ there exists the inverse map $Y^{-1}(\cdot,t ):\R^d \to \R^d $ to $Y(\cdot,t )$ which is a $\Cf^{1, \delta'}$-valued continuous process. We also set 
\begin{equation} 
  \label{equ_definition_of_the_process_z_by_psi}
  Z(\cdot,t )=\psi(t,Y(\cdot,t )), \quad t\geq 0,
\end{equation}
  which is a continuous process in the space of $\Cf^{1, \delta'}$-diffeomeorphisms for some $\delta \in (0,\delta)$, according to Proposition~\ref{pro_existence_of_solutions_to_the_stochastic_transport_equation}.

  \begin{lemma} 
  \label{lem_uniqueness_of_solutions_to_equation_for_rho}
  Under the assumptions of Lemma~\ref{lem_equation_for_transformed_measure_mu}, the process $\bar\rho_t=\mui\circ Z^{-1}(\cdot,t )$, $t\geq 0$, is the unique solution to~\eqref{equ_equation_for_rho}.
  \end{lemma}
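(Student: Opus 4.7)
The lemma packages existence and uniqueness together; the existence part comes almost for free from the preceding lemmas, while the heart of the argument is a duality-based uniqueness proof against a solution of the backward transport equation dual to~\eqref{equ_equation_for_rho}.

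For existence I observe that $Y(\cdot, t)$ is the (unique) flow of the linear SDE with the frozen coefficients $v, g$, so the same \Ito-formula computation that underlies Lemma~\ref{lem_from_sde_with_interaction_to_dk_equation} yields that $\tilde\mu_t := \mui \circ Y^{-1}(\cdot, t)$ satisfies the linear SPDE~\eqref{equ_linear_dk_equation} with initial datum $\mui$. Since the proof of Lemma~\ref{lem_equation_for_transformed_measure_mu} uses only that the underlying measure-valued process solves this linear equation (and not that it is the particular $\mu$ whose frozen values entered $v, g$), applying that lemma with $\tilde\mu$ in place of $\mu$ gives that $\tilde\mu_t \circ \psi^{-1}(t, \cdot) = \mui \circ Z^{-1}(\cdot, t) = \bar\rho_t$ solves~\eqref{equ_equation_for_rho}.

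For uniqueness, my plan is first to show that $Z(u, \cdot)$ is, a.s., an ODE trajectory for the (random, time-dependent) vector field $b$, i.e.\ $dZ(u, t) = b(t, Z(u, t)) dt$ with $Z(u, 0) = u$. I would obtain this by applying the It\^o--Wentzell formula (see e.g.\ \cite[Theorem~3.3.1]{Kunita:1990}) to $\psi(t, Y(u, t))$: the martingale $\int_\Theta \nabla \psi_k(t, Y) \cdot g(t, Y, \theta) W(d\theta, dt)$ produced by the spatial chain rule cancels the driving noise of $\psi$ at $x = Y(u, t)$, while the combination of the \Ito drift of $\psi$, the second-order term from $[Y, Y]_t$, and the cross quadratic variation between $M(\cdot, \cdot)$ at $y = Y$ and $Y$ itself collapses, after algebraic cancellations involving $\tilde a$ and its derivatives, to exactly $\tilde b(t, Y(u, t)) = b(t, Z(u, t))$. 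Hence $Z(s, t; x) := Z(\cdot, t) \circ Z(\cdot, s)^{-1}(x)$ defines a two-parameter flow of $\Cf^{1, \delta'}$-diffeomorphisms, and for any $\varphi_T \in \Cf_c^\infty(\R^d)$ the function $\varphi(s, x) := \varphi_T(Z(s, T; x))$ is compactly supported, $\Cf^{1, \delta'}$ in $x$, $\Cf^1$ in $s$, and solves the dual transport equation $\partial_s \varphi + b(s, \cdot) \cdot \nabla \varphi = 0$ with terminal datum $\varphi_T$. For any solution $\rho$ of~\eqref{equ_equation_for_rho}, I would then expand telescopically along a partition $0 = t_0 < \cdots < t_n = T$,
\begin{equation*}
\langle \varphi(T, \cdot), \rho_T \rangle - \langle \varphi(0, \cdot), \mui \rangle = \sum_i \bigl( \langle \varphi(t_{i+1}, \cdot), \rho_{t_{i+1}} - \rho_{t_i} \rangle + \langle \varphi(t_{i+1}, \cdot) - \varphi(t_i, \cdot), \rho_{t_i} \rangle \bigr),
\end{equation*}
control the first summand via~\eqref{equ_integral_equality_for_rho} and the second via the transport equation, and pass to the mesh limit; the two contributions cancel, yielding $\langle \varphi_T, \rho_T \rangle = \langle \varphi_T \circ Z(0, T; \cdot), \mui \rangle = \langle \varphi_T, \bar\rho_T \rangle$ and thus $\rho_T = \bar\rho_T$ by arbitrariness of $\varphi_T$.

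The main technical obstacles will be (i) carrying out the It\^o--Wentzell cancellations that produce $\tilde b$, which requires careful bookkeeping of the cross variation between two random fields with delicate spatial dependence, and (ii) the mismatch between the $\Cf_c^2$ test-function class demanded by~\eqref{equ_integral_equality_for_rho} and the $\Cf^{1, \delta'}$ regularity of $\varphi(s, \cdot)$. I would handle (ii) by a standard mollification, using that $b(s, \cdot)$ is pathwise $\Cf^{1, \delta'}$ on compacts---a consequence of Proposition~\ref{pro_existence_of_solutions_to_the_stochastic_transport_equation} and Assumption~\ref{ass_smoothness_of_coefficients}---together with the uniform compact support of $\varphi(s, \cdot)$ on $[0, T]$. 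The absence of any global Lipschitz or growth control on $b$---which is precisely what obstructs a direct appeal to classical superposition principles as in~\cite{Ambrosio:2008, Trevisan:2016, DiPerna:1989}---is the reason the transformation via $\psi$ is essential here: it provides the diffeomorphism flow $Z$ for $b$ without invoking any ODE well-posedness theory for $b$ itself.
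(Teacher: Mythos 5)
Your proposal is correct and follows essentially the same route as the paper: the generalised \Ito formula of \cite[Theorem~3.3.1]{Kunita:1990} shows that $Z(u,t)=\psi(t,Y(u,t))$ is an integral curve of $b$, existence then follows by pushing $\mui$ forward along $Z$, and uniqueness is obtained by duality against the backward transport equation solved by the pullback of a test function along the flow $Z$ (the paper works with $\gamma(t,\cdot)=\varphi(Z^{-1}(\cdot,t))$, shows $\langle\gamma(t,\cdot),\rho_t\rangle$ is constant by repeating the mollification computation of Lemma~\ref{lem_equation_for_transformed_measure_mu}, and concludes $\rho_t=\bar\rho_t$, which is your telescoping argument in differential form). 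Your minor variations --- routing existence through Lemma~\ref{lem_equation_for_transformed_measure_mu} applied to $\mui\circ Y^{-1}(\cdot,t)$, and handling the test-function regularity by mollification --- are harmless and consistent with the paper's argument.
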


  \begin{proof} 
    We first apply the generalised \Itos formula from \cite[Theorem~3.3.1]{Kunita:1990} to $Z(u,t)=\psi(t,Y(u,t))$, $t\geq 0$. We get for every $u \in \R^d $ a.s.
    \[
      Z(u,t)=u+\int_{ 0 }^{ t }\tilde{b}(s,Y(u,s))ds=u+\int_{ 0 }^{ t } b(s,Z(u,s))ds , \quad t\geq 0. 
    \]
    where $\tilde{b}$ and $b$ were defined in Lemma~\ref{lem_equation_for_transformed_measure_mu}.  Let $\varphi \in \Cf_c^{2}(\R^d )$. By the usual chain-rule, one obtains for each $u \in \R^d $ a.s.
    \begin{align}
      \varphi(Z(u,t))&= \varphi(u)+ \int_{ 0 }^{ t } (\nabla \varphi)\left( Z(u,s) \right) \cdot b(s,Z(u,s))ds  , \quad t\geq 0.
    \end{align}
    Then, integrating the obtained expression with respect to $\mui$ yields that the process $\bar\rho_t$, $t\geq 0$, satisfies the equality 
    \[
      \left\langle \varphi , \bar\rho_t \right\rangle=\langle \varphi , \mui \rangle+\int_{ 0 }^{ t } \left\langle \nabla \varphi \cdot b(s, \cdot ) , \bar\rho_s \right\rangle ds, \quad t\geq 0, 
    \]
    a.s. This implies that $\bar \rho_t$, $t\geq 0$, is a solution to~\eqref{equ_equation_for_rho}.

    We next prove the uniqueness of solutions to~\eqref{equ_equation_for_rho}. Let $\rho_t$, $t\geq 0$, be a solution to~\eqref{equ_equation_for_rho}.  For a fixed function $\varphi \in \Cf_c^{2}(\R^d )$ define 
    \[
      \gamma(t,\cdot )=\varphi(Z^{-1}(\cdot,t )), \quad t\geq 0, 
    \]
    which is a $\Cf^{3,\delta'}(\R^d )$-valued continuous process.  Then similarly to the proof of \cite[Lemma~6.1.1]{Kunita:1990}, one can show that for every $x \in \R^d $ a.s. 
    \[
      \gamma(t,x )= \varphi(x)-\int_{ 0 }^{ t } \nabla \gamma(s,x) \cdot b (s,x)ds, \quad t\geq 0.
    \]
    Repeating now the computation from the proof of Lemma~\ref{lem_equation_for_transformed_measure_mu}, we get a.s.
    \begin{align}
      \langle \gamma(t,\cdot ) , \rho_t \rangle&= \langle \gamma(0,\cdot ) , \varphi\rangle+\int_{ 0 }^{ t } \langle \nabla\gamma(s,\cdot ) \cdot b(s,\cdot ) , \rho_s \rangle ds- \int_{ 0 }^{ t } \left\langle \nabla \gamma(s,\cdot ) \cdot b(s,x) , \rho_s \right\rangle ds\\
      &= \langle \mui , \varphi \rangle  , \quad t\geq 0.
    \end{align}
    Thus, from the definition of $\gamma$ it follows that a.s.
    \[
      \left\langle \varphi , \mui \right\rangle=\int_{ \R^d  }   \gamma(t,x)\rho_t(dx)= \int_{ \R^d  }   \varphi(Z^{-1}(x,t))\rho_t(dx)= \int_{ \R^d  }   \varphi(u)\tilde{\rho}_t(du),  
    \]
    where $\tilde{\rho}_t(A)= \rho_t\{ Z(u,t):\ u \in A\}$, $A \in \B(\R^d )$. It is easy to see that $\tilde\rho_t$, $t\geq 0$, is a continuous process in $\cP(\R^d )$. Consequently, the above expression, which holds for every $\varphi \in \Cf_c^{2}(\R^d )$, yields that $\tilde\rho_t=\mui$, $t\geq 0$, a.s. Hence, $\rho_t=\mui\circ Z^{-1}(\cdot,t )=\bar\rho_t$, $t\geq 0$, a.s. This completes the proof of the lemma.
  \end{proof}

  We now ready to prove the well-posedness of the stochastic mean-field equation for arbitrary initial conditions.

  \begin{proof}[Proof of Theorem~\ref{the_well_posedness_of_dk_equation_in_general_case}] 
    Let $\mu_t$, $t\geq 0$, be an arbitrary solution to the stochastic mean-field equation~\eqref{equ_mean_field_equation_with_corr_noise}. Let also $\psi$, $\rho$, $Y$ and $Z$ be defined by~\eqref{equ_stochastic_transport_equation},~\eqref{equ_equation_for_rho},~\eqref{equ_equation_for_y_with_frozen_coefficients}, and~\eqref{equ_definition_of_the_process_z_by_psi}, respectively, where the process $\mu_t$, $t\geq 0$, is frozen in the coefficients $V,A,\tilde{A}$ and $G$, i.e, $v(t,x)=V(t,x,\mu_t)$, $a(t,x)=A(t,x,\mu_t)$, $\tilde{a}(t,x,y)=\tilde{A}(t,x,y,\mu_t)$ and $g(t,x,\theta)=G(t,x,\mu_t,\theta)$. We recall that $\rho_t=\mu_t\circ \psi^{-1}(t,\cdot )$, $t\geq 0$. Then by Lemmas~\ref{lem_equation_for_transformed_measure_mu} and~\ref{lem_uniqueness_of_solutions_to_equation_for_rho}, we obtain that a.s.
    \[
      \rho_t=\mui\circ Z^{-1}(\cdot,t )=\left( \mui\circ Y^{-1}(\cdot,t ) \right)\circ \psi^{-1}(t,\cdot ), \quad t\geq 0.
    \]
    Using that $\psi(t,\cdot )$ is a bijection, we conclude that $\mu_t=\mui\circ Y^{-1}(\cdot,t )$, $t\geq 0$, a.s. Hence, $Y$ is a solution to the SDE with interaction~\eqref{equ_equation_with_interaction} due to the definition of $Y$ and the fact that $\bar\mu_t=\mui\circ Y^{-1}(\cdot,t )=\mu_t$ for all $t\geq 0$. Moreover, $Y$ is also the unique solution, according to Theorem~\ref{the_well_posedness_of_sde_with_interaction}. This yields that $\mu_t$, $t\geq 0$, is a superposition solution to~\eqref{equ_mean_field_equation_with_corr_noise}. This completes the proof of the theorem since the superposition principle implies the uniqueness of the stochastic mean-field equation, by Corollary~\ref{cor_superposition_principle}.
  \end{proof}

\section{Limit theorems for the stochastic mean-field equation}
\label{sec:lln_and_clt_for_the_mean_field_equation}

The aim of this section is to prove an analog of the law of large numbers (LLN) and the central limit theorem (CLT) for solutions to the stochastic mean-field equation~\eqref{equ_mean_field_equation_with_corr_noise}. More precisely, we provide a rate of convergence of the superposition solution $\mu_t^{\eps}$, $t\geq 0$, to 
\begin{equation} 
  \label{equ_mean_field_equation_with_vanishing_noise}
  \begin{split} 
    d\mu_t^{\eps}&=  \frac{ \eps }{ 2 }D^2:(A(t, \cdot ,\mu_t^{\eps})\mu_t^{\eps})dt-\nabla \cdot \left( V(t,\cdot,\mu_t^{\eps})\mu_t^{\eps} \right)dt\\
    &-\sqrt{ \eps }\int_{ \Theta }   \nabla\cdot \left( G(t, \cdot ,\mu_t^{\eps}, \theta)\mu_t^{\eps} \right)W(d \theta,dt)
  \end{split}
\end{equation}
started from $\mu_0^{\eps} \in \cP_2(\R^d )$ to the superposition solution $\mu^0_t$, $t\geq 0$, to the PDE
\begin{equation} 
  \label{equ_mean_field_equation_without_noise}
  d\mu^0_t=-\nabla \cdot \left( V(t,\cdot,\mu^0_t)\mu^0_t \right)dt
\end{equation}
started from $\mu_0^0\in\cP_2(\R^d)$. We also show that the fluctuation field 
\begin{equation} 
  \label{equ_definition_of_eta_eps}
  \eta^{\eps}_t= \frac{1}{ \sqrt{ \eps } }\left( \mu_t^{\eps}-\mu^0_t \right), \quad t\geq 0,
\end{equation}
converges to a Gaussian process $\eta^0_t$, $t\geq 0$, which is a solution to the linear SPDE
\begin{equation} 
  \label{equ_equation_for_eta0}
  \begin{split}
    d\eta^0_t&= -\nabla \cdot \left( V(t,\cdot ,\mu_t^0)\eta_t^0+ \langle \tilde{V}(t,x ,\cdot ) , \eta^0_t \rangle_0 \mu_t^0(dx) \right)dt\\
    &- \int_{ \Theta }   \nabla \cdot \left( G(t,\cdot ,\mu_t^0,\theta)\mu_t^0 \right)W(d \theta,dt),
  \end{split}
\end{equation}
and estimate the speed of the convergence. 

\subsection{Law of large numbers}
\label{sub:law_of_large_numbers}

In this section, we will prove the following theorem.
\begin{theorem}[LLN for the stochastic mean-field equation] 
  \label{the_lln}
  Let the coefficients $A,V,G$ of the stochastic mean-field equation~\eqref{equ_mean_field_equation_with_vanishing_noise} satisfy Assumptions~\ref{ass_basic_assumption} and~\ref{ass_lipshitz_continuity}. Furthermore, let $\mu^{\eps}_t$, $t\geq 0$, be a superposition solution to the SPDE~\eqref{equ_mean_field_equation_with_vanishing_noise} started from $\mui^{\eps} \in \cP_2(\R^d )$ for each $\eps>0$ and $\mu_t^0$, $t\geq 0$, be a superposition solution to the PDE~\eqref{equ_mean_field_equation_without_noise} started from $\mui^0 \in \cP_2(\R^d )$. Then, for every $T>0$ there exists a constant $C>0$ such that 
  \begin{equation} 
  \label{equ_estimate_from_lln}
  \e \sup\limits_{ t \in [0,T] }\W_2^2(\mu_t^{\eps},\mu^0_t)\leq C \left(\eps\left( 1+\langle \phi_2 , \mu^{\eps}_0 \rangle \right)+ \W_2^2(\mu^{\eps}_0,\mu^0_0)\right),
  \end{equation}
  where $\phi_2(x)=|x|^2$, $x \in \R^d $.
\end{theorem}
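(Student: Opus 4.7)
My plan is to transfer the estimate from the stochastic mean-field equations to the associated SDEs with interaction by means of the superposition principle, and then to couple the two flows synchronously, very much in the spirit of Theorem~\ref{the_continuous_dependents_of_solutions_to_sde_with_interaction}, while carefully tracking the $\eps$-dependence introduced by the noise term. Concretely, I would write
\begin{align*}
\mu^{\eps}_t=\mui^{\eps}\circ (X^{\eps}(\cdot ,t))^{-1},\qquad \mu^0_t=\mui^0\circ (X^0(\cdot ,t))^{-1},
\end{align*}
where $X^{\eps}(u,\cdot )$ solves the SDE with interaction~\eqref{equ_equation_with_interaction} with drift $V$ and diffusion coefficient $\sqrt{\eps}\,G$, and $X^0(u,\cdot )$ solves the characteristic ODE $\dot X^0(u,t)=V(t,X^0(u,t),\mu^0_t)$. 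Both choices of coefficients satisfy Assumption~\ref{ass_lipshitz_continuity} uniformly in $\eps\in[0,1]$, so Theorem~\ref{the_well_posedness_of_sde_with_interaction} provides the $\eps$-uniform moment bound $\e\sup_{s\le T}|X^{\eps}(u,s)|^2\le C(1+|u|^2+\langle\phi_2,\mui^{\eps}\rangle)$. I would then fix an optimal coupling $\chi_0\in\cP_2(\R^d\times\R^d)$ of $\mui^{\eps}$ and $\mui^0$ realising $\int|u-v|^2\chi_0(du,dv)=\W_2^2(\mui^{\eps},\mui^0)$, both flows being driven by the same cylindrical Wiener process $W$.

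The central estimate is at the trajectory level: for each $(u,v)$, setting $D_s:=X^{\eps}(u,s)-X^0(v,s)$, I would apply It\^o's formula, the Burkholder--Davis--Gundy inequality, the Lipschitz continuity of $V$ and $G$, and the linear-growth bound from Remark~\ref{rem_linear_growth_of_coefficients_under_lipschitz_conditions} to obtain, for every $t\in[0,T]$,
\begin{align*}
\e\sup_{s\le t}|D_s|^2\le\;& 3|u-v|^2+C\!\int_0^t\e\sup_{r\le s}|D_r|^2\,ds+C\!\int_0^t\e\W_2^2(\mu^{\eps}_s,\mu^0_s)\,ds\\
&+C\eps\!\int_0^t\e\bigl(1+|X^{\eps}(u,s)|^2+\W_2^2(\mu^{\eps}_s,\delta_0)\bigr)\,ds.
\end{align*}
The last line is the new contribution compared with Theorem~\ref{the_continuous_dependents_of_solutions_to_sde_with_interaction}; it arises from the quadratic variation of the $\sqrt{\eps}$-stochastic integral together with the pointwise bound $\bigl\||G(s,x,\mu,\cdot )|\bigr\|_{\m}^2\le C(1+|x|^2+\W_2^2(\mu,\delta_0))$. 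Combining the moment bound of Theorem~\ref{the_well_posedness_of_sde_with_interaction} with the moment-preservation Corollary~\ref{cor_moment_preserving_property}, the integrand is controlled by $C(1+|u|^2+\langle\phi_2,\mui^{\eps}\rangle)$, so the noise term contributes an amount of the desired order $\eps(1+|u|^2+\langle\phi_2,\mui^{\eps}\rangle)$ after the time integration.

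Finally, I would integrate the trajectory inequality against $\chi_0$ and set
\begin{align*}
F(t):=\int\!\e\sup_{s\le t}|D_s|^2\,\chi_0(du,dv).
\end{align*}
Since the pushforward of $\chi_0$ by $(u,v)\mapsto(X^{\eps}(u,s),X^0(v,s))$ is a coupling of $\mu^{\eps}_s$ and $\mu^0_s$, one has $\e\sup_{s\le t}\W_2^2(\mu^{\eps}_s,\mu^0_s)\le F(t)$, and using $\int|u|^2\mui^{\eps}(du)=\langle\phi_2,\mui^{\eps}\rangle$ the integrated inequality reduces to
\begin{align*}
F(t)\le C\bigl(\W_2^2(\mui^{\eps},\mui^0)+\eps(1+\langle\phi_2,\mui^{\eps}\rangle)\bigr)+C\!\int_0^t F(s)\,ds.
\end{align*}
Gronwall's lemma then yields $F(T)\le C(\W_2^2(\mui^{\eps},\mui^0)+\eps(1+\langle\phi_2,\mui^{\eps}\rangle))$, which is precisely~\eqref{equ_estimate_from_lln} since $\e\sup_{t\le T}\W_2^2(\mu^{\eps}_t,\mu^0_t)\le F(T)$. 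I do not foresee a conceptual obstacle; the only bookkeeping point is to extract the factor $\eps(1+\langle\phi_2,\mui^{\eps}\rangle)$ cleanly via the $\eps$-uniform moment bound before closing the Gronwall loop, after which the argument is a $(V,\sqrt{\eps}G)$-versus-$(V,0)$ variant of Theorem~\ref{the_continuous_dependents_of_solutions_to_sde_with_interaction}.
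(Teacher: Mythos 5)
Your proposal is correct and follows essentially the same route as the paper's proof: both pass to the SDEs with interaction via the superposition principle, couple $X^{\eps}$ and $X^{0}$ through the same driving noise, control the $\sqrt{\eps}$-stochastic integral by the $\eps$-uniform moment bounds of Theorem~\ref{the_well_posedness_of_sde_with_interaction}, and close the argument by integrating against a coupling of the initial data and applying Gronwall's lemma. The only cosmetic differences are that the paper applies Gronwall once at the trajectory level before integrating and then again at the measure level (taking the infimum over couplings at the end rather than fixing an optimal one at the start), whereas you run a single Gronwall loop on the integrated quantity $F(t)$; both are valid.
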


\begin{proof} 
  The proof is similar to the proof of Theorem~\ref{the_continuous_dependents_of_solutions_to_sde_with_interaction}. Since the processes $\mu_t^{\eps}$, $t\geq 0$, and $\mu_t^0$, $t\geq 0$, are superposition solutions to~\eqref{equ_mean_field_equation_with_vanishing_noise} and~\eqref{equ_mean_field_equation_without_noise}, respectively, we have $\mu_t^{\eps}=\mu_0^{\eps}\circ X_{\eps}(t,\cdot )$, $t\geq 0$, $\eps\geq 0$, where $X_{\eps}$ are solutions to the corresponding SDEs with interaction~\eqref{equ_equation_with_interaction}. Using the Burkholder--Davis--Gundy inequality, Assumption~\ref{ass_lipshitz_continuity} and Remark~\ref{rem_linear_growth_of_coefficients_under_lipschitz_conditions}, we get for each $t \in [0,T]$ and $u,v \in \R^d $
  \begin{align}
    \e \sup\limits_{ s \in [0,t] }&|X_{\eps}(u,s)-X_0(v,s)|^2\leq 3|u-v|^2\\
    &+3T \e \int_{ 0 }^{ t } \left| V(s,X_{\eps}(u,r),\mu_s^{\eps})-V(s,X_0(v,s),\mu^0_s) \right|^2ds\\
    &+3 \eps \e \sup\limits_{ s \in [0,t] }\left| \int_{ 0 }^{ s } \int_{ \Theta }   G(r,X_{\eps}(u,r),\mu_r^{\eps}, \theta)W(d \theta,dr)   \right|^2\\
    &\leq 3|u-v|^2+C \int_{ 0 }^{ t } \left( \e|X_{\eps}(u,s)-X_0(v,s)|^2+\e \W_2^2(\mu_s^{\eps},\mu^0_s) \right)ds\\
    &+  \eps C\e \int_{ 0 }^{ t } \left\||G(s,X_{\eps}(u,s),\mu_s^{\eps})|\right\|_{\m}^2 ds\\     
    &\leq 3|u-v|^2+C \int_{ 0 }^{ t }  \e \sup\limits_{ r \in [0,s] }|X_{\eps}(u,r)-X_0(v,r)|^2ds+ C\int_{ 0 }^{ t }  \e \W_2^2(\mu_s^{\eps},\mu^0_s)ds\\
    &+ \eps 3CL^2 \int_{ 0 }^{ t } \e\left( 1+|X_{\eps}(u,s)|^2+\W_2^2(\mu_s^{\eps},\delta_0) \right) ds, 
  \end{align}
  where $C$ is independent of $u,v,t$ and $\eps$. We note that 
  \[
    \W_2^2(\mu_s^{\eps}, \delta_0)=\int_{ \R^d  }   |x|^2\mu_s^{\eps} (dx)= \int_{ \R^d  }   |X_{\eps}(\tilde{u},s)|^2\mu^{\eps}_0(d \tilde{u}). 
  \]
  Hence, by Gronwall's lemma, we conclude 
  \begin{align}
    \e \sup\limits_{ s \in [0,t] }&|X_{\eps}(u,s)-X_0(v,s)|^2\leq C |u-v|^2+C \int_{ 0 }^{ t } \e\W_2^2(\mu_s^{\eps},\mu^0_s)ds\\
    &+ \eps C \int_{ 0 }^{ t } \e\left( 1+|X_{\eps}(u,s)|^2+ \int_{ \R^d  }   |X_{\eps}(\tilde{u},s)|^2\mu_0^{\eps}(d \tilde{u})  \right) ds\\
    &\leq C |u-v|^2+C \int_{ 0 }^{ t } \e\W_2^2(\mu_s^{\eps},\mu^0_s)ds+ \eps C \left( 1+|u|^2+ \langle \phi_2 , \mu_0^{\eps} \rangle  \right) ds
  \end{align}
  for all $t \in [0,T]$, $u,v \in \R^d $ and $\eps>0$, where $C$ depends only on $L$ and $T$. Note that in the last step of the inequality, we have used Theorem~\ref{the_well_posedness_of_sde_with_interaction}.

  For fixed $\eps>0$, let $\chi$ be an arbitrary probability measure on $\R^d \times \R^d $ with marginals $\mu^{\eps}_0$ and $\mu^0_0$. Let also $\chi_s(B)=\chi\{ (u,v):\ (X_{\eps}(u,s),X_0(v,s)) \in B \}$, $B \in \B(\R^d \times \R^d )$. Then, for $t \in [0,T]$, we have the following
  \begin{align}
    \e \sup\limits_{ s \in [0,t] }\W_2^2(\mu_s^{\eps},\mu^0_s)&\leq \e \sup\limits_{ s \in [0,t] } \int_{ \R^d  }   \int_{ \R^d  }   |x-y|^2\chi_s(dx,dy)\\
    &\leq \int_{ \R^d  }   \int_{ \R^d  }   \e \sup\limits_{ s \in [0,t] }|X_{\eps}(u,s)-X_0(v,s)|^2\chi(du,dv)\\
    &\leq  C   \int_{ \R^d  }   \int_{ \R^d  }   |u-v|^2\chi(du,dv)+ C \int_{ \R^d  }   \int_{ \R^d  }   \int_{ 0 }^{ t } \e\W_2^2(\mu_s^{\eps},\mu^0_s)ds\chi(du,dv)\\
    &+\eps C\int_{ \R^d  }  \int_{ \R^d  }  \e \left( 1+|u|^2+ \langle \phi_2 , \mu_0^{\eps} \rangle  \right)\chi(du,ds)\\
    &=C \int_{ \R^d  }   \int_{ \R^d  }   |u-v|^2\chi(du,dv)+C \int_{ 0 }^{ t } \e \W_2^2(\mu_s^{\eps},\mu^0_s)ds\\
    &+ \eps C \left( 1+2 \langle \phi_2 , \mu_0^{\eps} \rangle  \right) \\
    &\leq C \int_{ \R^d  }   \int_{ \R^d  }   |u-v|^2\chi(du,dv)+C \int_{ 0 }^{ t } \e \sup\limits_{ r \in [0,s] } \W_2^{2}(\mu_r^{\eps},\mu^0_r)ds\\
    &+ \eps C\left( 1+ 2 \langle \phi_2 , \mu_0^{\eps} \rangle\right).
  \end{align}
   Taking the infimum over all probability measures $\chi$ with marginals $\mu_0^{\eps}$ and $\mu^0_0$, we obtain 
  \begin{align}
    \e \sup\limits_{ s \in [0,t] }\W_2^2(\mu_s^{\eps},\mu^0_s)&\leq C_1 \W_2^{2}(\mu^{\eps}_0,\mu^0_0)+C_1 \int_{ 0 }^{ t } \e \sup\limits_{ r \in [0,s] }\W_2^2(\mu_r^{\eps},\mu^0_r)ds \\
    &+ \eps C_2 \left( 1+2 \langle \phi_2 , \mu_0^{\eps} \rangle \right)
  \end{align}
  for all $t \in [0,T]$.  Using Gronwall's lemma again, we get the required inequality. This completes the proof of the theorem.
\end{proof}

\begin{remark} 
  \label{rem_estimate_for_difference_of_x_eps_and_x_0}
  Combining the estimate for $\e \sup\limits_{ s \in [0,t] }|X_{\eps}(u,s)-X_0(v,s)|^2$ from the proof of Theorem~\ref{the_lln} and the inequality~\eqref{equ_estimate_from_lln} from the statement of Theorem~\ref{the_lln}, one gets for every $t \in [0,T]$ and $\eps>0$
  \begin{align}
    \e \sup\limits_{ s \in [0,t] }|X_{\eps}(u,s)-X_0(v,s)|^2&\leq C\left(|u-v|^2+\W_2^2(\mu_0^{\eps},\mu_0^0)+ \eps  \left( 1+|u|^2+ \langle \phi_2 , \mu_0^{\eps} \rangle \right)\right),
  \end{align}
  where the constant $C$ depends only on $T$, $L$ and $d$.
\end{remark}

\subsection{Central limit theorem}
\label{sub:clt_for_mean_field_equation}
We note that Theorem~\ref{the_lln} implies that $\mu^{\eps}\to \mu^0$ as $\mu^{\eps}_0\to\mu^0_0$ and $\eps \to 0$. It this section, we will consider the fluctuations $\eta^{\eps}_t= \frac{1}{ \sqrt{ \eps } }\left( \mu_t^{\eps}-\mu^0_t \right)$ of $\mu_t^{\eps}$, $t\geq 0$, around $\mu^0_t$, $t\geq 0$, as $\eps \to 0$. Note that the process $\eta_t^{\eps}$, $t\geq 0$, takes values in the space $\M(\R^d )$ of all signed measures on $\R^d $ with finite total variation.  Since the Sobolev embedding theorem (see \cite[Theorem~4.12]{Adams:1975}) tells us that the space $H^J(\R^d )$ is continuously embedded into $\Cf_b^m(\R^d )$ for any $J> \frac{d}{ 2 }+m$, for every $J> \frac{d}{ 2 }$ and $\eps> 0$ the process $\eta^{\eps}_t$, $t \geq 0$, can be considered as a continuous process in $H^{-J}(\R^d )$, by the identification $\langle \varphi , \eta_t^{\eps} \rangle_0:=\langle \varphi, \eta_t^{\eps}\rangle$, $\varphi \in H^J(\R^d )$,  where $\langle\cdot,\cdot\rangle_0$ denotes the dualization between $H^J(\R^d)$ and $H^{-J}(\R^d)$. Moreover, a simple computation shows that
\begin{align}
  d\eta^{\eps}_t&=  \frac{ \sqrt{\eps}  }{ 2 }D^2:\left( A(t,\cdot,\mu_t^{\eps})\mu_t^{\eps} \right)-\nabla \cdot \left( V(t,\cdot ,\mu_t^{\eps})\eta_t^{\eps}+ \langle \tilde{V}(t,x ,\cdot ) , \eta^{\eps}_t \rangle_0 \mu_t^0(dx) \right)dt\\
  &- \int_{ \Theta }   \nabla \cdot \left( G(t,\cdot ,\mu_t^{\eps},\theta)\mu_t^{\eps} \right)W(d \theta,dt),
\end{align}
where we assume that $V(t,x,\mu)=\bar{V}(t,x)+ \langle \tilde{V}(t,x,\cdot ),\mu\rangle$. Passing formally to the limit as $\eps \to 0$, we expect that the limit of $\eta^{\eps}_t$, $t \geq 0$, is a solution to the same equation with $\eps=0$.

Therefore, the main goal of this section is to show that $\eta^{\eps}_t$, $t\geq 0$, converges to the solution $\eta^0_t$, $t\geq 0$, to the linear SPDE~\eqref{equ_equation_for_eta0} and to estimate the speed of convergence. We first prove some auxiliary statements and study the well posedness of the linear SPDE~\eqref{equ_equation_for_eta0}. We set $\Gamma_n=(-n,n)^d$ for $n \in \bar\N:=\N \cup \{ \infty \}$. We start with an auxiliary technical lemma that will prove useful in the proofs of the later results.

\begin{lemma} 
  \label{lem_estimate_for_coersivety}
  Let $J \in \N$, $n \in \bar\N$ and $v=(v_1,\dots,v_d)$ with $v_i \in \Cf_b^{J}(\Gamma_n )$, $i \in [d]$. Then the map $\cG:H^{-J+1}(\Gamma_n) \to H^{-J}(\Gamma_n)$ defined by 
  \[
    \langle \varphi,\cG(f) \rangle_{0,\Gamma_n}= \langle \nabla \varphi \cdot v , f \rangle_{0,\Gamma_n}, \quad f \in H^{-J+1}(\Gamma_n),\ \ \varphi \in \Cf^{\infty}_c(\Gamma_n),
  \]
  satisfies 
  \begin{equation} 
  \label{equ_estimate_for_coersivity}
|\langle \cG(f) , f \rangle_{-J,\Gamma_n}|\leq C \max\limits_{ i \in [d] }\|v_i\|_{\Cf^J_b}\|f\|_{-J,\Gamma_n}^2
  \end{equation}
  for all $f \in H^{-J+1}(\Gamma_n)$, where the constant $C$ depends only on $J$ and $d$. 
\end{lemma}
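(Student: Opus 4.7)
The plan is to convert the negative-Sobolev pairing $\langle\cG(f),f\rangle_{-J,\Gamma_n}$ into a standard energy identity in $H^J(\Gamma_n)$ via the Riesz isomorphism $L_{J,\Gamma_n}$, and then invoke the symmetric integration-by-parts cancellation that makes first-order transport operators well-behaved in the $L^2$-sense. Concretely, set $\psi:=L_{J,\Gamma_n}^{-1}(f)\in H^J(\Gamma_n)$, so that $\|\psi\|_{J,\Gamma_n}=\|f\|_{-J,\Gamma_n}$ and $\langle\varphi,f\rangle_{0,\Gamma_n}=\langle\varphi,\psi\rangle_{J,\Gamma_n}$ for every $\varphi\in H^J(\Gamma_n)$. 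Since $f\in H^{-J+1}(\Gamma_n)$, elliptic regularity for the order-$2J$ operator $\Lambda:=\sum_{|\alpha|\le J}(-1)^{|\alpha|}D^{2\alpha}$ (with zero Dirichlet data) upgrades $\psi$ to $H^{J+1}(\Gamma_n)$, hence $\nabla\psi\cdot v\in H^J(\Gamma_n)$. Extending the defining relation of $\cG$ from test functions to $\psi\in H^J$ by density, and then applying the Riesz identity a second time, yields
\[
\langle\cG(f),f\rangle_{-J,\Gamma_n}=\langle\psi,\cG(f)\rangle_{0,\Gamma_n}=\langle\nabla\psi\cdot v,f\rangle_{0,\Gamma_n}=\langle\nabla\psi\cdot v,\psi\rangle_{J,\Gamma_n}.
\]

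Expanding the resulting $H^J$-inner product coordinate by coordinate and using the Leibniz rule $D^\alpha(v_i\partial_i\psi)=\sum_{\beta\le\alpha}\binom{\alpha}{\beta}D^{\alpha-\beta}v_i\,D^\beta\partial_i\psi$ isolates one potentially dangerous contribution: the term $\beta=\alpha$ produces $v\cdot\nabla(D^\alpha\psi)$ and a priori involves derivatives of $\psi$ of order $|\alpha|+1$ that are not controlled by $\|\psi\|_J$. The saving observation is the symmetric integration by parts
\[
\int_{\Gamma_n}\bigl(v\cdot\nabla(D^\alpha\psi)\bigr)D^\alpha\psi\,dx=-\tfrac{1}{2}\int_{\Gamma_n}(\nabla\cdot v)(D^\alpha\psi)^2\,dx,
\]
bounded by $\tfrac{1}{2}\max_i\|v_i\|_{\Cf_b^1}\|\psi\|_{J,\Gamma_n}^2$. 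For every multi-index $\beta<\alpha$ one has $\|D^{\alpha-\beta}v_i\|_{\infty}\le\|v_i\|_{\Cf_b^J}$ (since $1\le|\alpha-\beta|\le J$) and $\|D^\beta\partial_i\psi\|_{L^2}\le\|\psi\|_{|\beta|+1,\Gamma_n}\le\|\psi\|_{J,\Gamma_n}$, so Cauchy--Schwarz produces a bound of the form $C(J,d)\max_i\|v_i\|_{\Cf_b^J}\|\psi\|_{J,\Gamma_n}^2$, completing~\eqref{equ_estimate_for_coersivity}.

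The main care, rather than a deep obstacle, is to justify the boundary-free integration by parts on a bounded cube: although $\psi\in H^J(\Gamma_n)$ has zero Dirichlet data, its top-order derivatives $D^\alpha\psi$ with $|\alpha|=J$ need not vanish on $\partial\Gamma_n$. This is handled by first establishing the identity for $\psi\in\Cf_c^\infty(\Gamma_n)$, where all boundary terms vanish trivially, and invoking the density of $\Cf_c^\infty(\Gamma_n)$ in $H^J(\Gamma_n)$; equivalently, one proves the inequality first for smooth compactly supported $f$ and then extends it to general $f\in H^{-J+1}(\Gamma_n)$ by the continuity of $\cG:H^{-J+1}\to H^{-J}$ together with the continuity of the $H^{-J}$-inner product in $f$. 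The independence of the constant on $n\in\bar\N$ (including $n=\infty$) is automatic, since every step of the Leibniz/integration-by-parts computation is local and depends only on the pointwise bound $\max_i\|v_i\|_{\Cf_b^J}$.
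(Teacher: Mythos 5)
Your argument is correct and follows essentially the same route as the paper's proof: pass to $\tilde f=L_{J,\Gamma_n}^{-1}f\in H^J(\Gamma_n)$ via the Riesz isometry, expand $\langle\nabla\tilde f\cdot v,\tilde f\rangle_{J,\Gamma_n}$ with the Leibniz rule, kill the top-order term $\int v_i\,D^\alpha\partial_i\tilde f\,D^\alpha\tilde f=-\tfrac12\int\partial_iv_i\,(D^\alpha\tilde f)^2$ by symmetric integration by parts, and extend from the dense class where $\tilde f\in\Cf_c^\infty(\Gamma_n)$ using the boundedness of $\cG:H^{-J+1}\to H^{-J}$. The elliptic-regularity aside (upgrading $\tilde f$ to $H^{J+1}$) is unnecessary — and delicate on a cube — but it plays no role once you fall back on the density argument, which is exactly what the paper does.
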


\begin{proof}
  We first remark that $\cG$ maps $H^{-J+1}(\Gamma_n)$ to $H^{-J}(\Gamma_n)$. Indeed, we have
  \begin{align}
  \|\cG(f)\|_{H^{-J}}&=\sup\limits_{\varphi \in \Cf_c^\infty(\Gamma_n)}\frac{\langle \nabla \varphi \cdot v , f \rangle_{0,\Gamma_n}}{\|\varphi\|_J}\\
  &\leq \sup\limits_{\varphi \in \Cf_c^\infty(\Gamma_n)}\frac{\| \nabla \varphi \cdot v \|_{J-1}}{\|\varphi\|_J}\|f\|_{-J+1}
  \leq C\max\limits_{i\in[d]}\|v_i\|_{\Cf_b^{J-1}}\|f\|_{-J+1}
  \end{align}
  for all $f\in H^{-J+1}$.
  
  Since $H^{-J}(\Gamma_n)$ is the dual space of $H^J(\Gamma_n)$, for every $f \in H^{-J+1}(\Gamma_n) \subset  H^{-J}(\Gamma_n)$ there exists a unique function $\tilde{f}=L_{J,\Gamma_n}^{-1}f \in H^J(\Gamma_n)$ such that $\langle \varphi , f \rangle_{0,\Gamma_n}=\langle \varphi , \tilde{f} \rangle_{J,\Gamma_n}$ for any $\varphi \in \Cf^{\infty}_c(\Gamma_n)$. We set 
  \[
    R:=\left\{ f \in H^{-J}(\Gamma_n):\ \tilde{f} \in \Cf_c^{\infty}(\Gamma_n ) \right\} \subset H^{-J+1}(\Gamma_n)
  \]
  and compute 
  \begin{align}
    \langle \cG(f) , f \rangle_{-J,\Gamma_n}&= \langle \tilde{f}, \cG(f)  \rangle_{0,\Gamma_n}=\langle \nabla \tilde{f} \cdot v , f \rangle_{0,\Gamma_n}=\langle \nabla \tilde{f}\cdot v , \tilde{f} \rangle_{J,\Gamma_n}\\
    &= \sum_{ |\alpha|\leq J }   \int_{ \Gamma_n  }   D^{\alpha}\left( \nabla \tilde{f}(x) \cdot v(x) \right)D^{\alpha} \tilde{f}(x)dx \\
    &=\sum_{ i=1 }^{ d } \sum_{ |\alpha|\leq J }   \int_{ \Gamma_n  }   D^{\alpha}\left(\partial_i \tilde{f}(x) v_i(x)  \right)D^{\alpha} \tilde{f}(x)dx.
  \end{align}
  For $\beta, \alpha \in \N_0^d$ we say $\beta\leq \alpha$ if $\beta_i\leq \alpha_i$ for all $i \in [d]$. We also set $\binom{\alpha}{\beta}=\binom{\alpha_1}{\beta_1}\dots\binom{\alpha_d}{\beta_d}$ for $\beta\leq \alpha$. Then, for every multi-index $\alpha$ with $|\alpha|=J$, we compute  
  \begin{align}
    D^{\alpha}\left( \partial_i \tilde{f}(x) v_i(x) \right)&= \sum_{ \beta\leq \alpha }   \binom{\alpha}{\beta}D^{\beta}\left( \partial_i \tilde{f}(x) \right)D^{\alpha-\beta}v_i(x)\\
    &=  \sum_{ \beta< \alpha }   \binom{\alpha}{\beta}D^{\beta}\left( \partial_i \tilde{f}(x) \right)D^{\alpha-\beta}v_i(x)+v_i(x)D^{\alpha}\partial_i \tilde{f}(x),
  \end{align}
  where  $\beta<\alpha$ means $\beta\leq \alpha$ and $|\beta|<|\alpha|$. Hence,
  \begin{equation} 
  \label{equ_rewriting_of_cg}
    \begin{split}
      \langle \cG(f) , f \rangle_{-J,\Gamma_n}&=\sum_{ i=1 }^{ d } \sum_{ |\alpha|<J } \sum_{ \beta\leq \alpha }  \binom{\alpha}{\beta}\int_{ \Gamma_n }   D^{\beta}\left( \partial_i \tilde{f}(x) \right)D^{\alpha-\beta}v_i(x)D^{\alpha}\tilde{f}(x)dx\\
      &+ \sum_{ i=1 }^{ d } \sum_{ |\alpha|=J }   \sum_{ \beta<\alpha }  \binom{\alpha}{\beta}\int_{ \Gamma_n }   D^{\beta}\left( \partial_i \tilde{f}(x) \right)D^{\alpha-\beta}v_i(x)D^{\alpha}\tilde{f}(x)dx\\
      &+\sum_{ i=1 }^{ d } \sum_{ |\alpha|=J }\int_{ \Gamma_n }   v_i(x)D^{\alpha}\partial_i \tilde{f}(x)D^{\alpha}\tilde{f}(x)dx .
    \end{split}
  \end{equation}
  Using the fact that $\tilde{f} \in \Cf_c^{\infty}(\Gamma_n)$ and the integration by parts, the last integral in the equality above can be rewritten as follows 
  \begin{equation} 
  \label{equ_rewriting_of_cg_integration_by_parts}
    \begin{split}
      \int_{ \Gamma_n }   v_i(x)D^{\alpha}\partial_i \tilde{f}(x)D^{\alpha}\tilde{f}(x)dx&= \frac{1}{ 2 }\int_{ \Gamma_n }   v_i(x)\partial_i\left( D^\alpha \tilde{f}(x) \right)^2dx\\
      &= - \frac{1}{ 2 }\int_{ \Gamma_n }   \partial_i v_i(x)\left( D^{\alpha}\tilde{f}(x) \right)^2dx.
    \end{split}
  \end{equation}
  Thus, Young's inequality and the equalities~\eqref{equ_rewriting_of_cg},~\eqref{equ_rewriting_of_cg_integration_by_parts} yield 
  \[
    |\langle \cG(f) , f \rangle_{-J,\Gamma_n}|\leq C \max\limits_{ i \in [d] }\|v_i\|_{\Cf_b^J(\Gamma_n)}\|\tilde{f}\|_{J,\Gamma_n}^2=C \max\limits_{ i \in [d] }\|v_i\|_{\Cf_b^J(\Gamma_n)}\|f\|_{-J,\Gamma_n}^2,
  \]
  for every $f \in R$ and some constant depending only on $d$ and $J$. We now prove the estimate~\eqref{equ_estimate_for_coersivity} for all $f \in H^{-J+1}$. Since the set $R$ is dense in $H^{-J+1}(\Gamma_n)$, there exists a sequence $\{ f_m,\ m\geq 1 \} \subset R$ which converges to $f$ in $H^{-J+1}(\Gamma_n)$. Since $\cG$ is a bounded linear operator, $\cG(f_m) \to \cG(f)$ in $H^{-J}(\Gamma_n)$. Moreover, $f_m \to f$ in $H^{-J}(\Gamma_n)$ due to the continuous embedding of $H^{-J+1}(\Gamma_n)$ into $H^{-J}(\Gamma_n)$. Hence,
  \begin{align}
    |\langle \cG(f) , f \rangle_{-J,\Gamma_n}|&= \lim_{ m\to\infty }|\langle \cG(f_m) , f_m \rangle_{-J,\Gamma_n}|\leq \lim_{ m\to\infty }C \max\limits_{ i \in [d] }\|v_i\|_{\Cf_b^J(\Gamma_n)} \|f_m\|_{-J,\Gamma_n}^2\\
    &=C \max\limits_{ i \in [d] }\|v_i\|_{\Cf_b^J(\Gamma_n)} \|f\|_{-J,\Gamma_n}^2.
  \end{align}
  This completes the proof of the lemma.
\end{proof} 

We now prove the well-posedness of the linear SPDE~\eqref{equ_equation_for_eta0}.

\begin{proposition} 
  \label{pro_well_posedness_of_the_equation_for_eta0}
  Let $J\geq \frac{d}{ 2 }+3$ and $\bar v:[0,T]\times \R^d \to \R^d $, $\tilde{v}:[0,T] \times \R^d \times \R^d \to \R^d $, $g_i:[0,T] \times \R^d \to L_2(\Theta,\m)$, $i\in[d]$, be measurable functions such that for every $t \in [0,T]$, $\bar v(t,\cdot ) \in \Cf^J(\R^d )$, $\tilde v(t,\cdot ,\cdot ) \in \Cf^J(\R^d \times \R^d )$, and  
  \begin{equation} 
  \label{equ_assumption_on_coefficients_for_equation_for_eta}
    \begin{split}
      &\sup\limits_{ t \in [0,T] }\|\bar v(t,\cdot )\|_{\Cf_b^J}+ \sup\limits_{ t \in [0,T], y \in \R^d  } \|\tilde v(t,\cdot ,y)\|_{\Cf_b^J}\\
      &\qquad\qquad+ \sup\limits_{ t \in [0,T],x \in \R^d  }\|\tilde{v}(t,x,\cdot )\|_{J}+ \sup\limits_{ t \in [0,T],x \in \R^d  } \frac{ \left\| |g(t,x,\cdot )| \right\|_{\m} }{ 1+|x| }<\infty.
    \end{split}
  \end{equation}
  Let also $\mu_t$, $t \in [0,T]$, be a continuous curve in $\cP_2(\R^d )$, $\varrho \in H^{-J+1}(\R^d )$, and $v(t,x)=\bar v(t,x)+\langle \tilde{v}(t,x,\cdot ) ,\mu_t  \rangle $. Then, there exists a unique $(\F_t)$-adapted continuous process in $H^{-J}(\R^d )$ such that for every $\varphi \in \Cf_c^{\infty}(\R^d )$ we have a.s. 
  \begin{equation} 
  \label{equ_linear_spde_for_eta}
    \begin{split}
      \langle \varphi , \eta_t \rangle_{0}&= \langle \varphi, \varrho \rangle_{0} +\int_{ 0 }^{ t } \left(\left\langle \nabla \varphi \cdot v(s,\cdot ) , \eta_s \right\rangle_{0,\R^d } + \left\langle \nabla \varphi \cdot \langle \tilde{v}(s,x,\cdot ) ,\eta_s  \rangle_{0,\R^d } , \mu_s(dx) \right\rangle \right) ds\\
    &+ \int_{ 0 }^{ t } \int_{ \Theta }   \left\langle \nabla \varphi \cdot g(s,\cdot, \theta) , \mu_s \right\rangle  W(d \theta,ds), \quad t \in [0,T].
    \end{split}
  \end{equation}
  Moreover, $\eta_t$, $t\geq 0$, is a Gaussian process in $H^{-J}(\R^d )$ and 
  \begin{equation} 
  \label{equ_bound_of_supremum_for_eta}
    \e \sup\limits_{ t \in [0,T] } \|\eta_t\|_{-J,\R^d }^2<\infty.
  \end{equation}
\end{proposition}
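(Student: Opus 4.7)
The equation \eqref{equ_linear_spde_for_eta} is linear in $\eta$ with additive (i.e.\ $\eta$-independent) noise, so the plan is to construct $\eta$ via a Galerkin approximation and pass to the limit, using the monotonicity estimate from Lemma~\ref{lem_estimate_for_coersivety} to obtain uniform energy bounds.

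First, I would verify that the stochastic integrand $\varphi\mapsto\langle\nabla\varphi\cdot g(s,\cdot,\theta),\mu_s\rangle$ defines a continuous square-integrable martingale in $H^{-J}(\R^d)$. Since $J\geq d/2+3$, the Sobolev embedding $H^J\hookrightarrow\Cf^1_b$, combined with the growth bound on $g$ from \eqref{equ_assumption_on_coefficients_for_equation_for_eta} and the boundedness of $\sup_{s\leq T}\langle 1+|\cdot|^2,\mu_s\rangle$ (which follows from continuity of $s\mapsto\mu_s$ in $\cP_2$), yields the required $L^2$-bound on the quadratic variation in $H^{-J}$.

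For existence, choose an orthonormal basis $\{e_k\}_{k\geq 1}$ of $H^J(\R^d)$ inside $\Cf^\infty_c(\R^d)$ and project \eqref{equ_linear_spde_for_eta} onto $\spann\{e_1,\dots,e_n\}$ via the Riesz isomorphism $L_J$. This gives a finite-dimensional linear SDE with bounded $(\F_t)$-progressive coefficients driven by an additive Gaussian martingale, whose unique strong solution $\eta^n_t$ is Gaussian. Applying \Itos formula to $\|\eta^n_t\|_{-J}^2$, the local transport term in the drift is controlled by Lemma~\ref{lem_estimate_for_coersivety} as $|\langle\cG(\eta^n_s),\eta^n_s\rangle_{-J}|\leq C\|v(s,\cdot)\|_{\Cf^J_b}\|\eta^n_s\|_{-J}^2$, while the nonlocal term $-\nabla\cdot(\langle\tilde v(s,x,\cdot),\eta^n_s\rangle_0\mu_s(dx))$ is easily seen to define an element of $H^{-J}$ with norm bounded by $C\|\eta^n_s\|_{-J}$ (using the bounds in \eqref{equ_assumption_on_coefficients_for_equation_for_eta} together with $\mu_s\in\cP(\R^d)$). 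Combining these with the Burkholder--Davis--Gundy inequality applied to the martingale part and Gronwall's lemma yields, uniformly in $n$,
\[
\e\sup_{t\in[0,T]}\|\eta^n_t\|_{-J}^2\leq C\bigl(1+\|\varrho\|_{-J}^2\bigr).
\]
Weak compactness in $L^2(\Omega;L^\infty([0,T];H^{-J}))$ combined with linearity extracts a limit $\eta$ solving \eqref{equ_linear_spde_for_eta} and satisfying \eqref{equ_bound_of_supremum_for_eta}; Gaussianity transfers to $\eta$ via convergence of finite-dimensional marginals.

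Uniqueness follows from the same energy estimate applied to the difference $\delta\eta=\eta^{(1)}-\eta^{(2)}$ of two putative solutions, which solves the homogeneous equation with zero initial data, so Gronwall's lemma forces $\delta\eta\equiv 0$. The main obstacle is justifying \Itos formula directly for $\|\eta_t\|_{-J}^2$: the hypothesis $\eta_t\in H^{-J+1}$ required to invoke Lemma~\ref{lem_estimate_for_coersivety} is not guaranteed a priori for the limit process. I would address this either by retaining the energy estimate at the Galerkin level (where the additional regularity is automatic) and passing to the limit, or via a mollification argument whose commutator error vanishes in the regularisation limit.
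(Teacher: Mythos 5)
Your strategy --- Galerkin projection, the coercivity estimate of Lemma~\ref{lem_estimate_for_coersivety} for the transport term, Gronwall, weak compactness, uniqueness by the same energy estimate --- is in spirit the strategy of the paper, which implements it via the variational framework of \cite{ESR12,Liu:2013} for the Gelfand triple $H^{-J+1}\subset H^{-J}\subset (H^{-J+1})^{*}$. Your observations that the noise is additive and that the nonlocal term $\tilde U$ is bounded from $H^{-J}$ to $H^{-J}$ are both correct. The structural difference is that the paper does \emph{not} run the scheme directly on $\R^d$: it first solves cut-off versions of the equation on the cubes $\Gamma_n=(-n,n)^d$ and then passes to the limit $n\to\infty$ using the cut-offs $\kappa_n$, and this detour is not cosmetic.

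The issue you gloss over is condition (H0) of the variational framework: one needs a system that is \emph{simultaneously} orthogonal in $\cV=H^{-J+1}$ and an orthonormal basis of $\cH=H^{-J}$. Your basis (an $H^{-J}$-orthonormal system of images of $\Cf_c^\infty$ functions under the Riesz map) lies in $\cV$ but is not orthogonal there. This matters at exactly the step you flag as the main obstacle. To identify the drift $U(s,\eta_s)$ of the weak limit --- and, for uniqueness and for \eqref{equ_bound_of_supremum_for_eta}, to apply \Itos formula to $\|\eta_t\|_{-J}^2$ --- one needs a uniform bound $\e\int_0^T\|\eta_t^n\|_{-J+1}^2\,dt\leq C$ producing a limit with a $dt\otimes\p$-version in $H^{-J+1}$; your $\|\cdot\|_{-J}$-energy estimate alone does not give this, since the equation has no coercivity in $\cV$. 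In the variational framework this $\cV$-bound comes from a second energy estimate at the Galerkin level, which uses the simultaneous orthogonality of the basis so that the $\cH$-orthogonal projection onto $E^k$ is compatible with the $\cV$-structure. Such a basis is obtained from the spectral decomposition of the operator associated with the form $\langle\cdot,\cdot\rangle_{\cV}$ on $\cH$ and requires the compact embedding $\cV\hookrightarrow\cH$, which holds on $\Gamma_n$ but fails on $\R^d$ --- this is precisely why the paper works on bounded domains (where \cite[Theorem~1.1]{Liu:2013} applies with constants independent of $n$) and recovers the equation on $\R^d$ by a diagonal weak-compactness argument. Your two proposed remedies do not obviously close this gap: keeping the estimate at the Galerkin level does not transfer the $H^{-J+1}$-regularity to the limit, nor to an arbitrary solution as needed for uniqueness; and a mollification argument would require commutator estimates for the nonlocal operator $\tilde U$ that you have not supplied. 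For uniqueness the paper instead invokes the \Ito formula of \cite[Theorem~4.2.4]{PR07} together with the monotonicity condition (H2) on the whole space.
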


\begin{proof} 
  To prove the well-posedness of~\eqref{equ_linear_spde_for_eta}, we will use the general theory developed in~\cite{ESR12,Liu:2013}, where one needs to check that the coefficients of the equation satisfy some conditions (see (H0)-(H4) below for more details). Unfortunately, working in the Sobolev space $H^{-J}(\R^d )$, we will not be able to verify (H0). In order to overcome this problem, we will first construct solutions to cut-off versions of the SPDE in the Sobolev spaces $H^{-J}(\Gamma_n)$ for all $G_n=(-n,n)^d$, $n \in \N$, and then pass to the limit as $n\to\infty$. 
  
  Since for every $n \in \bar\N$ the space $\cV_n:=H^{-J+1}(\Gamma_n)$ is continuously and densely embedded into the Hilbert space $\cH_n:=H^{-J}(\Gamma_n)$, 
  \[
    \cV_n \subset \cH_n\cong\cH_n^* \subset \cV_n^*
  \]
  is a Gelfand triple, where $\cV_n^*$ is the dual space of $\cV_n$ with respect to the inner product $\langle \cdot  , \cdot  \rangle_{\cH_n}=\langle \cdot  , \cdot  \rangle_{-J,\Gamma_n}$. In particular,
  \[
    {}_{\cV^*_n}\langle f , \varphi \rangle_{\cV_n}= \langle f , \varphi \rangle_{\cH_n}, \quad f \in \cH_n,\ \ \varphi \in \cV_n.
  \] 
  Let $\cL_2\left( L_2(\Theta,\m);\cV_n \right)$ denote the space of all Hilbert--Schmidt operators from $L_2(\Theta,\m)$ to $\cV_n$ and $\|\cdot \|_{\mathrm{HS},\cV_n}$ be the Hilbert--Schmidt norm on that space.  For the cylindrical Brownian motion $W_t$, $t\geq 0$, in $L_2(\Theta,\m)$ and fixed $n \in \bar\N$, we consider the following equation 
  \begin{equation} 
  \label{equ_general_linear_spde}
  d\xi_t^n=U_n(t,\xi_t^n)dt+B_n(t)dW_t,
  \end{equation}
  where $U_n=\bar U_n+\tilde U_n:[0,T]\times \cV_n \to \cH_n \subset  \cV_n^*$ and $B_n:[0,T] \to \cL_2\left( L_2(\Theta,\m); \cV_n \right)$  are measurable functions defined by 
  \begin{align}
    \langle \bar U_n(t,f),\varphi \rangle_{0,\Gamma_n}&= \langle \nabla \varphi \cdot v(t, \cdot) , f \rangle_{0,\Gamma_n},\\ 
    \langle \tilde U_n(t,f),\varphi \rangle_{0,\Gamma_n}&= \left\langle \nabla \varphi(x) \cdot \langle \kappa_n\tilde{v}(t,x,\cdot ) , f \rangle_{0,\Gamma_n} , \mu_t(dx) \right\rangle
  \end{align}
  and
  \[
    \langle B_n(t)h, \varphi \rangle_{0,\Gamma_n}=\left\langle h , \langle \nabla \varphi(x) \cdot g(t,x ,\cdot ) , \mu_t(dx) \rangle \right\rangle_{\m}
  \]
  for all $\varphi \in \Cf_c^{\infty}(\Gamma_n)$, $f \in \cV_n$ and $h \in L_2(\Theta,\m)$, where we extend the function $\varphi$ to the whole space $\R^d$ by zero in the integrals with respect to $\mu_t$, the function $\kappa_n \in \Cf_c^{\infty}(\R^d)$ has a compact support in $\Gamma_n$ and $\kappa_n=1$ on $\Gamma_{n-1}$ for every $n \in \N$, $\kappa_{\infty}=1$ on $\R^d $, and $\sup_{n\geq 1}\|\kappa_n\|_{\Cf_b^{J-1}}<\infty$. The fact that $U_n$ and $B_n(t)h$ take values in $\cH_n$ and $\cV_n$, respectively, follows from the continuous embedding of the Sobolev space $H^{-J}(\Gamma_n)$ into $\Cf_b^1(\Gamma_n )$.

  To prove the well-posedness of the equation~\eqref{equ_general_linear_spde}, we need to check the following conditions for all $n \in \N$.
  \begin{enumerate}
    \item [(H0)] There exists an orthogonal set $\{ e_1,e_2,\dots \}$ in $\cV_n$ such that it constitutes an orthonormal basis of $\cH_n$.

    \item [(H1)] (Hemicontinuity) The map $s \mapsto {}_{\cV^*_n}\langle U_n(t,f_1+s f_2) , f \rangle_{\cV_n}$ is continuous on $\R $ for any $f_1,f_2, f \in \cV_n$ and $t \in [0,T]$.

    \item [(H2)] (Monotonicity) There exists a constant $C>0$ such that 
      \[
	{}_{\cV_n^*}\langle U_n(t,f_1)-U_n(t,f_2) , f_1-f_2 \rangle_{\cV_n}\leq C \|f_1-f_2\|_{\cH_n}^2
      \]
      for all $t \in [0,T]$ and $f_1,f_2 \in \cV_n$.

    \item [(H3)] (One-side linear growth) For any $k \in \N$ the operator $U_n(t,\cdot )$ maps the set $E^k:=\spann\{ e_1,\dots,e_k \}$ into $\cV_n$, and there exists a constant $C>0$ such that 
      \[
	\langle U_n(t,f) , f \rangle_{\cV_n}\leq C(1+\|f\|_{\cV_n}^2) 
      \]
      for all $f \in E^k$ and some constant independent of $k$ and $t$.

    \item [(H4)] (Growth) There exists a constant $C>0$ such that 
      \[
	\|U_n(t,f)\|_{\cV^*_n}\leq C(1+ \|f\|_{\cV_n}), \quad \|B_n(t)\|^2_{\mathrm{HS},\cV_n}\leq C
      \]
      for all $t \in [0,T]$ and $f \in \cV_n$.
  \end{enumerate}

  The condition (H0) follows from the compact embedding of the space $\cV_n$ into $\cH_n$. Indeed, due to the fact that the inner product $\langle \cdot  , \cdot  \rangle_{\cV_n}$ induces a closed quadratic form on $\cH_n$, there exists a densely defined self-adjoint operator $L:\cH_n \supset D(L) \to \cH_n$ on $\cH_n$ such that $\langle f_1 , f_2 \rangle_{\cV_n}=\langle f_1 , Lf_2 \rangle_{\cH_n}$ for all $f_1 \in \cV_n$ and $f_2 \in D(L)$. By the compact embedding of $\cV_n$ into $\cH_n$, the operator $L$ has a discrete spectrum (see also~\cite[Section~2.1]{ESR12}) $\{ \lambda_k,\ k\geq 1 \}$ with corresponding eigenbasis $\{ e_k,\ k\geq 1 \}$ in $\cH_n$. It is easy to see that $\{ e_k,\ k\geq 1 \}$ is an orthogonal system in $\cV_n$ and $e_k \in H^{-J+2} (\Gamma_n)$ for every $k\geq 1$. 
  
  The condition (H1) holds due to the linearity of $U_n(t, \cdot )$ for every $t \in [0,T]$. 
  
  In order to check the monotonicity of $U_n$, we estimate for $f \in \cV_n$ and $t \in [0,T]$
  \begin{align}
    {}_{\cV_n^*}\langle U_n(t,f) , f \rangle_{\cV_n}&= \langle U_n(t,f) , f \rangle_{\cH_n}=\langle \bar U_n(t,f) , f \rangle_{\cH_n}+\langle \tilde{U}_n(t,f) ,f  \rangle_{\cH_n}.
  \end{align}
  By Lemma~\ref{lem_estimate_for_coersivety}, 
  \[
    \langle \bar U_n(t,f) , f \rangle_{\cH_n}=\langle \bar U_n(t,f) , f \rangle_{-J,\Gamma_n} \leq C \max\limits_{ i \in [d] }\sup\limits_{ t \in [0,T] }\|v_i(t,\cdot)\|_{\Cf_b^J}\|f\|_{-J,\Gamma_n}^2, 
  \]
  where the constant $C$ is independent of $n$. We remark that $\sup\limits_{ t \in [0,T] }\|v_i(t,\cdot)\|_{\Cf_b^J}$ is finite according to~\eqref{equ_assumption_on_coefficients_for_equation_for_eta}. For every $f \in \cV_n \subset H^{-J}(\Gamma_n)$ we define $\tilde{f}=L^{-1}_{J,\Gamma_n}f\in H^{J}(\Gamma_n)$ and note that $\langle \tilde{f} , \varphi \rangle_{J,\Gamma_n}=\langle f , \varphi \rangle_{0,\Gamma_n}$ for all $\varphi \in \Cf_c^{\infty}(\Gamma_n)$. By ~\cite[Theorem~5.29]{Adams:1975}, the extension by zero of $\tilde{f}$ on $\R^d$, which we also denote $\tilde{f}$, belongs to $H^J(\R^d)$. We next estimate 
  \begin{equation} 
  \label{equ_coersivity_of_tilda_u}
    \begin{split}
      \langle \tilde{U}_n(t,f) , f \rangle_{\cH_n}&= \langle \tilde{U}_n(t,f) , f \rangle_{-J,\Gamma_n}=\langle \tilde{U}_n(t,f) , \tilde{f} \rangle_{0,\Gamma_n}\\
      &= \left\langle \nabla \tilde{f}(x) \cdot \langle \kappa_n\tilde{v}(t,x,\cdot ) , f \rangle_{0,\Gamma_n} , \mu_t(dx) \right\rangle\\
      &\leq \sum_{ i=1 }^{ d } \int_{ \R^d  }   \left|\partial_i \tilde{f}(x)\right|\|\kappa_n \tilde{v}_i(t,x,\cdot )\|_{J,\Gamma_n}\|f\|_{-J,\Gamma_n}\mu_t(dx)\\
      &\leq \|\tilde{f}\|_{\Cf_b^1}|\|f\|_{-J,\Gamma_n}\sum_{ i=1 }^{ d } \sup\limits_{t \in [0,T],   x \in \R^d}\|\tilde{v}_i(t,x,\cdot )\|_{J,\R^d }. 
    \end{split}
  \end{equation}
  By the continuous embedding of $H^J(\R^d )$ into $\Cf_b^1(\R^d )$, we obtain
  \[
    \|\tilde{f}\|_{\Cf_b^1}\leq C \|\tilde{f}\|_{J,\R^d }=C \|\tilde{f}\|_{J,\Gamma_n}=C \|f\|_{-J,\Gamma_n}.
  \]
  Thus, the linearity of $U_n(t,\cdot )$ and the estimates above imply the monotonicity of $U_n$ with the constant $C$ independent of $n$. This establishes condition (H2).

  The function $U_n(t,\cdot )$ maps $E^k=\spann\{ e_1,\dots,e_k \}$ to $\cV_n$ due to the fact that $e_k \in H^{-J+2}(\Gamma_n)$ for all $k\geq 1$. Moreover, Lemma~\ref{lem_estimate_for_coersivety} and a similar computation to~\eqref{equ_coersivity_of_tilda_u} imply 
  \[
    \langle U_n(t,f) , f \rangle_{\cV_n}\leq C \max\limits_{ i \in [d] }\sup\limits_{ t \in [0,T], x \in \R^d  }\left(\|v_i(t,\cdot)\|_{\Cf_b^{J-1}}+\|\tilde{v}_i(t,x,\cdot )\|_{J-1,\R^d }\right) \|f\|_{\cV_n}^2
  \]
  for all $f \in E^k$, where the constant $C$ is independent of $n$ and $k$. Thus, condition (H3) holds.

  We separately estimate the norms of $\bar{U}_n$ and $\tilde{U}_n$ in $\cH_n$ as follows
  \begin{align}
    \|\bar{U}_n(t,f)\|_{\cH_n}&= \sup\limits_{ \varphi \in \Cf_c^{\infty}(\Gamma_n) } \frac{1}{ \|\varphi\|_{J,\Gamma_n} } \langle \nabla \varphi \cdot v(t,\cdot) , f \rangle_{0,\Gamma_n}\\
    &\leq \sup\limits_{ \varphi \in \Cf_c^{\infty}(\Gamma_n) } \frac{ \|\nabla \varphi \cdot v(t,\cdot)\|_{J-1,\Gamma_n} }{ \|\varphi\|_{J,\Gamma_n} } \|f\|_{\cV_n}\\
    &\leq C \max\limits_{ i \in [d] }\sup\limits_{ t \in [0,T] }\|v_i(t,\cdot)\|_{\Cf_b^{J-1}} \|f\|_{\cV_n},
  \end{align}
  for all $f \in \cV_n$ and $t \in [0,T]$, by the continuous embedding of $H^{J}(\R^d )$ into $\Cf_b^1(\R^d )$. Similarly, 
  \begin{align}
    \|\tilde{U}_n(t,f)\|_{\cH_n}&= \sup\limits_{ \varphi \in \Cf_c^{\infty}(\Gamma_n) } \frac{1}{ \|\varphi\|_{J,\Gamma_n} }\left\langle \nabla \varphi(x) \cdot \langle \kappa_n\tilde{v}(t,x,\cdot ) , f \rangle_{0,\Gamma_n} , \mu_t(dx) \right\rangle\\
    &\leq C \max\limits_{ i \in [d] }\sup\limits_{ t \in [0,T], x \in \R^d  }\|\kappa_n\tilde{v}_i(t,x,\cdot )\|_{J-1,\R^d } \|f\|_{\cV_n}\\
    &\leq C\|\kappa_n\|_{\Cf_b^{J-1}} \max\limits_{ i \in [d] }\sup\limits_{ t \in [0,T], x \in \R^d  }\|\tilde{v}_i(t,x,\cdot )\|_{J-1,\R^d } \|f\|_{\cV_n}\\
    &\leq C\max\limits_{ i \in [d] }\sup\limits_{ t \in [0,T], x \in \R^d  }\|\tilde{v}_i(t,x,\cdot )\|_{J-1,\R^d } \|f\|_{\cV_n}
  \end{align}
  for all $f \in \cV_n$ and $t \in [0,T]$, where the constant $C$ is independent of $n$ due to the specific choice of $\kappa_n$. 

  To estimate the Hilbert--Schmidt norm of $B_n(t)$, we consider an orthonormal basis $\{ h_k,\ k\geq 1 \}$ in $L_2(\Theta,\m)$ and compute for every $k\geq 1$ and $\varphi \in \Cf_c^{\infty}(\Gamma_n)$
  \begin{align}
    \langle B_n(t)h_k , \varphi \rangle_{0,\Gamma_n} = \left\langle h_k , \left\langle \nabla \varphi(x)\cdot g(t,x,\cdot ) , \mu_t(dx) \right\rangle \right\rangle_{\m}=\left\langle \nabla \varphi \cdot g^k(t,\cdot) , \mu_t \right\rangle, 
  \end{align}
  where $g^k(t,x)=\langle g(t,x,\cdot ) , h_k \rangle_{\m}$. Hence, by the continuous embedding of $H^{J-1}(\R^d )$ into $\Cf_b^1(\R^d )$ due to $J\geq \frac{d}{2}+3$, we obtain
  \begin{align}
    \|B_n(t)h_k\|_{\cV_n}&= \sup\limits_{ \varphi \in \Cf_c^{\infty}(\Gamma_n) } \frac{ \left\langle \nabla \varphi \cdot g^k(t,\cdot) , \mu_t \right\rangle }{ \|\varphi\|_{J-1,\Gamma_n} }\leq  C \langle |g^k(t,\cdot)| , \mu_t \rangle,
  \end{align}
  where the constant $C$ is independent of $n$. Consequently, by Jensen's inequality and Fubini's theorem,
  \begin{align}
    \|B_n(t)\|_{\mathrm{HS},\cV_n}^2&= \sum_{ k=1 }^{ \infty } \|B_n(t)h_k\|^{2}_{\cV_n}\leq C^2 \sum_{ k=1 }^{ \infty } \langle |g^k(t,\cdot)| , \mu_t \rangle^2\\
    &\leq C^2\sum_{ i=1 }^{ d } \left\langle\sum_{ k=1 }^{ \infty } (g_i^k(t,\cdot))^2,\mu_t\right\rangle=C^2 \sum_{ i=1 }^{ d } \left\langle \|g_i(t,x,\cdot )\|_{\m}^2 , \mu_t(dx) \right\rangle.
  \end{align}
  We note that $\sup\limits_{ t \in [0,T] }\langle \|g_i(t,x,\cdot )\|^2_{\m} , \mu_t(dx) \rangle<\infty$ due to~\eqref{equ_assumption_on_coefficients_for_equation_for_eta}. Thus, condition (H4) holds true with a constant $C$ that is independent of $n$.

  Now we can apply ~\cite[Theorem~1.1]{Liu:2013}, to obtain that, for every $n\geq 1$ and $\varrho \in H^{-J+1} \subset \cV_n$, there exists a continuous $\cH_n$-valued $(\F_t)$-adapted process $\eta_t^n$, $t \in [0,T] $, which has a $dt \times \p$-version $\bar\eta^n$ from $L^2([0,T]\times \Omega,dt \times \p;V)$ and $\p$-a.s.
  \[
    \eta_t^{n}=\varrho+\int_{ 0 }^{ t } U_n(s,\bar{\eta}_s^n)ds+\int_{ 0 }^{ t } B_n(s)dW_s, \quad t \in [0,T].  
  \]
  Using~\cite[Lemma~2.1]{Liu:2013}, one can see that 
  \begin{align}
    \e \sup\limits_{ t \in [0,T] }\|\eta_t^n\|_{\cH_n}^2+\e \int_{ 0 }^{ T } \|\bar{\eta}_t^n\|_{\cV_n}^2 dt
    &\leq C (1+ \|\varrho\|_{\cV_n}^2),
  \end{align}
  where the constant $C$ is independent of $n$ because all constants in the conditions (H1)-(H4) are independent of $n$. Since for every $m\geq n$ the relationships $\cH_m \subset \cH_n$, $\cV_m \subset \cH_n $, and $\|\cdot \|_{\cH_n}\leq \|\cdot \|_{\cH_m}$, $\|\cdot \|_{\cV_n}\leq \|\cdot\|_{\cV_m}$ hold, we get 
  \begin{equation} 
  \label{equ_estimate_for_z_m}
    \e \sup\limits_{ t \in [0,T] }\|\eta_t^m\|_{\cH_n}^2+\e \int_{ 0 }^{ T } \|\bar{\eta}_t^m\|_{\cV_n}^2 dt \leq C (1+\e \|\varrho\|_{\cV_m}^2)\leq C(1+\|\varrho\|_{\cV_{\infty}}^2)
  \end{equation}
  for all $m\geq n$. Hence the sequence $\{ \eta^m,\ m\geq n \}$ is relatively compact in the weak topologies of $L_2(\cH_n):=L_2([0,T]\times \Omega,dt \times \p;\cH_n)$, and $L_2(\cV_n):=L_2([0,T] \times \Omega, dt \times \p;\cV_n)$ for every $n\geq 1$. Using a diagonal argument, there exists a subsequence $\{ m_k,\ k\geq 1 \}$ such that $\eta^{m_k} \to \bar\eta$ in the weak topologies of $L_2(\cH_n)$ and $L_2(\cV_n)$ for every $n\geq 1$. In particular,
  \[
    \e \int_{ 0 }^{ T } \|\bar\eta_t\|_{\cV_n}^2dt \leq C(1+ \|\varrho\|_{\cV_{\infty}}^2) 
  \]
  for all $n\geq 1$. Using Fatou's lemma and the fact that the sequence $\|\bar\eta_t\|_{\cV_n}$, $n\geq 1$, increases, we get 
  \[
    \e \int_{ 0 }^{ T } \lim_{ n\to\infty }\|\bar\eta_t\|_{\cV_n}^2dt\leq  C(1+\|\varrho\|_{-J+1,\R^d }^2). 
  \]
  This yields that $\bar\eta \in L_2([0,T] \times \Omega,dt \times \p;H^{-J+1}(\R^d ))$ and 
  \[
    \e \int_{ 0 }^{ T } \|\bar\eta_t\|_{-J+1,\R^d }^2dt\leq C(1+\|\varrho\|_{-J+1,\R^d }^2). 
  \]

  Next, let $\gamma \in L_{\infty}([0,T]\times \Omega,dt \times \p;\R )$ and $\varphi \in \Cf_c(\R^d )$. We take $n \in \N$ satisfying $\supp \varphi \subset \Gamma_n$ and compute
  \begin{align}
    &\e \int_{ 0 }^{ T } \gamma(t)\langle \bar\eta_t , \varphi \rangle_{0,\R^d }dt = \e \int_{ 0 }^{ T } \gamma(t)\langle \bar\eta_t , \varphi \rangle_{0,\Gamma_n}dt =\lim_{ k\to\infty }\e \int_{ 0 }^{ T } \gamma(t)\langle \eta^{m_k}_t , \varphi \rangle_{0,\Gamma_n} dt\\
    &\qquad\qquad= \e\int_{ 0 }^{ T } \gamma(t) \langle \varrho , \varphi \rangle_{0,\Gamma_n}dt+ \lim_{ k\to\infty }\e \int_{ 0 }^{ T } \gamma(t)\int_{ 0 }^{ t } \langle U_{m_k}(s,\bar \eta_t^{m_k}),\varphi\rangle_{0,\Gamma_n} ds dt  \\
    &\qquad\qquad+\lim_{ k\to\infty }\e \int_{ 0 }^{ T } \gamma(t)\int_{ 0 }^{ t } \langle B_{m_k}(s) , \varphi \rangle_{0,\Gamma_n} dW_s dt  \\
    &\qquad\qquad= \e \int_{ 0 }^{ T }\gamma(t) \langle \varrho , \varphi \rangle_{0,\R^d } dt+ \lim_{ k\to\infty }\e \int_{ 0 }^{ T } \gamma(t) \int_{ 0 }^{ t } \langle \nabla \varphi \cdot v(s,\cdot) , \eta^{m_k}_s \rangle_{0,\Gamma_n} ds dt\\
    &\qquad\qquad+\lim_{ k\to\infty }\e \int_{ 0 }^{ T } \gamma(t)\int_{ 0 }^{ t } \left\langle \nabla \varphi (x) \cdot \langle \kappa_{m_k} \tilde{v}(s,x,\cdot ) , \eta_s^{m_k} \rangle_{0,\Gamma_{m_k} } , \mu_s(dx) \right\rangle ds dt\\
    &\qquad\qquad+\int_{ 0 }^{ T } \gamma(t)\int_{ 0 }^{ t } \int_{ \Theta }   \langle \nabla \varphi \cdot g(s,\cdot, \theta) , \mu_s \rangle W(d \theta,ds) dt.   
  \end{align}

  Using the convergence of $\eta^{m_k}$ in the weak topology of $L_2(\cV_n)$, we get
  \begin{align}
    \lim_{ k\to\infty }\e \int_{ 0 }^{ T } \gamma(t) \int_{ 0 }^{ t } \langle \nabla \varphi \cdot v(s,\cdot) , \eta^{m_k}_s \rangle_{0,\Gamma_n} ds dt&=
    \e \int_{ 0 }^{ T } \gamma(t) \int_{ 0 }^{ t } \langle \nabla \varphi \cdot v(s,\cdot) , \bar\eta_s \rangle_{0,\Gamma_n} ds dt\\
    &= \e \int_{ 0 }^{ T } \gamma(t) \int_{ 0 }^{ t } \langle \nabla \varphi \cdot v(s,\cdot) , \bar\eta_s \rangle_{0,\R^d } ds dt.
  \end{align}
  Next,  we rewrite for every $k,l \geq 1$
  \begin{align}
    \langle \kappa_{m_k} \tilde{v}(s,x,\cdot ) , \eta_s^{m_k} \rangle_{0,\Gamma_{m_k}}&-\langle \tilde{v}(s,x,\cdot ) , \bar\eta_s \rangle_{0,\R^d}= \langle (\kappa_{m_k}-1)\tilde{v}(s,x,\cdot ) , \bar\eta_s \rangle_{0,\R^d }\\
    &+ \langle (\kappa_{m_k}-\kappa_{l})\tilde{v}(s,x,\cdot ) ,\eta^{m_k}_s-\bar\eta_s  \rangle_{0,\Gamma_{m_k}}+ \langle \kappa_{l}\tilde{v}(s,x,\cdot ) ,\eta^{m_k}_s-\bar\eta_s  \rangle_{0,\Gamma_{l}}.
  \end{align}
  Since 
  \begin{align}
    |\langle (1-\kappa_{m_k})\tilde{v}(s,x,\cdot ) , \bar\eta_s \rangle_{0,\R^d }|&\leq \|(\kappa_{m_k}-1)\tilde{v}(s,x,\cdot )\|_{J-1,\R^d } \|\bar\eta_s\|_{-J+1,\R^d },\\
    |\langle (\kappa_{m_k}-\kappa_{l})\tilde{v}(s,x,\cdot ) , \eta_s^{m_k}-\bar\eta_s \rangle_{0,\Gamma_{m_k} }|&\leq \|(\kappa_{m_k}-\kappa_{l})\tilde{v}(s,x,\cdot )\|_{J-1,\Gamma_{m_k} } \|\eta_s^{m_k}-\bar\eta_s\|_{\cV_{m_k} },\\
    &\leq C\|(1-\kappa_{l})\tilde{v}(s,x,\cdot )\|_{J-1,\R^d  } \|\eta_s^{m_k}-\bar\eta_s\|_{\cV_{m_k} }
  \end{align}
  and $\eta^{m_k} \to \bar\eta$ in the weak topology of $L_2(\cV_l)$ for every $l$, it is easily seen that
  \begin{align}
    &\lim_{ k\to\infty }\bigg(\e \int_{ 0 }^{ T } \gamma(t)\int_{ 0 }^{ t } \left\langle \nabla \varphi (x) \cdot \langle \kappa_{m_k} \tilde{v}(s,x,\cdot ) , \eta_s^{m_k} \rangle_{0,\Gamma_{m_k}} , \mu_s(dx) \right\rangle ds dt\\
    &\qquad\qquad-\e \int_{ 0 }^{ T } \gamma(t)\int_{ 0 }^{ t } \left\langle \nabla \varphi (x) \cdot \langle \tilde{v}(s,x,\cdot ) , \bar\eta_s \rangle_{0,\R^d } , \mu_s(dx) \right\rangle ds dt\bigg)\\
    &\qquad\qquad =\lim_{ k\to\infty }\e \int_{ 0 }^{ T } \gamma(t)\int_{ 0 }^{ t } \left\langle \nabla \varphi (x) \cdot \langle (\kappa_{m_k}-1) \tilde{v}(s,x,\cdot ) , \bar\eta_s \rangle_{0,\R^d } , \mu_s(dx) \right\rangle ds dt\\
    &\qquad\qquad +\lim_{ l\to\infty }\lim_{ k\to\infty }\e \int_{ 0 }^{ T } \gamma(t)\int_{ 0 }^{ t } \left\langle \nabla \varphi (x) \cdot \langle (\kappa_{m_k}-\kappa_l) \tilde{v}(s,x,\cdot ) , \eta_s^{m_k}-\bar\eta_s \rangle_{0,\Gamma_{m_k}} , \mu_s(dx) \right\rangle ds dt\\
    &\qquad\qquad +\lim_{ l\to\infty }\lim_{ k\to\infty }\e \int_{ 0 }^{ T } \gamma(t)\int_{ 0 }^{ t } \left\langle \nabla \varphi (x) \cdot \langle \kappa_l \tilde{v}(s,x,\cdot ) , \eta_s^{m_k}-\bar\eta_s \rangle_{0,\Gamma_{l}} , \mu_s(dx) \right\rangle ds dt=0.
  \end{align}
  This implies that for that for a.e. $t \in [0,T]$ and all $\varphi \in \Cf_c^{\infty}(\R^d )$
  \begin{align}
    \langle \varphi , \bar\eta_t \rangle_{0,\R^d }&= \langle \varphi, \varrho \rangle_{0,\R^d } +\int_{ 0 }^{ t } \left(\left\langle \nabla \varphi \cdot v(s,\cdot) , \bar\eta_s \right\rangle_{0, \R^d } + \left\langle \nabla \varphi \cdot \langle \tilde{v}(s,x,\cdot ) ,\bar\eta_s  \rangle_{0, \R^d } , \mu_s(dx) \right\rangle \right) ds\\
  &+ \int_{ 0 }^{ t } \int_{ \Theta }   \left\langle \nabla \varphi \cdot g(s,\cdot, \theta) , \mu_s \right\rangle  W(d \theta,ds), \quad t \in [0,T].
  \end{align}
  Taking 
  \[
    \eta_t= \varrho +\int_{ 0 }^{ t } U_{\infty}(s,\bar\eta_s)ds+\int_{ 0 }^{ t } B_{\infty}(s)dW_s, \quad t \in [0,T],  
  \]
  and using the continuity of the right hand side of the expression above, we get the existence of solutions to~\eqref{equ_equation_for_eta0}.

  \Itos formula from \cite[Theorem~4.2.4]{PR07} and the condition (H2), which also holds true for $n=\infty$, imply the uniqueness of solutions to~\eqref{equ_linear_spde_for_eta} and the bound~\eqref{equ_bound_of_supremum_for_eta}. The fact that $\eta_t$, $t \in [0,T]$, is a Gaussian process in $H^{-J}(\R^d )$ follows from the linearity of the equation. This completes the proof of the proposition.
\end{proof}

\begin{lemma} 
  \label{lem_boundedness_of_the_norm_of_eta_eps}
  Let the coefficients $A,V,G$ satisfy Assumptions~\ref{ass_basic_assumption},~\ref{ass_lipshitz_continuity}. Let also $\mu_t^{\eps}$, $t\geq 0$, be a superposition solution to the stochastic mean-field equation~\eqref{equ_mean_field_equation_with_vanishing_noise} started from $\mu_0^{\eps}\in\cP_2(\R^d)$, $\mu_t$, $t\geq 0$, be a superposition solution to the PDE~\eqref{equ_mean_field_equation_without_noise} started from $\mu_0^0 \in \cP_2(\R^d )$ and  the process $\eta^{\eps}_t$, $t\geq 0$, be defined by~\eqref{equ_definition_of_eta_eps}.  Then for every $J> \frac{d}{ 2 }+1$ and $T>0$ there exists a constant $C>0$ such that 
  \[
    \e \sup\limits_{ t \in [0,T] }\|\eta_t^{\eps}\|_{-J}^2\leq C\left( 1+\langle \phi_2 , \mu_0^{\eps} \rangle \right)+ \frac{C}{ \eps } \W_2^2\left( \mu_0^{\eps},\mu_0^0 \right).
  \]
  for all $\eps>0$.
\end{lemma}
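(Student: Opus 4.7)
The plan is to exploit two facts: (i) $\eta_t^\eps = \eps^{-1/2}(\mu_t^\eps - \mu_t^0)$ is $\eps^{-1/2}$ times a difference of two probability measures, so its $H^{-J}$-norm is controlled by a Wasserstein distance between $\mu_t^\eps$ and $\mu_t^0$; and (ii) Theorem~\ref{the_lln} already gives an $O(\eps)$ bound on $\e \sup_{t \in [0,T]} \W_2^2(\mu_t^\eps,\mu_t^0)$, which exactly compensates the $\eps^{-1}$ coming from rescaling. This reduces the lemma to a purely functional-analytic estimate relating the $H^{-J}$-norm of a signed measure of zero total mass to the Kantorovich--Rubinstein $\W_1$-distance.

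Concretely, since $J > \tfrac{d}{2}+1$, the Sobolev embedding theorem~\cite[Theorem~4.12]{Adams:1975} gives a continuous inclusion $H^J(\R^d) \hookrightarrow \Cf_b^{1}(\R^d)$, so there is a constant $C_J$ with $\|\nabla \varphi\|_{\infty} \leq C_J \|\varphi\|_J$ for every $\varphi \in H^J(\R^d)$. Thus each $\varphi$ with $\|\varphi\|_J \leq 1$ is Lipschitz continuous with Lipschitz constant at most $C_J$. Since $\mu_t^\eps-\mu_t^0$ has zero total mass, adding a constant to $\varphi$ does not change the pairing, and the Kantorovich--Rubinstein duality yields
\begin{equation*}
  \|\mu_t^\eps - \mu_t^0\|_{-J}
  = \sup_{\|\varphi\|_J\leq 1}\langle \varphi,\mu_t^\eps-\mu_t^0\rangle
  \leq C_J\,\W_1(\mu_t^\eps,\mu_t^0)
  \leq C_J\,\W_2(\mu_t^\eps,\mu_t^0).
\end{equation*}
Dividing by $\sqrt{\eps}$, squaring and taking supremum over $t \in [0,T]$ gives
\begin{equation*}
  \sup_{t \in [0,T]}\|\eta_t^\eps\|_{-J}^2
  \leq \frac{C_J^2}{\eps}\sup_{t \in [0,T]} \W_2^2(\mu_t^\eps,\mu_t^0).
\end{equation*}

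Taking expectations and applying Theorem~\ref{the_lln} with initial data $\mu_0^\eps$ and $\mu_0^0$, one obtains
\begin{equation*}
  \e \sup_{t \in [0,T]}\|\eta_t^\eps\|_{-J}^2
  \leq \frac{C_J^2}{\eps}\, C\!\left(\eps(1+\langle \phi_2,\mu_0^\eps\rangle)+\W_2^2(\mu_0^\eps,\mu_0^0)\right)
  \leq C'\bigl(1+\langle \phi_2,\mu_0^\eps\rangle\bigr) + \frac{C'}{\eps}\,\W_2^2(\mu_0^\eps,\mu_0^0),
\end{equation*}
which is the claimed estimate. There is no real obstacle here; the only subtlety is verifying that the Sobolev embedding delivers a uniform Lipschitz constant (requiring $J > \tfrac{d}{2}+1$ rather than just $J > \tfrac{d}{2}$, which would only give $H^J\hookrightarrow\Cf_b$). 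Everything else is a one-line consequence of the previously established law of large numbers.
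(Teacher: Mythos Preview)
Your proof is correct, and the underlying mechanism is the same as the paper's: the Sobolev embedding $H^J(\R^d)\hookrightarrow \Cf_b^1(\R^d)$ for $J>\tfrac{d}{2}+1$ turns the $H^{-J}$-norm of $\mu_t^\eps-\mu_t^0$ into a Lipschitz-dual quantity controlled by a Wasserstein distance, after which the $\eps^{-1}$ from the rescaling is absorbed by the $O(\eps)$ bound from the law of large numbers. The only difference is packaging: you invoke Kantorovich--Rubinstein duality and then cite Theorem~\ref{the_lln} directly, whereas the paper writes out the superposition representation $\langle\varphi,\mu_t^\eps-\mu_t^0\rangle=\int\int(\varphi(X_\eps(u,t))-\varphi(X_0(v,t)))\,\chi(du,dv)$ for an arbitrary coupling $\chi$ of the initial measures, applies the Lipschitz bound pointwise, and then uses the particle-level estimate of Remark~\ref{rem_estimate_for_difference_of_x_eps_and_x_0} before optimizing over $\chi$. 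Your route is shorter because it treats the LLN as a black box rather than re-entering its proof; the paper's route is slightly more self-contained but yields the identical bound.
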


\begin{proof} 
  Since for each $\eps\geq 0$ the process $\mu_t^{\eps}$, $t\geq 0$, is a superposition solution to the corresponding equation, $\mu_t^{\eps}=\mu_0^{\eps}\circ X_{\eps}(t,\cdot )$, $t\geq 0$, where $X_{\eps}$ is a solution to the SDE with interaction~\eqref{equ_equation_with_interaction} with $G$ replaced by $\sqrt{ \eps }G$. Similarly to the proof of Theorem~\ref{the_lln}, we fix $\eps>0$ and consider an arbitrary probability measure $\chi$ on $\R^d \times \R^d $ with marginals $\mu_0^{\eps}$ and $\mu_0^0$. Using Remark~\ref{rem_estimate_for_difference_of_x_eps_and_x_0} and the continuous embedding of $H^{J}_0(\R^d)$ into $\Cf_b^1(\R^d)$, we get 
  \begin{align}
    \e \sup\limits_{ t \in [0,T] }\|\eta_t^{\eps}&\|^2_{-J}= \frac{1}{ \eps }\e \sup\limits_{ t \in [0,T] } \sup\limits_{ \varphi \in \Cf_c^{\infty}(\Gamma) } \frac{1}{ \|\varphi\|_{J}^2 } \langle \varphi , \mu^{\eps}_t-\mu^0_t \rangle^2\\
    &= \frac{1}{ \eps }\e \sup\limits_{ t \in [0,T] } \sup\limits_{ \varphi \in \Cf_c^{\infty}(\Gamma) } \frac{1}{ \|\varphi\|_{J}^2 } \left(\int_{ \R^d  }   \int_{ \R^d  }   \left( \varphi(X_{\eps}(u,t))-\varphi(X_0(v,t)) \right) \chi(du,dv)  \right)^2\\
    &\leq  \frac{1}{ \eps }\e \sup\limits_{ t \in [0,T] } \sup\limits_{ \varphi \in \Cf_c^{\infty}(\Gamma) } \frac{1}{ \|\varphi\|_{J}^2 } \int_{ \R^d  }   \int_{ \R^d  }   \left( \varphi(X_{\eps}(u,t))-\varphi(X_0(v,t)) \right)^2 \chi(du,dv)\\
    &\leq  \frac{1}{ \eps }\e \sup\limits_{ t \in [0,T] } \sup\limits_{ \varphi \in \Cf_c^{\infty}(\Gamma) } \frac{\sup\limits_{ x \in \Gamma }|\nabla \varphi(x)|^2}{ \|\varphi\|_{J}^2 } \int_{ \R^d  }   \int_{ \R^d  }   \left| X_{\eps}(u,t)-X_0(v,t) \right|^2 \chi(du,dv)\\
    &\leq  \frac{C}{ \eps }\e \sup\limits_{ t \in [0,T] }  \int_{ \R^d  }   \int_{ \R^d  }   \left| X_{\eps}(u,t)-X_0(v,t) \right|^2 \chi(du,dv)\\
    &\leq  \frac{C}{ \eps }\int_{ \R^d  }   \int_{ \R^d  }  \e \sup\limits_{ t \in [0,T] }   \left| X_{\eps}(u,t)-X_0(v,t) \right|^2 \chi(du,dv)\\
    &\leq  \frac{C_1}{ \eps } \int_{ \R^d  }   \int_{ \R^d  }   \left( |u-v|^2+\W_2^2( \mu_0^{\eps},\mu_0^0 )+ \eps\left( 1+|u|^2+ \langle \phi_2 , \mu_0^{\eps} \rangle \right) \right) \chi(du,dv) \\
    &=  \frac{C_1}{ \eps }\left( \int_{ \R^d  }   \int_{ \R^d  }   |u-v|^2\chi(du,dv)+\W_2^2( \mu_0^{\eps},\mu_0^0 )+ \eps\left( 1+2\langle \phi_2 , \mu_0^{\eps} \rangle \right)\right).
  \end{align}
  Taking infimum over all $\chi$ with marginals $\mu^{\eps}_0$ and $\mu^0_0$, we obtain the needed estimate, which completes the proof of the lemma.
\end{proof}

To prove the convergence of the fluctuation field $\eta^{\eps}$, we will need the following assumptions on the coefficients of the stochastic mean-field equation~\eqref{equ_mean_field_equation_with_vanishing_noise}.

\begin{assumption} 
  \label{ass_compact_support}
  \,
  \begin{enumerate}
    \item [(i)] The coefficients $A,V,G$ do not depend on $\omega\in \Omega$ and there exist Borel measurable functions $\bar{V}:[0,\infty) \times \R^d \to \R^d $ and $\tilde{V}:[0,\infty) \times \R^d \times \R^d \to \R^d $ such that \[
    V(t,x,\mu)=\bar{V}(t,x)+ \langle \tilde{V}(t,x,\cdot ) ,  \mu \rangle
    \]
    for all $t \in [0,\infty)$,  $x \in \R^d$, $\mu \in \cP_2(\R^d )$.

    \item [(ii)] For all $t\geq 0$, $\mu \in \cP_2(\R^d )$, $x,y \in \R^d $, and some $J\geq \frac{d}{ 2 }+4$ one has that $\bar{V}(t,\cdot ) \in \Cf^{J}_b(\R^d )$, $\tilde{V}(t,\cdot ,\cdot ) \in \Cf^J(\R^d \times \R^d )$, and for every compact set $K \in \cP_2(\R^d )$ and $i \in [d]$
      \begin{align}
	&\sup\limits_{ t \in [0,T] }\|\bar V_i(t,\cdot )\|_{\Cf_b^J}+ \sup\limits_{ t \in [0,T] }\|\tilde{V}_i(t,\cdot ,\cdot )\|_{\Cf_b^J \times H^J}+\sup\limits_{ t \in [0,T],\mu \in K }\|A_{i,i}(t,\cdot ,\mu)\|_{\Cf_b}<\infty,
      \end{align}
      where 
      \[
	\|f\|_{\Cf_b^m \times H^J}^2=\sum_{ |\alpha|\leq m } \sum_{ |\beta|\leq J } \sup\limits_{ x \in \R^d } \int_{ \R^d  }   \left( D^{\alpha}_xD^{\beta}_y f(x,y) \right)^2dy. 
      \]
  \end{enumerate}
\end{assumption}

Now we can formulate the main result of this section. 
\begin{theorem}[Quantified CLT for the stochastic mean field equation] 
  \label{the_clt}
  Let the coefficients $A,V,G$ of the equations~\eqref{equ_mean_field_equation_with_vanishing_noise},~\eqref{equ_mean_field_equation_without_noise} satisfy Assumptions~\ref{ass_basic_assumption},~\ref{ass_lipshitz_continuity},~\ref{ass_compact_support} for some $J\geq \frac{d}{ 2 }+4$. Let $\mu_t^{\eps}$, $t\geq 0$, be a superposition solution to the stochastic mean-field equation~\eqref{equ_mean_field_equation_with_vanishing_noise} started from $\mu_0^{\eps}\in\cP_2(\R^d)$ for each $\eps>0$, $\mu_t$, $t\geq 0$, be a superposition solution to the PDE~\eqref{equ_mean_field_equation_without_noise} started from $\mu_0^0 \in \cP_2(\R^d )$, and the continuous Gaussian process $\eta_t^{0}$, $t\geq 0$, be a solution to the linear SPDE~\eqref{equ_equation_for_eta0} started from $\varrho \in H^{-J+1}(\R^d )$. Furthermore, we assume that $\langle \phi_4 , \mu_0^{\eps} \rangle<\infty$ for all $\eps \in (0,1]$, and define the continuous process $\eta^{\eps}= \frac{1}{ \sqrt{ \eps } }\left( \mu_t^{\eps}-\mu_t^0 \right)$, $t\geq 0$. Then for every $T>0$ there exists a constant $C>0$ such that the inequality
   \begin{equation} 
  \label{equ__estimate_of_eta_eps_minus_eta_0}
    \e \sup\limits_{ t \in [0,T] }\|\eta_t^{\eps}-\eta_t^0\|^2_{-J}\leq C\eps \left(1+\langle \phi_4 , \mu_0^{\eps} \rangle\right)+C \W_2^2(\mu_0^{\eps},\mu_0^0)+C \|\eta^{\eps}_0 -\varrho\|_{-J}^2
  \end{equation}
  holds.
\end{theorem}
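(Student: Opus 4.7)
The approach is to write down an SPDE for the error process $\zeta_t^{\eps}:=\eta_t^{\eps}-\eta_t^0$, cast it in the variational framework of Proposition~\ref{pro_well_posedness_of_the_equation_for_eta0}, apply It\^o's formula to $\|\zeta_t^{\eps}\|_{-J}^2$, and close by Gronwall's lemma combined with a stopping time argument. First I subtract the equations for $\mu^{\eps}$ and $\mu^0$ and divide by $\sqrt\eps$, using the decomposition $V(t,x,\mu_t^{\eps})=V(t,x,\mu_t^0)+\sqrt\eps\langle \tilde V(t,x,\cdot),\eta_t^{\eps}\rangle$, to get
\begin{align}
  d\eta_t^{\eps}&=\tfrac{\sqrt\eps}{2}D^2\!:\!(A(t,\cdot,\mu_t^{\eps})\mu_t^{\eps})\,dt-\nabla\!\cdot\!\bigl(V(t,\cdot,\mu_t^0)\eta_t^{\eps}+\langle\tilde V(t,\cdot,\cdot),\eta_t^{\eps}\rangle\mu_t^0\bigr)\,dt\\
  &\quad-\sqrt\eps\,\nabla\!\cdot\!\bigl(\langle\tilde V(t,\cdot,\cdot),\eta_t^{\eps}\rangle\eta_t^{\eps}\bigr)\,dt-\int_{\Theta}\nabla\!\cdot\!\bigl(G(t,\cdot,\mu_t^{\eps},\theta)\mu_t^{\eps}\bigr)W(d\theta,dt).
\end{align}
Subtracting the equation for $\eta^0$, the error $\zeta^{\eps}$ solves
\begin{align}
  d\zeta_t^{\eps}&=\tfrac{\sqrt\eps}{2}D^2\!:\!(A(t,\cdot,\mu_t^{\eps})\mu_t^{\eps})\,dt-\nabla\!\cdot\!\bigl(V(t,\cdot,\mu_t^0)\zeta_t^{\eps}+\langle\tilde V(t,\cdot,\cdot),\zeta_t^{\eps}\rangle\mu_t^0\bigr)\,dt\\
  &\quad-\sqrt\eps\,\nabla\!\cdot\!\bigl(\langle\tilde V(t,\cdot,\cdot),\eta_t^{\eps}\rangle\eta_t^{\eps}\bigr)\,dt-\int_{\Theta}\nabla\!\cdot\!\bigl(G(t,\cdot,\mu_t^{\eps},\theta)\mu_t^{\eps}-G(t,\cdot,\mu_t^0,\theta)\mu_t^0\bigr)W(d\theta,dt).
\end{align}

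Next I interpret this as a variational SPDE in the Gelfand triple $H^{-J+1}(\R^d)\subset H^{-J}(\R^d)\subset H^{-J-1}(\R^d)$ exactly as in the proof of Proposition~\ref{pro_well_posedness_of_the_equation_for_eta0}, working first on cut-off domains $\Gamma_n$ and passing to the limit. The ``unperturbed'' linear drift (first-order transport by $V(\cdot,\mu_t^0)$ and the nonlocal term $\langle\tilde V,\zeta\rangle\mu^0$) is handled by Lemma~\ref{lem_estimate_for_coersivety} and the pointwise-in-$x$ estimate $\|\tilde V(s,x,\cdot)\|_J\le C$ (a Sobolev-embedding consequence of Assumption~\ref{ass_compact_support}(ii)), yielding coercivity $|\langle\cdot,\zeta_s^{\eps}\rangle_{-J}|\le C\|\zeta_s^{\eps}\|_{-J}^2$ for these two contributions. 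Applying It\^o's formula for the square $H^{-J}$-norm (cf.~\cite[Thm.~4.2.4]{PR07}) therefore gives
\begin{align}
  \|\zeta_t^{\eps}\|_{-J}^2\le\|\zeta_0^{\eps}\|_{-J}^2+C\!\int_0^t\!\|\zeta_s^{\eps}\|_{-J}^2\,ds+\mathrm{I}_t^{\eps}+\mathrm{II}_t^{\eps}+\mathrm{III}_t^{\eps}+M_t^{\eps},
\end{align}
where $\mathrm{I}^{\eps}$, $\mathrm{II}^{\eps}$, $\mathrm{III}^{\eps}$ collect respectively the $\tfrac{\sqrt\eps}{2}D^2:(A\mu^{\eps})$ contribution, the nonlinear remainder $\sqrt\eps\,\nabla\!\cdot\!(\langle\tilde V,\eta^{\eps}\rangle\eta^{\eps})$, and the quadratic-variation of the noise mismatch, and $M^{\eps}$ is a local martingale.

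Now I estimate the three remainders. For $\mathrm{I}^{\eps}$, the $\Cf_b$-bound on $A$ yields $|\mathrm{I}_t^{\eps}|\le C\sqrt\eps\int_0^t\|\zeta_s^{\eps}\|_{-J}\,ds\le Ct\eps+\int_0^t\|\zeta_s^{\eps}\|_{-J}^2 ds$. For $\mathrm{II}^{\eps}$, using the Sobolev embedding $H^J\hookrightarrow\Cf_b^1$ twice (once in each factor of $\eta^{\eps}$),
\begin{align}
  |\mathrm{II}_t^{\eps}|\le C\sqrt\eps\int_0^t\|\eta_s^{\eps}\|_{-J}^2\|\zeta_s^{\eps}\|_{-J}\,ds\le\int_0^t\|\zeta_s^{\eps}\|_{-J}^2\,ds+C\eps\int_0^t\|\eta_s^{\eps}\|_{-J}^4\,ds.
\end{align}
For the noise term I use the splitting $G(\mu^{\eps})\mu^{\eps}-G(\mu^0)\mu^0=\sqrt\eps\,G(\cdot,\mu^0)\eta^{\eps}+(G(\cdot,\mu^{\eps})-G(\cdot,\mu^0))\mu^{\eps}$, and combine the Lipschitz bound from Assumption~\ref{ass_lipshitz_continuity} with the pathwise control $\W_2^2(\mu_s^{\eps},\mu_s^0)=\eps\,\mathbb{E}\cdot$(something $O(1)$) from Theorem~\ref{the_lln}; the resulting contribution to the quadratic variation is bounded by $C\eps\int_0^t(\|\eta_s^{\eps}\|_{-J}^2+\W_2^2(\mu_s^{\eps},\mu_s^0))ds$. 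Burkholder--Davis--Gundy on $M^{\eps}$ produces a symmetric absorption term plus the same noise quadratic-variation contribution.

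The final step is closure. Introduce the stopping time $\tau_R^{\eps}=\inf\{t:\|\eta_t^{\eps}\|_{-J}>R\}\wedge T$, apply the estimates on $[0,\tau_R^{\eps}]$, take suprema and expectations, and invoke Gronwall to obtain
\begin{align}
  \e\sup_{t\in[0,\tau_R^{\eps}]}\|\zeta_t^{\eps}\|_{-J}^2\le C\bigl(\|\zeta_0^{\eps}\|_{-J}^2+\eps+\eps\,\e\!\!\int_0^{\tau_R^{\eps}}\!\!\|\eta_s^{\eps}\|_{-J}^4 ds+\e\!\!\int_0^T\!\!\W_2^2(\mu_s^{\eps},\mu_s^0)ds\bigr).
\end{align}
Send $R\to\infty$: the first integral on the right is bounded by $R^2\,\e\!\int_0^T\!\|\eta_s^{\eps}\|_{-J}^2 ds$, but a cleaner route is to strengthen Lemma~\ref{lem_boundedness_of_the_norm_of_eta_eps} to the fourth moment (which costs exactly the finite fourth moment of $\mu_0^{\eps}$ assumed in the statement) by redoing its proof with Burkholder--Davis--Gundy for $p=4$ and the analogous $p=4$ bound from Theorem~\ref{the_well_posedness_of_sde_with_interaction}; then $\e\int_0^T\|\eta_s^{\eps}\|_{-J}^4 ds\le C(1+\langle\phi_4,\mu_0^{\eps}\rangle)+C\eps^{-2}\W_2^4(\mu_0^{\eps},\mu_0^0)$, the last being a lower-order nuisance that folds into $\W_2^2(\mu_0^{\eps},\mu_0^0)$ once combined with $\eps$. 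Theorem~\ref{the_lln} then controls the last integral by $\eps\,\W_2^2(\mu_0^{\eps},\mu_0^0)+\eps^2(1+\langle\phi_2,\mu_0^{\eps}\rangle)$, and one obtains~\eqref{equ__estimate_of_eta_eps_minus_eta_0}.

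\textbf{Main obstacle.} The delicate step is the nonlinear self-interaction term $\sqrt\eps\,\nabla\!\cdot\!(\langle\tilde V,\eta^{\eps}\rangle\eta^{\eps})$: its Young-inequality split produces a fourth power $\|\eta^{\eps}\|_{-J}^4$ whose expectation is only finite-uniformly in $\eps$ after upgrading Lemma~\ref{lem_boundedness_of_the_norm_of_eta_eps} to a fourth-moment bound, which in turn demands the fourth moment of the initial data and the stopping-time localisation to handle the inherently quadratic nonlinearity before passing to expectations. Matching the optimal $\eps$-rate without sacrificing a power of $\eps$ at this step is the technical crux.
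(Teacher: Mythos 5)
Your overall architecture (It\^o's formula for $\|\zeta^\eps_t\|_{-J}^2$ in the variational framework, the coercivity Lemma~\ref{lem_estimate_for_coersivety}, Gronwall plus a stopping time, BDG for the martingale part) coincides with the paper's, but your treatment of the quadratic remainder has a genuine gap. You expand around $\mu^0$ so that the uncontrolled term is $\sqrt\eps\,\nabla\cdot(\langle\tilde V,\eta^\eps\rangle\eta^\eps)$, quadratic in $\eta^\eps$, and after Young's inequality you must control $\eps\,\e\sup_t\|\eta^\eps_t\|_{-J}^4$ uniformly in $\eps$. Even granting the fourth-moment upgrade of Lemma~\ref{lem_boundedness_of_the_norm_of_eta_eps} that you sketch, it yields at best $\e\sup_t\|\eta^\eps_t\|_{-J}^4\lesssim(1+\langle\phi_4,\mu_0^\eps\rangle)+\eps^{-2}\W_2^4(\mu_0^\eps,\mu_0^0)$, so multiplying by $\eps$ leaves $\eps^{-1}\W_2^4(\mu_0^\eps,\mu_0^0)$. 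Contrary to your claim, this does not ``fold into $\W_2^2(\mu_0^\eps,\mu_0^0)$'': it is dominated by the right-hand side of~\eqref{equ__estimate_of_eta_eps_minus_eta_0} only if $\W_2^2(\mu_0^\eps,\mu_0^0)=O(\eps)$, which is not assumed. If, say, $\W_2^2(\mu_0^\eps,\mu_0^0)\sim\sqrt\eps$, your bound does not even tend to zero while the theorem's does; this is not an academic case, since in the application $\mu_0^M$ is an empirical measure whose $\W_2^2$-distance to $\mu_0$ decays much more slowly than $1/M$ in higher dimension.

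The paper avoids this by arranging the decomposition the other way round: it splits $Q(s,\eta^\eps_s,\mu^\eps_s)-Q(s,\eta^0_s,\mu^0_s)$ into $Q(s,\zeta^\eps_s,\mu^\eps_s)+\sqrt\eps\,\hat U(s,\eta^0_s,\zeta^\eps_s)+\sqrt\eps\,\hat U(s,\eta^0_s,\eta^0_s)$, so the purely quadratic term involves only the \emph{limit} process $\eta^0$. Since $\eta^0$ is Gaussian, $\e\sup_t\|\eta^0_t\|_{-J+1}^4\lesssim\big(\e\sup_t\|\eta^0_t\|_{-J+1}^2\big)^2<\infty$ by hypercontractivity (\cite[Proposition~3.14]{Hairer:2009}), with no dependence on $\mu_0^\eps$ at all. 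The remaining cross term $\sqrt\eps\,\|\eta^0_s\|_{-J}\|\zeta^\eps_s\|_{-J}^2$ is absorbed into Gronwall by stopping at $\tau^\eps=\inf\{t:\ \|\eta^0_t\|_{-J}\geq\eps^{-1/2}\}$ --- a stopping time on $\eta^0$, not on $\eta^\eps$ as in your proposal --- and the discrepancy between the stopped and unstopped suprema is removed by Chebyshev together with the deterministic bound $\sup_t\|\eta^\eps_t\|_{-J}\leq 2/\sqrt\eps$. You would need to reorganise your expansion in this way (or add the hypothesis $\W_2^2(\mu_0^\eps,\mu_0^0)\lesssim\eps$) for the argument to close at the stated rate. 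A secondary point: bounding the quadratic variation of the piece $\sqrt\eps\,\nabla\cdot(G(\cdot,\mu^0,\theta)\eta^\eps)$ by $C\eps\|\eta^\eps\|_{-J}^2$ requires smoothness of $G$ in $x$ that Assumption~\ref{ass_compact_support} does not supply; the paper instead estimates $\langle\nabla\varphi\cdot\bar G^k(s,\cdot,\mu^0_s),\mu^\eps_s-\mu^0_s\rangle$ through the superposition coupling of $X_\eps$ and $X_0$ and Remark~\ref{rem_estimate_for_difference_of_x_eps_and_x_0}, which uses only the Lipschitz continuity of $G$.
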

\begin{proof}
  The existence of a unique solution $\eta^0_t$, $t \in [0,T]$, to the linear SPDE~\eqref{equ_equation_for_eta0} follows from Proposition~\ref{pro_well_posedness_of_the_equation_for_eta0}. Moreover, it is a.s. a continuous $H^{-J+1}(\R^d )$-valued process such that 
  \[
    \e\sup\limits_{ t \in [0,T] }\|\eta_t^0\|_{-J+1}^2<\infty.
  \]
  We also remark that $\eta_t^{\eps}$, $t \in [0,T]$, is a.s. a continuous $H^{-J+1}(\R^d)$-valued process for every $\eps>0$.

  Define the functions $Q=U+\tilde U:[0,T] \times H^{-J+1}(\R^d) \times \cP_2(\R^d ) \to H^{-J}(\R^d)$, $R:[0,T] \times \cP_2(\R^d ) \to H^{-J}(\R^d )$, and $B: [0,T] \times \cP_2(\R^d) \to \cL_2(L_2(\Theta,\m),H^{-J}(\R^d)) $ as follows 
  \begin{align}
    \langle U(t,f,\mu),\varphi \rangle_{0}&:= \langle \nabla \varphi \cdot V(t, \cdot ,\mu) , f \rangle_{0},\\ 
    \langle \tilde U(t,f,\mu),\varphi \rangle_{0}&:= \left\langle \nabla \varphi(x) \cdot \langle \tilde{V}(t,x,\cdot ) , f \rangle_{0} , \mu(dx) \right\rangle\\
    \langle R(t,\mu) , \varphi \rangle_0&:=\langle D^2 \varphi:A(t,\cdot,\mu) , \mu \rangle
  \end{align}
  and 
  \[
    \langle B(t)h, \varphi \rangle_{0}=\left\langle h , \langle \nabla \varphi(x) \cdot G(t,x ,\mu,\cdot ) , \mu(dx) \rangle \right\rangle_{\m}
  \]
  for all $\varphi \in \Cf_c^{\infty}(\R^d )$, $f \in H^{-J+1}(\R^d)$, and $h \in L_2(\Theta,\m)$. 
  Then 
  \begin{align}
    \eta^{\eps}_t&= \eta_0^{\eps}+\frac{\sqrt{ \eps }}{2}\int_{ 0 }^{ t } R(s,\mu_s^{\eps})ds+ \int_{ 0 }^{ t } Q(s,\eta^{\eps}_s,\mu_s^{\eps}) ds+ \int_{ 0 }^{ t } B(s,\mu_s^{\eps})dW_s, \quad t \in [0,T],   
  \end{align}
  for all $\eps\geq 0$. 

  Let $\zeta^{\eps}=\eta_t^{\eps}-\eta_t^0$, $t \in [0,T]$. Then
  \begin{align}
    \zeta^{\eps}_t&= \zeta_0^{\eps}+\frac{\sqrt{ \eps }}{2}\int_{ 0 }^{ t } R(s,\mu_s^{\eps})ds+ \int_{ 0 }^{ t } \left(Q(s,\eta^{\eps}_s,\mu_s^{\eps})-Q(s, \eta^0_s,\mu_s^0)\right) ds\\
    &+ \int_{ 0 }^{ t } \left(B(s,\mu_s^{\eps})-B(s,\mu_s^0)\right)dW_s, \quad t \in [0,T].
  \end{align}
  
Using \Itos formula from \cite[Theorem~4.2.4]{PR07}, we get 
  \begin{align}
    \|\zeta_t^{\eps}\|_{-J}^2&= \|\zeta_0^{\eps}\|_{-J}^2+ \sqrt{ \eps }\int_{ 0 }^{ t } \langle R(s,\mu_s^{\eps}) , \zeta^{\eps}_s \rangle_{-J}ds +2\int_{ 0 }^{ t } \langle Q(s,\eta_s^{\eps},\mu_s^{\eps})-Q(s,\eta_s^0,\mu_s^0) , \zeta^{\eps}_s \rangle_{-J}ds  \\
    &+ \int_{ 0 }^{ t } \|B(s,\mu_s^{\eps})-B(s,\mu_s^0)\|_{\mathrm{HS},H^{-J}}^2ds + \int_{ 0 }^{ t } \langle \zeta_s^{\eps} , (B(s,\mu_s^{\eps})-B(s,\mu_s^0))dW_s \rangle_{-J} 
  \end{align}
  for all $t \in [0,T]$. We fix an $(\F_t)$-stopping time $\tau$ and apply the Burkholder--Davis--Gundy inequality to estimate $\e \sup\limits_{ s \in [0,\tau] }\|\zeta_s^{\eps}\|_{-J}^2=\e \sup\limits_{ s \in [0,t] }\|\zeta_{s\wedge \tau}^{\eps}\|_{-J}^2$. We get
  \begin{align}
    \e \sup\limits_{ s \in [0,t] }\|\zeta_{s\wedge\tau}^{\eps}\|_{-J}^2&\leq \|\zeta_0^{\eps}\|_{-J}^2+ \sqrt{ \eps }\e \sup\limits_{ s \in [0,t] }\left|\int_{ 0 }^{ s\wedge\tau } \langle R(r,\mu_r^{\eps}) , \zeta_r^{\eps} \rangle_{-J}dr\right| \\
    &+2\e \sup\limits_{ s \in [0,t] }\left|\int_{ 0 }^{ s\wedge\tau }  \langle Q(r,\eta_r^{\eps},\mu_r^{\eps})-Q(r,\eta_r^0,\mu_r^0) , \zeta_r^{\eps} \rangle_{-J}dr\right|\\
    &+ \e \sup\limits_{ s \in [0,t] }\int_{ 0 }^{ s\wedge\tau } \|B(r,\mu_r^{\eps})-B(r,\mu_r^0)\|_{\mathrm{HS},H^{-J}}^2 dr\\
    &+ \e \sup\limits_{ s \in [0,t] }\left|\int_{ 0 }^{ s\wedge\tau } \langle \zeta_r^{\eps} , (B(r,\mu_r^{\eps})-B(r,\mu_r^0))dW_r \rangle_{-J}\right| \\
    &\leq \|\zeta_0^{\eps}\|_{-J}^2+ \sqrt{ \eps }\e \int_{ 0 }^{ t } \left|\langle R(s\wedge\tau,\mu_{s\wedge\tau}^{\eps}) , \zeta_{s\wedge\tau}^{\eps} \rangle_{-J}\right|ds \\
    &+2\e \int_{ 0 }^{ t }  \left|\langle Q(s\wedge\tau,\eta_{s\wedge\tau}^{\eps},\mu_{s\wedge\tau}^{\eps})-Q(s\wedge\tau,\eta_{s\wedge\tau}^0,\mu_{s\wedge\tau}^0) , \zeta_{s\wedge\tau}^{\eps} \rangle_{-J}\right|ds\\
    &+ \e \int_{ 0 }^{ t } \|B(s\wedge\tau,\mu_{s\wedge\tau}^{\eps})-B(s\wedge\tau,\mu_{s\wedge\tau}^0)\|_{\mathrm{HS},H^{-J}}^2 ds\\
    &+ C\e \left(\int_{ 0 }^{ t } \|\hat{B}^{\eps}(s\wedge\tau)\|_{\mathrm{HS},\R}^2ds \right)^{ \frac{1}{ 2 }}=: \sum_{ i=1 }^{ 5 } I_i(t),
  \end{align}
  where $\hat{B}^{\eps}:[0,T] \to \cL_2(L_2(\Theta,\m),\R )$ is defined as follows 
  \[
    \hat{B}^{\eps}(s)h:= \langle \zeta_s^{\eps} , B(s,\mu_s^{\eps})h-B(s,\mu_s^0)h \rangle_{-J}
  \]
  for all $s \in [0,T]$ and $h \in L_2(\Theta,\m)$, and $C$ is the universal constant from the Burkholder--Davis--Gundy inequality. We next estimate every term in the inequality above.
  
  {\it Estimate of $I_2$.} Using the Cauchy--Schwarz inequality and Young's inequality, we get
  \begin{align}
    \sqrt{ \eps }|\langle R(s,\mu_s^{\eps}) , \zeta_s^{\eps} \rangle_{-J}|&\leq \sqrt{ \eps }\|R(s,\mu_s^{\eps})\|_{-J}\|\zeta_s^{\eps}\|_{-J}\leq \frac{ \eps}{ 2 }\|R(s,\mu_s^{\eps})\|_{-J}^2+ \frac{1}{ 2 }\|\zeta_s^{\eps}\|_{-J}^2.
  \end{align}
  Then, by the continuity of the embedding of $H^{J}(\R^d)$ into $\Cf_b^2(\R^d )$, Assumptions~\ref{ass_basic_assumption},~\ref{ass_lipshitz_continuity} and Remark~\ref{rem_linear_growth_of_coefficients_under_lipschitz_conditions}, we have 
  \begin{align}
    \|R(s,\mu_s^{\eps})\|_{-J}^2 &=  \sup\limits_{ \varphi \in \Cf_c^{\infty}(\R^d ) } \frac{1}{ \|\varphi\|_{J}^2 }\langle D^2 \varphi:A(s,\cdot ,\mu_s^{\eps}) , \mu_s^{\eps} \rangle^2\\
    &\leq \sup\limits_{ \varphi \in \Cf_c^{\infty}(\R^d) } \frac{ \|\varphi\|_{\Cf_b^2}^2 }{ \|\varphi\|_{J}^2 }\sup\limits_{t \in [0,T]}\langle |A(t,\cdot ,\mu_t^{\eps})|,\mu_t^{\eps}\rangle^2\\
    &\leq C \sup\limits_{ t \in [0,T]}\langle \||G(t,x,\mu_t^{\eps}, \cdot )|\|_\m^2 , \mu_t^{\eps}(dx) \rangle^2\\
    &\leq C \sup\limits_{ t \in [0,T]}\langle 1+|x|^2+\W_2^2(\mu_t^{\eps},\delta_0) , \mu_t^{\eps}(dx) \rangle^2\\
    &\leq C \left(1+4\sup\limits_{ t \in[0,T]  }\W_2^4(\mu_t^{\eps},\delta_0)\right).
  \end{align}
 Using Corollary~\ref{cor_moment_preserving_property} and the fact that $\langle \phi_4 , \mu_0^{\eps} \rangle<\infty$ for each $\eps \in (0,1]$, we get 
  \begin{align} 
    \e \sup\limits_{ t \in [0,T] }\W_2^4(\mu^{\eps}_t,\delta_0)=&\e \sup\limits_{ t \in [0,T] }\langle \phi_2 , \mu_t^{\eps} \rangle^2 \\\leq & \e \sup\limits_{ t \in [0,T] }\langle \phi_4 , \mu_t^{\eps} \rangle\\\leq& C(1+\langle \phi_4 , \mu_0^{\eps} \rangle)
  \end{align}
  for all $\eps \in (0,1]$. Hence, for every $\eps \in (0,1]$, we have
  \begin{align}
    I_2(t)&\leq C \eps(1+\langle \phi_4 , \mu_0^{\eps} \rangle)+C \int_{ 0 }^{ t } \|\zeta_{s\wedge\tau}^{\eps}\|_{-J}^2 \, ds\\
    &\leq C \eps(1+\langle \phi_4 , \mu_0^{\eps} \rangle)
    +C \int_{ 0 }^{ t } \sup\limits_{ r \in [0,s] }\|\zeta_{r\wedge\tau}^{\eps}\|_{-J}^2 \, ds \,
  \end{align}
  for all $t \in [0,T]$. 
  
 {\it Estimate of $I_3$.} We estimate $I_3$ as follows
  \begin{equation} 
  \label{equ_estimate_of_bar_u}
    \begin{split}
      \left|\langle  U(s,\eta_s^{\eps},\mu_s^{\eps})- U(s,\eta_s^0,\mu_s^0) , \zeta_s^{\eps} \rangle_{-J}\right|&\leq \left|\langle  U(s,\eta_s^{\eps},\mu_s^{\eps})- U(s,\eta_s^{0},\mu_s^{\eps}) , \zeta_s^{\eps} \rangle_{-J}\right|\\
      &+ \left|\langle  U(s,\eta_s^{0},\mu_s^{\eps})- U(s,\eta_s^0,\mu_s^0) , \zeta_s^{\eps} \rangle_{-J}\right|.
    \end{split}
  \end{equation}
  By Lemma~\ref{lem_estimate_for_coersivety}, 
  \begin{align}
    \left|\langle  U(s,\eta_s^{\eps},\mu_s^{\eps})- U(s,\eta_s^{0},\mu_s^{\eps}) , \zeta_s^{\eps} \rangle_{-J}\right|&=   \left|\langle  U(s,\zeta_s^{\eps},\mu_s^\eps) , \zeta_s^{\eps} \rangle_{-J}\right|\\
    &\leq C \max\limits_{ i \in [d] }\sup\limits_{ t \in [0,T] }\|V_i(t,\cdot ,\mu_t^{\eps})\|_{\Cf_b^J} \|\zeta_s^{\eps}\|_{-J}^2.
  \end{align}
  We note that 
  \begin{align}
    \sup\limits_{ t \in [0,T] }\|V_i(t,\cdot ,\mu_t^{\eps})\|_{\Cf_b^J}&\leq \sup\limits_{ t \in [0,T] }\|\bar V_i(t,\cdot )\|_{\Cf_b^J}+\sup\limits_{ t \in [0,T] }\|\langle \tilde V_i(t,\cdot ,y) , \mu_t^{\eps}(dy) \rangle\|_{\Cf_b^J}\\
    &\leq \sup\limits_{ t \in [0,T] }\|\bar V_i(t,\cdot )\|_{\Cf_b^J}+\sup\limits_{ t \in [0,T],y \in \R^d  }\| \tilde V_i(t,\cdot ,y)\|_{\Cf_b^J}\leq C,
  \end{align}
  where $C$ is a non-random constant, according to Assumption~\ref{ass_compact_support} and the continuous embedding of $H^J(\R^d )$ into $\Cf_b(\R^d )$. In order to estimate the second term of the right hand side of~\eqref{equ_estimate_of_bar_u}, we consider the following bilinear map $\hat U:[0,T] \times H^{-J+1}(\R^d)\times H^{-J}(\R^d) \to H^{-J}(\R^d)$ defined by
  \begin{equation}\label{equ_hat_U}
    \langle \hat U(t,f_1,f_2) , \varphi \rangle_0:= \left\langle \nabla \varphi(x) \cdot \langle \tilde{V}(t,x,\cdot ) , f_1 \rangle_0 , f_2(dx) \right\rangle_0
  \end{equation}
  for all $\varphi \in \Cf_c^{\infty}(\R^d)$, which coincides with $\tilde U$ on $[0,T] \times H^{-J+1}(\R^d)\times \cP_2(\R^d)$. Then, using Lemma~\ref{lem_estimate_for_coersivety}, we obtain
  \begin{equation}\label{equ_estimate_of_bar_U_1}
  \begin{split}
    \big|\langle  U(s,\eta_s^{0},\mu_s^{\eps})&- U(s,\eta_s^0,\mu_s^0) , \zeta_s^{\eps} \rangle_{-J}\big|=  \sqrt{ \eps }|\langle \hat U(s,\eta_s^0,\eta_s^{\eps}) , \zeta_s^{\eps} \rangle_{-J}|\\
    &\leq \sqrt{ \eps }|\langle \hat U(s,\eta_s^0,\zeta_s^{\eps}) , \zeta_s^{\eps} \rangle_{-J}|+\sqrt{ \eps }|\langle \hat U(s,\eta_s^0,\eta_s^0) , \zeta_s^{\eps} \rangle_{-J}|\\
    &\leq \sqrt{ \eps }C\max\limits_{ i \in [d] }\|\langle \tilde{V}_i(s,\cdot ,x) , \eta_s^0(dx) \rangle_0\|_{\Cf_b^J}\|\zeta_s^{\eps}\|_{-J}^2\\
    &+\sqrt{ \eps }\|\hat U(s,\eta_s^0,\eta_s^0)\|_{-J}\|\zeta_s^{\eps}\|_{-J}.
  \end{split}
  \end{equation}
  We estimate separately 
  \begin{align}
    \|\langle \tilde{V}_i(s,\cdot ,x) , \eta_s^0(dx) \rangle\|_{\Cf_b^J}\leq \|\tilde{V}_i(s,\cdot ,\cdot )\|_{\Cf_b^J \times H^J}\|\eta_s^0\|_{-J}
  \end{align}
  and 
  \begin{align}
    \|\hat U(s,\eta_s^0,\eta_s^0)\|_{-J}&= \sup\limits_{ \varphi \in \Cf_c^{\infty}(\R^d) } \frac{1}{ \|\varphi\|_J }\langle \nabla \varphi(x) \cdot \langle \tilde{V}(t,x,\cdot ) , \eta_s^0 \rangle_0 , \eta_s^0(dx) \rangle_0\\
    &\leq \sup\limits_{ \varphi \in \Cf_c^{\infty}(\R^d)} \frac{1}{ \|\varphi\|_J } \|\nabla \varphi\|_{J-1} \|\tilde{V}(s,\cdot ,\cdot )\|_{\Cf_b^{J-1}\times H^{J-1}} \|\eta_s^0\|_{-J+1}^2\\
    &\leq C \sup\limits_{ t \in [0,T] }\|\tilde{V}(t,\cdot ,\cdot )\|_{\Cf_b^{J-1}\times H^{J-1}} \|\eta_s^0\|_{-J+1}^2.
  \end{align}
  Therefore, by Assumption~\eqref{ass_compact_support} and Young's inequality,
  \begin{equation}\label{equ_estimate_of_bar_U_2}
  \begin{split}
    \big|\langle  U(s,\eta_s^{0},\mu_s^{\eps})&- U(s,\eta_s^0,\mu_s^0) , \zeta_s^{\eps} \rangle_{-J}\big|\leq \sqrt{ \eps }C \|\eta_s^0\|_{-J}\|\zeta_s^{\eps}\|_{-J}^2+ \sqrt{ \eps }C \|\eta_s^0\|^2_{-J+1}\|\zeta_s^{\eps}\|_{-J}\\
    &\leq \sqrt{ \eps }C \|\eta_s^0\|_{-J}\|\zeta_s^{\eps}\|_{-J}^2+ C \eps \|\eta_s^0\|^4_{-J+1}+ C \|\zeta_s^{\eps}\|_{-J}^2.
  \end{split}
  \end{equation}

We next estimate
  \begin{align}
    |\langle \tilde U(s,\eta_s^{\eps},\mu_s^{\eps})&-\tilde{U}(s,\eta_s^0,\mu_s^0) , \zeta_s^{\eps} \rangle_{-J}|\\
    &\leq|\langle \tilde U(s,\eta_s^{\eps},\mu_s^{\eps})-\tilde{U}(s,\eta_s^0,\mu_s^{\eps}) , \zeta_s^{\eps} \rangle_{-J}|
    +|\langle \tilde U(s,\eta_s^0,\mu_s^{\eps})-\tilde{U}(s,\eta_s^0,\mu_s^0) , \zeta_s^{\eps} \rangle_{-J}|\\
     &=|\langle \tilde U(s,\zeta_s^{\eps},\mu_s^{\eps}),\zeta_s^{\eps}\rangle|+\sqrt{\eps}|\langle \hat U(s,\eta_s^0,\eta_s^{\eps}) , \zeta_s^{\eps} \rangle_{-J}|
    \end{align}
    where $\hat U$ is defined by~\eqref{equ_hat_U}.
    Then,
    \begin{align}
    |\langle \tilde U(s,\zeta_s^{\eps},\mu_s^{\eps}),\zeta_s^{\eps}\rangle|&\leq\|\tilde U(s,\zeta_s^{\eps},\mu_s^{\eps})\|_{-J}\|\zeta_s^{\eps}\|_{-J}\\
    & \leq \|\zeta_s^{\eps}\|_{-J} \sup\limits_{ \varphi \in \Cf_c^{\infty}(\R^d) } \frac{1}{ \|\varphi\|_J }\left\langle \nabla \varphi \cdot \langle \tilde{V}(s,x, \cdot ) , \zeta_s^{\eps} \rangle_0 , \mu_s^{\eps}(dx) \right\rangle\\
    &\leq \|\zeta_s^{\eps}\|_{-J}^2 \sup\limits_{ \varphi \in \Cf_c^{\infty}(\R^d) } \frac{ \|\varphi\|_{\Cf_b^1} }{ \|\varphi\|_J }\|\tilde{V}(s,\cdot ,\cdot )\|_{\Cf_b \times H^{J}}\\& \leq C \sup\limits_{ t \in [0,T] } \|\tilde{V}(s,\cdot ,\cdot )\|_{\Cf_b \times H^J} \|\zeta_s^{\eps}\|^2_{-J}.
  \end{align}
  Using the first equality in the estimate~\eqref{equ_estimate_of_bar_U_1} and then~\eqref{equ_estimate_of_bar_U_2}, we can conclude that
  \begin{align}
      \sqrt{\eps}|\langle \hat U(s,\eta_s^0,\eta_s^{\eps}) , \zeta_s^{\eps} \rangle_{-J}|\leq \sqrt{ \eps }C \|\eta_s^0\|_{-J}\|\zeta_s^{\eps}\|_{-J}^2+ C \eps \|\eta_s^0\|^4_{-J+1}+ C \|\zeta_s^{\eps}\|_{-J}^2.
  \end{align}
  Thus, we have shown that there exists a constant $C>0$ such that
  \begin{align}
    I_3(t)&\leq C \int_{ 0 }^{ t } \e \|\zeta_{s\wedge\tau}^{\eps}\|_{-J}^2ds+ C \sqrt{ \eps }\int_{ 0 }^{ t }\e \|\eta_{s\wedge\tau}^0\|_{-J}\|\zeta_{s\wedge\tau}^{\eps}\|_{-J}^2ds + C\eps\int_{ 0 }^{ t } \e \|\eta_{s\wedge\tau}^0\|_{-J+1}^{4}ds\\
    &\leq C \int_{ 0 }^{ t } \e \sup\limits_{ r \in [0,s] }\|\zeta_{r\wedge\tau}^{\eps}\|_{-J}^2ds+ C \sqrt{ \eps }\int_{ 0 }^{ t }\E{ \|\eta_{s\wedge\tau}^0\|_{-J} \sup\limits_{ r \in [0,s] }\|\zeta_{r\wedge\tau}^{\eps}\|_{-J}^2}ds\\
    &+ C\eps \e \sup\limits_{ s \in [0,T] }\|\eta_{s}^0\|_{-J+1}^{4}
  \end{align}
  for all $t \in [0,T]$. 
  
  {\it Estimate of $I_4$.} Let $\{ h_k,\ k\geq 1 \}$ be an orthonormal basis in $L_2(\Theta,\m)$ and $\bar G^k(t,x,\mu)=\langle G(t,x,\mu,\cdot ) , h_k \rangle$, $k\geq 1$. Then 
  \begin{align} 
  \allowdisplaybreaks
      &\|B(s,\mu_s^{\eps})-B(s,\mu_s^0)\|_{\mathrm{HS},H^{-J}}^2= \sum_{ k=1 }^{ \infty } \|B(s,\mu_s^{\eps})h_k-B(s,\mu_s^0)h_k\|_{-J}^2\\
      &\qquad\qquad= \sum_{ k=1 }^{ \infty } \sup\limits_{ \varphi \in \Cf_c^{\infty}(\R^d) } \frac{1}{ \|\varphi\|_{J}^2 }\left( \langle \nabla \varphi \cdot \bar G^k(s,\cdot ,\mu_s^{\eps}) , \mu_s^{\eps} \rangle-\langle \nabla \varphi \cdot \bar G^k(s,\cdot ,\mu_s^0) , \mu_s^0 \rangle \right)^2\\
      &\qquad\qquad \leq  2\sum_{ k=1 }^{ \infty } \sup\limits_{ \varphi \in \Cf_c^{\infty}(\R^d) } \frac{1}{ \|\varphi\|_{J}^2 }\left( \langle \nabla \varphi \cdot \bar G^k(s,\cdot ,\mu_s^{\eps}) , \mu_s^{\eps} \rangle-\langle \nabla \varphi \cdot \bar G^k(s,\cdot ,\mu_s^0) , \mu_s^{\eps} \rangle \right)^2 \label{equ_estimate_of_i4} \\   
      &\qquad\qquad+ 2\sum_{ k=1 }^{ \infty } \sup\limits_{ \varphi \in \Cf_c^{\infty}(\R^d) } \frac{1}{ \|\varphi\|_{J}^2 }\left( \langle \nabla \varphi \cdot \bar G^k(s,\cdot ,\mu_s^0) , \mu_s^{\eps} \rangle-\langle \nabla \varphi \cdot \bar G^k(s,\cdot ,\mu_s^0) , \mu_s^0 \rangle \right)^2\\
      &\qquad\qquad =  2\sum_{ k=1 }^{ \infty } \sup\limits_{ \varphi \in \Cf_c^{\infty}(\R^d) } \frac{1}{ \|\varphi\|_{J}^2 }\left\langle \nabla \varphi \cdot (\bar G^k(s,\cdot ,\mu_s^{\eps}) - \bar G^k(s,\cdot ,\mu_s^0)) , \mu_s^{\eps} \right\rangle^2\\
      &\qquad\qquad+ 2\sum_{ k=1 }^{ \infty } \sup\limits_{ \varphi \in \Cf_c^{\infty}(\R^d) } \frac{1}{ \|\varphi\|_{J}^2 }\left\langle \nabla \varphi \cdot \bar G^k(s,\cdot ,\mu_s^0) , \mu_s^{\eps}-\mu_s^0 \right\rangle^2.
  \end{align}
  Using Jensen's inequality, Fubini's theorem, Parseval's identity, and Assumption~\ref{ass_lipshitz_continuity}, we get 
  \begin{align}
    &\sum_{ k=1 }^{ \infty } \sup\limits_{ \varphi \in \Cf_c^{\infty}(\R^d) } \frac{1}{ \|\varphi\|_{J}^2 }\left\langle \nabla \varphi \cdot (\bar G^k(s,\cdot ,\mu_s^{\eps}) - \bar G^k(s,\cdot ,\mu_s^0)) , \mu_s^{\eps} \right\rangle^2\\
    &\qquad\qquad \leq \sum_{ k=1 }^{ \infty } \sup\limits_{ \varphi \in \Cf_c^{\infty}(\R^d) } \frac{1}{ \|\varphi\|_{J}^2 }\|\varphi\|_{\Cf_b^1}^2\left\langle |\bar G^k(s,\cdot ,\mu_s^{\eps})-\bar G^k(s,\cdot,\mu_s^0)|^2 , \mu_s^{\eps} \right\rangle\\
    &\qquad\qquad\leq  C\left\langle \sum_{ k=1 }^{ \infty } |\bar G^k(s,\cdot ,\mu_s^{\eps})-\bar G^k(s,\cdot,\mu_s^0)|^2 , \mu_s^{\eps} \right\rangle\\
    &\qquad\qquad=C \sum_{ i=1 }^{ d } \left\langle \|G_i(s,\cdot ,\mu_s^{\eps})-G_i(s,\cdot ,\mu_s^0)\|_{\m}^2 , \mu_s^{\eps} \right\rangle\leq CL \W_2^2(\mu_s^{\eps},\mu_s^0).
    \allowdisplaybreaks
  \end{align}

  In order to estimate the second term in the right hand side of~\eqref{equ_estimate_of_i4}, we use the fact that $\mu_t^{\eps}$, $t\geq 0$, is a superposition solution for every $\eps\geq 0$. Therefore, for every $\eps\geq 0$ there exists a solution $X_{\eps}$ to the corresponding SDE with interaction~\eqref{equ_equation_with_interaction} such that $\mu_t^{\eps}=\mu_0^{\eps}\circ X_{\eps}(t,\cdot )$, $t\geq 0$. Let $\chi$ be an arbitrary probability measure on $\R^d \times \R^d $ with marginals $\mu_0^{\eps}$ and $\mu_0^0$. Then, by Jensen's inequality and the mean value theorem, we have
  \begin{align}
  \allowdisplaybreaks
    &\langle \nabla \varphi \cdot \bar G^k(s,\cdot ,\mu_s^0) , \mu_s^{\eps}-\mu_s^0 \rangle^2=  \bigg(\int_{ \R^d  }   \int_{ \R^d  }   \big( \nabla \varphi(X_{\eps}(u,s)) \cdot \bar G^k(s,X_{\eps}(u,s),\mu_s^0)\\
    &\hspace{65mm}-\nabla \varphi(X_0(v,s)) \cdot \bar G^k(s,X_0(v,s),\mu_s^0) \big)\chi(du,dv)  \bigg)^2\\
    &\qquad\qquad \leq 2 \int_{ \R^d  }   \int_{ \R^d  }   \left[ \nabla \varphi(X_{\eps}(u,s)) \cdot \left(\bar G^k(s,X_{\eps}(u,s),\mu_s^0)-\bar G^k(s,X_{0}(v,s),\mu_s^0)\right) \right]^2\chi (du,dv)  \\
    &\qquad\qquad + 2 \int_{ \R^d  }   \int_{ \R^d  }   \left[ \left(\nabla \varphi(X_{\eps}(u,s))-\nabla \varphi(X_0(v,s))\right) \cdot \bar G^k(s,X_{0}(v,s),\mu_s^0)\right]^2\chi (du,dv)  \\
    &\qquad\qquad \leq 2 \|\varphi\|_{\Cf_b^1}^2\int_{ \R^d  }   \int_{ \R^d  }    \left|\bar G^k(s,X_{\eps}(u,s),\mu_s^0)-\bar G^k(s,X_{0}(v,s),\mu_s^0)\right|^2\chi (du,dv)  \\
    &\qquad\qquad + 2 \|\varphi\|_{\Cf_b^2} \int_{ \R^d  }   \int_{ \R^d  }   |X_{\eps}(u,s)-X_0(v,s)|^2 |\bar G^k(s,X_{0}(v,s),\mu_s^0)|^2\chi (du,dv).
  \end{align}
  Thus, using the continuous embedding of $H^{J}(\R^d)$ into $\Cf_b^2(\R^d )$, Fubini's theorem, and Parseval's identity, the second term of the right hand side of~\eqref{equ_estimate_of_i4} can be estimated as follows
  \begin{align}
      &\sum_{ k=1 }^{ \infty } \sup\limits_{ \varphi \in \Cf_c^{\infty}(\R^d) } \frac{1}{ \|\varphi\|_{J}^2 }\langle \nabla \varphi \cdot \bar G^k(s,\cdot ,\mu_s^0) , \mu_s^{\eps}-\mu_s^0 \rangle^2\\
      &\qquad\qquad\leq C\sum_{ k=1 }^{ \infty } \int_{ \R^d  }   \int_{ \R^d  }    \left|\bar G^k(s,X_{\eps}(u,s),\mu_s^0)-\bar G^k(s,X_{0}(v,s),\mu_s^0)\right|^2\chi (du,dv)  \\
      &\qquad\qquad +C \sum_{ k=1 }^{ \infty } \int_{ \R^d  }   \int_{ \R^d  }   |X_{\eps}(u,s)-X_0(v,s)|^2 |\bar G^k(s,X_{0}(v,s),\mu_s^0)|^2\chi (du,dv) \\
      &\qquad\qquad =C \int_{ \R^d  }   \int_{ \R^d  }   \left|\|G(s,X_{\eps}(u,s),\mu_s^0)-G(s,X_0(u,s),\mu_s^0)\|_{\m}\right|^2\chi (du,dv)\\
      &\qquad\qquad +C \int_{ \R^d  }   \int_{ \R^d  }   |X_{\eps}(u,s)-X_0(v,s)|^2 \left|\|G(s,X_0(v,s),\mu_s^0)\|_{\m}\right|^2\chi(du,dv).
  \end{align}
  Using Assumption~\ref{ass_lipshitz_continuity} and the expression $\|G_i(t,x,\mu)\|_{\m}^2=A_{i,i}(t,x,\mu)$ for all $i \in [d]$, we can bound the expression above by 
  \begin{align}
    C\left(L+\sum_{ i=1 }^{ d } \|A_{i,i}(s,\cdot ,\mu_s^0)\|_{\Cf_b}^2\right) \int_{ \R^d  }   \int_{ \R^d  }   |X_{\eps}(u,s)-X_0(v,s)|^2 \chi(du,dv).
  \end{align}
  Using Assumption~\ref{ass_compact_support}, Theorem~\ref{the_lln}, Remark~\ref{rem_estimate_for_difference_of_x_eps_and_x_0}, and the estimate above, we get
  \begin{align}
    I_4(t)&\leq C \int_{ 0 }^{ t } \e\W_2^2(\mu_{s\wedge\tau}^{\eps},\mu_{s\wedge\tau}^0)ds+C \int_{ 0 }^{ t } \int_{ \R^d  }   \int_{ \R^d  }  \e |X_{\eps}(u,s\wedge\tau)-X_0(v,s\wedge\tau)|^2\chi(du,dv)ds\\
    &\leq CT\e \sup\limits_{ s \in [0,T] }\W_2^2(\mu_s^{\eps},\mu_s^0)+CT \int_{ \R^d  }   \int_{ \R^d  }  \e \sup\limits_{ s \in [0,T] } |X_{\eps}(u,s)-X_0(v,s)|^2\chi(du,dv)\\
    &\leq C\W_2^2(\mu_0^{\eps},\mu_0^0)+C\eps (1+\langle \phi_2 , \mu_0^{\eps} \rangle)+ C\int_{ \R^d  }   \int_{ \R^d  }   |u-v|^2\chi (du,dv).    
  \end{align}
  Taking infimum over all measures $\chi$ on $\R^d \times \R^d $ with marginals $\mu_0^{\eps}$ and $\mu_0^0$, the last term on the right hand side of the above inequality  will give $\W_2^2(\mu_0^{\eps},\mu_0^0)$.  Moreover,
  \[
  \langle \phi_2 , \mu_0^{\eps} \rangle\leq \langle \phi_4 , \mu_0^{\eps} \rangle^{\frac{1}{2}}\leq \frac{1}{2}(1+\langle \phi_4 , \mu_0^{\eps} \rangle) \, .
  \]
  Hence,
  \[
    I_4(t)\leq C\W_2^2(\mu_0^{\eps},\mu_0^0)+C\eps (1+\langle \phi_4 , \mu_0^{\eps} \rangle)
  \]
  for all $t \in [0,T]$. For the final term $I_5$, we have
  \begin{align}
    \|\hat{B}^{\eps}(s)\|_{\mathrm{HS},\R }^2=& \sum_{ k=1 }^{ \infty } \langle \zeta_s^{\eps} , B(s,\mu_s^{\eps})h_k-B(s,\mu_s^0)h_k \rangle_{-J}^2\\\leq& \|\zeta_s^{\eps}\|^2_{-J} \sum_{ k=1 }^{ \infty } \|B(s,\mu_s^{\eps})h_k-B(s,\mu_s^0)h_k\|_{-J}^2\\
    \leq& \|\zeta_s^{\eps}\|_{-J}^2 \|B(s,\mu_s^{\eps})-B(s,\mu_s^0)\|_{\mathrm{HS},H^{-J}}^2.
  \end{align}
  Therefore, for every $t \in [0,T]$
 \begin{align}
    I_5(t)&\leq C\e\left( \int_{ 0 }^{ t } \|\zeta_{s\wedge\tau}^{\eps}\|_{-J}^2 \|B(s\wedge\tau,\mu_{s\wedge\tau}^{\eps})-B(s\wedge\tau,\mu_{s\wedge\tau}^0)\|^2_{\mathrm{HS},H^{-J}} ds  \right)^{ \frac{1}{ 2 }}\\
    &\leq C\e\left[ \sup\limits_{ s \in [0,t] }\|\zeta_{s\wedge\tau}^{\eps}\|_{-J} \left( \int_{ 0 }^{ t } \|B(s\wedge\tau,\mu_{s\wedge\tau}^{\eps})-B(s\wedge\tau,\mu_{s\wedge\tau}^0)\|_{\mathrm{HS},H^{-J}}^2 ds  \right)^{ \frac{1}{ 2 }} \right]\\
    &\leq \frac{1}{ 2 }\e \sup\limits_{ s \in [0,t] }\|\zeta_{s\wedge\tau}^{\eps}\|_{-J}^2+ \frac{C^2}{ 2 } \int_{ 0 }^{ t } \e \|B(s\wedge\tau,\mu_{s\wedge\tau}^{\eps})-B(s\wedge\tau,\mu_{s\wedge\tau}^0)\|_{\mathrm{HS},H^{-J}}^2ds\\
    &= \frac{1}{ 2 }\e \sup\limits_{ s \in [0,t] }\|\zeta_{s\wedge\tau}^{\eps}\|_{-J}^2+\frac{ C^2 }{ 2 }I_4(t)\\
    &\leq \frac{1}{ 2 }\e \sup\limits_{ s \in [0,t] }\|\zeta_{s\wedge\tau}^{\eps}\|_{-J}^2+ C\W_2^2(\mu_0^{\eps},\mu_0^0)+C\eps (1+\langle \phi_4 , \mu_0^{\eps} \rangle).
  \end{align}

  We remark that $\eta^0_t$, $t \in [0,T]$, is a Gaussian random element in $\Cf([0,T],H^{-J+1}(\R^d))$ and $\e \sup\limits_{ t \in [0,T] }\|\eta_t\|_{-J+1}^2<\infty$, by Proposition~\ref{pro_well_posedness_of_the_equation_for_eta0}. Using \cite[Proposition~3.14]{Hairer:2009}, one can see that there exists a constant $C>0$ such that 
  \begin{equation} 
  \label{equ_estimate_of_the_fourth_moment_of_eta0}
    \e \sup\limits_{ t \in [0,T] }\|\eta_t^0\|^{4}_{-J+1}\leq C\left( \e \sup\limits_{ t \in [0,T] }\|\eta_t^0\|^2_{-J+1} \right)^2<\infty.
  \end{equation}
  Defining for each $\eps>0$ the stopping time 
  \[
    \tau^{\eps}=\inf\left\{ t\geq 0:\ \|\eta_t^0\|_{-J}\geq \frac{1}{ \sqrt{ \eps } } \right\}
  \]
  and combining the estimates of $I_i(t)$, $i \in [5]$, with $\tau=\tau^{\eps}$, we get
  \begin{align}
    \e \sup\limits_{ s \in [0,t] }\|\zeta_{s \wedge\tau^{\eps}}^{\eps}\|_{-J}^2&\leq C\eps \left(1+\langle \phi_4 , \mu_0^{\eps} \rangle\right)+C \W_2^2(\mu_0^{\eps},\mu_0^0)+C \|\eta^{\eps}_0 -\varrho\|_{-J}^2\\
    &+ C\int_{ 0 }^{ t } \e \sup\limits_{ r \in [0,s] }\|\zeta_{r\wedge\tau^{\eps}}^{\eps}\|_{-J}^2ds 
  \end{align}
  for all $t \in [0,T]$, where $C$ is independent of $\eps$. Using Gronwall's lemma, we obtain 
  \begin{equation} 
  \label{equ_estimate_of_stopping_zeta}
    \e \sup\limits_{ s \in [0,t] }\|\zeta_{s\wedge\tau^{\eps}}^{\eps}\|_{-J}^2\leq C^*(\eps)e^{TC}, \quad t \in [0,T],
  \end{equation}
  where 
  \[
  C^*(\eps):=C\eps \left(1+\langle \phi_4 , \mu_0^{\eps} \rangle\right)+C \W_2^2(\mu_0^{\eps},\mu_0^0)+C \|\eta^{\eps}_0 -\varrho\|_{-J}^2.
  \]
  
   Set
  \begin{align}
    \eta^*:&=\sup\limits_{ t \in [0,T] }\|\eta^0_t\|_{-J}
  \end{align}
  and note that 
  \[
    \sup\limits_{ t \in [0,T] }\|\eta^{\eps}_t\|_{-J}= \frac{1}{ \sqrt\eps  }\sup\limits_{ t \in [0,T] }\|\mu_t^{\eps}-\mu_t^0\|_{-J}\leq \frac{2}{ \sqrt\eps  }
  \]
  for all $\eps>0$. Thus,  by~\eqref{equ_estimate_of_the_fourth_moment_of_eta0},~\eqref{equ_estimate_of_stopping_zeta}, and Chebyshev's inequality, we obtain 
  \begin{align}
    \e \sup\limits_{ t \in [0,T] }\|\zeta_{t}^{\eps}\|_{-J}^2&= \E{\sup\limits_{ t \in [0,T] }\|\zeta_{t}^{\eps}\|_{-J}^2\I_{\left\{ \eta^*< \frac{1}{ \sqrt{ \eps } } \right\}}}+\E{ \sup\limits_{ t \in [0,T] }\|\zeta_{t}^{\eps}\|_{-J}^2\I_{\left\{ \eta^*\geq \frac{1}{ \sqrt{ \eps } } \right\}} }\\
    &\leq \e \sup\limits_{ t \in [0,T] }\|\zeta_{t\wedge\tau^{\eps}}^{\eps}\|_{-J}^2+\E{ \sup\limits_{ t \in [0,T] }\|\eta_t^{\eps}-\eta_t^0\|_{-J}^2\I_{\left\{ \eta^*\geq \frac{1}{ \sqrt{ \eps } } \right\}} }\\
    &\leq e^{CT}C^*(\eps)+ 4\E{ \sup\limits_{ t \in [0,T] }\|\eta_t^{\eps}\|_{-J}^2\I_{\left\{ \eta^*\geq \frac{1}{ \sqrt{ \eps } } \right\}} }+4\E{ \sup\limits_{ t \in [0,T] }\|\eta_t^0\|_{-J}^2\I_{\left\{ \eta^*\geq \frac{1}{ \sqrt{ \eps } } \right\}} }\\
    &\leq e^{CT}C^*(\eps)+ \frac{4}{ \eps } \e \I_{\left\{ \eta^*\geq \frac{1}{ \sqrt{ \eps } } \right\}}+4 \eps \e \sup\limits_{ t \in [0,T] }\|\eta_t^0\|_{-J}^4\\
    &\leq e^{CT}C^*(\eps)+ \left(\frac{ 4 \eps^2 }{ \eps }+4 \eps\right)\e \sup\limits_{ t \in [0,T] } \|\eta_t^0\|_{-J}^4\leq C (C^*(\eps)+\eps)
  \end{align}
  for all $\eps>0$. This completes the proof of the theorem.
\end{proof}

\section{Mean field limit and stochastic gradient descent}
\label{sec:mean_field_limit_and_stochastic_gradient_descent}

In this section, we consider the one-hidden layer neural network $f^M$ defined by~\eqref{eq:ML-shallow} for the approximation of a function $f: \R^{n_0} \to \R $, where $c_i \in \R $, $U_i$ is an $1\times n_0$-matrix, $\theta \in \R^{n_0}$, $x_i=(c_i,y_i) \in \R \times \R^{n_0}=:\R^d $, $b_i=0$ and $\phi \in \Cf_b^{\infty}(\R )$ is a fixed activation function. We also assume that $\theta$ is a random element in $\Theta:=\R^{n_0}$ with distribution $\m$. To minimize the risk function $L$ defined by~\eqref{eq:intro_Lemp}, the parameter $x=(x_i)_{i\in M}$ can be estimated using the stochastic gradient descent~\eqref{equ_SGD} with $P=1$ and the learning rate $\alpha=\frac{\beta}{M}$, where $\theta_k$, $k \in \N_0$, are i.i.d. random variables with distribution $\m$ and $x_i(0)$, $i \in [M]$, are i.i.d. random variables generated from a distribution $\mu_0$. We define the empirical distribution $\nu^{M,\frac \beta M}_t$, $t\geq 0$, of the network parameters by~\eqref{equ_empirical_process_for_sgd_intro} which is a random element in the Skorohod space $\Df([0,\infty),\cP(\R^d ))$ of all \cdl functions from $[0,\infty)$ to $\cP(\R^d )$ equipped with the Skorohod topology. 

We next consider the stochastic mean-field equation~\eqref{equ_mean_field_equation_with_vanishing_noise} with the coefficients $A,V,G$ defined by~\eqref{equ_definition_of_coefficients_intro} with $\Phi(x_i,\theta)$ replaced by $\beta\Phi(x_i,\theta)$. Using the assumptions on the activation function $\phi$, it is easy to see that $A,V,G$ satisfy Assumptions~\ref{ass_basic_assumption},~\ref{ass_lipshitz_continuity}. Thus, by Theorem~\ref{the_uniqueness_for_atomic_initial_conditions}, for every $M \in \N$ there exists a unique superposition solution $\mu^{\frac 1M}_t$, $t\geq 0$, to the stochastic mean-field equation~\eqref{equ_mean_field_equation_with_vanishing_noise} with $\eps= \frac{1}{ M }$ started from $\nu_0^{M,\frac \beta M}$.  

We will further consider $\mu_t^{\frac 1M}$, $t\geq 0$,  and $\nu^{M,\frac{\beta}{M}}_t$, $t\geq 0$, as random processes in the Sobolev space $H^{-J}(\R^d )$ with $J> \frac{d}{ 2 }$. Let $\law_{T,-J}(\rho)$ denote the distribution of a random element $\rho_t$, $t \in [0,T]$, in the Skorohod space $\Df([0,T],H^{-J}(\R^d ))$. The distribution of the marginals $\rho_t$ will be denoted by $\law_{-J}(\rho_t)$. The following theorem is the main result of this section.
 
\begin{theorem}\label{the_approximation_of_sgd} 
   Let the measure $\m$ and $\mu_0$ be compactly supported on $\Theta$ and $\R^d$, respectively, $f$ be bounded on the support of $\m$, and the processes $\mu_t^{\frac 1M}$, $t\geq 0$, $\nu_t^{M,\frac \beta M}$, $t\geq 0$, be defined as above. Then for every $J\geq \frac{ 3d }{ 2 }+7$, $T>0$ and $p \in [1,2)$
  \[
    \tilde\W_{p}\left(\law_{T,-J}(\mu^{\frac 1M}),\law_{T,-J}(\nu^{M,\frac \beta M})\right)=o(M^{- \frac{1}{ 2 }}),
  \]
  where $\tilde\W_p$ is the $p$-Wasserstein distance on the space $\Df([0,T],H^{-J}(\R^d ))$ that is equipped with the uniform norm.
  In particular, 
  \begin{equation} 
  \label{equ_assimptotic_for_sup_of_w_in_application}
  \sup\limits_{ t \in [0,T] }\W_p(\law_{-J}(\mu_t^{\frac 1M}),\law_{-J}(\nu_t^{M,\frac \beta M}))=o(M^{- \frac{1}{ 2 }})
  \end{equation}
  and for every $\varphi \in \Cf_c^{\infty}(\R^d )$
  \begin{equation} 
  \label{equ_assumptotic_for_e_of_phi_in_application}
    \sup\limits_{ t \in [0,T] }|\e\langle \varphi , \mu_t^{\frac 1M} \rangle-\e\langle \varphi , \nu_t^{M,\frac \beta M} \rangle|=o(M^{- \frac{1}{ 2 }}).
  \end{equation}
\end{theorem}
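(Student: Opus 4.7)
The plan is to pass through the common Gaussian fluctuation limit $\eta$ of both the SME and SGD dynamics, using Theorem~\ref{the_clt} on the SME side (which provides a \emph{quantitative} rate of convergence) and the Sirignano--Spiliopoulos CLT on the SGD side (which provides convergence without a rate). Introduce the fluctuation fields
\[
  \xi_t^{M}:=M^{\frac12}\bigl(\mu_t^{\frac{1}{M}}-\mu_t^{0}\bigr), \qquad \tilde{\xi}_t^{M}:=M^{\frac12}\bigl(\nu_t^{M,\frac{\beta}{M}}-\mu_t^{0}\bigr),
\]
where $\mu^{0}$ is the solution of the transport equation~\eqref{equ_mean_field_equation_without_noise} with initial datum $\mu_0$. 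By a triangle-type argument in the Wasserstein distance $\tilde{\W}_p$ on $\Df([0,T],H^{-J}(\R^d))$, it suffices to show that
\[
  \tilde{\W}_p\bigl(\law_{T,-J}(\xi^{M}),\law_{T,-J}(\eta)\bigr)=O(M^{-\frac12})\quad\text{and}\quad \tilde{\W}_p\bigl(\law_{T,-J}(\tilde{\xi}^{M}),\law_{T,-J}(\eta)\bigr)=o(1),
\]
where $\eta$ is the Gaussian solution to the linear SPDE~\eqref{eq:intro-linSPDE2} started from $\tilde\eta_0$ (the value $a=1$ is forced since both $\alpha=1/M$ and the sample-size scaling coincide). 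Then
\[
  \tilde{\W}_p\bigl(\law_{T,-J}(\xi^{M}),\law_{T,-J}(\tilde\xi^{M})\bigr)\le O(M^{-\frac12})+o(1)=o(1),
\]
and the scaling identity $\tilde{\W}_p(\law_{T,-J}(\mu^{\frac{1}{M}}),\law_{T,-J}(\nu^{M,\frac{\beta}{M}}))=M^{-\frac12}\tilde{\W}_p(\law_{T,-J}(\xi^{M}),\law_{T,-J}(\tilde\xi^{M}))$ gives the claimed $o(M^{-\frac12})$ rate. The consequences~\eqref{equ_assimptotic_for_sup_of_w_in_application} and~\eqref{equ_assumptotic_for_e_of_phi_in_application} then follow from the definitions of $\tilde{\W}_p$ and marginalisation, using $\sup_t\W_p(\law(f_t),\law(g_t))\le\tilde{\W}_p(\law(f),\law(g))$ and the Kantorovich--Rubinstein duality for expectations against test functions.

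For the first bound, apply Theorem~\ref{the_clt} with $\eps=1/M$. The conditions are met here: the assumptions on $\m$, $\mu_0$, $f$, and $\phi\in\Cf_b^\infty$ imply that the modified coefficients $V,A,G$ (with $\Phi$ replaced by $\beta\Phi$) satisfy Assumptions~\ref{ass_basic_assumption},~\ref{ass_lipshitz_continuity} and~\ref{ass_compact_support} for $J\geq \frac{3d}{2}+7$, and $\langle\phi_4,\mu_0^{\eps}\rangle=\langle\phi_4,\nu_0^{M,\frac\beta M}\rangle$ is uniformly bounded thanks to the compact support of $\mu_0$. Since the initial datum is the same ($\varrho=\tilde\eta_0$ in a coupling sense using a common realisation of the i.i.d.\ initial sample, so that $\|\xi_0^M-\tilde\eta_0\|_{-J}=O_{L^2}(1)$ with vanishing expected squared norm by the classical CLT in $H^{-J}$), Theorem~\ref{the_clt} yields $\e\sup_{t\in[0,T]}\|\xi_t^{M}-\eta_t\|_{-J}^{2}\lesssim M^{-1}+o(1)$, and hence $\tilde{\W}_2^2(\law_{T,-J}(\xi^{M}),\law_{T,-J}(\eta))\lesssim M^{-1}$, which upgrades to $\tilde{\W}_p\le\tilde{\W}_2$ for $p\in[1,2)$.

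For the second bound, I would invoke the CLT of~\cite{Si.Sp2020}, which under the present compact-support and smoothness assumptions gives weak convergence of $\tilde\xi^{M}$ to $\eta$ in $\Df([0,T],H^{-J}(\R^d))$. To upgrade weak convergence in law to convergence in the $\tilde{\W}_p$-distance for $p\in[1,2)$, I would combine weak convergence with a uniform-in-$M$ bound
\[
  \sup_{M\ge 1}\e\sup_{t\in[0,T]}\|\tilde\xi_t^{M}\|_{-J}^{2}<\infty,
\]
which implies uniform integrability of $\|\tilde\xi^{M}\|_{\Df([0,T],H^{-J})}^{p}$ for $p<2$ and hence $\tilde{\W}_p$ convergence. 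Establishing this moment bound is the main technical task: one writes the discrete semimartingale decomposition of the SGD empirical measure (as in~\eqref{eq:discrete_SPDE}), tests it against $H^{J}$-functions, and applies Doob's maximal inequality together with the Lipschitz and bounded-coefficient estimates from Section~2 (the compact support of $\mu_0$ and $\m$ and boundedness of $\phi$ and $f$ on this support give the required bounds on the martingale brackets).

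The hard part is precisely this last upgrade: the Sirignano--Spiliopoulos CLT provides only qualitative weak convergence, so one must control tightness and a $(1+\delta)$-moment of $\|\tilde\xi^{M}\|_{-J}$ uniformly in $M$. The bookkeeping of the discrete-time martingale in the negative Sobolev norm, and verifying that the time-discretisation error $O(M^{-1})$ between SGD and the stochastic modified equation does not enter at the fluctuation scale, are the delicate points; once these are in place, the triangle argument sketched above closes the proof.
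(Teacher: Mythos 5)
Your proposal follows essentially the same route as the paper: pass to the fluctuation fields, control the SME side quantitatively via Theorem~\ref{the_clt}, control the SGD side qualitatively via the Sirignano--Spiliopoulos CLT upgraded to $\tilde\W_p$-convergence through a uniform second-moment bound and uniform integrability, and close with the scaling identity and triangle inequality. Two small remarks: the uniform bound $\sup_M\e\sup_t\|\tilde\xi^M_t\|_{-J}^2<\infty$ that you flag as ``the main technical task'' is simply cited in the paper from \cite[Lemma~4.8]{Si.Sp2020b} (together with the compact-support localisation of \cite[Lemma~4.3]{Si.Sp2020b}, which you should invoke since that CLT lives in $H^{-J}(\Gamma)$ for a bounded $\Gamma$); and your claim that the SME-side distance is $O(M^{-1/2})$ is too strong, since the initial-data terms $\e\W_2^2(\mu_0^M,\mu_0)$ and $\e\|\eta_0^M-\eta_0\|_{-J}^2$ only tend to zero without that rate --- but this is harmless, as $o(1)$ on both sides suffices for the stated $o(M^{-1/2})$ conclusion.
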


\begin{proof} 
  To simplify the notation, we will write $\mu_t^{M}$ and $\nu_t^M$ instead of $\mu_t^{\frac 1M}$ and $\nu_t^{M,\frac \beta M}$, respectively. Let $T>0$ and $J\geq \frac{ 3d }{ 2 }+7$ be fixed. Using~\cite[Theorem~1.2]{Si.Sp2020}, we get that the process $\nu^M_t$, $t \in [0,T]$, converges in $\Df([0,T],\cP(\R^d ))$ in distribution to the unique superposition solution $\mu^0_t$, $t\geq 0$, to the PDE~\eqref{equ_mean_field_equation_without_noise} started from $\mu_0$. We next consider the fluctuation field
  \[
    \xi_t^{M}=\sqrt{ M }(\nu_t^{M}-\mu^0_t), \quad t \in [0,T].
  \]
  According to~\cite[Lemma~4.3]{Si.Sp2020b}, there exists a rectangle $[-R,R]^d$ such that $\mu^0_t$, $\nu_t^M$ and $\xi_t^M$ are supported on it for each $t \in [0,T]$ and $M\geq 1$. We take $\Gamma=(-3 \sqrt{ d }R,3 \sqrt{ d }R)^d$. Then, by~\cite[Theorem~1.5]{Si.Sp2020b}, the process $\xi_t^{M}$, $t \in [0,T]$, converges in $\Df([0,T],H^{-J}(\Gamma))$ in distribution  to the continuous Gaussian process $\xi_t$, $t \in [0,T]$, in $H^{-J}(\Gamma)$ satisfying for every $\varphi \in \Cf_c^{\infty}(\Gamma)$
  \begin{equation} 
  \label{equ_equation_for_xi_in_application}
    \begin{split}
      \langle \varphi , \xi_t \rangle_{0,\Gamma}&= \langle \varphi , \xi_0 \rangle_{0,\Gamma}+\int_{ 0 }^{ t }\left(  \left\langle \nabla \varphi \cdot V(\cdot ,\mu^0_s), \xi_s \right\rangle_{0,\Gamma}+\left\langle \nabla \varphi \cdot \langle \tilde{V}(x,\cdot) , \xi_s \rangle_{0,\Gamma} , \mu^0_s(dx) \right\rangle\right)ds\\
      &+M_t^{\varphi}, \quad  t \in [0,T],
    \end{split}
  \end{equation}
  where $M_t^{\varphi}$, $t \in [0,T]$, is a mean-zero Gaussian process with variance 
  \begin{equation} 
  \label{equ_variation_of_xi_in_application}
    \Var(M_t^{\varphi})=\int_{ 0 }^{ t } \int_{ \Theta }   \langle \nabla \varphi \cdot G(\cdot ,\mu^0_s,\theta) , \mu^0_s \rangle^2 \m(d\theta)ds, \quad t \in [0,T].
  \end{equation}
  Moreover, by~\cite[Theorem~6.2]{Si.Sp2020b}, such a process $\xi_t$, $t \in [0,T]$, is unique. Using the Skorohod theorem~\cite[Theorem~3.1.8]{Ethier:1986}, we may assume that the process $\xi^{M}$ converges to $\xi$ in $\Df([0,T],H^{-J}(\Gamma))$ a.s. Since $\xi_t$, $t \in [0,T]$, is continuous in $H^{-J}(\Gamma)$, it is easy to see that 
  \[
    \sup\limits_{ t \in [0,T] }\|\xi_t^M-\xi_t\|_{-J,\Gamma} \to 0 \quad \mbox{a.s.}
  \]
  as $M \to \infty$. Finally, we remark that 
  \[
    \sup\limits_{ M \geq 1 }\e \sup\limits_{ t \in [0,T] }\|\xi_t^M\|_{-J,\Gamma}^2<\infty,
  \]
  according to~\cite[Lemma~4.8]{Si.Sp2020b}. Hence, by the de la Vall\'ee-Poussin theorem~\cite[Theorem~1.8]{Liptser:2001}, the sequence $\sup\limits_{ t \in [0,T] }\|\xi_t^M\|_{-J,\Gamma}^p$, $M\geq 1$, is uniformly integrable for every $p \in [1,2)$. Therefore,  
  \begin{equation} 
  \label{equ_convergence_of_wasserstein_distance_for_xi_in_application}
    \tilde\W_p^p\left(\law_{T,-J,\Gamma}(\xi^M),\law_{T,-J,\Gamma}(\xi)\right)\leq \e\sup\limits_{ t \in [0,T] }\|\xi_t^M-\xi_t\|_{-J,\Gamma}^p \to 0
  \end{equation}
  as $M\to\infty$ for every $p \in [1,2)$, where $\law_{T,-J,\Gamma}(\rho)$ is the law of a random element $\rho_t$, $t \in [0,T]$, in $\Df([0,T],H^{-J}(\Gamma))$.  

Since the initial condition $\mu^M_0$ is independent of the noise driving the equation~\eqref{equ_mean_field_equation_with_vanishing_noise}, we may assume that $\mu^M_0=\nu^M_0$, without loss of generality. Then
  \[
  \e\|\eta_0^M-\eta_0\|_{-J,\R^d}^p \to 0\quad \mbox{as}\ \ M\to\infty
  \]
  for every $p\in[1,2)$, where $\eta_0=\xi_0$, by~\eqref{equ_convergence_of_wasserstein_distance_for_xi_in_application}.  Consider the fluctuation field $\eta^M_t= \sqrt{ M }(\mu_t^M-\mu^0_t)$, $t \in [0,T]$, and the $\sigma$-field $\cS=\sigma(\eta^M_0,\eta_0)$. Note that, by Jensen's inequality and Theorem~\ref{the_clt}, for each $p\in[1,2)$ 
  \begin{equation} 
  \label{equ_convergence_of_etam_in_application}
  \begin{split}
    \e \sup\limits_{ t \in [0,T] }\|\eta_t^M-\eta_t\|_{-J,\R^d }^p&=\e\left[\e\left( \sup\limits_{ t \in [0,T] }\|\eta_t^M-\eta_t\|_{-J,\R^d }^p\Big|\cS\right)\right]\\
    &\leq \e\left[\e\left( \sup\limits_{ t \in [0,T] }\|\eta_t^M-\eta_t\|_{-J,\R^d }^2\Big|\cS\right)^{\frac{p}{2}}\right]\\
    &\leq\e\left[\left(\frac{C}{M} \left(1+\langle \phi_4 , \mu_0^{M} \rangle\right)+C \W_2^2(\mu_0^{M},\mu_0^0)+C \|\eta^{M}_0 -\eta_0\|_{-J,\R^d}^2\right)^{\frac{p}{2}}\right]\\
     &\leq\left(\frac{C}{M} \left(1+\e\langle \phi_4 , \mu_0^{M} \rangle\right)+C \e\W_2^2(\mu_0^{M},\mu_0^0)\right)^{\frac{p}{2}}+C^{\frac{p}{2}} \e\|\eta^{M}_0 -\eta_0\|_{-J,\R^d}^p.
    \end{split}
  \end{equation}
  Since $\mu_0$ has a compact support, we have that $\e\langle \phi_4 , \mu_0^{M} \rangle\leq \langle \phi_4 , \mu_0 \rangle<\infty$ for all $M$. Hence, by~\cite[Theorem~1]{Fournier:2015}, $\e\W_2^2(\mu_0^{M},\mu_0^0)\to 0$ as $M\to\infty$, and, consequently,
  \[
    \e \sup\limits_{ t \in [0,T] }\|\eta_t^M-\eta_t\|_{-J,\R^d }^p\to 0 \quad \mbox{as}\ \ M\to\infty.
  \]
Using the definition of the Wasserstein distance $\tilde\W_p$, we get 
\begin{equation} 
  \label{equ_convergence_of_wasserstein_distance_for_eta_application}
  \tilde{\W}_p\left( \law_{T,-J,\Gamma}(\eta^M),\law_{T,-J,\Gamma}(\eta) \right) \to 0 \quad \mbox{as}\ \ M\to\infty
\end{equation}
for every $p \in [1,2)$.

Let us make the following observations. Since $\mu^M_t$, $t \in [0,T]$, is a superposition solution to the stochastic mean-field equation~\eqref{equ_mean_field_equation_with_vanishing_noise} and $\mu_0$ is compactly supported, without loss of generality we may assume that the support of $\mu^M_t$ is contained in $\Gamma$ for each $M\geq 1$ and $t \in [0,T]$, otherwise, we may choose $\Gamma$ larger. Taking an arbitrary $\varphi \in \Cf_c^{\infty}(\Gamma)$, it is easily seen that $\langle \varphi , \eta_t \rangle$, $t \in [0,T]$, satisfies~\eqref{equ_equation_for_xi_in_application},~\eqref{equ_variation_of_xi_in_application}. Hence $\eta_t$, $t \in [0,T]$, is a continuous process in $H^{-J}(\Gamma)$ and has the same distribution as $\xi_t$, $t \in [0,T]$. We can see also that for every $p \in [1,2)$
\begin{align}
  \sqrt{ M^p }\tilde\W_p&\left( \law_{T,-J}(\mu_t^M),\law_{T,-J}(\nu_t^M) \right)^p\\
  &= \sqrt{ M^p }\inf\left\{ \e \sup\limits_{ t \in [0,T] }\|\tilde{\mu}_t^M-\tilde{\nu}_t^M\|^p_{-J,\R^d }:\ \tilde\mu^M\sim \law(\mu^M),\ \tilde\nu^M\sim \law (\nu^M) \right\}\\
  &= \sqrt{ M^p }\inf\left\{ \e \sup\limits_{ t \in [0,T] }\|\tilde{\mu}_t^M-\tilde{\nu}_t^M\|^p_{-J,\Gamma }:\ \tilde\mu^M\sim \law(\mu^M),\ \tilde\nu^M\sim \law (\nu^M) \right\}\\
  &= \inf\Bigg\{ \e \sup\limits_{ t \in [0,T] }\left\|\sqrt{ M }(\tilde{\mu}_t^M-\mu^0_t)-\sqrt{ M }(\tilde{\nu}_t^M-\mu^0_t)\right\|_{-J,\Gamma}^p:\\  
  &\qquad\qquad\qquad\qquad\qquad\qquad\tilde\mu^M\sim \law(\mu^M),\ \tilde\nu^M\sim \law (\nu^M)\Bigg\}\\
  &= \inf\left\{ \e \sup\limits_{ t \in [0,T] }\|\tilde\eta_t^M-\tilde{\xi}_t^M\|_{-J,\Gamma}^p:\ \ \tilde\eta^M\sim\law (\eta^M),\ \tilde{\xi}^M\sim\law(\xi^M) \right\}\\
  &= \tilde\W_p\left(\law_{T,-J,\Gamma}(\eta^M),\law_{T,-J,\Gamma}(\xi^M)\right).
\end{align}
Therefore, using the previous observations, we can estimate for every $p \in [1,2)$ 
\begin{align}
  \sqrt{ M }\tilde\W_p&\left(\law_{T,-J}(\mu^M),\law_{T,-J}(\nu^M)\right)= \tilde{\W}_p\left( \law_{T,-J,\Gamma}(\eta^M),\law_{T,-J,\Gamma}(\xi^M) \right)\\
  &\leq \tilde\W_p(\law_{T,-J,\Gamma}(\eta^M),\law_{T,-J,\Gamma}(\eta))+\tilde\W_p\left( \law_{T,-J,\Gamma}(\xi),\law_{T,-J,\Gamma}(\xi^M) \right)\\
  &\leq \tilde\W_p(\law_{T,-J}(\eta^M),\law_{T,-J}(\eta))+\tilde\W_p\left( \law_{T,-J,\Gamma}(\xi),\law_{T,-J,\Gamma}(\xi^M) \right).
\end{align}
Thus, 
\[
  \sqrt{ M }\tilde{\W}_p\left(\law_{T,-J}(\mu^M),\law_{T,-J}(\nu^M)\right) \to 0 \quad \mbox{as}\ \ M\to\infty,
\]
by~\eqref{equ_convergence_of_wasserstein_distance_for_xi_in_application} and~\eqref{equ_convergence_of_wasserstein_distance_for_eta_application}. This completes the proof of the first part of the theorem.

The asymptotics~\eqref{equ_assimptotic_for_sup_of_w_in_application},~\eqref{equ_assumptotic_for_e_of_phi_in_application} directly follow from the inequalities 
\[
  \sup\limits_{ t \in [0,T] }\W_p\left( \law_{-J}(\mu_t^M),\law_{-J}(\nu_t^M) \right)\leq \tilde{\W}_p\left( \law_{-J,T}(\mu^M),\law_{-J,T}(\nu^M) \right)
\]
and 
\[
  \left|\e \langle \varphi , \mu_t^M \rangle -\e \langle \varphi , \nu_t^M \rangle\right|\leq \|\varphi\|_{J}\W_p(\law_{-J}(\mu_t^M),\law_{-J}(\nu_t^M)),
\]
for every $\varphi \in \Cf_c^{\infty}(\R^d )$ and $t \in [0,T]$, respectively.
\end{proof}

\begin{remark} Note that the optimal rate in the quantified central limit Theorem~\ref{the_clt} implies that 
\[
\tilde\W_p(\law_{T,-J}(\eta^M),\law_{T,-J}(\eta)) = O(M^{-\frac 12}).
\]
Therefore, a quantified CLT for the SGD dynamics $\nu^M$ with the same rate of convergence would imply the stronger approximation error
\[
\tilde{\W}_p\left(\law_{T,-J}(\mu^M),\law_{T,-J}(\nu^M)\right)=O(M^{-1}).
\]
However, proving a quantified CLT for SGD is an open problem.
\end{remark}

\subsection*{Acknowledgements}
The first and third authors were supported by the Deutsche Forschungsgemeinschaft (DFG, German Research Foundation) – SFB 1283/2 2021 – 317210226.   BG acknowledges support by the Max Planck Society through the Research Group "Stochastic Analysis in the Sciences (SAiS)". The third author thanks the Max Planck Institute for Mathematics in the Sciences for its warm hospitality, where a part of this research was carried out.

\providecommand{\bysame}{\leavevmode\hbox to3em{\hrulefill}\thinspace}
\providecommand{\MR}{\relax\ifhmode\unskip\space\fi MR }
\providecommand{\MRhref}[2]{%
  \href{http://www.ams.org/mathscinet-getitem?mr=#1}{#2}
}
\providecommand{\href}[2]{#2}

\end{document}